\documentclass[11pt]{article}

\usepackage{longtable}
\usepackage{tabularx}
\usepackage{tikz}
\usetikzlibrary{shapes}
\usetikzlibrary{decorations.pathreplacing}
\usepackage{enumerate}
\usepackage[shortlabels]{enumitem}
\usepackage{verbatim}
\usepackage[margin=1in]{geometry}
\usepackage{amssymb}
\usepackage{caption}
\usepackage[normalem]{ulem}
\usepackage{bbm}
\usepackage{amsthm}
\usepackage{enumitem}
\usepackage{authblk}
\usepackage{scalerel}    
\usepackage{stmaryrd}   

\usetikzlibrary{arrows.meta}


\DeclareFontFamily{U}{mathx}{\hyphenchar\font45}
\DeclareFontShape{U}{mathx}{m}{n}{
      <5> <6> <7> <8> <9> <10>
      <10.95> <12> <14.4> <17.28> <20.74> <24.88>
      mathx10
      }{}
\DeclareSymbolFont{mathx}{U}{mathx}{m}{n}
\DeclareFontSubstitution{U}{mathx}{m}{n}
\DeclareMathAccent{\widecheck}{0}{mathx}{"71}
\DeclareMathAccent{\wideparen}{0}{mathx}{"75}

\usepackage{rotating,centernot,cancel}    

\newcommand{\myfootnote}[1]{
    \renewcommand{\thefootnote}{}
    \footnotetext{\scriptsize#1}
    \renewcommand{\thefootnote}{\arabic{footnote}}
}

\usepackage{amsmath}
\usepackage{tocloft}
\usepackage{float}

\usepackage{tcolorbox}   
\PassOptionsToPackage{hyphens}{url}
\usepackage[hidelinks]{hyperref}
\usepackage{babel}
\theoremstyle{plain}

\input amssym.def
\input amssym.tex

\def\beqn{\begin{eqnarray}}
\def\eeqn{\end{eqnarray}}

\def\beq{\begin{equation}}
\def\eeq{\end{equation}}
\usepackage{mathtools}
\DeclarePairedDelimiter\floor{\lfloor}{\rfloor}

\newtheorem{theorem}{Theorem}[section]
\newtheorem{corollary}[theorem]{Corollary} 
\newtheorem{lemma}[theorem]{Lemma} 
\newtheorem*{lemma*}{Lemma}
\newtheorem{proposition}[theorem]{Proposition} 

\theoremstyle{remark}
\newtheorem{remark}[theorem]{Remark}

\theoremstyle{definition}

\numberwithin{figure}{section}
\numberwithin{equation}{section}


\def\R{\mathbb R}

\def\Z{\mathbb Z}

\def\to{\Bigarrow}
\def\dlim[#1][#2]{\lim_{#1 \to #2, #1 \neq #2}}

\def\Var{\textup{$\mathbb{V}$ar}}

\def\Cov{\textup{$\mathbb{C}$ov}}
\def\Corr{\textup{$\mathbb{C}$orr}}
\def\pb{\mathbb{P}}


\newcommand{\be}{\begin{equation}}
\newcommand{\ee}{\end{equation}}

\newcommand{\nn}{\nonumber}

\def\ind{\mathbf{1}}

\def\wt{\widetilde}

\newcounter{cnstcnt}
\newcommand{\newc}{%
\refstepcounter{cnstcnt}%
\ensuremath{C_{\thecnstcnt}}}
\newcommand{\oldc}[1]{\ensuremath{C_{\ref*{#1}}}}

\usepackage{xargs}
\usepackage[colorinlistoftodos,prependcaption,textsize=footnotesize]{todonotes}
\newcommandx{\addmath}[2][1=]{\todo[linecolor=red,backgroundcolor=red!25,bordercolor=red,#1]{#2}}
\newcommandx{\fixtext}[2][1=]{\todo[linecolor=blue,backgroundcolor=blue!25,bordercolor=blue,#1]{#2}}
\newcommandx{\note}[2][1=]{\todo[linecolor=yellow,backgroundcolor=yellow!25,bordercolor=yellow,#1]{#2}}


\def\barevent{\mathcal{B}_{\textup{bar}}}

\newcommand\bbullet{{{\scaleobj{0.6}{\bullet}}}} 










\title{Temporal correlation in the inverse-gamma polymer}
\date{}
\author{
  Riddhipratim Basu\thanks{\scriptsize{International Centre for Theoretical Sciences, Tata Institute of Fundamental Research, Bengaluru, INDIA.
 \texttt{rbasu@icts.res.in}}} \qquad Timo Sepp\"{a}l\"{a}inen\thanks{\scriptsize{Department of Mathematics, University of Wisconson-Madison, Wisconsin, USA. \texttt{seppalai@math.wisc.edu}}
} \qquad  Xiao Shen\thanks{\scriptsize{Department of Mathematics, University of Utah, Utah, USA. \texttt{xiao.shen@utah.edu}}}
  }

\setcounter{tocdepth}{2}
\begin{document}

\allowdisplaybreaks

\maketitle

\begin{abstract}
Understanding the decay of correlations in time for (1+1)-dimensional polymer models in the KPZ universality class has been a challenging topic. Following numerical studies by physicists, concrete conjectures were formulated by Ferrari and Spohn  \cite{Ferr-Spoh-2016} in the context of planar exponential last passage percolation. These have mostly been resolved by various authors. In the context of positive temperature lattice models, however, these  questions have remained open. We consider the time correlation problem for the exactly solvable inverse-gamma polymer in $\Z^2$. We establish, up to constant factors,  upper and lower bounds on the correlation between free energy functions for two polymers rooted at the origin (droplet initial condition) when the endpoints are either close together or far apart. We find   the same exponents as predicted in \cite{Ferr-Spoh-2016}. Our arguments rely on the understanding of stationary polymers, coupling, and random walk comparison. We use recently established moderate deviation estimates for the free energy. In particular, we do not require asymptotic analysis of complicated exact formulae. 
\end{abstract}

\myfootnote{Date: \today}
\myfootnote{2010 Mathematics Subject Classification. 60K35, 	60K37}
\myfootnote{Key words and phrases: correlation, coupling, directed polymer, Kardar-Parisi-Zhang, random growth model.}

\tableofcontents

\section{Introduction}

\subsection{Universality in stochastic growth} 
Random growth models have always been at the heart of probability theory. The simplest example of random growth, {occurring in zero spatial dimension plus one time dimension,} is a sum of independent and identically distributed (i.i.d.) random variables. Provided their second moment is finite, the large-scale behavior  of the centered sum  is independent of the distribution  of the summands, as described by the \textit{central limit theorem}. With the fluctuation exponent $1/2$ and the Gaussian distribution as the central limit scaling law, this model is a member of the \textit{Gaussian universality class}.

Following the seminal 1986 physics work of Kardar, Parisi and Zhang \cite{Kar-Par-Zha-86}, one major goal of recent  probability research has been to demonstrate that very  different  universal behavior arises in a wide class of stochastic models with spatial dependence. Extensive computer simulations, non-rigorous physical arguments, laboratory experiments, and rigorous mathematical results have all suggested that this \textit{Kardar-Parisi-Zhang  universality class} (KPZ) is rich. It includes interacting particle systems, percolation models, polymer models,  random tilings, certain stochastic PDEs and more. All known members of the KPZ class share universal fluctuation exponents and {many of their limiting distributions} are from random matrix theory \cite{introkpz1, introkpz2}. 

In the past twenty-five years,   many ground-breaking advances in understanding  KPZ universality   have come through the study of \textit{exactly solvable} or \textit{integrable} models. However, for the  vast majority of the conjectural members of the KPZ class,  these fine techniques of integrable probability, representation theory, and algebraic combinatorics do not apply. With the eventual goal of extending results beyond the integrable cases, a second line of  research uses in principle broadly applicable probabilistic techniques and geometric arguments to study the integrable models. This paper falls in the latter category.  We study the temporal correlation in the inverse-gamma polymer model,  originally introduced in  \cite{poly2}. 

In the remainder of this introduction, Section \ref{sec:methods} gives a brief overview of the presently used  mathematical methods in KPZ study, Section \ref{sec:corr} discusses the correlation problem studied in this paper, and Section \ref{sec:org} explains 
the organization of the rest of the paper.


\subsection{Methods in the  study of the KPZ class} 
\label{sec:methods} 

Several different approaches to studying the exactly solvable models of the KPZ class have emerged over the last 25 years.  We describe these methods briefly on a very general level, mainly  in the context of zero-temperature last-passage percolation (LPP) with exponential or geometric weights, where mathematical development is farthest along.  

\subsubsection{Integrable probability}
\label{s:int}
For exactly solvable LPP models based on RSK correspondence or similar remarkable bijections, it is possible to write down explicit formulas for one-point and multi-point distributions. Integrable probability estimates refer to estimates and asymptotics obtained by careful analysis of these formulas. Beginning with the seminal work of Baik, Deift and Johansson 
\cite{Bai-Dei-Joh-99}, 
which established the Tracy-Widom scaling limit for the longest increasing subsequence problem, this approach has brought much success. This includes process limits for the last-passage time profile started from particular initial conditions: {the droplet initial condition \cite{Airy_2}, the flat initial condition \cite{Airy_1, Airy_11}, and the stationary initial condition \cite{converge_BK}. 
More recently, the seminal work \cite{kpz_fix} constructed the KPZ fixed point which admits general initial conditions. 
Formulas for two-time distributions have also been obtained \cite{ Baik-Liu-19, two_time_LPP, Liao-22, Liu-22}.}

\subsubsection{Gibbsian line ensembles}
\label{s:gibbs}
A useful approach based on   resampling in line ensembles   was introduced by Corwin and Hammond \cite{bgibbs}. 
In the zero temperature model of Brownian last-passage percolation, where the corresponding line ensemble has the \emph{Brownian Gibbs property}, a detailed understanding of the passage time profile was obtained in a series of works \cite{cont_profile, quilt, disjoint_blpp, BR}. A similar approach exists for the positive temperature KPZ equation and has been used recently to great effect \cite{KPZ_line}. The Gibbsian line ensemble approach led to the construction of the \emph{directed landscape} (DL),  the space-time scaling limit of zero temperature models \cite{DL, DL_LPP}.  Subsequently, DL limits were established for the KPZ equation \cite{KPZ_DL_v, KPZ_DL}. 

\subsubsection{Percolation methods with integrable inputs}
\label{s:perc}
Yet another suite of techniques uses black-box integrable inputs together with probabilistic and geometric arguments that can in general be referred to as \emph{percolation arguments}. The inputs typically used in this line of work include (i)  uniform curvature of the limit shape,  (ii)  moderate deviation estimates of the passage time, and (iii) convergence of the   one point distribution  to the GUE Tracy-Widom law  that has a negative mean \cite{small_deviation_LPP, geowater}. Some cases  require more sophisticated inputs such as the Airy process limit of the full profile \cite{timecorrflat}. These inputs are typically obtained from the first approach above.  In cases like exponential and Brownian LPP, one can also exploit  random matrix connections to obtain similar, albeit usually a bit weaker, estimates \cite{led-rid-2021}. These estimates are then applied  to obtain fine information about the geodesic geometry, which in turn provides further information about the space-time profile of last-passage times. An axiomatic framework for these types of arguments has been developed and used in \cite{geowater}. 

\subsubsection{Coupling methods}
\label{s:coup}
The most probabilistic  approach that minimizes the role  of integrability utilizes couplings with stationary  growth processes. In zero temperature the seminal work was   \cite{Cat-Gro-06}, followed by \cite{cuberoot}, and \cite{poly2} began this development in positive temperature polymer models.  This effort has been  recently revolutionized by \cite{rtail1} that made possible certain quantitatively optimal bounds.  Presently this approach still relies on a special feature of the model, namely, that the stationary measure is explicitly known.   It has been most successful in the study of solvable models.  Its applicability does extend to some stochastic processes presently not known to be fully integrable, namely classes of zero-range processes and interacting diffusion 
\cite{Bal-Kom-Sep-12-jsp, kpz_id}.
Through comparisons with the stationary process, many results about the geodesics, parallel to those developed by the previous approach, have been proved  {\cite{balzs2019nonexistence, balzs2020local, uni_tree, seppcoal}}.  
Following the optimal bounds of  \cite{rtail1},  some of the integrable  inputs of  the percolation approach can now be supplied by coupling techniques, thereby reducing dependence on random matrix theory and integrable probability. 

\subsubsection{The approach of this paper}
The current paper uses a combination of the final two approaches discussed above to study the temporal decay of correlations   in the positive-temperature exactly solvable inverse-gamma polymer model. The major barrier to applying the percolation arguments from \cite{timecorriid, timecorrflat} has been the lack of one-point moderate deviation estimates. One advantage of the coupling  techniques is that they can be extended from  zero temperature to positive temperature \cite{Emr-Jan-Xie-22-, Xie2022}. In the context of the semi-discrete   O'Connell-Yor polymer, one-point estimates have recently been obtained under stationary initial conditions \cite{Lan-Sos-22-a-} and more recently for the point-to-point problem \cite{OC_tail}. These techniques carry over to the inverse-gamma polymer model as well.  This opens the door for proving versions of the lattice LPP results obtained through one-point estimates, now in the context of the positive temperature polymer models. {Our paper provides the first example in this vein (the fusion of percolation and coupling arguments), by proving the first bounds on the time 
correlation structure of a lattice polymer model.} 

We expect similar techniques to be applicable to a number of other related problems. {To emphasize, although our approach here is similar to the one described in \S~\ref{s:perc}, we only require the one point moderate deviation inputs, and these are provided by the recent advances in the coupling/stationary polymer approach of \S~\ref{s:coup}.  Therefore our work does not rely on the integrable methods described in \S~\ref{s:int}-\ref{s:gibbs}}.

\subsection{Time correlation problem} \label{sec:corr}
We turn to the precise problem we study, its history,  and our contributions.

A central object in KPZ models is the random  \textit{height function} $h:\R\times\R_{\ge0} \rightarrow\R$.   Depending on the situation studied,    $h(x, t)$ can be the height of a randomly  moving interface over spatial location $x$ at  time $t$, the passage time on the plane from the origin to $(x,t)$, or the free energy of point to point polymers between the origin and location $(x,t)$. 

The spatial statistics of $x\mapsto h(x,t_0)$ at a fixed time $t_0$ are  much better  understood than  the temporal  process $t \mapsto h(x_0, t)$.    Multi-time joint distributions of the height function have been obtained in several exactly solvable models \cite{Baik-Liu-19,  Joha-17, Joha-19, Joha-Rahm-21, Liao-22, Liu-22}. However, it has remained difficult to extract useful information from these impressive formulas.

Short of capturing the full distribution of the temporal evolution, a  natural object to   study   is   the two-time correlation function 
\begin{equation}\label{hcorr}
\Corr (h(0, t_1), h(0, t_2)),\end{equation}
where we have now singled out the origin $x_0=0$ as the spatial location.  
This correlation was first studied by physicists Takeuchi and Sano \cite{Take-Sano-2012}, who  measured the quantity \eqref{hcorr} from a turbulent liquid crystal experiment. Subsequently came  numerical simulations \cite{Take-2012} that predicted the behavior of \eqref{hcorr} by fixing $t_1$ and sending $t_2$ to infinity.

\subsubsection{Prior rigorous time correlation results}

Ferrari and Spohn \cite{Ferr-Spoh-2016} studied the large-time behavior of \eqref{hcorr} from various initial conditions in the exponential last-passage percolation, which is one of the most-studied zero-temperature KPZ last-passage growth model on the lattice. Taking   time   to infinity, they obtained  a variational formulation of the (rescaled) height function in terms of two independent \textit{Airy processes}. From the variational problem they derived an explicit formula for the limiting two-time covariance under the stationary initial distribution, as $t_1, t_2$ both tend to infinity. For the step and flat initial conditions  \cite{Ferr-Spoh-2016}  conjectured  asymptotics in the regimes $t_1/t_2\rightarrow0$   and $t_1/t_2\rightarrow1$. 



Following the conjectures of \cite{Ferr-Spoh-2016}, several  rigorous works studied this problem under  different initial conditions in the zero-temperature setting.

The time correlation problem for the droplet initial condition in  exponential LPP was solved in two parallel works.  Both  employed a combination of integrable inputs and a geometric study of geodesics. The results of \cite{ferocctimecorr}, which also utilizes comparison with stationary processes, are limiting in nature and also used the convergence of the passage time profile to the Airy$_2$ process. They also obtained an exact formula for the stationary case and identified universal behavior with respect to the initial condition when the two time points are close to one another. In contrast, \cite{timecorriid} used one point estimates, convergence to Tracy-Widom GUE distribution (and the negativity of its mean), together with geometric arguments, to obtain similar, but quantitatively weaker, results for the droplet initial condition, but valid  also in the pre-limit setting. When the two time points are far away, the case of the flat initial condition was dealt with in \cite{timecorrflat}. This work relied on strong Brownian comparison results for the Airy$_2$ process, in addition to convergence to it. 
{The time correlation problem in the half-space exponential LPP has also been recently studied in \cite{fer-occ-2022}, utilizing comparison arguments with its stationary version and the process limit obtained in \cite{half_process}. }

The Gibbsian line ensemble approach has also been useful in this context. In an unpublished work, Corwin and Hammond solved the time correlation problem in   Brownian LPP with this approach.  Subsequently, together with Ghosal, they extended their work to the positive temperature KPZ equation \cite{KPZcorr}. 

\subsubsection{Our work: temporal correlations in positive temperature on the lattice}
Prior to the present work,  there does not appear to be any mathematically rigorous work on this problem for positive temperature lattice models. The application of Gibbsian line ensemble techniques seems challenging for lattice models, {due to the absence of explicit calculations available for random walks compared to the Brownian motion}. The one-point convergence to Tracy-Widom GUE is known for the inverse-gamma polymer \cite{TW_polymer, kri-qua-2018, Bor-Cor-Rem-13}; { very recently, since our work was completed, the convergence of the free energy profile has also been shown in \cite{Airy_char}. Hence, the approach of \cite{ferocctimecorr} might also be feasible in the positive temperature case if one were to study the limiting regime,  but we are interested in the finite size estimates as well}.  

Our approach is inspired by \cite{timecorriid} and the recent progress in stationary techniques. One cannot directly apply the techniques of \cite{timecorriid} in the positive temperature set-up, as much of it refers to the fluctuations and coalescence of \emph{geodesics} which do not exist in our setting. We modify their definitions appropriately and construct events in terms of the free energy profile and restricted free energies, which can serve similar purposes.  Certain estimates are directly proved using stationary techniques.   The novel technical ingredients of our paper are developed in these directions. 

{In Section \ref{loc_fluc}}, we directly prove the locally diffusive behavior of the free energy profile instead of utilizing local fluctuations of geodesics as in \cite{timecorriid}. {For the lower bound in Theorem \ref{thm_r_small}}, we use the FKG inequality as in \cite{timecorriid}, but in the absence of geodesics, the resampling argument is significantly different, and, in fact, somewhat simpler. {In Section \ref{sec:nr}}, we give a direct proof of a lower bound of the difference between the expected free energy and its long-term value, at the standard deviation scale.  This way we avoid the  need for  the Tracy-Widom limit, and in fact,  this  provides a new proof  of the negativity of the mean of the Tracy-Widom distribution.  Our arguments carry over to the zero temperature setting as well, thus eliminating  the integrable probability inputs from  the LPP results of \cite{timecorriid}. 

To summarize, {in Theorem \ref{thm_r_large} and Theorem \ref{thm_r_small},} we establish the exponents that govern the decay of correlations in the time direction for the inverse-gamma polymer model. As expected on universality grounds,  the exponents are the same as in the zero-temperature case. Ours   is the first such result in a lattice polymer model in the KPZ class.   The only  special feature  of the model we use is  the explicit description of the stationary process. In particular, we do not use any weak convergence result (to the Tracy-Widom distribution, for example). Our techniques consist of   one-point estimates obtained through  stationary polymers, random walk comparisons, and  percolation arguments. Ours is the first instance where the stationary polymer techniques have been put together with the percolation arguments in a positive temperature setting.  This combination can be useful for  extending many  zero-temperature results to the inverse-gamma polymer. 

That 
  our approach does not rely on integrable inputs is not only  
 potentially useful for  future extensions, but also necessary in the current state of the subject. There are fewer  integrable tools available for our model in comparison with exponential LPP, Brownian LPP or the KPZ equation. There is no determinantal formula for the multi-point joint distribution of the free energy, and there is no corresponding Brownian Gibbs property in this discrete setting (unless one takes a certain limit of the model).
Lastly, the inverse-gamma polymer model sits higher up in the hierarchy of the KPZ models. This means that through appropriate transformations and limits,  LPP, BLPP, and the KPZ equation can be derived from the inverse-gamma polymer.  In consequence, our results should carry over to these other models and thereby {remove the inputs from integrable probability} utilized in previous works.

\subsection{Organization of the paper}\label{sec:org} 

The polymer model is defined and our main results on the correlation bounds, Theorems \ref{thm_r_large} and \ref{thm_r_small}, are stated in Section \ref{main_results}.  Theorem \ref{thm_r_large} is proved in Section \ref{pf1} and Theorem   \ref{thm_r_small} in Section  \ref{pf2}. 
Auxiliary results needed for the main proofs are collected in Sections \ref{s:aux1} and \ref{loc_fluc}.   We treat the proofs of these auxiliary results differently depending on their status. Those that require significant proof are verified in Sections \ref{sec:nr} and \ref{sec_tail}, while those based on existing work, such as analogous zero-temperature results, are in the appendices. 
Next, we explain the organization of the supporting results in more detail. 

Section \ref{notation} contains additional notation and conventions, in particular,  for various subsets of $\Z^2$ and  partition functions of restricted collections of paths.  
Section \ref{sec:reg} collects regularity properties of the shape function. Nothing beyond calculus is used here.  

Section \ref{free_est} covers various estimates for the free energy, organized into several subsections.
\begin{itemize} 
\item Section \ref{mod_est} gives   moderate deviation estimates for the point-to-point free energy. Two estimates for the left tail that appear here are used multiple times in the paper and  proved in Section \ref{sec_tail}. 

\item Sections \ref{sec:Zpath}--\ref{sec:Zvar}  contain a a variety of estimates.   These   are used   only for the lower bound of Theorem \ref{thm_r_small}. 
 Those that have previously appeared in the zero-temperature setting have their proofs in Appendix \ref{free fluc}. 

\item Section \ref{non_rand} gives a lower bound on the discrepancy between the asymptotic free energy and the finite-volume  expected free energy, sometimes called the \textit{non-random fluctuation}. It is proved in Section \ref{sec:nr} by comparison with  the increment-stationary polymer.   This result is used in the  proof of the lower bound of the left tail in Section \ref{mod_est} and the construction of the Barrier event $\barevent$ in Section 6.1. 
\end{itemize}

Section \ref{stat_poly} introduces the increment-stationary inverse-gamma polymer and discusses some of its properties. The proofs for these properties can be found in Section \ref{sec:nr}. Among  the results here are  upper and lower bounds on  the free energy difference between the stationary model and the i.i.d.~model.

Section \ref{sec_rw}  presents a random walk comparison of the free energy profile. Specifically, we establish upper and lower bounds on the free energy along a down-right path using two random walks. The proof of this comparison, which relies on the stationary polymer process,  can be found in Appendix \ref{rwrw_proof}.

 Section \ref{loc_fluc} is dedicated to  local fluctuations in the free energy profile. The proofs in this section rely on the moderate deviation estimates and the random walk comparison. The results obtained here are crucial for the proofs of  the main theorems.  
 
{
We end this section with an index of different partition functions that will appear in the paper. 
\begin{longtable}{ l l }
$\mathcal{L}_{\bf a}$ & the anti-diagonal line $\{\mathbf{a} + (j, -j) : j \in \mathbb{Z}\}$\\ [2mm]
$\mathcal{L}^k_{{\bf a}}$ & the anti-diagonal segment $\{{\bf x}\in \mathcal{L}_{{\bf a}} : |{\bf x}-{\bf a}|_\infty \leq k \}$ \\ [2mm]
$R_{{\bf a}, {\bf b}}^k$ & the parallelogram spanned by ${\bf a} \pm (-k,k)$ and ${\bf b} \pm (-k,k)$\\ [2mm]
$Z_{\mathbf{u}, \mathbf{v}}$ & the point-to-point partition function \\[2mm]
$Z_{\mathbf{u}, \mathcal{L}_\mathbf{v}}$ & the point-to-line partition function \\[2mm]
$Z_{\mathcal{L}_\mathbf{u}^c, \mathcal{L}_\mathbf{v}^d}$ & the segment-to-segment partition function \\[2mm]
$Z_{A, B}$ & the partition function from summing over all paths between $A$ and $B$, for $A, B \subset \mathbb{Z}^2$\\[2mm]
$Z^\textup{max}_{A, B}$ & the maximum $\max_{{\bf a}\in A, {\bf b}\in B} Z_{{\bf a}, {\bf b}}.$\\[2mm]
$Z_{A, B}^{\textup{in}, R^{h}_{{\bf c}, {\bf d}}}$ & the partition function with paths from $A$ to $B$ contained inside $R^{h}_{{\bf c}, {\bf d}}$\\ [2mm]
$Z_{A, B}^{\textup{exit}, R^{h}_{{\bf c}, {\bf d}}}$ & the partition function with paths from $A$ to $B$  that exit diagonal sides of  $R^{h}_{{\bf c}, {\bf d}}$\\ [2mm]
$Z_{\mathcal{L}_{{\bf a}}^{s_1}, \mathcal{L}_{{\bf b}}^{s_2}}^{\textup{in}, k}$ &  the abbreviation for $Z_{\mathcal{L}_{{\bf a}}^{s_1}, \mathcal{L}_{{\bf b}}^{s_2}}^{\textup{in}, R^{k}_{{\bf a}, {\bf b}}}$ \\[2mm]
$Z_{\mathcal{L}_{{\bf a}}^{s_1}, \mathcal{L}_{{\bf b}}^{s_2}}^{\textup{exit}, k}$ &  the abbreviation for $Z_{\mathcal{L}_{{\bf a}}^{s_1}, \mathcal{L}_{{\bf b}}^{s_2}}^{\textup{exit}, R^{k}_{{\bf a}, {\bf b}}}$ \\[2mm]
$Z_{m,n}$ & the abbreviation for $Z_{(m,m),(n,n)}$ \\[2mm]
$\wt Z_{\mathbf{u}, \mathbf{v}}$ & the point-to-point partition function, including the weight at $\mathbf{u}$ \\[2mm]
$Z^\rho_{\mathbf{u}, \mathbf{v}}$ & the partition function for the ratio-stationary polymer \\[2mm]
\end{longtable}
}

\noindent\textbf{Acknowledgements.}  
We would like to express our gratitude to the two anonymous referees for their careful reading and valuable suggestions, and to Pranay Agarwal for pointing out that the statement of Theorem \ref{rwrw} in the previous version was inadequate for later uses (a stronger version of this theorem is now appropriately stated and proved). RB was partially supported by a MATRICS grant (MTR/2021/000093) from SERB, Govt.~of India, DAE project no.~RTI4001 via ICTS, and the Infosys Foundation via the Infosys-Chandrasekharan Virtual Centre for Random Geometry of TIFR.  TS was partially supported by National Science Foundation grants  DMS-1854619 and DMS-2152362, by Simons Foundation grant 1019133, and by the Wisconsin Alumni Research Foundation. Part of this work was conducted at the International Centre for Theoretical Sciences (ICTS), Bengaluru, India during the program "First-passage percolation and related models" in July 2022 (code:~ICTS/fpp-2022/7), and the authors thank ICTS for their hospitality.


\section{Main results} \label{main_results}

Let $\{Y_{\bf z}\}_{{\bf z}\in \mathbb{Z}^2}$ be a collection of positive weights on the integer lattice $\mathbb{Z}^2$. Fix two points ${\bf u}, {\bf v} \in \mathbb{Z}^2$ and denote the collection of up-right paths between them by $\mathbb{X}_{{\bf u}, {\bf v}}$. An element  $\gamma \in \mathbb{X}_{{\bf u}, {\bf v}}$  is viewed as a sequence of vertices $\gamma = (\gamma_0, \gamma_1, \dots, \gamma_{|{\bf u}- {\bf v}|_1})$ such that $\gamma_0 = {\bf u}$, $\gamma_{|{\bf u}- {\bf v}|_1} ={\bf v}$ and  $\gamma_{i+1}-\gamma_i \in \{{\bf e}_1, {\bf e}_2\}$.
The \textit{point-to-point polymer partition function} between ${\bf u}$ and  ${\bf v}$ is defined by 
\begin{equation}\label{par_fun}
Z_{{\bf u},{\bf v}} = \sum_{\gamma \in \mathbb{X}_{{\bf u}, {\bf v}}} \prod_{i=1}^{|{\bf u} - {\bf v}|_1} Y_{\gamma_i},
\end{equation}
provided that ${\bf u} \neq {\bf v}$ and $\mathbb{X}_{{\bf u}, {\bf v}}$ is non-empty. Otherwise, we set $Z_{{\bf u},{\bf v}} = 0$. Note the convention here that the weight $Y_{\bf u}$ at the beginning of the path does not enter into the definition of the partition function, since the product starts with $i=1$. 

The \textit{free energy} is defined to be 
$\log Z_{{\bf u}, {\bf v}}$ 
and takes the value $-\infty$ if $Z_{{\bf u}, {\bf v}} = 0$.
Provided that $Z_{{\bf u},{\bf v}} >0$, the \textit{quenched polymer measure} is a probability measure on the set of paths $\mathbb{X}_{{\bf u}, {\bf v}}$   defined by 
$$Q_{{\bf u}, {\bf v}}\{\gamma\} = \frac{1}{Z_{{\bf u}, {\bf v}}} \prod_{i=1}^{|{\bf u}-{\bf v}|_1} Y_{\gamma_i} \qquad \textup{ for $\gamma \in \mathbb{X}_{{\bf u}, {\bf v}}$}.$$

In general, the positive weights $\{Y_{\bf z}\}_{{\bf z}\in \mathbb{Z}^2}$ can be chosen as a collection of i.i.d.~positive random variables on some probability space $(\Omega, \mathbb{P})$. 
Under a mild moment assumption such as
$$\mathbb{E}\big[|\log Y_{\bf z}|^p\big] < \infty \quad  \textup{ for some $p>2$},$$
a law of large numbers type result called the \textit{shape theorem}  holds for the free energy   (Section 2.3 of \cite{Jan-Ras-20-aop}):  
there exists a concave, positively homogeneous and deterministic continuous function $\Lambda: \mathbb{R}^2_{\geq 0} \rightarrow \mathbb{R}$ that satisfies 
$$\lim_{n\rightarrow \infty} \sup_{{\bf z}\in \Z^2_{\geq 0} : |{\bf z}|_1 \geq n} \frac{|\log Z_{(0,0), {\bf z}} - \Lambda({\bf z})|}{|{\bf z}|_1} = 0 \qquad \textup{$\mathbb{P}$-almost surely}.$$

For general i.i.d.~weights, regularity properties of $\Lambda$ such as strict concavity or differentiability, expected to hold at least for continuous  weights,  are unknown.
There is a special case, first observed in \cite{poly2}, that if the i.i.d.\ weights  have the inverse-gamma distribution, then $\Lambda$ can be computed explicitly. 
The density function of the inverse-gamma distribution is  
\be\label{ig4}  f_\mu(x) = {\displaystyle {\frac {1}{\Gamma (\mu )}}x^{-\mu -1}e^{-{\frac {1}{x}}}}
\quad \text{for } x>0. \ee 
The shape parameter $\mu\in(0,\infty)$  plays the role of temperature in this polymer model. 
We derive several properties for $\Lambda$ in Section \ref{sec:reg} which will be used in our proofs later on. 
In addition, for this inverse-gamma polymer, many more explicit estimates can be established, hence it is often referred to as an exactly solvable model.

As is standard, the correlation coefficient  of two random variables $\zeta$ and $\eta$ is defined by 
\[  \textup{$\mathbb{C}$orr}(\zeta, \eta)=
\frac{\Cov(\zeta, \eta)}{\Var(\zeta)^{1/2}\,\Var(\eta)^{1/2}}
=\frac{\mathbb E[\zeta \eta] - \mathbb E\zeta  \cdot  \mathbb E\eta}
{\mathbb E[ \,|\zeta-\mathbb E\zeta|^2\,]^{1/2}\,\mathbb E[ \,|\eta-\mathbb E\eta|^2\,]^{1/2}}.
\]
Our main result establishes the time correlation exponents $1/3$ and $2/3$ for two free energies based on the separation of their endpoints. 

The bounds in the next two theorems are valid under the assumption that the weights $\{Y_{\bf z}\}$ have the i.i.d.\  inverse-gamma distribution \eqref{ig4} for some choice of the parameter $\mu\in(0,\infty)$.

\begin{theorem}\label{thm_r_large}
There exist positive constants $\newc\label{thm_r_large_c1}, \newc\label{thm_r_large_c2}, c_0, N_0$ such that, whenever $N\geq N_0$ and $N/2 \le r \leq N-c_0$,
we have 
$$ 1-\oldc{thm_r_large_c1}\Big(\frac{N-r}{N}\Big)^{2/3} \leq \textup{$\mathbb{C}$orr}\big(\log Z_{(0,0), (r,r)}, \log Z_{(0,0), (N,N)}\big) \leq 1-\oldc{thm_r_large_c2}\Big(\frac{N-r}{N}\Big)^{2/3}.$$ 
\end{theorem}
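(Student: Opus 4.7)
Write $X = \log Z_{(0,0),(r,r)}$, $Y = \log Z_{(0,0),(N,N)}$ and $D = Y - X$, with $v_X, v_Y, v_D$ denoting the corresponding variances. My starting point is the algebraic identity
\begin{equation*}
1 - \Corr(X, Y) \;=\; \frac{v_D - \bigl(\sqrt{v_X} - \sqrt{v_Y}\bigr)^2}{2\sqrt{v_X v_Y}}.
\end{equation*}
By the moderate-deviation estimates of Section~\ref{mod_est}, $v_X \asymp r^{2/3}$ and $v_Y \asymp N^{2/3}$, so $\sqrt{v_X v_Y} \asymp N^{2/3}$ uniformly in $r \in [N/2, N-c_0]$. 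The theorem then reduces to proving $v_D \asymp (N-r)^{2/3}$ together with the correction estimate $(\sqrt{v_X} - \sqrt{v_Y})^2 \le C(N-r)^{2/3}$. The latter follows automatically from the upper bound on $v_D$: from $v_Y - v_X = 2\,\Cov(X, D) + v_D$ and Cauchy--Schwarz one gets $|v_Y - v_X| \le 2\sqrt{v_X v_D} + v_D \le CN^{1/3}(N-r)^{1/3}$, and dividing by $\sqrt{v_X} + \sqrt{v_Y} \asymp N^{1/3}$ gives $|\sqrt{v_X} - \sqrt{v_Y}| \le C(N-r)^{1/3}$.

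For the upper bound $v_D \le C(N-r)^{2/3}$, I would use the KPZ transverse-fluctuation picture. The quenched polymer measure $Q_{(0,0),(N,N)}$ concentrates its crossing of the anti-diagonal $A_r := \{z \in \Z^2 : z_1 + z_2 = 2r\} \cap [0,N]^2$ in a window of diameter $O((N-r)^{2/3})$ around $(r,r)$; this is a standard consequence of tail bounds on $\log Z_{(0,0),z} + \log Z_{z,(N,N)} - \log Z_{(0,0),(N,N)}$, which combine the moderate-deviation estimates with the strict concavity of $\Lambda$ from Section~\ref{sec:reg}. Within this window, the random walk comparison of Section~\ref{sec_rw} and the local-fluctuation estimates of Section~\ref{loc_fluc} show that $\log Z_{(0,0),z} - \log Z_{(0,0),(r,r)}$ and $\log Z_{z,(N,N)} - \log Z_{(r,r),(N,N)}$ each have $L^2$-size $O((N-r)^{1/3})$ (diffusive scaling on a window of size $(N-r)^{2/3}$). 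Assembling these ingredients yields a decomposition
\begin{equation*}
Y - X \;=\; \log Z_{(r,r),(N,N)} + E, \qquad \mathbb{E}[E^2] \le C(N-r)^{2/3},
\end{equation*}
which together with $\Var(\log Z_{(r,r),(N,N)}) \asymp (N-r)^{2/3}$ gives $v_D \le C'(N-r)^{2/3}$.

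For the lower bound $v_D \ge c(N-r)^{2/3}$, an $L^2$-comparison with $\log Z_{(r,r),(N,N)}$ is too crude, since the error bound above has the same order as the target variance. I would instead follow the FKG-based resampling strategy of \cite{timecorriid}, adapted to the polymer. Conditioning on $\mathcal{F} = \sigma(Y_z : z \in [0,r]^2)$ freezes $X$, so $v_D \ge \mathbb{E}[\Var(Y \mid \mathcal{F})]$. A lower bound of order $(N-r)^{2/3}$ on the conditional variance is then plausible because, given $\mathcal{F}$, the free energy $Y$ still depends on all weights in $[0,N]^2 \setminus [0,r]^2$, which contribute fluctuations of the appropriate order. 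The FKG inequality, together with the stationary-polymer comparisons of Section~\ref{stat_poly} and the non-random fluctuation bounds of Section~\ref{non_rand}, should provide the necessary positive-correlation structure between $Y$ and the weights in the cone $[r,N]^2$ to extract this lower bound rigorously.

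The principal obstacle is the $L^2$ control of $E$ in the upper-bound step: converting the heuristic $Y - X \approx \log Z_{(r,r),(N,N)}$ into an estimate on $\mathbb{E}[E^2]$ requires simultaneously controlling (i) the probability that the polymer exits the $O((N-r)^{2/3})$-window around $(r,r)$ on $A_r$, (ii) the transverse variation of the free-energy profile within that window, and (iii) the moderate-deviation tails needed to integrate (i)--(ii) into an $L^2$ bound. Each ingredient is supplied by Sections~\ref{mod_est}, \ref{sec_rw}, and \ref{loc_fluc}, but knitting them into one tight $L^2$ estimate while tracking the dependence on $r$ uniformly is where the real work lies.
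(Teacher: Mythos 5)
Your algebraic identity $1-\Corr(X,Y)=\bigl(v_D-(\sqrt{v_X}-\sqrt{v_Y})^2\bigr)/\bigl(2\sqrt{v_Xv_Y}\bigr)$ is correct, and it does give the easy direction (the theorem's lower bound on the correlation) from $v_D\le C(N-r)^{2/3}$ alone, since the correction term is nonnegative. The genuine gap is in the other direction. To get $1-\Corr\ge c\,((N-r)/N)^{2/3}$ you need a \emph{lower} bound on the numerator $v_D-(\sqrt{v_X}-\sqrt{v_Y})^2$, and this does not follow from ``$v_D\ge c(N-r)^{2/3}$ together with $(\sqrt{v_X}-\sqrt{v_Y})^2\le C(N-r)^{2/3}$'': the two constants come from unrelated arguments, and in the regime $N-r\asymp N$ (which is included, since $r\ge N/2$) the correction term is genuinely of the same order as $v_D$. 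With only the two-sided bounds of Theorem \ref{var}, whose upper and lower constants differ, the difference could be arbitrarily small or even of the wrong sign relative to the needed $c(N-r)^{2/3}$. The problem is created by freezing $\lambda=1$ in $Y-\lambda X$. The cure, and the route the paper takes, is to optimize over $\lambda$: $\inf_{\lambda}\Var(Y-\lambda X)=(1-\Corr^2)\,v_Y\le 2v_Y(1-\Corr)$, and since $X$ is measurable with respect to the $\sigma$-algebra $\mathcal F$ of the weights below $\mathcal{L}_r$ (or your square $[\![0,(r,r)]\!]$), one has $\Var(Y-\lambda X)\ge \mathbb{E}[\Var(Y\mid\mathcal F)]$ for \emph{every} $\lambda$; a conditional-variance lower bound of order $(N-r)^{2/3}$ then finishes the proof with no correction term to subtract.

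Two further points. First, your plan for that conditional-variance lower bound leans on ``FKG-based resampling'' and stationary-polymer/nonrandom-fluctuation inputs, but no resampling is needed here (that machinery is what the paper uses for Theorem \ref{thm_r_small}). The concrete mechanism is: $\log Z_{r,N}$ is independent of $\mathcal F$ and, by superadditivity, $\log Z_{0,N}-\log Z_{0,r}\ge \log Z_{r,N}$; Proposition \ref{up_lb}, with the centering replaced by the mean via superadditivity, produces an event of uniformly positive probability on which $\log Z_{r,N}$ exceeds $\mathbb{E}[\log Z_{0,N}-\log Z_{0,r}]$ by $s_0(N-r)^{1/3}$ (Theorem \ref{nest} controls the discrepancy between $\mathbb{E}[\log Z_{r,N}]$ and $\mathbb{E}[\log Z_{0,N}-\log Z_{0,r}]$), while Chebyshev together with the already-proved bound $\Var(\log Z_{0,N}-\log Z_{0,r})\le C(N-r)^{2/3}$ gives an $\mathcal F$-measurable event of positive probability on which the conditional mean of $\log Z_{0,N}-\log Z_{0,r}$ is within $\tfrac{s_0}{10}(N-r)^{1/3}$ of the unconditional one; intersecting the two events yields $\Var(Y\mid\mathcal F)\ge c(N-r)^{2/3}$ with uniformly positive probability. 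Second, for the upper bound $v_D\le C(N-r)^{2/3}$ your decomposition $Y-X=\log Z_{r,N}+E$ is exactly the paper's, but the $L^2$ control of $E$ that you flag as ``the real work'' is precisely Theorem \ref{nest} (built from Propositions \ref{fluc_bound_u}, \ref{nest_max}, \ref{nest_max1}); note also that superadditivity gives $E\ge 0$, so only the one-sided tail at scale $(N-r)^{1/3}$ is needed, and the stated stretched-exponential tail integrates to the $L^2$ bound. As written, your proposal assembles the right ingredients for this half without executing them, and the reduction for the other half would fail as stated.
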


\begin{theorem}\label{thm_r_small}  
There exist positive constants $\newc\label{thm_r_small_c1}, \newc\label{thm_r_small_c2}, c_0,  N_0$ such that, whenever $N\geq N_0$ and $c_0 \leq r \leq N/2$,  we have 
$$ \oldc{thm_r_small_c1}\Big(\frac{r}{N}\Big)^{1/3} \leq \textup{$\mathbb{C}$orr}\big(\log Z_{(0,0), (r,r)}, \log Z_{(0,0), (N,N)}\big) \leq \oldc{thm_r_small_c2}\Big(\frac{r}{N}\Big)^{1/3}.$$ 
\end{theorem}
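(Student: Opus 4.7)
Write $X = \log Z_{(0,0),(r,r)}$ and $Y = \log Z_{(0,0),(N,N)}$, and let $\mathcal F_{\text{in}}$ and $\mathcal F_{\text{out}}$ denote the $\sigma$-algebras generated by the weights $\{Y_{\bf z}\}$ inside and outside the box $[0,r]^2$, respectively. The moderate deviation estimates of Section \ref{mod_est} give $\Var X \asymp r^{2/3}$ and $\Var Y \asymp N^{2/3}$, so the denominator in the correlation is of order $r^{1/3} N^{1/3}$ and the theorem reduces to the two-sided bound $\Cov(X,Y) \asymp r^{2/3}$. Both directions start from the path decomposition across the anti-diagonal $\mathcal{A}_{2r} = \{(k, 2r-k) : 0 \leq k \leq 2r\}$,
\[
Z_{(0,0),(N,N)} \;=\; \sum_{{\bf z}\in\mathcal{A}_{2r}} Z_{(0,0),{\bf z}}\, Z_{{\bf z},(N,N)},
\]
together with the observation that $X$ is $\mathcal F_{\text{in}}$-measurable while, for every ${\bf z}\in\mathcal A_{2r}$, the factor $Z_{{\bf z},(N,N)}$ is $\mathcal F_{\text{out}}$-measurable.

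For the upper bound I would couple $Y$ with an independent copy $\tilde Y$ obtained by resampling all weights in $[0,r]^2$. Then $\tilde Y \stackrel{d}{=} Y$, $\tilde Y$ is independent of $X$, and
\[
|\Cov(X,Y)| \;=\; |\Cov(X,\,Y - \tilde Y)| \;\leq\; \sqrt{\Var X}\,\sqrt{\Var(Y - \tilde Y)},
\]
so it suffices to prove $\Var(Y - \tilde Y) \leq C r^{2/3}$. By transversal concentration, the log-sum-exp defining $Y$ is effectively supported on ${\bf z}\in\mathcal A_{2r}$ within an $O(r^{2/3})$-neighbourhood of $(r,r)$; on this window, the effect of resampling the inside weights is captured by the increments of the profile $k\mapsto \log Z_{(0,0),(k,2r-k)}$. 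The random walk sandwich of Section \ref{sec_rw}, combined with the local fluctuation estimates of Section \ref{loc_fluc}, gives these increments a diffusive variance of order $r^{2/3}$, yielding the required bound.

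For the lower bound I would apply FKG. Both $X$ and $Y$ are coordinatewise increasing in the weights $\{Y_{\bf z}\}$, so $\Cov(X,Y) \geq 0$ trivially; to make this quantitative I would introduce the barrier event $\barevent$ of Section 6.1, constructed using stationary-polymer boundary weights in a thin strip near the origin, so that $\mathbb{P}(\barevent) \geq c' > 0$ uniformly and $X \geq \mathbb E X + c r^{1/3}$ on $\barevent$. The quantitative ingredient powering this construction is the non-random fluctuation lower bound $\Lambda(N,N) - \mathbb E\log Z_{(0,0),(N,N)} \geq c N^{1/3}$ of Section \ref{non_rand}: it says the expected free energy sits a full standard deviation below the deterministic shape, so stationary boundary weights can boost it by the right order. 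A conditional FKG argument on $\barevent$, applied to the two increasing functions $X$ and $\mathbb E[Y \mid \mathcal F_{\text{in}}]$ of the inside weights, together with control of atypical-outside contributions via the moderate deviation upper tail of Section \ref{mod_est}, then delivers $\Cov(X,Y) \geq c r^{2/3}$.

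The step I expect to be the main obstacle is the variance estimate $\Var(Y - \tilde Y) \leq C r^{2/3}$ underlying the upper bound. In zero-temperature LPP the analogous input is coalescence of geodesics on scale $r^{2/3}$; here geodesics do not exist, and a substitute must be assembled out of the random walk sandwich of Section \ref{sec_rw} and the local diffusivity of the free energy profile from Section \ref{loc_fluc}, with no integrable-probability input beyond the one-point moderate deviation bounds. Executing this substitute cleanly is the novel technical contribution flagged in the introduction; a secondary care-point is arranging the barrier event so that the quantitative FKG output is not spoiled when one passes from the conditional to the unconditional expectation.
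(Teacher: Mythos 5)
Your upper bound is essentially sound and, after unwrapping, is the paper's argument in different packaging: the paper simply writes $\Cov(X,Y)=\Cov(X,\,Y-\log Z_{r,N})$ (using independence of $X$ and $\log Z_{r,N}$), applies Cauchy--Schwarz, and bounds $\Var(Y-\log Z_{r,N})\le Cr^{2/3}$ by comparing with $X$ through the superadditivity defect $D=Y-X-\log Z_{r,N}\ge 0$, which is $O(r^{1/3})$ in $L^2$ (Theorem \ref{nest} with the roles of $r$ and $N-r$ swapped). Your resampled copy $\tilde Y$ reduces to exactly the same input, since $Y-\tilde Y$ differs from $\log Z_{0,r}-\log\tilde Z_{0,r}$ by two copies of $D$; what you leave as ``diffusive increments via the random walk sandwich'' is precisely the content of Section \ref{loc_fluc} feeding into Theorem \ref{nest}, so this half is fine modulo writing out that estimate.

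The lower bound, however, has a genuine gap, both in the description of the barrier event and in the quantitative mechanism. The paper's $\barevent$ involves no stationary boundary weights (none exist in the i.i.d.\ model): it is an event about the i.i.d.\ bulk weights in the region $R^{\phi_2 r^{2/3}}_{0,r}\setminus R^{\theta r^{2/3}}_{0,r}$ flanking a thin tube, requiring block free energies there to be atypically \emph{low}; it is measurable with respect to the \emph{outside} $\sigma$-algebra $\mathcal F_\theta$, has only a uniformly positive (doubly exponentially small) probability, and its purpose is to localize the polymer to the tube --- not to force $X\ge\mathbb E X+cr^{1/3}$. (The nonrandom fluctuation bound of Section \ref{non_rand} enters through the left-tail lower bound powering Proposition \ref{itl_bound}, i.e.\ to make ``bad'' blocks likely enough, the opposite direction from the boost you describe.) More seriously, your quantitative step does not close: having $X\ge \mathbb E X+cr^{1/3}$ on an event of probability $c'$, plus FKG, gives nothing beyond $\Cov(X,Y)\ge 0$ unless $\mathbb E[Y\mid\mathcal F_{\mathrm{in}}]$ also exceeds its mean by order $r^{1/3}$ there; but $\mathbb E[Y\mid\mathcal F_{\mathrm{in}}]=X+\mathbb E\log Z_{r,N}+\mathbb E[D\mid\mathcal F_{\mathrm{in}}]$ with $\mathbb E[D^2]\le C r^{2/3}$ for an uncontrolled constant $C$, so the putative gain $cr^{1/3}$ must beat an error of the same order and there is no way to arrange $c>C$ at this level of generality. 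This constant-competition is exactly why the paper introduces the small parameter $\theta$: it proves a \emph{conditional} variance lower bound $\Var\bigl(\log Z^{\textup{in},3\theta r^{2/3}}_{0,r}\mid\mathcal F_\theta\bigr)\ge C\theta^{-1/2}r^{2/3}$ on a positive-probability sub-event of $\barevent$ (Theorem \ref{con_var}, via a Doob-martingale resampling of blocks inside the tube), shows both $X$ and $Y-\log Z_{r,N}$ are within $O(r^{2/3})$ of that constrained free energy in conditional $L^2$ with $\theta$-independent constants (Propositions \ref{2term_bound}, \ref{4term_bound}), and then lets $\theta\to$ small so the main term dominates; two FKG applications conditional on $\mathcal F_\theta$ transfer this to the unconditional covariance. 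Finally, note that ``conditional FKG on $\barevent$'' as you propose is not justified if $\barevent$ depends on the inside weights, since conditioning on such an event destroys the product structure; the paper avoids this by conditioning only on the $\sigma$-algebra of outside weights and using an event measurable with respect to it.
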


{
We record the following two corollaries, which state the equivalent results but in terms of the covariance of the free energies.}

{
\begin{corollary}
There exist positive constants $\newc\label{cor_r_large_c1}, \newc\label{cor_r_large_c2}, c_0, N_0$ such that, whenever $N\geq N_0$ and $N/2 \le r \leq N-c_0$,
we have 
\begin{align*}\oldc{cor_r_large_c1}({N-r})^{2/3} \leq &\sqrt{\Var\big(\log Z_{(0,0), (r,r)}\big)} \sqrt{\Var\big( \log Z_{(0,0), (N,N)}\big)} \\
&\qquad \qquad \quad - \textup{$\mathbb{C}$ov}\big(\log Z_{(0,0), (r,r)}, \log Z_{(0,0), (N,N)}\big) \leq \oldc{cor_r_large_c2}({N-r})^{2/3}.
\end{align*}
\end{corollary}
\begin{corollary}
There exist positive constants $\newc\label{cor_r_small_c1}, \newc\label{cor_r_small_c2}, c_0,  N_0$ such that, whenever $N\geq N_0$ and $c_0 \leq r \leq N/2$,  we have 
$$ \oldc{cor_r_small_c1}r^{2/3} \leq \textup{$\mathbb{C}$ov}\big(\log Z_{(0,0), (r,r)}, \log Z_{(0,0), (N,N)}\big) \leq \oldc{cor_r_small_c2}r^{2/3}.$$ 
\end{corollary}
}

\section{Preliminaries on the polymer model} \label{s:aux1}

\subsection{Notation} \label{notation}

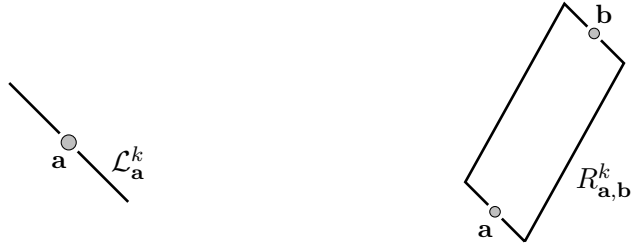
\begin{figure}[t]
\captionsetup{width=0.9\textwidth}
\begin{center}
\begin{tikzpicture}[x=0.75pt,y=0.75pt,yscale=1,xscale=1]
 
\draw [line width=1] (0,0+20)-- (-60,80);
\draw [line width=1] (200,0)--(200-30,0+30)--(200-30+50,0+30+90)--(200+50,0+90)--(200,0) ;

\fill[color=white] (200-15,15)circle(1.3mm);
\draw[ fill=lightgray](200-15,15)circle(0.7mm);

\node at (200-15-5,5) {\small ${\bf a}$};

\fill[color=white] (200-15+50,15+90)circle(1.3mm);
\draw[ fill=lightgray](200-15+50,15+90)circle(0.7mm);

\node at (200+40,25+90) {\small ${\bf b}$};

\node at (200+40,30) { ${R}_{{\bf a}, {\bf b}}^k$};

\fill[color=white] (-30,50)circle(1.7mm);
\draw[ fill=lightgray](-30,50)circle(1mm);

\node at (-35,40) {\small ${\bf a}$};

\node at (0,40) { $\mathcal{L}_{\bf a}^k$};

\end{tikzpicture}
\end{center}
\caption{\small Illustrations of the segment $\mathcal{L}_{\bf a}^k$ and the parallelogram $R_{{\bf a}, {\bf b} }^k$. All anti-diagonal segments have $\ell^\infty$-length $2k$.}
\label{LandR}
\end{figure}

Generic positive constants are denoted by $C, C'$ in the   proofs. They may change from line to line.  Other important positive constants   in the results are numbered in the form $C_{\textup{number}}$. 

As shown in Figure \ref{LandR}, for any  point ${\bf a}\in \mathbb{Z}^2$,  $\mathcal{L}_{{\bf a}}=\{{\bf a}+(j,-j): j\in\Z\}$ denotes the anti-diagonal line with slope $-1$ going through the point ${\bf a}$, and for any positive constant $k$, set
$$\mathcal{L}^k_{{\bf a}} = \{{\bf x}\in \mathcal{L}_{{\bf a}} : |{\bf x}-{\bf a}|_\infty \leq k  \}.$$
For ${\bf a}, {\bf b}\in \mathbb{Z}^{2}$ and  $k\in \mathbb{R}_{\geq 0}$,  
$R_{{\bf a}, {\bf b}}^k$ denotes the parallelogram spanned by the four corners ${\bf a} \pm (-k,k)$ and ${\bf b} \pm (-k,k)$.

For a collection of directed paths $\mathfrak{A}$, let   $Z(\mathfrak{A})$   be the partition function obtained by summing over all the paths in $\mathfrak{A}$
$$Z(\mathfrak{A})= \sum_{\gamma \in \mathfrak{A}}\, \prod_{\mathbf{z}\in \gamma} Y_{\mathbf{z}}.$$
For $A, B \subset \mathbb{R}^2$, let 
$Z_{A, B}$ denote the partition function obtained by summing over all directed paths starting from integer points 
$$A^\circ = \{{\bf a}\in \mathbb{Z}^2 : {\bf a}+[0,1)^2 \cap A \neq \emptyset\}$$ and ending in 
$$B^\circ = \{{\bf b}\in \mathbb{Z}^2 : {\bf b}+[0,1)^2 \cap B \neq \emptyset\}.$$ 
Furthermore, set 
$$Z^\textup{max}_{A, B} = \max_{{\bf a}\in A^\circ, {\bf b}\in B^\circ} Z_{{\bf a}, {\bf b}}.$$

For $A, B \subset \mathbb{R}^2$, ${\bf c},{\bf d} \in \mathbb{Z}^2$ and $h> 0$ we define two specific partition functions: 
\begin{align*}
   Z_{A, B}^{\textup{in}, R^{h}_{{\bf c}, {\bf d}}} &= \text{sum over directed paths from $A$ to $B$  contained inside the parallelogram  $R^{h}_{{\bf c}, {\bf d}}$, } \\
   Z_{A, B}^{\textup{exit}, R^{h}_{{\bf c}, {\bf d}}}&= \text{sum over directed paths from $A$ to $B$  that exit at least one of } \\
   &\quad\; \text{the sides of  $R^{h}_{{\bf c}, {\bf d}}$ parallel to ${\bf d}-{\bf c}$. }
\end{align*}
We simplify the notation when the starting and end places of the free energy match with the parallelogram, for example
$$Z_{\mathcal{L}_{{\bf a}}^{s_1}, \mathcal{L}_{{\bf b}}^{s_2}}^{\textup{in}, R^{k}_{{\bf a}, {\bf b}}} = Z_{\mathcal{L}_{{\bf a}}^{s_1}, \mathcal{L}_{{\bf b}}^{s_2}}^{\textup{in}, k} \qquad \textup{ and } \qquad Z_{\mathcal{L}_{{\bf a}}^{s_1}, \mathcal{L}_{{\bf b}}^{s_2}}^{\textup{exit}, R^{k}_{{\bf a}, {\bf b}}} = Z_{\mathcal{L}_{{\bf a}}^{s_1}, \mathcal{L}_{{\bf b}}^{s_2}}^{\textup{exit}, k} .$$

Integer points on the diagonal are abbreviated as $a=(a,a)\in\Z^2$. Common occurrences of this include  $Z_{r,N}=Z_{(r,r), (N,N)}$,  $Z_{{\bf p}, N}=Z_{{\bf p}, (N,N)}$,  $\mathcal{L}_a^k=\mathcal{L}_{(a,a)}^k$ and $R_{a, b}^k = R^k(a, b)=R_{(a,a), (b,b)}^k$.

The standard gamma function is $\Gamma(s)=\int_0^\infty x^{s-1} e^{-x}\,dx$ and the polygamma functions are $\Psi_k(s) = \frac{d^{k+1}}{ds^{k+1}}\log\Gamma(s)$ for $k=0,1,2,\dotsc$.  

Finally, we point out  two conventions. First, we drop the integer floor function   to simplify  notation. For example, if we divide the line segment from $(0,0)$ to $(N, N)$ in $5$  equal pieces, we denote the free energy of the first segment by  $\log Z_{0, N/5}$ even if  $N/5$ is not an integer. The second one is about the dependence of constants on parameters. A  statement of the type 
``there exists a positive $\theta_0$ such that for each $0< \theta < \theta_0$, there exist positive constants $C_0, N_0, t_0$ such that ..." means that $C_0, N_0$ and $t_0$ can (and often necessarily do) depend on $\theta$. 

\subsection{Regularity of the shape function and the characteristic direction}\label{sec:reg}

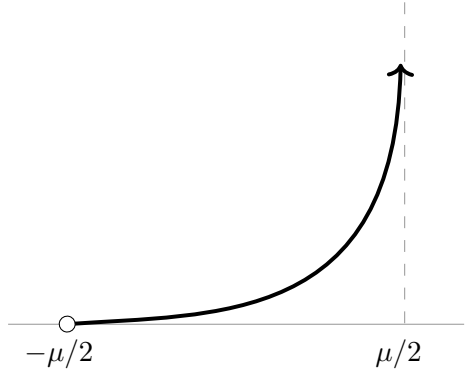
\begin{figure}[t]
\captionsetup{width=0.8\textwidth}
\begin{center}
\begin{tikzpicture}[x=0.75pt,y=0.75pt,yscale=-1,xscale=1]

\draw [color={rgb, 255:red, 155; green, 155; blue, 155 }  ,draw opacity=1 ]   (430,207) -- (200,207) ;
\draw [line width=1.5, ->]    (229,207) .. controls (305,201.5) and (395,208.5) .. (398,75.5) ;
\draw  [fill={rgb, 255:red, 255; green, 255; blue, 255 }  ,fill opacity=1 ] (225.94,205.86) .. controls (226.53,203.73) and (228.73,202.47) .. (230.86,203.06) .. controls (232.99,203.64) and (234.24,205.84) .. (233.66,207.97) .. controls (233.08,210.1) and (230.87,211.36) .. (228.74,210.77) .. controls (226.61,210.19) and (225.36,207.99) .. (225.94,205.86) -- cycle ;
\draw [color={rgb, 255:red, 155; green, 155; blue, 155 }  ,draw opacity=1 ] [dash pattern={on 4.5pt off 4.5pt}]  (400,206.5) -- (400,41.5) ;

\draw (207,213.4) node [anchor=north west][inner sep=0.75pt]    {$-\mu /2$};
\draw (384,213.4) node [anchor=north west][inner sep=0.75pt]    {$\mu /2$};
\end{tikzpicture}
\end{center}
\caption{\small The graph of $m_{\mu/2}(z)$ for $z\in (-\mu/2, \mu/2)$. The function $m$ is smooth and has a non-vanishing derivative on $(-\mu/2, \mu/2)$. The image of $m_{\mu/2}(z)$ is $(0, \infty)$ which corresponds the slopes of the points inside $]{\bf e}_1, {\bf e}_2[$.}
\label{m(z)}
\end{figure}

Henceforth fix the shape parameter $\mu\in(0,\infty)$ and assume that the weights $\{Y_{\bf z}\}$ have the i.i.d.\ inverse-gamma distribution \eqref{ig4}. 
Recall from Section \ref{notation} that $\Psi_1$ is the trigamma function, define the \textit{characteristic direction} as a function of $\rho \in (0, \mu)$
\begin{equation}\label{char_dir}
{\boldsymbol\xi}[\rho] = \Big(\frac{\Psi_1(\rho)}{\Psi_1(\rho) + \Psi_1(\mu -\rho)}\,,\, \frac{\Psi_1(\mu -\rho)}{\Psi_1(\rho) + \Psi_1(\mu -\rho)}\Big). 
\end{equation}
  The term characteristic direction becomes  meaningful  when we define the stationary inverse-gamma polymer in Section \ref{stat_poly}.
$\Psi_1$ is strictly decreasing and $C^\infty$ on $\mathbb{R}_{>0}$.  Thus  ${\boldsymbol\xi}[\rho]$ is a continuous bijection between $\rho \in (0, \mu)$ and vectors  (or directions) on the open line segment between ${\bf e}_1$ and  ${\bf e}_2$.   
Denote the slope of the vector ${\boldsymbol\xi}[\rho +z]$ by 
$$m_\rho( z) = \frac{{\boldsymbol\xi}[\rho + z] \cdot {\bf e}_2}{{\boldsymbol\xi}[\rho + z] \cdot {\bf e}_1} =\frac{\Psi_1(\mu - \rho-z)}{\Psi_1(\rho+z)}. $$
It  is $C^\infty$  with non-vanishing derivative on the interval $ z \in (-\rho, \mu-\rho)$. 
Its inverse function  $z_\rho(m)$  is $C^\infty$ and has a non-vanishing positive derivative for $m\in (0,\infty)$. 
The graph of $m_{\mu/2}(z)$ is illustrated in Figure \ref{m(z)}.
Taylor expansion for $m_\rho (z)$ around $z= 0$ gives this estimate:  
\begin{proposition}[Lemma 3.1 of \cite{Bus-Sep-22}]\label{slope1}
There exist positive constants $\newc\label{slope1_c1}, \newc\label{slope1_c2}, \epsilon$ such that for each $z\in [-\epsilon, \epsilon]$ and each $\rho \in [\epsilon, \mu-\epsilon]$, we have
$$
\big|m_\rho(z) - (m_\rho(0) + \oldc{slope1_c1} z)\big| \leq \oldc{slope1_c2} z^2.
$$
\end{proposition}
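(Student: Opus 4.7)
The plan is to recognize this as a straightforward second-order Taylor expansion of the smooth function $z \mapsto m_\rho(z)$ at $z=0$, identifying $\oldc{slope1_c1}$ with the first derivative $m_\rho'(0)$ and $\oldc{slope1_c2}$ with half of a uniform upper bound on $|m_\rho''|$ over the compact parameter set.

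First I would observe that $\Psi_1$ is $C^\infty$ and strictly positive on $(0,\infty)$, so the map $(\rho,z) \mapsto m_\rho(z) = \Psi_1(\mu-\rho-z)/\Psi_1(\rho+z)$ is jointly $C^\infty$ on the open set $\{(\rho,z) : 0 < \rho+z < \mu\}$. Choosing $\epsilon \in (0,\mu/4)$ small enough that $\rho \in [\epsilon,\mu-\epsilon]$ and $|z| \le \epsilon$ force both $\rho+z$ and $\mu-\rho-z$ into the compact subinterval $[\epsilon/2,\mu-\epsilon/2] \subset (0,\infty)$, the polygamma functions $\Psi_1,\Psi_2,\Psi_3$ are bounded in absolute value there and $\Psi_1$ is bounded away from zero. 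The quotient rule then produces a uniform bound on $|m_\rho''(z)|$ over this compact set.

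Next I would apply Taylor's theorem with Lagrange remainder:
\begin{equation*}
m_\rho(z) = m_\rho(0) + m_\rho'(0)\,z + \tfrac12\, m_\rho''(\xi)\,z^2
\end{equation*}
for some $\xi$ between $0$ and $z$. Setting $\oldc{slope1_c1} := m_\rho'(0)$ and $\oldc{slope1_c2}$ equal to half the uniform $L^\infty$-bound on $m_\rho''$ yields the asserted inequality. To verify that $\oldc{slope1_c1}$ is strictly positive, a direct differentiation gives
\begin{equation*}
m_\rho'(0) = -\,\frac{\Psi_2(\mu-\rho)\,\Psi_1(\rho) + \Psi_1(\mu-\rho)\,\Psi_2(\rho)}{\Psi_1(\rho)^2}.
\end{equation*}
The series representations $\Psi_1(s) = \sum_{n\geq 0}(s+n)^{-2}$ and $\Psi_2(s) = -2\sum_{n\geq 0}(s+n)^{-3}$ show that $\Psi_1 > 0 > \Psi_2$ on $(0,\infty)$, so the numerator above is strictly negative and $m_\rho'(0) > 0$.

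There is no real obstacle; the whole statement is a compactness-plus-Taylor exercise. The only mild subtlety worth flagging is the role of $\rho$: since $m_\rho'(0)$ genuinely depends on $\rho$, one should read $\oldc{slope1_c1}$ as being the pointwise value $m_\rho'(0)$ (continuous and uniformly bounded above and below by positive constants as $\rho$ ranges over the compact interval $[\epsilon,\mu-\epsilon]$), rather than a single numerical constant independent of $\rho$. With that reading, both $\oldc{slope1_c1}$ and $\oldc{slope1_c2}$ are controlled uniformly in $\rho$, and the result follows. Since the paper cites this as Lemma~3.1 of \cite{Bus-Sep-22}, I expect essentially this argument to appear there.
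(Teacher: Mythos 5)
Your proof is correct and takes essentially the same approach as the paper: the paper gives no separate argument, stating only that the estimate follows from Taylor expansion of $m_\rho(z)$ around $z=0$ (quoting Lemma 3.1 of the cited reference), which is exactly your compactness-plus-Taylor argument with a uniform bound on $m_\rho''$ over $\rho\in[\epsilon,\mu-\epsilon]$, $|z|\le\epsilon$. Your reading of the linear coefficient as $m_\rho'(0)$, bounded above and below by positive constants uniformly in $\rho$, is also consistent with how the estimate is invoked later (e.g.\ in the proof of Theorem \ref{rwrw}).
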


The next few results  specialize to the diagonal direction $\rho = \mu/2$. We drop the subscript and write $m = m_{\mu/2}$ and $z = z_{\mu/2}$.
Taylor expansion of $z(m)$ around $m = 1$ gives this estimate: 
\begin{proposition}\label{slope_z}
There exist positive constants $\newc\label{slope_z_c1}, \newc\label{slope_z_c2}, \epsilon$ such that for each $m\in [1-\epsilon, 1+\epsilon]$, we have
$$
\big|z(m) - \oldc{slope_z_c1}(m-1)\big| \leq\oldc{slope_z_c2} (m-1)^2.
$$
\end{proposition}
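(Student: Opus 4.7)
The plan is to invert the estimate of Proposition \ref{slope1} in the direction $\rho=\mu/2$. First, note that $m(0)=m_{\mu/2}(0)=\Psi_1(\mu/2)/\Psi_1(\mu/2)=1$, so Proposition \ref{slope1} specialized to $\rho=\mu/2$ reads
\[
\bigl|\,m(z)-1-\oldc{slope1_c1}\,z\,\bigr|\le \oldc{slope1_c2}\,z^2 \qquad \text{for } z\in[-\epsilon,\epsilon].
\]
Since $z(m)$ is $C^\infty$ with non-vanishing derivative on $(0,\infty)$ (as stated after the definition of $m_\rho$), $z$ is continuous at $m=1$ and $z(1)=0$. Therefore given the target interval $m\in[1-\epsilon',1+\epsilon']$, we can choose $\epsilon'$ small enough so that $z(m)\in[-\epsilon,\epsilon]$ and apply the displayed bound with $z=z(m)$.

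The first step is a linear a priori bound on $z(m)$. Writing $z=z(m)$ and using the displayed inequality,
\[
|m-1|\ge \oldc{slope1_c1}|z|-\oldc{slope1_c2}z^2\ge \tfrac{\oldc{slope1_c1}}{2}|z|,
\]
provided $|z|\le \oldc{slope1_c1}/(2\oldc{slope1_c2})$. Shrinking $\epsilon'$ further if needed (using continuity of $z$ at $1$), this yields $|z(m)|\le (2/\oldc{slope1_c1})|m-1|$ throughout the target interval.

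The second step is to feed this bound back into the displayed inequality. Rearranging gives
\[
\Bigl|\,z(m)-\tfrac{1}{\oldc{slope1_c1}}(m-1)\,\Bigr|
=\frac{1}{\oldc{slope1_c1}}\bigl|\,m(z(m))-1-\oldc{slope1_c1}\,z(m)\,\bigr|
\le \frac{\oldc{slope1_c2}}{\oldc{slope1_c1}}\,z(m)^2
\le \frac{4\,\oldc{slope1_c2}}{\oldc{slope1_c1}^{\,3}}\,(m-1)^2,
\]
which is the desired estimate with $\oldc{slope_z_c1}=1/\oldc{slope1_c1}$ and $\oldc{slope_z_c2}=4\oldc{slope1_c2}/\oldc{slope1_c1}^{\,3}$. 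There is essentially no obstacle here: the only delicate point is choosing the interval $\epsilon$ small enough to guarantee the a priori linear bound, and this is immediate from the continuity of $z$ and the non-vanishing of $m'(0)=\oldc{slope1_c1}>0$. (Alternatively, one could invoke the inverse function theorem and Taylor's formula directly, but the quantitative route above fits the style of Proposition \ref{slope1} and keeps the constants explicit.)
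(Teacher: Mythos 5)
Your proof is correct. The paper gives no written proof of Proposition~\ref{slope_z}; the intended argument, indicated by the preceding sentence, is to Taylor-expand $z(m)$ directly around $m=1$ using the stated smoothness of $z$ and the non-vanishing of $z'(1)$, which gives the bound immediately by Taylor's theorem with remainder (and one identifies $\oldc{slope_z_c1}=z'(1)=1/m'(0)=1/\oldc{slope1_c1}$ via the inverse function theorem). You instead invert the bound of Proposition~\ref{slope1}: you establish the a priori linear comparison $|z(m)|\le (2/\oldc{slope1_c1})|m-1|$ and feed it back into the quadratic estimate for $m(z)$. This is a slightly more roundabout route — it requires the bootstrap step, whereas the direct Taylor expansion does not — but it is logically sound, reuses Proposition~\ref{slope1} rather than invoking Taylor's theorem afresh, and produces the same leading constant with an explicit bound $4\oldc{slope1_c2}/\oldc{slope1_c1}^{3}$ for the quadratic error. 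Either route is fine.
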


Next, we quantify the dependence of   the shape function on $\rho$.  {Recall the \textit{shape function} $\Lambda$ is a positively homogeneous, nonrandom continuous function $\Lambda: \mathbb{R}^2_{\geq 0} \rightarrow \mathbb{R}$ that satisfies the \textit{shape theorem} (see \cite[Section 2.3]{Jan-Ras-20-aop}):
\be\label{La8} \lim_{n\rightarrow \infty} \sup_{\mathbf z\in \Z^2_{\geq 0} : |\mathbf z|_1 \geq n} \frac{|\log Z_{0, \mathbf z} - \Lambda(\mathbf z)|}{|\mathbf z|_1} = 0 \qquad \text{$\mathbb{P}$-almost surely}.\ee}
Let $f(\rho)$ denote the shape function $\Lambda$ evaluated at the vector ${\boldsymbol\xi}[\rho]$, and recall from \cite{poly2} that  \begin{equation}\label{define_f}
f(\rho) = \Lambda({\boldsymbol\xi}[\rho]) = -\tfrac{\Psi_1({{\rho}})}{\Psi_1({{\rho}}) + \Psi_1({{\mu - \rho}})}\cdot  \Psi_0( \mu-\rho)  -  \tfrac{\Psi_1({{\mu-\rho}})}{\Psi_1({{\rho}}) + \Psi_1({{\mu - \rho}})}  \Psi_0(\rho)
\end{equation}
where $\Psi_0$ and $\Psi_1$ are the digamma and trigamma function.
Let $f_d=f(\mu/2)$ denote  the shape function in the  diagonal direction. From concavity and symmetry, we get this inequality: 
\begin{proposition}\label{time_const1}
For each $\mu>0$ and  each $z\in (-\mu/2, \mu/2)$, $f(\mu/2) \geq f(\mu/2 + z)$.
\end{proposition}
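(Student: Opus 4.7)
The plan is to use two properties of the shape function $\Lambda$: its concavity (already asserted in the shape theorem discussion) and a reflection symmetry that comes from the i.i.d.\ structure of the weights. The strategy is to first reformulate $f(\mu/2+z) \leq f(\mu/2)$ as a statement about the restriction of $\Lambda$ to the antidiagonal segment joining $\mathbf{e}_1$ and $\mathbf{e}_2$, and then appeal to the elementary fact that a concave function on an interval that is symmetric about the midpoint must attain its maximum at the midpoint.

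First, observe that the parameterization $\boldsymbol{\xi}[\rho]$ from \eqref{char_dir} satisfies $\boldsymbol{\xi}[\rho]\cdot\mathbf{e}_1 + \boldsymbol{\xi}[\rho]\cdot\mathbf{e}_2 = 1$ for all $\rho \in (0,\mu)$, so $\boldsymbol{\xi}[\rho]$ lies on the open segment from $\mathbf{e}_1$ to $\mathbf{e}_2$. Since $\Psi_1(\mu/2) = \Psi_1(\mu-\mu/2)$, we have $\boldsymbol{\xi}[\mu/2] = (1/2,1/2)$, which is the midpoint of this segment. Define the one-variable function $g(t) := \Lambda(1-t,\,t)$ for $t\in[0,1]$; then the desired inequality $f(\mu/2) \geq f(\mu/2+z)$ is equivalent to saying that $g$ attains its maximum at $t=1/2$.

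Next, the two needed properties of $g$. Concavity of $g$ is immediate from the concavity of $\Lambda$ on $\mathbb{R}^2_{\geq 0}$. For symmetry of $g$ about $1/2$, i.e., $g(t) = g(1-t)$, I would invoke the invariance of the weight distribution under swapping the two coordinate axes: the map $(i,j)\mapsto(j,i)$ takes the collection $\{Y_{\mathbf{z}}\}$ into an equidistributed collection, and it sends up-right paths from $(0,0)$ to $(a,b)$ bijectively onto up-right paths from $(0,0)$ to $(b,a)$. Hence $Z_{(0,0),(a,b)} \stackrel{d}{=} Z_{(0,0),(b,a)}$, so the almost sure limit $\Lambda$ satisfies $\Lambda(x,y) = \Lambda(y,x)$, which gives $g(1-t) = g(t)$.

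Finally, for any concave function $g$ on $[0,1]$ with $g(t) = g(1-t)$, one has for any $t \in [0,1]$
\begin{equation*}
g(1/2) = g\bigl(\tfrac{1}{2}t + \tfrac{1}{2}(1-t)\bigr) \geq \tfrac{1}{2}g(t) + \tfrac{1}{2}g(1-t) = g(t),
\end{equation*}
so $g$ is maximized at $1/2$. Applying this with $t$ chosen so that $(1-t,t) = \boldsymbol{\xi}[\mu/2 + z]$ yields $f(\mu/2+z) = g(t) \leq g(1/2) = f(\mu/2)$. There is no real obstacle here; the only point that requires a brief justification is the symmetry $\Lambda(x,y) = \Lambda(y,x)$, which is standard but worth spelling out since it is the one place where the specific distributional assumption enters.
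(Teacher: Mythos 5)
Your proof is correct and is exactly the argument the paper intends: the paper's one-line justification ``from concavity and symmetry'' is precisely your combination of concavity of $\Lambda$, the transposition symmetry $\Lambda(x,y)=\Lambda(y,x)$ coming from the i.i.d.\ weights, and the elementary fact that a symmetric concave function on the segment between $\mathbf{e}_1$ and $\mathbf{e}_2$ is maximized at the midpoint $\boldsymbol{\xi}[\mu/2]=(1/2,1/2)$. You have simply spelled out the details the paper leaves implicit, so there is nothing to change.
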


The next  bound   captures the curvature of the shape function. 
\begin{proposition}\label{time_const}
There exist positive constants $\newc\label{time_const_c1}, \newc\label{time_const_c2}, \epsilon$ 
such that for each $z\in [-\epsilon, \epsilon]$, we have
$$\Big|(f(\mu/2 + z) - f(\mu/2))-(-\oldc{time_const_c1} z^2)  \Big| \leq \oldc{time_const_c2} z^4.$$
\end{proposition}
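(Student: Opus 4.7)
The plan is to exploit the symmetry of the explicit formula \eqref{define_f} under $\rho \leftrightarrow \mu - \rho$. First I would verify by inspection of \eqref{define_f} that $f(\rho) = f(\mu - \rho)$ for all $\rho \in (0, \mu)$, since swapping $\Psi_1(\rho) \leftrightarrow \Psi_1(\mu-\rho)$ and $\Psi_0(\rho) \leftrightarrow \Psi_0(\mu-\rho)$ leaves both the numerator and denominator of \eqref{define_f} invariant. Setting $g(z) = f(\mu/2 + z)$, this gives $g(z) = g(-z)$, so $g$ is an even function of $z$. Moreover, since the polygamma functions $\Psi_0$ and $\Psi_1$ are $C^\infty$ on $(0,\infty)$ and the denominator $\Psi_1(\rho) + \Psi_1(\mu-\rho)$ is strictly positive, $g$ is $C^\infty$ on a neighborhood of $0$.

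Next I would apply Taylor's theorem with remainder to $g$ at $z = 0$ up to fourth order. Evenness of $g$ forces $g'(0) = g'''(0) = 0$, so
\[
g(z) - g(0) = \tfrac{1}{2} g''(0)\, z^2 + R(z), \qquad |R(z)| \le C_2 z^4 \text{ for } |z| \le \epsilon,
\]
with $\epsilon > 0$ small enough that $\mu/2 \pm \epsilon$ stays in $(0,\mu)$ and $C_2$ given by a standard supremum of $|g^{(4)}|$ over $[-\epsilon, \epsilon]$.

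It then remains to evaluate $g''(0)$ and verify it is strictly negative. Writing $g(z) = -N(z)/D(z)$ with $N(z) = \Psi_1(\mu/2{+}z)\Psi_0(\mu/2{-}z) + \Psi_1(\mu/2{-}z)\Psi_0(\mu/2{+}z)$ and $D(z) = \Psi_1(\mu/2{+}z) + \Psi_1(\mu/2{-}z)$, the symmetry makes $D'(0) = 0$ and $N'(0) = 0$ immediate, so
\[
g''(0) = -\frac{N''(0)D(0) - N(0)D''(0)}{D(0)^2}.
\]
A short computation using $\Psi_0' = \Psi_1$, $\Psi_1' = \Psi_2$, $\Psi_2' = \Psi_3$ and cancelling the terms proportional to $\Psi_3(\mu/2)\Psi_0(\mu/2)$ between $N''(0)D(0)$ and $N(0)D''(0)$ leaves $g''(0) = \Psi_2(\mu/2)$. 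Since $\Psi_1$ is strictly decreasing on $(0,\infty)$ (as already used in Section \ref{sec:reg}), $\Psi_2(\mu/2) = \Psi_1'(\mu/2) < 0$, so we may set $C_1 = -\Psi_2(\mu/2)/2 > 0$, which completes the estimate.

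The main potential pitfall is the derivative bookkeeping in computing $N''(0)$: one must track six terms from the product rule and recognize the exact cancellation that eliminates the $\Psi_3(\mu/2)\Psi_0(\mu/2)$ contribution and leaves a clean multiple of $\Psi_2(\mu/2)$. Everything else is either the algebraic symmetry check or a direct application of Taylor's theorem on a smooth function.
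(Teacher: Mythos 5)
Your proposal is correct and follows essentially the same route as the paper: Taylor expansion of $f(\mu/2+z)$ around $z=0$, with the odd-order coefficients vanishing (the paper states this directly, you derive it from the symmetry $f(\rho)=f(\mu-\rho)$) and the quadratic coefficient equal to $\tfrac12\Psi_2(\mu/2)<0$, which matches your computation $g''(0)=\Psi_2(\mu/2)$.
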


\begin{proof}
From \eqref{define_f},   
\begin{align}
& f(\mu/2 + z) - f(\mu/2) \nonumber\\
&= \Big[- \Big(\tfrac{\Psi_1(\mu/2+z)}{\Psi_1(\mu/2+z) + \Psi_1(\mu/2-z)}\Psi_0(\mu/2- z) + \tfrac{\Psi_1(\mu/2-z)}{\Psi_1(\mu/2+z) + \Psi_1(\mu/2-z)}\Psi_0(\mu/2+ z)\Big)\Big]  \label{taylor_1}\\
&\qquad\qquad\qquad\qquad  -\Big[-\Big(\tfrac{\Psi_1(\mu/2)}{\Psi_1(\mu/2) + \Psi_1(\mu/2)}\Psi_0(\mu/2) + \tfrac{\Psi_1(\mu/2)}{\Psi_1(\mu/2) + \Psi_1(\mu/2)}\Psi_0(\mu/2)\Big)\Big]. \label{taylor_2}
\end{align}
Taylor expand \eqref{taylor_1} around $z = 0$. The ``zeroth" derivative terms and \eqref{taylor_2} cancel each other. The coefficients of   $z$, $z^3$, $z^5$ are zero. The coefficient of $z^2$ is $\frac{1}{2}\Psi_2(\mu/2)<0$. 
\end{proof}

Our next proposition controls the variation of the shape function on a segment $\mathcal{L}_{N}^{hN^{2/3}}$.

\begin{proposition}\label{reg_shape}
There exist positive constants $\newc\label{reg_shape_c1}, N_0, \epsilon_0$ such that for each $N\geq N_0$ , $h\leq \epsilon_0N^{1/3}$ and each ${\bf p}\in \mathcal{L}_{N}^{h N^{2/3}}$, we have 
$$\big|\Lambda({\bf p}) - 2N{f_d}\big| \leq \oldc{reg_shape_c1} h^2 N^{1/3}.$$
\end{proposition}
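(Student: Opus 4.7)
The plan is to reduce this proposition to the one-parameter estimates already proved in Section \ref{sec:reg}, namely Propositions \ref{slope_z} and \ref{time_const}, by exploiting the positive homogeneity of $\Lambda$ and parameterizing points on the simplex through the characteristic-direction map $\rho \mapsto {\boldsymbol\xi}[\rho]$.

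First I would write ${\bf p} = (N+j, N-j)$ with $|j| \le hN^{2/3}$, which is the general form of a point in $\mathcal{L}_N^{hN^{2/3}}$. Since $|{\bf p}|_1 = 2N$ and $\Lambda$ is positively homogeneous of degree one,
\[
\Lambda({\bf p}) \;=\; 2N \cdot \Lambda\!\bigl(\tfrac{N+j}{2N},\tfrac{N-j}{2N}\bigr).
\]
The slope of ${\bf p}$ is $m := (N-j)/(N+j)$, and by the definition \eqref{char_dir} of ${\boldsymbol\xi}[\rho]$ together with the bijection between $\rho \in (0,\mu)$ and slopes $m \in (0,\infty)$, there is a unique $\rho = \mu/2 + z(m)$ for which $\Lambda({\bf p}/|{\bf p}|_1) = f(\rho)$. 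Hence
\[
\Lambda({\bf p}) - 2Nf_d \;=\; 2N\bigl(f(\mu/2 + z(m)) - f(\mu/2)\bigr).
\]

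Second, I would control $z(m)$ via Proposition \ref{slope_z}. Choosing $\epsilon_0$ small, the hypothesis $h \le \epsilon_0 N^{1/3}$ forces $|j|/N \le \epsilon_0$, so $|m-1| = 2|j|/(N+j) \le C|j|/N$ with $m$ arbitrarily close to $1$. Proposition \ref{slope_z} then yields $|z(m)| \le C|m-1| \le C|j|/N$, provided $N_0$ is large and $\epsilon_0$ is small enough that $m$ lies in the interval on which that proposition applies. For the same reason $|z(m)|$ is small, so we may apply Proposition \ref{time_const} to conclude
\[
\bigl|f(\mu/2 + z(m)) - f_d\bigr| \;\le\; \oldc{time_const_c1}\, z(m)^2 + \oldc{time_const_c2}\, z(m)^4 \;\le\; C\,(j/N)^2.
\]
Plugging this back gives $|\Lambda({\bf p}) - 2N f_d| \le 2N \cdot C(j/N)^2 = C j^2/N \le C h^2 N^{1/3}$, which is the desired bound.

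There is no real obstacle here; the argument is a two-step Taylor expansion packaged through the previously established regularity of $f$ and of the slope-to-$\rho$ conversion. The one thing to track carefully is the quantitative range: one must choose $\epsilon_0$ small enough (depending on the $\epsilon$'s of Propositions \ref{slope_z} and \ref{time_const}) so that $|m-1|$ and $|z(m)|$ simultaneously lie in both Taylor regimes, and $N_0$ large enough that $|j| \le \epsilon_0 N$ implies $N+j \ge N/2$ so that the denominator in $m-1$ does not cause trouble.
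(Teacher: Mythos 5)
Your proposal is correct and follows essentially the same route as the paper: parameterize $\mathbf p$ through the characteristic direction so that $\Lambda(\mathbf p)=2Nf(\mu/2+z)$, bound $|z|\le ChN^{-1/3}$ via Proposition \ref{slope_z} from the slope estimate $|m-1|\le ChN^{-1/3}$, and then apply the curvature bound of Proposition \ref{time_const}. The only difference is cosmetic (writing $\mathbf p=(N+j,N-j)$ and invoking homogeneity explicitly), and your attention to keeping $m$ and $z$ within the Taylor regimes matches the paper's choice of $\epsilon_0$.
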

\begin{proof}
Since each ${\bf p} \in \mathcal{L}_{N}^{hN^{2/3}}$ has the same $\ell^1$-norm $2N$, let us rewrite ${\bf p} = 2N{\boldsymbol\xi}[\mu/2 + z_{\bf p}]$ for some real number $z_{\bf p}$. 
Then, by our definition $\Lambda({\bf p}) = 2Nf(\mu/2 + z_{\bf p}).$

Since the perpendicular $\ell^\infty$-distance from ${\bf p}$ to the diagonal is at most $hN^{2/3}$,   the slope of the characteristic vector ${\boldsymbol\xi}[\mu/2 + z_{\bf p}]$ satisfies 
$$|m(z_{\bf p})-1|\leq 2hN^{-1/3}.$$ 
Fix $\epsilon_0$ sufficiently small, by Proposition \ref{slope_z}, we obtain
$$|z_{\bf p}| \leq C h N^{-1/3}.$$ 
Finally, applying Proposition \ref{time_const}, we obtain that 
$$|f(\mu/2 + z_{\bf p}) - {f_d}| \leq Ch^2 N^{-2/3}$$
which directly implies the result of our proposition after multiplying by $2N$ on both sides.
\end{proof}

\subsection{Free energy estimates}\label{free_est}
In this section, we collect a number of estimates used later in the proofs, organized thematically into subsections.  Some results are merely quoted, some proved later,  and in cases  where the result has already  appeared in the zero temperature setting  the positive temperature proofs are in Appendix 
\ref{appen}.

\subsubsection{Moderate deviation estimates for the free energy} \label{mod_est}

  There are four moderate deviation estimates: upper and lower bounds for both left and right tails.

The first theorem gives the upper bound on the right tail of the free energy. This result for the inverse-gamma polymer was first proved as a combination of the moderate deviation estimate from \cite{TW_polymer}, which used integrable techniques, and the large deviation estimate from \cite{Geo-Sep-13}. 
The same moderate deviation upper bound was also recently proven in \cite{OC_tail} for the O'Connell-Yor polymer using the coupling method, which was based on the seminal work \cite{rtail1} in the zero temperature setting. 
With a similar coupling approach, the forthcoming work \cite{Emr-Jan-Xie-22-} proves this bound and obtains the sharp leading order term $\tfrac{4}{3}t^{3/2}$ in the exponent for $t\leq C N^{2/3}$. A version of this bound can be found in the Ph.D. thesis of one of the authors of \cite{Emr-Jan-Xie-22-}, as Theorem 4.3.1 in \cite{Xie2022}. The right tail estimate for the KPZ equation with the sharp leading order term was also recently obtained in \cite{KPZ_uptail}.

\begin{proposition}\label{up_ub}
Let $\epsilon \in (0, \mu/2)$. There exist positive constants $\newc\label{up_ub_c1}, N_0$ depending on $\epsilon$ such that for each $N\geq N_0$, $t \geq 1$, and each $\rho \in [\epsilon, \mu-\epsilon]$, we have
$$\mathbb{P}(\log Z_{0,  2N{\boldsymbol\xi}[\rho]} - 2Nf(\rho) \geq tN^{1/3}) \leq e^{-\oldc{up_ub_c1} \min\{t^{3/2}\!,\, tN^{1/3}\}}.$$
\end{proposition}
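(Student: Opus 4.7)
The plan is to dominate the bulk partition function by a stationary inverse-gamma polymer and then apply an exponential Chebyshev (Chernoff) bound whose Laplace transform is fully explicit. For a tunable parameter $\lambda \in (0, \mu)$, I would introduce the stationary inverse-gamma polymer by attaching independent inverse-gamma boundary weights to the south and west axes (with parameters $\mu-\lambda$ and $\lambda$) while retaining the i.i.d.\ inverse-gamma$(\mu)$ bulk weights. The resulting partition function $Z^{\textup{stat},\lambda}_{(0,0),(m,n)}$ sums over strictly more paths than the bulk model, so $Z_{(0,0),(m,n)} \leq Z^{\textup{stat},\lambda}_{(0,0),(m,n)}$ up to a bounded boundary correction.

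A key property of the stationary polymer is that $\log Z^{\textup{stat},\lambda}_{(0,0),(m,n)}$ decomposes as a sum of $m+n$ mutually independent log-inverse-gamma variables along a down-right staircase through the boundary (this is the Burke-type invariance to be discussed in Section \ref{stat_poly}). Consequently its MGF equals
$$\bigl[\Gamma(\mu-\lambda-s)/\Gamma(\mu-\lambda)\bigr]^m \cdot \bigl[\Gamma(\lambda-s)/\Gamma(\lambda)\bigr]^n$$
for $0 \le s < \min(\lambda, \mu-\lambda)$. Setting $(m,n) = 2N\boldsymbol\xi[\rho]$ and $A = 2Nf(\rho) + tN^{1/3}$, Chernoff yields
$$P\bigl(\log Z_{(0,0),(m,n)} \geq A\bigr) \leq \exp\!\Bigl(-sA + m\log\tfrac{\Gamma(\mu-\lambda-s)}{\Gamma(\mu-\lambda)} + n\log\tfrac{\Gamma(\lambda-s)}{\Gamma(\lambda)}\Bigr),$$
reducing the proof to a simultaneous optimization in $(\lambda, s)$.

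The defining identity for the characteristic direction $\boldsymbol\xi[\rho]$ makes $\lambda = \rho$ a critical point of the ``stationary shape'' $\lambda \mapsto -m\Psi_0(\mu-\lambda) - n\Psi_0(\lambda)$ with critical value $-2Nf(\rho)$, consistent with \eqref{define_f}. Because of this cancellation, the $O(s)$ and $O(\lambda-\rho)$ terms in the Taylor expansion of the exponent vanish, leaving a positive-definite quadratic form of order $N$ in $(s, \lambda-\rho)$ together with the driving linear term $-stN^{1/3}$. Choosing $\lambda-\rho$ and $s$ both of order $t^{1/2}N^{-1/3}$ minimizes the exponent at order $-Ct^{3/2}$, which is admissible provided $t \lesssim N^{1/3}$. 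For $t \gtrsim N^{1/3}$ the would-be optimum exits the admissible range $s < \min(\lambda, \mu-\lambda)$, and one instead fixes $s$ and $\lambda-\rho$ at order-one constants to obtain the linear regime $-CtN^{1/3}$.

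The main obstacle is ensuring that the cubic remainder in the Taylor expansion (of order $s^{3}N$) stays dominated by the quadratic bulk when $s \sim N^{-1/3}$, matching the two regimes smoothly at the crossover $t \sim N^{1/3}$, and tracking everything uniformly in $\rho \in [\epsilon, \mu-\epsilon]$. The recipe is by now standard: in zero temperature it was executed by \cite{rtail1}, and in positive temperature by the polymer references cited just above the proposition, whose arguments transfer to the inverse-gamma polymer essentially verbatim.
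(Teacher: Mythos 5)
There is no in-paper proof to compare against: the paper imports Proposition \ref{up_ub} from the literature (via \cite{TW_polymer,Geo-Sep-13}, or, without integrable inputs, \cite{OC_tail,Emr-Jan-Xie-22-,Xie2022}), so your sketch is really an attempt to reconstruct those cited arguments. Your overall architecture (dominate the bulk free energy by a stationary model with boundary parameter $\lambda$, apply a Chernoff bound, optimize jointly over $s$ and $\lambda-\rho$, and treat the regimes of small and large $t$ separately) is indeed the architecture of those proofs.

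However, the pivotal step is wrong as stated. You claim that $\log Z^{\textup{stat},\lambda}_{(0,0),(m,n)}$ is a sum of $m+n$ \emph{mutually independent} log-inverse-gamma variables ``along a down-right staircase through the boundary,'' so that its MGF is the product $[\Gamma(\mu-\lambda-s)/\Gamma(\mu-\lambda)]^m[\Gamma(\lambda-s)/\Gamma(\lambda)]^n$. The Burke property (Theorem \ref{stat}) gives independence of increments only along \emph{down-right} paths, whereas $\log Z^{\textup{stat},\lambda}_{(0,0),(m,n)}$ telescopes from the base point along an \emph{up-right} path, e.g.\ west boundary then top row; the top-row increments are each $\textup{Ga}^{-1}(\mu-\lambda)$-distributed and i.i.d.\ among themselves, but they are functions of the bulk and of the west boundary weights, so the two families are not jointly independent and the MGF does not factor. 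A decisive sanity check: if your product formula held, then $\Var\bigl(\log Z^{\textup{stat},\lambda}_{(0,0),(m,n)}\bigr)=m\Psi_1(\mu-\lambda)+n\Psi_1(\lambda)\asymp N$, while in the characteristic direction the stationary free energy has fluctuations of order $N^{1/3}$ (consistent with Theorem \ref{stat_up_ub}); one can also verify the failure directly in the exponential analogue at $(m,n)=(1,1)$. What the cited proofs actually use in place of this step is a genuinely nontrivial exact identity for the stationary MGF $\mathbb{E}\bigl[(Z^{\lambda})^{s}\bigr]$ — a Rains-type / parameter-shift identity going back to \cite{rtail1} in zero temperature and to its log-gamma analogues in positive temperature — and that identity has a different structure from the naive product (in particular the two directions enter with opposite parameter shifts, which is exactly what makes the right-tail optimization close). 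Without that input your Chernoff computation cannot start. A secondary slip: with $s\asymp t^{1/2}N^{-1/3}$ the admissibility constraint $s<\min(\lambda,\mu-\lambda)$ fails only when $t\gtrsim N^{2/3}$, so the crossover between the $t^{3/2}$ and $tN^{1/3}$ regimes is at $t\asymp N^{2/3}$, not $N^{1/3}$ (matching $\min\{t^{3/2},tN^{1/3}\}$).
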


The next theorem is the corresponding lower bound for the right tail, restricted to the diagonal direction. 
This was recently proved for the O'Connell-Yor polymer in \cite{OC_tail} in the diagonal direction. The proof uses the subadditivity of the free energy and the Tracy-Widom limit of the free energy. Since using integrable techniques, the  Tracy-Widom limit of the inverse-gamma polymer is also known \cite{TW_polymer}, the proof for the O'Connell-Yor polymer in Section 9 of \cite{OC_tail} can be repeated verbatim for the inverse-gamma polymer. 
A similar argument in the zero-temperature setting appeared earlier in \cite{timecorriid, bootstrap}.
Without this input from integrable probability, a lower bound with the correct leading order $\tfrac{4}{3} t^{3/2}$ for $t\leq CN^{2/3}$ over all directions in a compact interval away from ${\bf e}_1$ and ${\bf e}_2$  will appear in \cite{Emr-Jan-Xie-22-}. 
\begin{proposition}\label{up_lb}
There exist positive constants $\newc\label{up_lb_c1}, N_0, t_0, \epsilon_0$ such that for each $N\geq N_0$, $t_0 \leq t \leq \epsilon_0 N^{2/3}$, we have
$$\mathbb{P}(\log Z_{0, N} - 2N{f_d} \geq tN^{1/3}) \geq e^{-\oldc{up_lb_c1} t^{3/2}}.$$
\end{proposition}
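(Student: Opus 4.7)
The plan is to combine the Tracy-Widom limit (available for the inverse-gamma polymer from \cite{TW_polymer}) with a super-additivity/independence argument, following the strategy recently used for the O'Connell-Yor polymer in \cite{OC_tail}. The Tracy-Widom convergence provides a single-scale lower bound of fixed positive probability $p_0 > 0$, and one then boosts the deviation from typical scale $n^{1/3}$ up to $tN^{1/3}$ by splitting the path $(0,0) \to (N,N)$ into $K \asymp t^{3/2}$ independent diagonal pieces.

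Concretely, first I would record the superadditive decomposition. Restricting the sum in \eqref{par_fun} to paths through the intermediate diagonal points $(jn, jn)$, $j = 0, 1, \ldots, K$ (with $n = N/K$), and using that the weight $Y_{(jn,jn)}$ is included exactly once, yields
\[
\log Z_{0, N} \;\geq\; \sum_{j=0}^{K-1} \log Z_{(jn,jn),\,((j+1)n,(j+1)n)}.
\]
The $K$ summands on the right are i.i.d.\ copies of $\log Z_{0, n}$, since the corresponding partition functions use pairwise disjoint sets of $Y_{\bf z}$-weights. Next, from the Tracy-Widom convergence of $(\log Z_{0, n} - 2n f_d)/n^{1/3}$ \cite{TW_polymer}, one fixes a constant $s_0 > 0$ lying in the support of the limiting distribution and obtains $n_0$ and $p_0 > 0$ with
\[
\mathbb{P}\bigl(\log Z_{0, n} - 2n f_d \geq s_0 n^{1/3}\bigr) \;\geq\; p_0 \qquad \text{for all } n \geq n_0.
\]
If every piece satisfies this bound, then $\log Z_{0,N} - 2N f_d \geq s_0 K n^{1/3} = s_0 K^{2/3} N^{1/3}$, so choosing $K = \lceil (t/s_0)^{3/2} \rceil$ delivers the required $tN^{1/3}$ deviation, and independence yields
\[
\mathbb{P}\bigl(\log Z_{0, N} - 2N f_d \geq t N^{1/3}\bigr) \;\geq\; p_0^{K} \;\geq\; e^{-\oldc{up_lb_c1}\, t^{3/2}}.
\]

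The main technical point to verify is that each block is actually large enough for the Tracy-Widom lower bound to be applicable. Since $K \leq (t/s_0)^{3/2} + 1$ and the hypothesis $t \leq \epsilon_0 N^{2/3}$ forces $K \lesssim \epsilon_0^{3/2} N/s_0^{3/2}$, the block size satisfies $n = N/K \gtrsim s_0^{3/2}/\epsilon_0^{3/2}$. Choosing $\epsilon_0$ small enough (depending on $s_0$ and $n_0$) makes $n \geq n_0$ uniformly, which is the only delicate point in the argument; the requirement $t \geq t_0$ simply ensures $K \geq 1$ so that the decomposition is nontrivial. Everything else is bookkeeping and the standard bound $p_0^K = e^{K \log p_0}$.
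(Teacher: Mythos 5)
Your proposal is correct and follows essentially the same route the paper relies on: the paper cites the argument of Section 9 of \cite{OC_tail} (superadditivity of the free energy over i.i.d.\ diagonal blocks of size $\asymp N/t^{3/2}$ plus the Tracy--Widom limit of \cite{TW_polymer} for a one-block lower bound of fixed probability), which is exactly your construction. The only points to keep in mind are the routine ones you already flag: integer rounding of the block size (covered by the paper's floor-dropping convention) and choosing $\epsilon_0$ small so each block exceeds the threshold where the Tracy--Widom bound applies.
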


The next theorem is the upper bound for the left tail. A similar result was stated as Proposition 3.4 in \cite{OC_tail} for the O'Connell-Yor polymer. We prove this estimate for the inverse-gamma polymer in Section \ref{sec_tail}. Our proof is similar to \cite{OC_tail}, based on ideas from the zero-temperature work  \cite{cgm_low_up}. 

\begin{proposition}\label{low_ub}
Let $\epsilon \in (0, \mu/2)$. There exist positive constants $\newc\label{low_ub_c1}, N_0 $ depending on $\epsilon$ such that for each $N\geq N_0$, $t\geq 1$ and each $\rho\in[\epsilon, \mu-\epsilon]$, we have 
$$\mathbb{P}(\log Z_{0,  2N{\boldsymbol\xi}[\rho]} - 2Nf(\rho) \leq -tN^{1/3}) \leq e^{-\oldc{up_ub_c1} \min\{t^{3/2}, tN^{1/3}\}}.$$
\end{proposition}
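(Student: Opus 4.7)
The plan is to use multiplicativity of the partition function together with a Chernoff-style bootstrap, closely following the zero-temperature argument of \cite{cgm_low_up} as adapted to positive temperature in \cite{OC_tail}. For any integer $k \geq 1$, restricting the up-right paths from $0$ to $2N{\boldsymbol\xi}[\rho]$ to those passing through the $k-1$ intermediate diagonal points $j \cdot 2(N/k){\boldsymbol\xi}[\rho]$ (for $j = 1, \ldots, k-1$) gives
$$Z_{0, 2N{\boldsymbol\xi}[\rho]} \geq \prod_{i=1}^{k} Z^{(i)},$$
where the $k$ factors are independent copies of $Z_{0, 2(N/k){\boldsymbol\xi}[\rho]}$ because they use disjoint weight sets. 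Taking logs and centering by $2N f(\rho) = k \cdot 2(N/k)f(\rho)$ yields
$$\mathbb{P}\bigl(\log Z_{0, 2N{\boldsymbol\xi}[\rho]} - 2Nf(\rho) \leq -tN^{1/3}\bigr) \leq \mathbb{P}\Bigl(\sum_{i=1}^k X_i \leq -tN^{1/3}\Bigr),$$
where the $X_i$ are i.i.d.\ copies of $\log Z_{0, 2(N/k){\boldsymbol\xi}[\rho]} - 2(N/k) f(\rho)$.

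The main calculation is a Chernoff estimate: $\mathbb{P}(\sum X_i \leq -tN^{1/3}) \leq e^{-\lambda tN^{1/3}} (E[e^{-\lambda X_1}])^k$ for $\lambda > 0$. I would bound the MGF via integration by parts,
$$E[e^{-\lambda X_1}] = 1 + \lambda \int_0^\infty e^{\lambda y}\, \mathbb{P}(X_1 \leq -y)\, dy,$$
iterating on the range of the deviation parameter. Specifically, suppose the bound $\mathbb{P}(X_1 \leq -s m^{1/3}) \leq e^{-c s^{3/2}}$ has been established at the smaller scale $m = N/k$ for $s$ up to some threshold $s_*$. A Laplace-type estimate then gives $\log E[e^{-\lambda X_1}] \lesssim \lambda^3 m$ for moderate $\lambda$, so the Chernoff bound becomes $\exp(-\lambda t N^{1/3} + C \lambda^3 N)$. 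Optimizing in $\lambda$ (roughly $\lambda \sim \sqrt{t}/N^{1/3}$) produces the target $\exp(-c' t^{3/2})$ at scale $N$, extending the admissible range of $s$. The base case is trivial since $\mathbb{P} \leq 1$ covers $s \leq s_0$ with $c$ chosen small, and iterating the bootstrap covers $s$ up to order $N^{2/3}$.

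The large-$t$ regime $t \geq \epsilon_0 N^{2/3}$ requires a separate large-deviation argument, since the target bound $e^{-c t N^{1/3}}$ is then exponential in $N$. Here I would use the deterministic lower bound $\log Z_{0, 2N{\boldsymbol\xi}[\rho]} \geq \sum_{v \in \gamma^*} \log Y_v$ for a fixed up-right path $\gamma^*$ (for instance the L-shaped path along the coordinate axes), and then apply Cramer's theorem to a sum of $\Theta(N)$ i.i.d.\ log-inverse-gamma weights with finite exponential moments. The main obstacle is quantitative: ensuring the constant $c$ in $e^{-c t^{3/2}}$ does not degrade across bootstrap iterations. This requires a careful choice of $k$ (as a function of $t$) and a tight integration-by-parts bound on the MGF, balancing the number of multiplicative splits against the strength of the inductive hypothesis. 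Uniformity in $\rho \in [\epsilon, \mu-\epsilon]$ should follow from the smooth dependence of $f(\rho)$ on $\rho$ over this compact interval.
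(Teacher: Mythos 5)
Your route (superadditive block decomposition plus a Chernoff--bootstrap, in the spirit of the zero-temperature literature) is genuinely different from the paper's proof, which compares $Z_{0,2N{\boldsymbol\xi}[\rho]}$ with the increment-stationary partition function $Z^\rho_{-1,2N{\boldsymbol\xi}[\rho]}$, restricts to exit points $|\tau|\le \sqrt t\,N^{2/3}$ via Theorem \ref{exit_time}, bounds the resulting increment process by an i.i.d.\ sub-exponential random walk through Theorem \ref{rwrw}, and then invokes the stationary right-tail bound of Theorem \ref{stat_up_ub}; the regime $t\gtrsim N^{2/3}$ is handled by a single-path large-deviation bound, exactly as you propose for that regime. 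However, your main argument has a genuine gap: the centering in the statement is the macroscopic value $2Nf(\rho)$, not the mean, and superadditivity only gives $\mathbb{E}[\log Z_{0,2m{\boldsymbol\xi}[\rho]}]\le 2mf(\rho)$. Each block variable $X_i$ therefore has a \emph{negative} mean of unknown size, and your MGF estimate $\log \mathbb{E}[e^{-\lambda X_1}]\lesssim \lambda^3 m$ silently drops the term $-\lambda\,\mathbb{E}[X_1]=\lambda\,(2mf(\rho)-\mathbb{E}[\log Z_{0,2m{\boldsymbol\xi}[\rho]}])$. With $\lambda\sim\sqrt t/N^{1/3}$, $m=N/k$ and $k\sim t^{3/2}$, this term summed over the $k$ blocks is of order $t^{3/2}$ \emph{only if} one already knows the nonrandom-fluctuation upper bound $2mf(\rho)-\mathbb{E}[\log Z_{0,2m{\boldsymbol\xi}[\rho]}]\le Cm^{1/3}$; if the discrepancy were, say, of order $m^{2/5}$, the mean shift would dominate $tN^{1/3}$ for large $N$ and the bound collapses. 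Nothing available at this point in the paper supplies that input: Proposition \ref{up_ub} controls the right tail (wrong direction), and Theorem \ref{nr_lb} is a \emph{lower} bound on the discrepancy. In the zero-temperature versions of this argument the $O(m^{1/3})$ mean bound is an explicit hypothesis obtained from integrable or stationary inputs, and in the present model establishing it would essentially require the stationary-comparison machinery the paper uses anyway.

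Two smaller points: the intermediate points $j\cdot 2(N/k){\boldsymbol\xi}[\rho]$ must be rounded to lattice sites, so the blocks are not exactly in direction ${\boldsymbol\xi}[\rho]$ and you need the shape regularity estimates (Propositions \ref{slope_z} and \ref{time_const}) to control the resulting $O(m^{1/3})$-scale centering errors uniformly in $\rho\in[\epsilon,\mu-\epsilon]$; and in the Laplace/integration-by-parts step the inductive tail bound only holds up to the threshold $s_*$, so beyond it you need the crude $e^{-csm^{1/3}}$ bound at scale $m$ (your single-path LDP argument), which should be stated as part of the induction rather than only at scale $N$.
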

\begin{remark}
The correct order of the left tail should be $e^{-C\min\{t^3, tN^{4/3}\}}$ for all $t\geq t_0$. This is different from the zero-temperature model where the left tail behaves as $e^{-Ct^3}$. For the O'Connell-Yor polymer, the authors in \cite{OC_tail} also proved an upper bound $e^{-Ct^3}$ when $t_0 \leq t \leq N^{2/3}(\log N)^{-1}$. This is done by adapting the bootstrapping argument from the zero-temperature work  \cite{bootstrap}. We do not pursue
 this here but expect the same result.
\end{remark}

Finally, we have the lower bound on the left tail, which we prove in Section \ref{sec_tail}. The same lower bound was proved in \cite{OC_tail} for the O'Connell-Yor polymer. The idea of the proof follows the zero-temperature work  \cite{bootstrap}.  

\begin{proposition}\label{ptp_low}
There exist positive constants $\newc\label{ptp_low_c1}, N_0, t_0, \epsilon_0$ such that for each $N \geq N_0$ and each $t_0 \leq t \leq \epsilon_0 N^{2/3}/(\log N)^2$, we have 
$$\mathbb{P}(\log Z_{0, N}  - 2N{f_d} \leq -t N^{1/3}) \geq e^{-\oldc{ptp_low_c1}t^3}.$$
\end{proposition}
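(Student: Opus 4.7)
My plan is to follow the strategy of \cite{bootstrap} in zero temperature and its positive-temperature adaptation in \cite{OC_tail}, combining the increment-stationary polymer from Section \ref{stat_poly} with the random walk comparison from Section \ref{sec_rw}. The core idea is to dominate the left tail of $\log Z_{0,N}$ by the left tail of a sum of i.i.d.~log-inverse-gamma random variables, for which the Cram\'er moderate deviation lower bound already produces the required exponent throughout the stated parameter range.

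First I would reduce to the stationary polymer. For $\rho \in (0,\mu)$ with $\rho < \mu$, the inverse-gamma$(\rho)$ distribution is stochastically larger than the inverse-gamma$(\mu)$ distribution (since Gamma$(\rho)$ is stochastically smaller in $\rho$), so a monotone coupling via inverse CDFs on the boundary weights, with shared bulk weights, yields the pointwise inequality $Z^{\rho}_{0,N} \geq Z_{0,N}$, and hence
\[
\mathbb{P}(\log Z_{0,N} \leq 2Nf_d - tN^{1/3}) \;\geq\; \mathbb{P}(\log Z^{\rho}_{0,N} \leq 2Nf_d - tN^{1/3}).
\]
Next, the random walk comparison of Section \ref{sec_rw}, applied in an appropriate geometry, dominates $\log Z^{\rho}_{0,N}$ from above by a random walk of the form $S_N + C_N(\rho)$, where $S_N$ is a sum of $\Theta(N)$ i.i.d.~log-inverse-gamma increments coming from the boundary of the stationary polymer, and $C_N(\rho)$ is a deterministic shift determined by the shape function expansion. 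Choose $\rho$ near $\mu/2$ so that, via the curvature estimate of Proposition \ref{time_const}, the shift $C_N(\rho)$ differs from $2Nf_d$ by at most $o(tN^{1/3})$ over the stated range of $t$.

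The final step is a moderate deviation lower bound for the random walk $S_N$. Since $S_N-\mathbb{E}[S_N]$ is a centered sum of $\Theta(N)$ i.i.d.~log-inverse-gamma random variables with per-step variance $\Psi_1(\rho)$ and finite exponential moments on a neighborhood of $0$, the Cram\'er-type moderate deviation lower bound gives
\[
\mathbb{P}\bigl(S_N \leq \mathbb{E}[S_N] - s\bigr) \;\geq\; c_1\exp\bigl(-c_2 s^2/N\bigr)
\]
for $s$ in the moderate range $1\le s\le c_3 N$. Taking $s\sim tN^{1/3}$ produces a lower bound of order $\exp(-c_2 t^2/N^{1/3})$, which dominates $\exp(-c_2 t^3)$ throughout $t_0\le t\le \epsilon_0 N^{2/3}/(\log N)^2$ because $t^2/N^{1/3}\le t^3$ whenever $t\ge N^{-1/3}$. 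The main technical obstacle is the random walk comparison: it must be sharp enough that the deterministic shift $C_N(\rho)$ does not erode the required deviation, and the curvature gain from Proposition \ref{time_const} must be matched carefully to the stationary mean adjustment. Once this balance is achieved, the proof reduces cleanly to the one-dimensional random walk computation above.
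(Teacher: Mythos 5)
The step that breaks is the claimed pathwise domination $\log Z^{\rho}_{0,N}\le S_N+C_N(\rho)$ with $S_N$ a sum of $\Theta(N)$ i.i.d.\ log-inverse-gamma increments and $C_N(\rho)=2Nf_d+o(tN^{1/3})$. Theorem \ref{rwrw} supplies nothing of this kind: it compares the \emph{increments} of the free energy profile along a down-right path of length at most $sN^{2/3}$ near one endpoint with walks of \emph{perturbed} parameter $\rho\pm q_0sN^{-1/3}$, so it neither controls the absolute value of the free energy nor extends over $\Theta(N)$ steps (the per-step drift from the perturbation, summed over order $N$ steps, is far larger than $tN^{1/3}$). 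The exact telescoping identity for the stationary model does write $\log Z^{\rho}_{0,(N,N)}$ as a sum of $2N$ increments that are marginally log-inverse-gamma, but the horizontal and vertical blocks are strongly negatively dependent: in the characteristic direction $\Var(\log Z^{\rho}_{0,N})$ is of order $N^{2/3}$, not $N$, so there is no legitimate reduction to a mean-$2Nf_d$ walk with Gaussian fluctuations at scale $\sqrt N$, and the Cram\'er lower bound $e^{-c t^2/N^{1/3}}$ has no object to apply to. Moreover the conclusion itself is impossible: combined with your (correct) first reduction $\mathbb{P}(\log Z_{0,N}\le x)\ge\mathbb{P}(\log Z^{\rho}_{0,N}\le x)$, it would give $\mathbb{P}(\log Z_{0,N}-2Nf_d\le -tN^{1/3})\ge c_1e^{-c_2t^2/N^{1/3}}\to c_1$ as $N\to\infty$ with $t$ fixed, contradicting Proposition \ref{low_ub}, which bounds this probability by $e^{-C t^{3/2}}$ uniformly in $N$ (take $t$ large enough that $e^{-Ct^{3/2}}<c_1$). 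So the issue you flag as ``the main technical obstacle'' (the deterministic shift) is not a gap to be closed but the reason the route cannot work: any almost-sure upper bound of $\log Z^{\rho}_{0,N}$ by an i.i.d.\ walk must have mean exceeding $2Nf_d$ by much more than $N^{1/3}$.

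The paper's proof is structurally different and does not pass through a single global random-walk bound. It first extracts a constant-probability event on which a point-to-point free energy sits $cN^{1/3}$ below $2Nf_d$ (Proposition \ref{first_lb}, using concavity, Proposition \ref{time_const1}, and the nonrandom fluctuation bound, Theorem \ref{nr_lb}), upgrades this by a step-back argument and FKG to interval-to-interval and interval-to-line statements with uniform positive probability (Propositions \ref{iti_bound} and \ref{itl_bound}), and then pays the $e^{-Ct^3}$ cost by intersecting, via FKG, order $t^3$ such events on a grid of cells of diagonal length $N/t^{3/2}$ covering $R^{\sqrt t N^{2/3}}_{0,N}$, which forces $\log Z^{\textup{in},\sqrt t N^{2/3}}_{0,N}$ down by $ctN^{1/3}$ (Proposition \ref{con_lb} with $l=\sqrt t$); finally the paths exiting the strip are handled by Theorem \ref{trans_fluc_loss3}, and the two decreasing events are combined by one more application of FKG. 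If you want to salvage your outline, the stationary polymer and random walk comparison enter only at the level of Theorem \ref{nr_lb} (and Proposition \ref{low_ub}), not as a global surrogate for $\log Z_{0,N}$; the cubic exponent has to come from multiplying order $t^3$ constant-probability local decreasing events, not from a Gaussian deviation of a length-$N$ walk.
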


\subsubsection{Free energy and path fluctuations}
\label{sec:Zpath} 
In this section we  state the estimates which capture the loss of free energy when the paths have high fluctuations.

\begin{proposition}\label{trans_fluc_loss}
There exist positive constants $\newc\label{trans_fluc_loss_c1}, \newc\label{trans_fluc_loss_c2}, N_0$ such that for each $N\geq N_0$, $h\in \mathbb{Z}$ and $t\geq 0$ we have
$$\mathbb{P}\Big(\log Z_{\mathcal{L}_0^{N^{2/3}} , \mathcal{L}_{(N-2hN^{2/3}, N+ 2hN^{2/3})}^{N^{2/3}}}  - 2N{f_d} \geq (-\oldc{trans_fluc_loss_c1}h^2+t)N^{1/3} \Big) \leq e^{-\oldc{trans_fluc_loss_c2}(|h|^3 + \min\{t^{3/2}, tN^{1/3}\})}.$$
\end{proposition}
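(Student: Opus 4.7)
My plan is to reduce to the point-to-point right tail Proposition~\ref{up_ub} by a union bound, using the quadratic curvature of the shape function away from the diagonal direction. Write
\[
Z^\ast := Z_{\mathcal{L}_0^{N^{2/3}}, \mathcal{L}_{(N-2hN^{2/3}, N+2hN^{2/3})}^{N^{2/3}}} = \sum_{{\bf u}, {\bf v}} Z_{{\bf u}, {\bf v}},
\]
with at most $CN^{4/3}$ summands, so $\log Z^\ast \le \max_{{\bf u},{\bf v}} \log Z_{{\bf u},{\bf v}} + O(\log N)$. The claim is vacuous when $|h| > N^{1/3}/2$, because no up-right path then connects the two segments, so restrict to $|h| \le N^{1/3}/2$; this keeps every direction ${\bf v} - {\bf u}$ in a compact subset of the open quadrant, so that Proposition~\ref{up_ub} applies uniformly.

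Next I estimate the shape function on each pair. For $({\bf u}, {\bf v})$ on the two segments, ${\bf v} - {\bf u}$ has $\ell^1$-norm $2N$ and $\ell^\infty$-distance to $(N,N)$ in the interval $[(2|h|-2)N^{2/3}, (2|h|+2)N^{2/3}]$. By the argument used to prove Proposition~\ref{reg_shape} (via Propositions~\ref{slope_z} and~\ref{time_const}), I get a fixed $h_0 \ge 2$ and a constant $c_1 > 0$ for which $\Lambda({\bf v} - {\bf u}) \le 2N f_d - c_1 h^2 N^{1/3}$ whenever $|h| \ge h_0$, while for $|h| < h_0$ the plain concavity bound $\Lambda({\bf v}-{\bf u}) \le 2N f_d$ from Proposition~\ref{time_const1} suffices. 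Setting $\oldc{trans_fluc_loss_c1} := c_1 / 2$, the target event forces some pair to satisfy
\[
\log Z_{{\bf u}, {\bf v}} - \Lambda({\bf v}-{\bf u}) \;\ge\; \tfrac12 (\oldc{trans_fluc_loss_c1} h^2 + t) N^{1/3},
\]
provided $|h|^3 + t^{3/2} \gg \log N$ (which absorbs the $O(\log N)$ correction from the union bound). Proposition~\ref{up_ub} gives per-pair probability at most $\exp(-c_2 \min\{(\oldc{trans_fluc_loss_c1} h^2 + t)^{3/2}, (\oldc{trans_fluc_loss_c1} h^2 + t) N^{1/3}\})$. The elementary inequalities $(a+b)^{3/2} \ge \tfrac12(a^{3/2}+b^{3/2})$ and $h^2 N^{1/3} \ge |h|^3$ (valid since $|h| \le N^{1/3}/2$) convert this to $\exp(-c_3(|h|^3 + \min\{t^{3/2}, tN^{1/3}\}))$, and the $N^{4/3}$ union-bound factor is absorbed into the exponent.

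The remaining case is the small-exponent regime $|h|^3 + \min\{t^{3/2}, tN^{1/3}\} \le K \log N$. Here I would choose $\oldc{trans_fluc_loss_c2}$ small enough that the right-hand side of the claim stays above the probability, using that $\mathbb{P}(\log Z^\ast - 2Nf_d \ge -cN^{1/3})$ is uniformly less than $1$ when $c > 0$ is chosen large, a consequence of the non-random fluctuation bound of Section~\ref{non_rand} (negativity of $\mathbb{E}\log Z^\ast - 2Nf_d$ at scale $N^{1/3}$). The main technical step is the curvature comparison $\oldc{trans_fluc_loss_c1} < c_1$, guaranteed by Proposition~\ref{time_const}; I do not expect conceptual obstacles, as everything reduces to Proposition~\ref{up_ub}, but the bookkeeping between the two regimes (large vs.\ small exponent, and $|h| \lessgtr h_0$) requires some care.
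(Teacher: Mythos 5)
There are two genuine gaps, and both occur exactly where your reduction to Proposition \ref{up_ub} by a union bound over endpoint pairs loses control. First, the entropy factor: the union over $\asymp N^{4/3}$ pairs produces a polynomial prefactor that can only be absorbed when $|h|^3+\min\{t^{3/2},tN^{1/3}\}$ exceeds a large multiple of $\log N$, and your fallback does not close the complementary window $C_0\le |h|^3+\min\{t^{3/2},tN^{1/3}\}\le K\log N$. In that window the right-hand side of the claim is polynomially small in $N$ no matter how small you take $\oldc{trans_fluc_loss_c2}$, so an argument that the probability is ``uniformly less than $1$'' cannot suffice; genuine polynomial decay is still required and is not supplied. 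Moreover the fact you invoke is stated backwards: $\mathbb{P}(\log Z^{\ast}-2Nf_d\ge -cN^{1/3})$ \emph{increases} to $1$ as $c$ grows. What you would actually need is an interval-to-interval left-tail statement (that $\log Z^{\ast}$ falls below $2Nf_d-\delta N^{1/3}$ with probability bounded below), and Theorem \ref{nr_lb} only controls the point-to-point expectation; in this paper the interval versions (Propositions \ref{iti_bound} and \ref{itl_bound}) are proved \emph{downstream} of the present proposition, so invoking them here would be circular.

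Second, the directional claim is false: for $|h|$ of order $N^{1/3}$ the anti-diagonal displacement of ${\bf v}-{\bf u}$ is of order $N$, so the directions degenerate toward ${\bf e}_1$ or ${\bf e}_2$ rather than staying in a fixed compact subset of the open quadrant. There Proposition \ref{up_ub} (whose constants depend on the distance $\epsilon$ to the axes) and the quadratic curvature inputs (Propositions \ref{slope_z}, \ref{time_const}, \ref{reg_shape}, all requiring the tilt to be at most $\epsilon_0 N^{1/3}$ or the argument to lie in $[-\epsilon,\epsilon]$) are unavailable, so your per-pair bound and the deficit $c_1h^2N^{1/3}$ are not justified for nearly axial $h$. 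The paper's proof handles both issues at once with a step-back: writing $\log Z^{\max}_{I,J^h}\le \log Z_{{\bf a},{\bf b}}-\log Z_{{\bf a},{\bf u}^{\ast}}-\log Z_{{\bf v}^{\ast},{\bf b}}$ with ${\bf a},{\bf b}$ obtained by shifting the midpoints by $\mp w_0(N,N)$ for a large constant $w_0$ keeps the direction ${\bf b}-{\bf a}$ uniformly close to the diagonal for all admissible $h$, yields the $h^2N^{1/3}$ curvature loss there, and replaces the union bound by a single right-tail application of Proposition \ref{up_ub} plus two left-tail applications of Proposition \ref{low_ub} (the random endpoints ${\bf u}^{\ast},{\bf v}^{\ast}$ being handled via independence and a supremum over deterministic points), with no entropy factor, hence valid for all $t\ge 0$ and all $h$. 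To salvage your route you would need at least a coarse-graining argument to remove the entropy loss in the intermediate window and a separate treatment of nearly axial directions.
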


Then, by essentially a union bound, we obtain the following proposition.

\begin{proposition}\label{trans_fluc_loss2}
There exist positive constants $\newc\label{trans_fluc_loss2_c1}, \newc\label{trans_fluc_loss2_c2},  N_0$ such that for each $N\geq N_0$, $t\geq 1$ and $s\geq 0$, we have
$$\mathbb{P}\Big(\log Z_{\mathcal{L}^{sN^{2/3}}_0, \mathcal{L}_{N}\setminus\mathcal{L}_{N}^{(s+t)N^{2/3}}}   - 2N{f_d} \geq -\oldc{trans_fluc_loss2_c1}t^2N^{1/3}\Big) \leq e^{-\oldc{trans_fluc_loss2_c2}t^3}.$$
\end{proposition}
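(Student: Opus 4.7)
My plan is a union bound after decomposing the two regions $\mathcal{L}_0^{sN^{2/3}}$ and $\mathcal{L}_N\setminus\mathcal{L}_N^{(s+t)N^{2/3}}$ into anti-diagonal segments of radius $N^{2/3}$, reducing the estimate to Proposition \ref{trans_fluc_loss} applied pair-wise. Write $\mathcal{L}_0^{sN^{2/3}}=\bigcup_{j_1\in I}A_{j_1}$ and $\mathcal{L}_N\setminus\mathcal{L}_N^{(s+t)N^{2/3}}=\bigcup_{j_2'\in J}B_{j_2'}$, where $A_{j_1}=\mathcal{L}_{(-j_1N^{2/3},\,j_1N^{2/3})}^{N^{2/3}}$ and $B_{j_2'}=\mathcal{L}_{(N-2j_2'N^{2/3},\,N+2j_2'N^{2/3})}^{N^{2/3}}$, with $|j_1|\le s/2$ and $|j_2'|\ge (s+t)/2$; these give $|I|=O(s+1)$ and $|J|=O(N^{1/3})$. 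Since the full partition function is a sum over pairs, $\log Z_{\mathcal{L}_0^{sN^{2/3}},\,\mathcal{L}_N\setminus\mathcal{L}_N^{(s+t)N^{2/3}}}\le\log(|I|\,|J|)+\max_{j_1,j_2'}\log Z_{A_{j_1},B_{j_2'}}$, and $\log(|I|\,|J|)=O(\log N)$ is negligible compared with $t^2N^{1/3}$ (for $t\ge 1$ and $N$ large). It therefore suffices to control the probability that $\max_{j_1,j_2'}\log Z_{A_{j_1},B_{j_2'}}-2Nf_d\ge-C'\,t^2N^{1/3}$ for some $C'$ slightly larger than the target constant $\oldc{trans_fluc_loss2_c1}$.

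By translation invariance, $Z_{A_{j_1},B_{j_2'}}$ has the distribution of the partition function in Proposition \ref{trans_fluc_loss} with parameter $h=j_2'-j_1/2$, and the constraints on $j_1,j_2'$ force $|h|\ge t/2$. I apply Proposition \ref{trans_fluc_loss} with the free parameter $\tau=\oldc{trans_fluc_loss_c1} h^2-C' t^2$; choosing $\oldc{trans_fluc_loss2_c1}$ (hence $C'$) small enough guarantees $\tau\ge\oldc{trans_fluc_loss_c1} h^2/2>0$, so the exponent $|h|^3+\min\{\tau^{3/2},\tau N^{1/3}\}$ is at least a positive multiple of $|h|^3$, yielding
\[
\mathbb{P}\bigl(\log Z_{A_{j_1},B_{j_2'}}-2Nf_d\ge -C't^2N^{1/3}\bigr)\le e^{-c|h|^3}
\]
for some constant $c>0$.

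The crucial combinatorial input is a careful count of the pairs with a given $|h|=k$. Fixing $j_2'>0$ and the sign of $h$, the relation $j_2'=j_1/2+h$, together with $j_1\in[-s/2,s/2]$ and $j_2'\ge(s+t)/2$, admits at most $\min(2k-s/2-t+1,\,s+1)$ values of $j_1$; this count grows linearly from $1$ at $k=s/4+t/2$ (the minimal admissible $|h|$) to $s+1$ at $k=3s/4+t/2$, then plateaus, with a parallel count for $j_2'<0$. Summation yields
\[
\sum_{k\ge s/4+t/2}\min(k,\,s)\,e^{-ck^3}\;\lesssim\;e^{-c(s/4+t/2)^3}+s\,e^{-c(3s/4+t/2)^3}\;\le\;e^{-\oldc{trans_fluc_loss2_c2}\,t^3},
\]
where I use $(s/4+t/2)^3\ge\max(s^3/64,\,t^3/8)$ together with the elementary absorption $s\le e^{cs^3/100}$ valid for all $s\ge 1$ (and $s^3\ge t^3$ whenever the second summand is non-negligible). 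The degenerate case $s<1$ is handled by enlarging to $s=1$ at the start.

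\textbf{Main obstacle.} The delicate point is the combinatorics above: a naive union bound assigning the factor $|I|=O(s)$ uniformly to every $|h|$-shell would overwhelm the tail bound when $s\gg t^3$. Uniform-in-$s$ control is possible only because the endpoint constraint $|j_2'|\ge(s+t)/2$ forces $j_1$ into a narrow subregion of $I$ in the dominant near-minimum regime $|h|\sim s/4+t/2$, producing only an $O(1)$ count there; beyond this leading regime, where the count reaches $O(s)$, the cubic exponential $e^{-ck^3}$ has already decayed fast enough to absorb the polynomial factor.
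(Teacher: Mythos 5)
Your argument is correct and follows the same overall route as the paper's proof: cover both boundary sets by anti-diagonal segments of radius $N^{2/3}$, replace the partition function by a maximum over segment pairs at the cost of a logarithmic prefactor, use translation invariance to put each pair into the framework of Proposition \ref{trans_fluc_loss} with effective off-diagonal displacement $|h|\gtrsim t$, and sum the resulting tails $e^{-c|h|^3}$. Where you genuinely differ is in the union-bound bookkeeping. The paper lumps pairs together and quotes the bound $e^{-C(h+k)^3}$ with threshold $-\tfrac{\oldc{trans_fluc_loss_c1}}{5}(h+k)^2N^{1/3}$ for all pairs, which is transparent for pairs whose start and end lie on opposite sides of the diagonal (effective displacement $h+k$) but is written loosely for same-side pairs, whose effective displacement is $h-k$ and which dominate; the paper's closing sum over ``$h,k\ge t$'' does not display how the $O(s)$ multiplicity of starting segments is controlled. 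Your per-shell count --- that at effective displacement $d$ the number of admissible pairs is at most $\min\bigl(O(1+d-t/2),\,O(s)\bigr)$, because the constraint that the start lie inside the interval while the end lies outside the excluded window pins the start index near one edge when $d$ is near its minimum --- is exactly the observation that makes the estimate uniform in $s$, and is the most delicate point of the proof; you have it right. Two small bookkeeping slips, neither of which affects validity: with your centers $(-j_1N^{2/3},j_1N^{2/3})$ the covering of $\mathcal{L}_0^{sN^{2/3}}$ requires $|j_1|\lesssim s$ rather than $|j_1|\le s/2$ (or else take centers spaced by $2N^{2/3}$), which changes the minimal shell from $s/4+t/2$ to $\approx t/2$ but leaves the linear growth of the shell counts from $O(1)$, and hence the summation, intact; and the absorption ``$s\le e^{cs^3/100}$ for all $s\ge1$'' is false for moderate $s$ and small $c$ --- use instead $\sup_{s\ge0}s\,e^{-cs^3}<\infty$ together with $(a+b)^3\ge a^3+b^3$, which gives the same conclusion with a multiplicative constant that is then absorbed by adjusting $\oldc{trans_fluc_loss2_c2}$ (and taking $t_0$ large, with the range $1\le t\le t_0$ handled by a further adjustment of constants, as the paper also implicitly does).
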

Following this, we have the next proposition which states that paths with high fluctuation tend to have much small free energy.
\begin{theorem}\label{trans_fluc_loss3}
There exist positive constants $\newc\label{trans_fluc_loss3_c1}, \newc\label{trans_fluc_loss3_c2},   N_0$ such that for each $N\geq N_0$, $1 \leq t \leq  N^{1/3}$ and $0< s < e^{t}$,  we have
$$\mathbb{P}\Big(\log Z^{\textup{exit}, (s+t)N^{2/3}}_{\mathcal{L}_{0}^{sN^{2/3}}, \mathcal{L}_N^{sN^{2/3}}} - 2N{f_d} \geq -\oldc{trans_fluc_loss3_c1}t^2N^{1/3}\Big) \leq e^{-\oldc{trans_fluc_loss3_c2}t^{3}}.$$
\end{theorem}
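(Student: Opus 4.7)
The plan is to reduce, by reflecting across the diagonal, to bounding $Z^{\textup{exit}^+}$, the partition function of paths exiting through the upper-left lateral side of $R^{(s+t)N^{2/3}}_{0,N}$; the other side is handled identically. Since path increments change the transversal coordinate by $\pm 1$, every such path has a unique first-exit antidiagonal $\mathcal{L}_a$, on which it must pass through the specific point $v_a^+=(a-k,a+k)$ with $k=\lceil (s+t)N^{2/3}\rceil$. Splitting each path at $v_a^+$ yields
\begin{equation*}
Z^{\textup{exit}^+}\;\le\;\sum_a\, Z_{\mathcal{L}_0^{sN^{2/3}},\,v_a^+}\cdot Z_{v_a^+,\,\mathcal{L}_N^{sN^{2/3}}}.
\end{equation*}
A priori restrictions arise since the path cannot reach transversal distance $(s+t)N^{2/3}$ in fewer than $tN^{2/3}$ antidiagonal steps, so only $a\in [tN^{2/3},\,N-tN^{2/3}]$ contributes.

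Next I would apply Proposition \ref{trans_fluc_loss2} to each factor after rescaling. For the first, enlarge $Z_{\mathcal{L}_0^{sN^{2/3}},\,v_a^+}\le Z_{\mathcal{L}_0^{sN^{2/3}},\,\mathcal{L}_a\setminus\mathcal{L}_a^{(s+t)N^{2/3}}}$ and invoke Proposition \ref{trans_fluc_loss2} with $N\mapsto a$ and $s,t\mapsto s(N/a)^{2/3},\,t(N/a)^{2/3}$, yielding
\begin{equation*}
\mathbb{P}\bigl(\log Z_{\mathcal{L}_0^{sN^{2/3}},\,v_a^+} - 2af_d \ge -c_1 t^2 N^{4/3}/a\bigr) \;\le\; e^{-c_2 t^3 (N/a)^2}.
\end{equation*}
The symmetric estimate for the second factor (with $a$ replaced by $N-a$, via the reversed polymer, which is distributionally identical) combines with the first using $1/a+1/(N-a)\ge 4/N$ to give, per $a$, that the product's log is at most $2Nf_d - 4c_1 t^2 N^{1/3}$ except with probability $\le 2e^{-c_2 t^3(N/\min(a,N-a))^2}$.

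The hard part will be executing the union bound over $a$ without incurring an unacceptable factor of $N$. A naive summation over $a\in [tN^{2/3}, N-tN^{2/3}]$ of $2e^{-c_2 t^3(N/\min(a,N-a))^2}$ is dominated by $a$ near the endpoints, where the per-$a$ failure probability is only $e^{-c_2 t^3}$ while the number of such $a$'s is $\Theta(N)$, producing the insufficient bound $\sim Ne^{-c_2 t^3}$. To overcome this, I plan to partition $[1,N]$ dyadically according to $\min(a,N-a)\in [2^j,2^{j+1}]$ and, within each dyadic block, aggregate the per-$a$ exit partition functions into a single line-to-line partition function from $\mathcal{L}_0^{sN^{2/3}}$ to the block's segment of the lateral side; a suitably rescaled variant of Proposition \ref{trans_fluc_loss2} then controls this aggregate directly, eliminating the interior union bound. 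Summing geometrically over the $O(\log N)$ dyadic scales should yield the required $e^{-C_2 t^3}$ bound, with the constraint $s<e^t$ ensuring that the start-width corrections remain negligible at all relevant scales.
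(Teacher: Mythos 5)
Your decomposition by the first-exit antidiagonal and the rescaled application of Proposition~\ref{trans_fluc_loss2} to the two factors are sound, and you rightly identify that the naive union bound over $a\in[tN^{2/3},N-tN^{2/3}]$ produces an unabsorbable factor of $N$. (Incidentally, the combined two-factor bound should read $2e^{-c_2t^3(N/\max(a,N-a))^2}$, not $\min$: a union bound is controlled by the \emph{larger} of the two failure probabilities, which corresponds to the longer leg; the $\min$ version makes the bound look artificially strong near the endpoints.) The proposed dyadic aggregation, however, does not close this gap. The quantity $Z_{\mathcal{L}_0^{sN^{2/3}},V_j}\cdot Z_{V_j,\mathcal{L}_N^{sN^{2/3}}}$ (product of sums over $a$ in the block) overcounts the genuine block contribution $\sum_a Z_{\mathcal{L}_0^{sN^{2/3}},v_a^+}\cdot Z_{v_a^+,\mathcal{L}_N^{sN^{2/3}}}$: the $a$ that maximizes the forward factor and the $a$ that maximizes the backward factor sit at opposite ends of the block (whenever $f_d\neq 0$), so the dominant cross-term has $\log$ exceeding that of the true sum by $\approx 2^{j+1}|f_d|$, which for the large blocks is of order $N$ and dwarfs the target deviation scale $t^2N^{1/3}$. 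Relatedly, there is no single shape-function centering for the aggregate: the terms in $V_j$ span ``times'' $a\in[N-2^{j+1},N-2^j]$ whose means $2af_d$ differ by $\Theta(2^j)$. Proposition~\ref{trans_fluc_loss2} and everything else available control targets lying on a single antidiagonal $\mathcal{L}_a$; nothing in the paper gives a tail bound for a target that is a long diagonal-direction segment of the lateral boundary, so ``a suitably rescaled variant of Proposition~\ref{trans_fluc_loss2}'' is not available. Finally, even granting such a bound, the block with $\min(a,N-a)\approx N/2$ still carries failure probability $\gtrsim e^{-ct^3}$, so summing over the $O(\log N)$ dyadic blocks leaves a $\log N$ prefactor that cannot be absorbed for small $t$.

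The paper avoids the $N$-factor with a structurally different decomposition: instead of classifying exit paths by first-exit time, it classifies by the finest dyadic \emph{scale} $k$ at which the path first oversteps a slowly growing family of transversal thresholds $(s+t\sum_{i\le k}2^{-i/5})N^{2/3}$ at the mesh $\{lN/2^{k+1}\}$. A path in class $A_k$ is forced to make a transversal excursion of size $\approx t\,2^{-k/5}N^{2/3}$ inside a time window of length $N/2^k$; in rescaled coordinates this is a deviation parameter $t\,2^{7k/15}$, yielding via Proposition~\ref{trans_mid} a per-location failure probability $e^{-Ct^3 2^{ck}}$ that decreases geometrically in $k$. This geometric gain absorbs the $2^k$ mesh locations at scale $k$, and the sum over $k$ converges to $e^{-C't^3}$. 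It is this scale-indexed chaining of the \emph{transversal profile}, not a spatial dyadic blocking of the exit time, that eliminates the factor of $N$.
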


From this, we have the following corollary which is a similar bound for point-to-point free energy that is slightly off the diagonal direction.
\begin{corollary}\label{trans_fluc_loss4}
There exist positive constants $\newc\label{trans_fluc_loss4_c1}, \newc\label{trans_fluc_loss4_c2},  N_0$ such that for each $N\geq N_0$, $1 \leq t \leq  N^{1/3}$ and $0< s < t/10$,  we have
$$\mathbb{P}\Big(\log Z^{\textup{exit}, {tN^{2/3}}}_{(-sN^{2/3}, sN^{2/3}), N}  - 2N{f_d} \geq -\oldc{trans_fluc_loss4_c1}t^2N^{1/3}\Big) \leq e^{-\oldc{trans_fluc_loss4_c2}t^{3}}.$$
\end{corollary}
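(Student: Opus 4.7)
My plan is to deduce Corollary \ref{trans_fluc_loss4} directly from Theorem \ref{trans_fluc_loss3} through a geometric comparison. The intuition is that the off-diagonal starting point $\mathbf{a}=(-sN^{2/3},sN^{2/3})$ shifts the initial anti-diagonal position by only $sN^{2/3}$, which is a small perturbation of the exit threshold $tN^{2/3}$ because $s\le t/10$.

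The first step is to parameterize an arbitrary point inside the slanted parallelogram $R^{tN^{2/3}}_{\mathbf{a},(N,N)}$ as $\mathbf{p}=\mathbf{a}+u\bigl((N,N)-\mathbf{a}\bigr)+v(-1,1)$ with $u\in[0,1]$ and $|v|\le tN^{2/3}$. A direct computation gives the signed anti-diagonal distance $(p_2-p_1)/2=(1-u)sN^{2/3}+v$. Hence any up-right path from $\mathbf{a}$ to $(N,N)$ exiting this parallelogram through a side parallel to $(N,N)-\mathbf{a}$ must visit a lattice point with $|v|>tN^{2/3}$; since $(1-u)sN^{2/3}\in[0,sN^{2/3}]$ and $sN^{2/3}<tN^{2/3}$, this lattice point has absolute anti-diagonal distance strictly greater than $(t-s)N^{2/3}$ from the main diagonal. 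Because $\mathbf{a}\in\mathcal{L}_0^{sN^{2/3}}$ and $(N,N)\in\mathcal{L}_N^{sN^{2/3}}$, and because exceeding that absolute anti-diagonal distance is precisely the exit event for the diagonal parallelogram $R^{(t-s)N^{2/3}}_{(0,0),(N,N)}$, I obtain the pointwise domination
\[
Z^{\textup{exit},\,tN^{2/3}}_{\mathbf{a},(N,N)} \;\le\; Z^{\textup{exit},\,(t-s)N^{2/3}}_{\mathcal{L}_0^{sN^{2/3}},\,\mathcal{L}_N^{sN^{2/3}}}.
\]

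Next I would apply Theorem \ref{trans_fluc_loss3} with its parameters set to $s'=s$ and $t'=t-2s$, so that $(s'+t')N^{2/3}=(t-s)N^{2/3}$ and the right-hand side above matches the quantity controlled by the theorem. The hypothesis $s\le t/10$ yields $t'\ge \tfrac{4}{5}t$, so the conditions $1\le t'\le N^{1/3}$ and $0<s'<e^{t'}$ are satisfied as soon as $t$ exceeds some fixed constant $t_0$ (the inequality $s\le t/10<e^{4t/5}$ is easy for $t\ge 1$). Theorem \ref{trans_fluc_loss3} then gives
\[
\mathbb{P}\bigl(\log Z^{\textup{exit},\,(t-s)N^{2/3}}_{\mathcal{L}_0^{sN^{2/3}},\,\mathcal{L}_N^{sN^{2/3}}}-2Nf_d\ge -\oldc{trans_fluc_loss3_c1}(t-2s)^2 N^{1/3}\bigr)\le e^{-\oldc{trans_fluc_loss3_c2}(t-2s)^3}.
\]
Since $(t-2s)^2\ge \tfrac{16}{25}t^2$ and $(t-2s)^3\ge \tfrac{64}{125}t^3$, choosing $\oldc{trans_fluc_loss4_c1}=\tfrac{16}{25}\oldc{trans_fluc_loss3_c1}$ and $\oldc{trans_fluc_loss4_c2}=\tfrac{64}{125}\oldc{trans_fluc_loss3_c2}$ converts the above into the statement of the Corollary for $t\ge t_0$. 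The bounded regime $1\le t\le t_0$ is handled by a trivial probability bound, absorbing the loss into the constants.

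I do not anticipate any substantial obstacle; the only real content is the geometric identification of the two exit events in the first step, and once that is done the corollary is a direct translation of Theorem \ref{trans_fluc_loss3}. The only minor subtlety to monitor is the matching of the parameterizations of the slanted and diagonal parallelograms, which the explicit computation of $(p_2-p_1)/2$ makes transparent.
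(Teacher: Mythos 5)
Your proof is correct and follows essentially the same route as the paper: both arguments dominate $Z^{\textup{exit},\,tN^{2/3}}_{(-sN^{2/3},sN^{2/3}),N}$ by an interval-to-interval exit partition function over a diagonal parallelogram (the paper via the containment $R_{0,N}^{(s+t/2)N^{2/3}}\subset R_{(-sN^{2/3},sN^{2/3}),N}^{tN^{2/3}}$, you via the anti-diagonal displacement computation giving width $(t-s)N^{2/3}$) and then invoke Theorem~\ref{trans_fluc_loss3} with rescaled parameters, absorbing harmless factors into the constants. The only cosmetic caveat is that for $t$ in a bounded range your ``trivial probability bound'' remark is really a constant-adjustment matter rather than a literal bound by $1$, but the paper's own choice $t'=t/2$ glosses the same small-$t$ issue.
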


\subsubsection{Interval-to-line free energy}
In our work, we will also need an upper bound for the right tail of the interval-to-line free energy. 
\begin{theorem}\label{ptl_upper}
There exist positive constants $\newc\label{ptl_upper_c1}, \newc\label{ptl_upper_c2}, N_0$ such that for each $N \geq N_0$, $t\geq 1$ and $1\leq h \leq e^{\oldc{ptl_upper_c1}{\min\{t^{3/2}, tN^{1/3}\}}}$, we have 
$$\mathbb{P}\Big(\log Z_{\mathcal{L}_0^{hN^{2/3}},\mathcal{L}_{N}}  - 2N{f_d} \geq tN^{1/3}\Big) \leq 
e^{-\oldc{ptl_upper_c2}\min\{t^{3/2}, tN^{1/3}\}}.  
$$
\end{theorem}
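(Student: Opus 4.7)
The plan is to decompose $\mathcal{L}_0^{hN^{2/3}}$ and $\mathcal{L}_N$ into anti-diagonal segments of length $2N^{2/3}$ and apply the segment-to-segment tail bound of Proposition \ref{trans_fluc_loss} to each pair together with a union bound. For $i\in\Z$ with $|i|\lesssim h$ let $I_i$ be the translate of $\mathcal{L}_0^{N^{2/3}}$ centered at $2iN^{2/3}({\bf e}_2-{\bf e}_1)$, and for $j\in\Z$ let $J_j$ be the analogous translate on $\mathcal{L}_N$ (only $|j|\lesssim N^{1/3}$ meet $\Z^2_{\geq 0}$). Then
$$Z_{\mathcal{L}_0^{hN^{2/3}},\mathcal{L}_N}\le \sum_{i,j}Z_{I_i,J_j},$$
and by translation invariance each $Z_{I_i,J_j}$ has the law of the partition function in Proposition \ref{trans_fluc_loss} with transversal parameter $h_{ij}:=j-i$.

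Fix a threshold $a_{ij}\in\R$ for each pair and let $E_{ij}=\{\log Z_{I_i,J_j}\le 2Nf_d+a_{ij}N^{1/3}\}$; on $\bigcap_{i,j}E_{ij}$ we have $Z_{\mathcal{L}_0^{hN^{2/3}},\mathcal{L}_N}\le \sum_{i,j}e^{2Nf_d+a_{ij}N^{1/3}}$, so it suffices to choose $\{a_{ij}\}$ satisfying the size estimate $\sum_{i,j} e^{a_{ij} N^{1/3}} \le \tfrac12 e^{tN^{1/3}}$ and the failure estimate $\sum_{i,j}\mathbb{P}(E_{ij}^c)\le e^{-C\min\{t^{3/2},tN^{1/3}\}}$. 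The plan is to split into a central and a tail regime. In the central regime $|h_{ij}|\le\sqrt{Kt}$ (with $K$ a large constant to be fixed), set $a_{ij}=t-\delta$ where $\delta=O(\log(h\sqrt{t}))/N^{1/3}$ so that the $O(h\sqrt t)$ pairs fit into half the size budget; the hypothesis $\log h\le \oldc{ptl_upper_c1}\min\{t^{3/2},tN^{1/3}\}$ with $\oldc{ptl_upper_c1}$ small forces $\delta\le t/2$, so Proposition \ref{trans_fluc_loss} applied with parameter $s_{ij}=a_{ij}+\oldc{trans_fluc_loss_c1}h_{ij}^2\ge t/2$ yields $\mathbb{P}(E_{ij}^c)\le e^{-C\min\{t^{3/2},tN^{1/3}\}}$, and the union bound absorbs the $h\sqrt{t}$ factor by smallness of $\oldc{ptl_upper_c1}$. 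In the tail regime $|h_{ij}|>\sqrt{Kt}$, set $a_{ij}=-(\oldc{trans_fluc_loss_c1}/2)h_{ij}^2$; the geometric sum $\sum e^{a_{ij}N^{1/3}}$ is dominated by $O(h)\,e^{-CKt N^{1/3}}$ and stays below $\tfrac12 e^{tN^{1/3}}$ once $K$ is large, while Proposition \ref{trans_fluc_loss} gives $\mathbb{P}(E_{ij}^c)\le e^{-C|h_{ij}|^3}$ whose sum is $O(h)\,e^{-C(Kt)^{3/2}}$, again acceptable.

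The main obstacle is the simultaneous balancing of four quantities: the number of central pairs $O(h\sqrt t)$, the per-pair tail $\min\{t^{3/2},tN^{1/3}\}$, the quadratic penalty $-c h_{ij}^2 N^{1/3}$ needed to converge the size sum in the tail regime, and the cubic exponent $|h_{ij}|^3$ needed to close the failure sum there. The only freedom is in the hypothesis constant $\oldc{ptl_upper_c1}$ and the internal constant $K$: choosing $\oldc{ptl_upper_c1}$ sufficiently small first and then $K$ sufficiently large resolves all four inequalities simultaneously and produces $\oldc{ptl_upper_c2}$ as a definite fraction of $\oldc{trans_fluc_loss_c2}$. The delicate point is that the prefactor $h$ can be enormous (exponential in $\min\{t^{3/2},tN^{1/3}\}$), yet the hypothesis provides \emph{exactly} the budget needed to absorb $\log h$ in both the central union bound and the tail size sum.
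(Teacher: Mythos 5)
Your proposal is correct, but it takes a different route from the paper. The paper reduces to the case $h=1$ by the same union bound over width-$N^{2/3}$ source segments that you use (absorbing the factor $h$ through the hypothesis on $\oldc{ptl_upper_c1}$), but it treats a single width-$N^{2/3}$ interval-to-line free energy by a step-back argument: if ${\bf p}^*$ is the maximizing starting point on $\mathcal{L}_0^{N^{2/3}}$, then $\log Z_{-N,\mathcal{L}_N}\ge \log Z_{-N,{\bf p}^*}+\log Z_{{\bf p}^*,\mathcal{L}_N}$, and the two pieces are controlled by the point-to-line right-tail bound (Proposition \ref{lem_ptl}, itself a union bound over target segments via Proposition \ref{trans_fluc_loss}) and the point-to-point left-tail bound (Proposition \ref{low_ub}), using that ${\bf p}^*$ is independent of the weights below $\mathcal{L}_0$. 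You instead decompose \emph{both} the source interval and the target line into $N^{2/3}$-segments and feed every pair directly into the curvature-penalized bound of Proposition \ref{trans_fluc_loss}, handling the logarithm of the sum of partition functions by an explicit per-pair threshold budget rather than the paper's device $\log Z_{A,B}\le \log Z^{\max}_{A,B}+C\log N$. What each buys: the paper's argument is shorter once \ref{lem_ptl} is in hand, but it needs the left tail \ref{low_ub} and the step-back construction; yours is self-contained from \ref{trans_fluc_loss} alone (in the spirit of the paper's own proof of Proposition \ref{trans_fluc_loss2}), at the price of the central/tail threshold bookkeeping and the simultaneous tuning of $\oldc{ptl_upper_c1}$ and $K$, which you carry out correctly. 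The only soft spot is the absorption of bounded prefactors (such as $C\sqrt{Kt}$ and the geometric-sum constants) into the exponent when $t$ is of order one; this is constant-chasing of exactly the kind the paper itself performs silently (e.g.\ in summing $e^{-C(|h|^3+\min\{t^{3/2},tN^{1/3}\})}$ over $h$), so it is not a genuine gap, but if you want the statement literally for all $t\ge1$ you should note that the final constant $\oldc{ptl_upper_c2}$ is decreased at the end to swallow these factors.
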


\subsubsection{Estimates for the constrained free energy} \label{con_bound}
When we constrain the paths, the free energy decreases because we are summing over a smaller collection of paths in the partition function.
The first theorem captures that the point-to-point free energy can not be too small if we constrain the paths to a fixed  rectangle of size order $N\times N^{2/3}$ which obeys the KPZ transversal fluctuation scale. 
Our second theorem gives a lower bound for the probability that a constrained free energy is large.

\begin{theorem}\label{wide_similar}
For each positive $a_0$, there exist positive constants $\newc\label{wide_similar_c1}, t_0$ such that for each $0< \theta \leq  100$, there exists a positive constant $ N_0$ such that for each $N\geq N_0$, $t\geq t_0$ and  ${\bf p} \in \mathcal{L}_N^{a_0\theta N^{2/3}}$, we have
$$\mathbb{P}\Big(\log Z^{\textup{in}, {\theta N^{2/3}}}_{0, {\bf p}} - 2N{f_d} \leq -tN^{1/3}\Big) \leq  \tfrac{\sqrt{t}}{\theta}e^{-\oldc{wide_similar_c1}\theta t}.$$
\end{theorem}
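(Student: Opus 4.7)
The plan is a multi-step concatenation argument driven by the choice $K := \lceil \sqrt{t}/\theta \rceil$. Take intermediate lattice points $\mathbf p_0 = 0, \mathbf p_1, \dotsc, \mathbf p_K = \mathbf p$ (lattice approximations of $(i/K)\mathbf p$). Since each $\mathbf p_i$ lies on the segment from $0$ to $\mathbf p$, the sub-parallelograms $R^{\theta N^{2/3}}_{\mathbf p_{i-1}, \mathbf p_i}$ all sit inside the big one $R^{\theta N^{2/3}}_{0, \mathbf p}$, so concatenating sub-paths that each stay in their sub-parallelogram yields a path in the big parallelogram, giving
$$Z^{\textup{in}, \theta N^{2/3}}_{0, \mathbf p} \geq \prod_{i=1}^{K} Z^{\textup{in}, \theta N^{2/3}}_{\mathbf p_{i-1}, \mathbf p_i}.$$
The arithmetic identity motivating this $K$ is $\theta N^{2/3} = (t\theta)^{1/3}(N/K)^{2/3}$: relative to each sub-segment of length $2N/K$, the tube of width $\theta N^{2/3}$ is wider than the natural fluctuation scale $(N/K)^{2/3}$ by the factor $(t\theta)^{1/3}$. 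I work in the nontrivial regime $\sqrt{t}/\theta \geq 1$, which is ensured by taking $t_0 \geq 10^4$, so that $K$ is a genuine integer.

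The heart of the proof is the per-segment estimate
$$\mathbb{P}\Big(\log Z^{\textup{in}, \theta N^{2/3}}_{\mathbf p_{i-1}, \mathbf p_i} \leq \Lambda(\mathbf p/K) - (t\theta)^{2/3}(N/K)^{1/3}\Big) \leq 2 e^{-C t\theta}.$$
I prove this by intersecting two high-probability events. First, Proposition \ref{low_ub} (valid in the near-diagonal direction of $\mathbf p_i - \mathbf p_{i-1}$, with $s = (t\theta)^{2/3} \leq \epsilon(N/K)^{2/3}$ in the regime $t \leq \epsilon N^{2/3}$) gives the left-tail bound $\log Z_{\mathbf p_{i-1}, \mathbf p_i} \geq \Lambda(\mathbf p/K) - (t\theta)^{2/3}(N/K)^{1/3}$ with probability at least $1 - e^{-Ct\theta}$. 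Second, Corollary \ref{trans_fluc_loss4} applied at effective tube width parameter $(t\theta)^{1/3}$ (in units of $(N/K)^{2/3}$) gives $\log Z^{\textup{exit}, \theta N^{2/3}}_{\mathbf p_{i-1}, \mathbf p_i} \leq \Lambda(\mathbf p/K) - C_1(t\theta)^{2/3}(N/K)^{1/3}$ for $C_1$ large, also with probability at least $1 - e^{-Ct\theta}$. On the intersection, $Z^{\textup{exit}}/Z \leq \exp\{-(C_1-1)(t\theta)^{2/3}(N/K)^{1/3}\} \leq 1/2$, where the exponent simplifies to $(C_1-1)\sqrt{t}\,\theta N^{1/3}$ and is made as large as desired by choosing $N_0$ depending on $\theta$. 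Hence $Z^{\textup{in}} = Z - Z^{\textup{exit}} \geq Z/2$ and $\log Z^{\textup{in}} \geq \log Z - \log 2$, the additive $\log 2$ being absorbed into constants.

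A union bound over the $K$ sub-segments produces a total failure probability of at most $2Ke^{-Ct\theta} \leq (2\sqrt{t}/\theta)e^{-Ct\theta}$, matching the target form. Summing the per-segment lower bounds on the good event, using positive homogeneity $\sum_i \Lambda(\mathbf p/K) = \Lambda(\mathbf p)$, I get
$$\log Z^{\textup{in}, \theta N^{2/3}}_{0, \mathbf p} \geq \Lambda(\mathbf p) - (t\theta)^{2/3}K^{2/3}N^{1/3} = \Lambda(\mathbf p) - tN^{1/3},$$
since $K^{2/3} = t^{1/3}\theta^{-2/3}$ makes $(t\theta)^{2/3}K^{2/3} = t$. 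Proposition \ref{reg_shape} gives $\Lambda(\mathbf p) \geq 2Nf_d - C'a_0^2\theta^2 N^{1/3}$, and since $\theta \leq 100$ this $C'a_0^2\theta^2$ correction is a bounded constant that can be absorbed by slightly enlarging $t_0$ and shrinking the constant in the exponent.

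The principal difficulty is the per-segment estimate: Proposition \ref{low_ub} controls only the unconstrained $Z$, and the elementary inequality $Z^{\textup{in}} \leq Z$ runs the wrong way. The key insight is that when the tube is much wider than the natural fluctuation scale, here by the factor $(t\theta)^{1/3}$, Corollary \ref{trans_fluc_loss4} shows the exit contribution is exponentially smaller than $Z$, so $Z^{\textup{in}}$ and $Z$ differ only by a bounded multiplicative factor. A secondary regime is $t > \epsilon N^{2/3}$, where Proposition \ref{low_ub} transitions to the $e^{-CtN^{1/3}}$ branch; this is handled analogously but with the smaller choice $K := N^{1/3}/\theta$, which a direct calculation shows also reproduces $e^{-Ct\theta}$ in the exponent and a prefactor $N^{1/3}/\theta \leq \sqrt{t}/\theta$ dominated by the target.
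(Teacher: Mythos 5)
Your argument is correct and is essentially the route the paper takes: break the path into $K=\sqrt{t}/\theta$ pieces of $\ell^1$-length $2N\theta/\sqrt{t}$, for each piece couple the unconstrained left-tail bound (Proposition \ref{low_ub}) with an exit bound (the paper cites Theorem \ref{trans_fluc_loss3} rather than Corollary \ref{trans_fluc_loss4}, but the effective parameter $(\theta t)^{1/3}$ is the same) to conclude $Z^{\textup{in}}$ and $Z$ are comparable, union-bound over segments, and absorb the curvature correction from Proposition \ref{reg_shape}. Your superadditivity-plus-intersection phrasing is an equivalent reorganization of the paper's ``$\log Z \le \log 2 + \max\{\log Z^{\textup{in}},\log Z^{\textup{exit}}\}$'' split. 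One small correction: the constant $C_1$ in the exit bound is fixed by the corollary, so you cannot ``choose $C_1$ large''; instead, as in the paper, one should use a free small constant $C'\le C_1/2$ in the unconstrained low-tail deficit (giving a per-segment deficit $C'(t\theta)^{2/3}(N/K)^{1/3}$), which makes the ratio $Z^{\textup{exit}}/Z$ exponentially small and then one renames $t$ at the end to recover the stated threshold $-tN^{1/3}$. With that adjustment your proof matches the paper's.
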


\begin{theorem}\label{c_up_lb}
For any positive constant $s$, there exist positive constants $\newc\label{c_up_lb_c1},t_0, N_0$ such that for each $N\geq N_0$, $t_0 \leq t \leq  N^{2/3}$, 
$$\mathbb{P}\Big(\log Z^{\textup{in}, {s N^{2/3}}}_{0, N}  - 2N{f_d} \geq tN^{1/3}\Big) \geq e^{-\oldc{c_up_lb_c1} t^{3/2}}.$$
\end{theorem}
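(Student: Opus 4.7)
The plan is to establish the lower bound in two steps. First I would prove a base case where the width parameter takes a sufficiently large universal value $s=\Theta$, so that the constrained partition function is (with high probability) comparable to the unconstrained one. Then I would extend to arbitrary fixed $s>0$ by subdividing the diagonal into many independent subsegments and applying the base case on each.

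\smallskip
\noindent\textbf{Base case (wide constraint).} For a sufficiently large universal constant $\Theta$ and $t_0\leq t\leq c_0\Theta^2$, I would combine Proposition~\ref{up_lb} with an exit estimate such as Theorem~\ref{trans_fluc_loss3} (or Corollary~\ref{trans_fluc_loss4}) applied at exit width $\Theta N^{2/3}$. Using the decomposition $Z_{0,N}=Z^{\textup{in},\Theta N^{2/3}}_{0,N}+Z^{\textup{exit},\Theta N^{2/3}}_{0,N}$ and intersecting $\mathcal{A}=\{\log Z_{0,N}-2Nf_d\geq (t+1)N^{1/3}\}$ with $\mathcal{B}=\{\log Z^{\textup{exit},\Theta N^{2/3}}_{0,N}-2Nf_d\leq -C\Theta^2 N^{1/3}\}$, on $\mathcal{A}\cap\mathcal{B}$ the exit contribution is exponentially smaller than $Z_{0,N}$, giving $Z^{\textup{in},\Theta N^{2/3}}_{0,N}\geq \tfrac12 Z_{0,N}$ and hence $\log Z^{\textup{in},\Theta N^{2/3}}_{0,N}-2Nf_d\geq tN^{1/3}$ for $N$ large. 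A union bound yields $\mathbb{P}(\mathcal{A}\cap\mathcal{B})\geq e^{-C(t+1)^{3/2}}-e^{-C\Theta^3}$, and the restriction $t\leq c_0\Theta^2$ makes the second term at most half the first, delivering probability $\geq \tfrac12 e^{-C't^{3/2}}$.

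\smallskip
\noindent\textbf{Subdivision to general $s$.} For arbitrary $s>0$ and $t\in[t_0(s),N^{2/3}]$, introduce diagonal points ${\bf p}_i=(iN/k,iN/k)$ for $0\leq i\leq k$, and choose $k=\max\bigl\{\lceil(\Theta/s)^{3/2}\rceil,\,\lceil(t/(c_0\Theta^2))^{3/2}\rceil\bigr\}$. Any path that visits each ${\bf p}_i$ and, on the $i$-th segment, stays within local width $\Theta(N/k)^{2/3}$ of its segment diagonal lies globally within width $\Theta(N/k)^{2/3}\leq sN^{2/3}$ of the diagonal. Concatenation yields the deterministic inequality
\[
Z^{\textup{in},sN^{2/3}}_{0,N}\;\geq\;\prod_{i=1}^{k}Z^{\textup{in},\Theta(N/k)^{2/3}}_{{\bf p}_{i-1},{\bf p}_i},
\]
and the factors on the right are independent. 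Applying the base case to each segment at local level $t_k=t/k^{2/3}$ (for which $t_0\leq t_k\leq c_0\Theta^2$ is built into the choice of $k$) and noting that the per-segment excesses sum to $tN^{1/3}$, we obtain
\[
\mathbb{P}\bigl(\log Z^{\textup{in},sN^{2/3}}_{0,N}-2Nf_d\geq tN^{1/3}\bigr)\;\geq\;\bigl(\tfrac12\,e^{-Ct_k^{3/2}}\bigr)^{k}\;=\;2^{-k}\,e^{-Ck t_k^{3/2}}\;=\;2^{-k}\,e^{-C t^{3/2}}.
\]
Because the chosen $k$ satisfies $k\leq C(s)(t^{3/2}+1)$ on the relevant range, the factor $2^{-k}$ can be absorbed into $e^{-C t^{3/2}}$ by enlarging the constant.

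\smallskip
\noindent\textbf{Main obstacle.} The technical heart lies in the bookkeeping of admissible ranges: ensuring that the chosen $k$ simultaneously satisfies the geometric containment $k\geq(\Theta/s)^{3/2}$, the per-segment admissibility $t_0\leq t_k\leq c_0\Theta^2$, and $N/k\geq N_0$, while producing a final exponent of order $t^{3/2}$ so that the accumulated $2^{-k}$ prefactor can be swallowed. The threshold $t_0(s)$ in the statement arises precisely from requiring $t_k\geq t_0$ in the base case, forcing $t\geq t_0\Theta/s$, and the $s$-dependence of the constants is unavoidable through this route. Beyond this bookkeeping, the proof uses only the one-point lower deviation bound from Section~\ref{mod_est} and the exit-tail estimate from Section~\ref{sec:Zpath}, both already available.
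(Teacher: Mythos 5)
Your core mechanism is the same as the paper's: cut the diagonal into order $t^{3/2}$ independent blocks, use superadditivity to dominate $Z^{\textup{in},sN^{2/3}}_{0,N}$ by a product of block partition functions that are only mildly constrained on the block scale, lower-bound each block by combining Proposition \ref{up_lb} with the exit estimate of Theorem \ref{trans_fluc_loss3}, and multiply. The organizational difference is that the paper asks each block only for an $O(1)$ upward deviation at the block scale, so each block event has probability bounded below by an absolute constant and the $e^{-Ct^{3/2}}$ comes purely from the number of blocks, whereas you ask each block for the proportional share $t_k=t/k^{2/3}$ and route everything through a wide-constraint base case; both give the right exponent.

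The one place your bookkeeping genuinely breaks is the top of the $t$-range, which you claim in full ($t\le N^{2/3}$). With $M=N/k$ the ratio of the requested block deviation to the block scale is $t_k/M^{2/3}=t/N^{2/3}$ for \emph{every} choice of $k$, so once $t$ exceeds a fixed fraction of $N^{2/3}$ the per-block application of Proposition \ref{up_lb} is outside its admissible window $t_k\le\epsilon_0 M^{2/3}$; equivalently, at $t\approx N^{2/3}$ your $k$ forces $M\approx(c_0\Theta^2)^{3/2}$, a constant that can fall below the $\Theta$-dependent cutoff the base case needs (both for Proposition \ref{up_lb} and for Theorem \ref{trans_fluc_loss3}, whose exit parameter $\asymp\Theta$ must satisfy $\Theta\lesssim M^{1/3}$). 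More structurally, any scheme whose total excess is a sum of per-block excesses, each capped by $\epsilon_0 M$, cannot exceed $\epsilon_0 N$ in total, while the target $tN^{1/3}$ exceeds this once $t>\epsilon_0 N^{2/3}$. So as written your argument covers $t\le\epsilon_1 N^{2/3}$ for a small constant $\epsilon_1$, not the full range; the remainder must be handled separately, either the paper's way (demand only $O(1)$ per block and absorb the resulting constant multiple of $tN^{1/3}$ into the theorem's constants) or by an elementary large-deviation bound in the regime $t\ge\epsilon_1 N^{2/3}$, e.g.\ forcing all $2N$ weights along a single staircase in the tube to exceed a level $b>f_d+1$, which costs only $e^{-CN}\ge e^{-C't^{3/2}}$ there. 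Apart from this, your outline (independence of the blocks, the decomposition $Z_{0,N}=Z^{\textup{in}}+Z^{\textup{exit}}$ in the base case, absorption of the $2^{-k}$ prefactor, and the $s$-dependence of $t_0$) is sound.
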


\subsubsection{Minimum and maximum estimate for the free energy}
Our first theorem is the box-to-point minimum bound. This was first proved in the zero-temperature setting which appeared in \cite{slowbondproblem} for the Poissonian LPP model, then later in \cite{timecorrflat} for the exponential LPP model. The proof follows the idea from Section C.4 of \cite{timecorrflat}.

\begin{theorem}\label{high_inf}
There exist positive constants $\newc\label{high_inf_c1}, N_0, t_0 $ such that 
for each $N \geq N_0$ and $t\geq t_0$, we have 
$$\mathbb{P}\Big(\min_{{\bf p} \in R^{N^{2/3}}_{0, 9N/10}} \log Z^{\textup{in}, R_{0,N}^{N^{2/3}}}_{{\bf p}, N} - (2N - |{\bf p}|_1){f_d} \leq - tN^{1/3}\Big) \leq e^{-\oldc{high_inf_c1}t}.$$
\end{theorem}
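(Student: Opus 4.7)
The plan is to discretize $R^{N^{2/3}}_{0, 9N/10}$ at the KPZ transversal scale $L = N^{2/3}$, apply the constrained one-point lower bound from Theorem \ref{wide_similar} at each grid point, and then extend the bound to every ${\bf p}$ in the parallelogram via the sub-multiplicativity identity $Z_{{\bf p}, N} \geq Z_{{\bf p}, {\bf q}} Z_{{\bf q}, N}$, valid whenever ${\bf p} \leq {\bf q}$ componentwise (paths from ${\bf p}$ to $N$ that are forced to pass through ${\bf q}$ contribute exactly $Z_{{\bf p}, {\bf q}} \cdot Z_{{\bf q}, N}$ under the paper's weight convention, with $Y_{{\bf q}}$ counted in $Z_{{\bf p}, {\bf q}}$ but not in $Z_{{\bf q}, N}$).

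First place grid points ${\bf q}_{k,j}$ on the anti-diagonals $\mathcal{L}_{kL}$ for $k = 1, \ldots, K = O(N^{1/3})$, restricted to a slightly inward version of $R^{N^{2/3}}_{0, 9N/10}$ (say with transverse offsets at most $N^{2/3}/2$), with $O(1)$ transverse positions per level, chosen so that every ${\bf p} \in R^{N^{2/3}}_{0, 9N/10}$ is componentwise dominated by some ${\bf q}_{k,j}$ at $\ell^1$-distance $O(L)$. For each ${\bf q}_{k,j}$, set $M := N - |{\bf q}_{k,j}|_1 / 2 \in [N/20, N]$ and use translation invariance $Z_{{\bf q}, N} \stackrel{d}{=} Z_{0, N - {\bf q}}$. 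Applying Theorem \ref{wide_similar} with $a_0$ large enough and $\theta$ a small constant (both chosen so that, after translating back, the theorem's parallelogram around $[{\bf q}, N]$ fits inside the original constraint region $R^{N^{2/3}}_{0, N}$) yields the per-grid-point estimate
$$\mathbb{P}\Big(\log Z^{\textup{in}, R^{N^{2/3}}_{0,N}}_{{\bf q}_{k,j}, N} - (2N - |{\bf q}_{k,j}|_1) f_d \leq -\tfrac{t}{2} N^{1/3}\Big) \leq C e^{-ct}.$$

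For any ${\bf p}$ with dominating grid point ${\bf q}_{k,j}$ and $|{\bf q}_{k,j} - {\bf p}|_1 \leq CL$, the decomposition gives $\log Z^{\textup{in}}_{{\bf p}, N} \geq \log Z_{{\bf p}, {\bf q}_{k,j}} + \log Z^{\textup{in}}_{{\bf q}_{k,j}, N}$, and the identity $|{\bf q}_{k,j} - {\bf p}|_1 + (2N - |{\bf q}_{k,j}|_1) = 2N - |{\bf p}|_1$ produces the correct shape-function centering. The ``local'' free energy $\log Z_{{\bf p}, {\bf q}_{k,j}}$ has fluctuation scale $L^{1/3} = N^{2/9} \ll N^{1/3}$, so controlling $\log Z_{{\bf p}, {\bf q}_{k,j}} - |{\bf q}_{k,j} - {\bf p}|_1 f_d \geq -(t/2) N^{1/3}$ is a large-deviation event at scale $s = \Theta(t N^{1/9})$. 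Proposition \ref{low_ub}'s left-tail bound $e^{-c s^{3/2}}$ combined with a union bound over the $O(L^2)$ lattice points in each sub-box yields a total local failure probability of order $N^{4/3} e^{-c t^{3/2} N^{1/6}}$, negligible compared to $e^{-ct}$ for $N$ large and $t \geq t_0$.

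Union-bounding the per-grid-point estimate over all $O(N^{1/3})$ choices of $(k, j)$ and adding the local bound gives an overall probability of at most $C N^{1/3} e^{-ct} +$ (negligible). The main obstacle is absorbing the polynomial factor $N^{1/3}$ into the target $e^{-C_1 t}$ uniformly in $N$ for \emph{all} $t \geq t_0$: for $t \geq C_0 \log N$ this is routine by losing a small constant in the exponent, but for bounded $t$ the per-grid-point rate $e^{-ct}$ does not beat $N^{1/3}$ directly. Resolving this requires either exploiting the flexibility of $\theta \in (0, 100]$ in Theorem \ref{wide_similar} adaptively across scales (so the per-point exponential rate grows when the polynomial penalty matters), or running the grid-and-interpolate argument on a multi-scale dyadic hierarchy as in Section C.4 of \cite{timecorrflat}, with each smaller scale handled recursively via the same sub-multiplicative decomposition, so that only a constant multiplicative factor is accumulated at each scale.
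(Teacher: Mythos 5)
Your proposal correctly identifies the main ingredients (Theorem \ref{wide_similar} for the per-grid-point constrained lower bound, sub-multiplicativity $\log Z^{\textup{in}}_{{\bf p},N}\ge\log Z^{\textup{in}}_{{\bf p},{\bf q}}+\log Z^{\textup{in}}_{{\bf q},N}$ for interpolation), and it correctly diagnoses the central obstacle: a single-scale grid of $O(N^{1/3})$ anti-diagonals gives a union-bound prefactor $N^{1/3}$ that cannot be absorbed into $e^{-C_1 t}$ for bounded $t$. You also correctly point to the multi-scale construction of \cite{timecorrflat}, Section~C.4, as the fix. However, the proposal stops there and does not carry out that construction, so as written it does not prove the statement.

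The paper does exactly the multi-scale tree argument you gestured at, but the mechanism is not the ``constant multiplicative factor per scale'' you describe. The paper builds a tree rooted at $(N,N)$ with levels $j=0,\dots,J$, where $J$ is chosen so that $N^{1/4}\le N8^{-J}\le N^{1/3-0.01}$. At level $j$, vertices sit on anti-diagonals at diagonal distance $\sim N8^{-j}$ from the root, the transverse grid spacing is $\sim 4^{-j}N^{2/3}$, and the deviation budget allocated to each tree edge is $2^{-j/5}tN^{1/3}$ (so the total allocation telescopes to $O(tN^{1/3})$). Rescaling to the level-$j$ fluctuation unit $(N8^{-j})^{1/3}=N^{1/3}2^{-j}$, this budget is $2^{4j/5}t$; applying Theorem~\ref{wide_similar} at scale $N8^{-j}$ with constant $\theta$ gives a per-edge failure probability of order $e^{-c\,2^{4j/5}t}$. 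The number of edges at level $j$ grows only like $32^{j+1}$, so $\sum_j 32^{j+1}e^{-c\,2^{4j/5}t}\lesssim e^{-C_1 t}$ because the exponent grows superexponentially in $j$ while the edge count grows only exponentially. This is the key that avoids the $N^{1/3}$ prefactor: the polynomial penalty is paid once (at $j=0$) and the deeper levels contribute a convergent geometric-like correction, not a repeated constant factor.

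Two further points. First, to bridge from an arbitrary ${\bf p}\in R^{N^{2/3}}_{0,N/16}$ to the nearest tree leaf at the finest scale $N8^{-J}\le N^{1/3-0.01}$, the paper does \emph{not} use another free energy estimate (which would be in a deep large-deviation regime there) but instead bounds the cost of a single deterministic path by $|{\bf p}-x^J_{\bf p}|_1\cdot\min_{{\bf z}\in R^{N^{2/3}}_{0,N}}\log Y_{\bf z}$ and uses the crude event $\mathcal{R}_{\textup{start}}$ controlling that minimum; the probability of its complement decays like $e^{-cN^{0.001}t}$. Your scheme uses Proposition~\ref{low_ub} for this local piece, which is plausible in principle, but note a second gap: the inequality $\log Z^{\textup{in}}_{{\bf p},N}\ge\log Z_{{\bf p},{\bf q}}+\log Z^{\textup{in}}_{{\bf q},N}$ with an \emph{unconstrained} $Z_{{\bf p},{\bf q}}$ is false, since paths from ${\bf p}$ to ${\bf q}$ can leave $R^{N^{2/3}}_{0,N}$ whenever ${\bf p}$ is near the boundary and $|{\bf p}-{\bf q}|_1$ is comparable to $N^{2/3}$ (the rectangle spanned by ${\bf p}$ and ${\bf q}$ has transverse extent up to $\sim\frac{3}{2}N^{2/3}$). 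One must use the constrained local partition function and hence a constrained left-tail bound, which circles back to Theorem~\ref{wide_similar}. Finally, the paper establishes the result first for ${\bf p}\in R^{N^{2/3}}_{0,N/16}$ and then covers $R^{N^{2/3}}_{0,9N/10}$ with an $O(1)$ union bound over translated copies, which costs nothing.
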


Lastly, we state a box-to-line maximum bound. 

\begin{theorem}\label{btl_upper}
There exist positive constants $\newc\label{btl_upper_c1}, N_0 , t_0$ such that for each $N \geq N_0$ and each $t\geq t_0 $, we have 
$$\mathbb{P}\Big(\max_{{\bf p} \in R^{N^{2/3}}_{0, 9N/10}}\log Z_{{\bf p},\mathcal{L}_{N}}  - (2N - |{\bf p}|_1) {f_d} \geq tN^{1/3}\Big) \leq 
e^{-\oldc{btl_upper_c1}t}.  
$$
\end{theorem}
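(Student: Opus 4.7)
My strategy is to decompose the 2D box into anti-diagonal slices and apply the interval-to-line right tail (Theorem~\ref{ptl_upper}) on each slice, followed by a union bound. Write $R^{N^{2/3}}_{0,\,9N/10}$ as the (essentially disjoint) union of the slices $\mathcal{L}_k^{N^{2/3}}$ for $k=0,1,\dots,\lfloor 9N/10\rfloor$. On a given slice, $|{\bf p}|_1=2k$ is constant, hence the recentering $(2N-|{\bf p}|_1)f_d=2(N-k)f_d$ does not depend on the particular ${\bf p}$, and the trivial inequality $\max_{{\bf p}\in\mathcal{L}_k^{N^{2/3}}}Z_{{\bf p},\mathcal{L}_N}\le Z_{\mathcal{L}_k^{N^{2/3}},\mathcal{L}_N}$ allows me to replace the event in the theorem by the union over $k$ of $\{\log Z_{\mathcal{L}_k^{N^{2/3}},\mathcal{L}_N}-2(N-k)f_d\ge tN^{1/3}\}$.

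To apply Theorem~\ref{ptl_upper}, set $M=N-k$. By translation invariance of the i.i.d.\ weights, $\log Z_{\mathcal{L}_k^{N^{2/3}},\mathcal{L}_N}$ has the same law as $\log Z_{\mathcal{L}_0^{hM^{2/3}},\mathcal{L}_M}$ with thickness $h=(N/M)^{2/3}$ and rescaled deviation $s=t(N/M)^{1/3}$. The cutoff $k\le 9N/10$ keeps $M\ge N/10$, so $h\le 10^{2/3}$ is a universal bound; also $s\ge t$ and $sM^{1/3}=tN^{1/3}$, so the hypothesis $h\le e^{C_1\min\{s^{3/2},sM^{1/3}\}}$ is met for $t$ above a fixed constant. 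Theorem~\ref{ptl_upper} then gives, uniformly in $k$,
\[
\mathbb{P}\!\left(\log Z_{\mathcal{L}_k^{N^{2/3}},\mathcal{L}_N}-2(N-k)f_d\ge tN^{1/3}\right)\le e^{-c\min\{t^{3/2},tN^{1/3}\}}.
\]
A union bound over the $\Theta(N)$ values of $k$ then yields a bound of the form $CN\,e^{-c\min\{t^{3/2},tN^{1/3}\}}$ on the probability in the statement of the theorem.

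The anticipated main obstacle will be converting this estimate into the form $e^{-\oldc{btl_upper_c1}\,t}$, since the $\min$ in the exponent together with the polynomial prefactor $N$ is not immediately of exponential-in-$t$ shape. For $t$ above a threshold of order $(\log N)^{2/3}$, the super-linear decay $e^{-ct^{3/2}}$ dominates the $\log N$ coming from the prefactor and the desired bound $e^{-ct/2}\le e^{-\oldc{btl_upper_c1}\,t}$ follows. In the complementary regime of moderate $t$ the crude union bound is not quite sufficient; there the target $e^{-\oldc{btl_upper_c1}\,t}$ is only mildly less than $1$, and one closes the gap either by taking $t_0$ sufficiently large and $\oldc{btl_upper_c1}$ sufficiently small (absorbing the mismatch into the constants) or by a finer dyadic grouping of the slices designed to reduce the polynomial prefactor in the union bound.
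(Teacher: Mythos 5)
There is a genuine gap at exactly the point you flag as the ``anticipated main obstacle,'' and neither of your proposed fixes closes it. Your per-slice estimate is fine: for each $k$ the bound $\mathbb{P}\bigl(\log Z_{\mathcal{L}_k^{N^{2/3}},\mathcal{L}_N}-2(N-k)f_d\ge tN^{1/3}\bigr)\le e^{-c\min\{t^{3/2},tN^{1/3}\}}$ does follow from Theorem~\ref{ptl_upper} as you describe. But the theorem asks for a bound $e^{-\oldc{btl_upper_c1}t}$ valid for \emph{all} $t\ge t_0$ and $N\ge N_0$ with $t_0,\oldc{btl_upper_c1}$ independent of $N$. After the union bound over $\Theta(N)$ slices your estimate is $CN\,e^{-ct^{3/2}}$, which for any fixed $t$ (say $t=t_0$) tends to infinity as $N\to\infty$; the bound is vacuous throughout the range $t_0\le t\lesssim(\log N)^{2/3}$, and that range grows with $N$. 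Hence ``taking $t_0$ sufficiently large and $\oldc{btl_upper_c1}$ sufficiently small'' cannot absorb the mismatch, and the ``finer dyadic grouping of the slices'' is precisely the missing argument: to beat the polynomial prefactor you must bound the maximum over many slices by a \emph{single} quantity rather than summing tail bounds, and you have not supplied such a device.

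This is why the paper argues differently. It uses a step-back argument: with ${\bf p}^*$ the maximizer over the box, superadditivity gives $\log Z_{-N,\mathcal{L}_N}\ge \log Z_{-N,{\bf p}^*}+\log Z_{{\bf p}^*,\mathcal{L}_N}$, so the event in the theorem forces either the single point-to-line free energy $\log Z_{-N,\mathcal{L}_N}$ to exceed $4Nf_d+\tfrac12 tN^{1/3}$ (controlled by the point-to-line right tail, Proposition~\ref{lem_ptl}/\ref{low_ub}) or some backward free energy $\log Z_{-N,{\bf p}}$, ${\bf p}$ in the box, to fall below its centering by $\tfrac12 tN^{1/3}$, which is controlled \emph{uniformly over the box} by the box-to-point minimum bound, Theorem~\ref{high_inf}. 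That theorem's multiscale tree construction is exactly the mechanism that replaces the naive union bound, and it is also the source of the exponent $t$ (rather than $t^{3/2}$) in the final estimate. If you want to complete your route, you would essentially have to reprove a statement of the strength of Theorem~\ref{high_inf} (or an analogous uniform-in-$N$ control of the maximum over the box); as written, your argument does not establish the theorem in the moderate-$t$ regime.
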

\begin{remark}
In both bounds of Theorem \ref{high_inf} and Theorem \ref{btl_upper}, the power $1$ on the exponent $t^1$ is not expected to be optimal. 
\end{remark}

\subsubsection{Variance bound for the free energy} \label{sec:Zvar}

We state the variance bound of the free energy which follows directly from the upper and lower bounds for the left and right tails. We omit its proof.
The upper bound was first shown in \cite{poly2} where the inverse-gamma polymer was first introduced.
\begin{theorem}\label{var}
There exist positive constants $\newc\label{var_c1}, \newc\label{var_c2}, N_0$ such that for each $N\geq N_0$, we have 
$$\oldc{var_c1}N^{2/3}\leq \Var(\log Z_{0,N})\leq \oldc{var_c2}N^{2/3}.$$
\end{theorem}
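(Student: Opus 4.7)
The plan is to deduce both bounds directly from the four tail estimates of Section \ref{mod_est}. Set $X = \log Z_{0,N} - 2N{f_d}$; since a deterministic shift does not affect the variance, it suffices to control $\Var(X)$.

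For the upper bound, I would use $\Var(X) \leq \mathbb{E}[X^2]$ and the standard identity
\[
\mathbb{E}[X^2] = \int_0^\infty 2t\,\mathbb{P}(|X| > t)\,dt = 2N^{2/3}\int_0^\infty u\,\mathbb{P}(|X|> uN^{1/3})\,du,
\]
after substituting $t = uN^{1/3}$. Combining Propositions \ref{up_ub} and \ref{low_ub} gives
$\mathbb{P}(|X| > uN^{1/3}) \leq 2e^{-C\min\{u^{3/2},\,uN^{1/3}\}}$ for $u \geq 1$, with a trivial bound for $u \leq 1$. Splitting the integral at $u = N^{2/3}$ (the crossover between the two regimes), the contribution from $u \in [0,N^{2/3}]$ is $\int_0^\infty 2u\,e^{-Cu^{3/2}}\,du$, a convergent constant, while the tail $u \geq N^{2/3}$ contributes an exponentially small amount after the change of variables $v = uN^{1/3}$. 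This yields $\mathbb{E}[X^2] \leq C N^{2/3}$.

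For the lower bound, the key observation is that both tails have matching lower estimates at scale $N^{1/3}$. By Proposition \ref{up_lb}, choosing a fixed $t_1 \geq t_0$ gives $\mathbb{P}(X \geq t_1 N^{1/3}) \geq e^{-\oldc{up_lb_c1} t_1^{3/2}} =: p_+ > 0$ for all $N$ large, and by Proposition \ref{ptp_low}, $\mathbb{P}(X \leq -t_1 N^{1/3}) \geq e^{-\oldc{ptp_low_c1} t_1^{3}} =: p_- > 0$ for all $N$ large (the upper constraint $t \leq \epsilon_0 N^{2/3}/(\log N)^2$ is satisfied since $t_1$ is fixed). Let $X'$ be an independent copy of $X$ and use
\[
\Var(X) = \tfrac{1}{2}\,\mathbb{E}[(X - X')^2] \geq \tfrac{1}{2}(2t_1 N^{1/3})^2 \cdot \mathbb{P}(X \leq -t_1 N^{1/3})\,\mathbb{P}(X' \geq t_1 N^{1/3}) \geq 2 t_1^2 p_+ p_- N^{2/3}.
\]
This produces the desired lower bound with $\oldc{var_c1} = 2 t_1^2 p_+ p_-$.

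There is essentially no obstacle: both steps are routine once the four moderate-deviation estimates are available. The only detail to check carefully is the split at $u = N^{2/3}$ in the upper-bound integral, where one must verify that the $\min$ inside the exponent matches with $u^{3/2}$ precisely when $u \leq N^{2/3}$; this is where the slower Gaussian-like decay of the left tail (Proposition \ref{low_ub}) for large $t$ is absorbed with room to spare.
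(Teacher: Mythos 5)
Your proposal is correct and follows exactly the route the paper intends: the paper omits the proof, stating only that the bound ``follows directly from the upper and lower bounds for the left and right tails,'' and your argument (tail integration via Propositions \ref{up_ub} and \ref{low_ub} for the upper bound, and the fixed-$t_1$ two-sided tail lower bounds from Propositions \ref{up_lb} and \ref{ptp_low} combined with the symmetrization identity $\Var(X)=\tfrac12\mathbb{E}[(X-X')^2]$ for the lower bound) is precisely that omitted computation, with the crossover at $u=N^{2/3}$ handled correctly.
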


\subsubsection{Nonrandom fluctuation} \label{non_rand}
Finally, we record a lower bound for the nonrandom fluctuation of the free energy in i.i.d.~inverse-gamma polymer. This result follows directly from the Tracy-Widom limit of the inverse-gamma model and the fact that the Tracy-Widom distribution has a negative mean. Our contribution here is  an alternative proof  (in Section \ref{sec:nr}) without relying on the Tracy-Widom limit. 

\begin{theorem}\label{nr_lb}
Let $\epsilon \in(0,\mu/2)$. There exist positive constants $\newc\label{nr_lb_c1}, N_0$ such that for each $N\geq N_0$ and $\rho \in [\epsilon, \mu-\epsilon]$, we have 
$$  2Nf(\rho) -  \mathbb{E}[\log Z_{0, 2N{\boldsymbol\xi}[\rho]}] \geq  \oldc{nr_lb_c1} N^{1/3}.$$
\end{theorem}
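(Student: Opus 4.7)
The plan is to compare $Z_0 := Z_{0,\mathbf{v}}$ (with $\mathbf{v} = 2N{\boldsymbol\xi}[\rho]$) against the increment-stationary inverse-gamma partition function $Z^\rho_{0,\mathbf{v}}$ from Section~\ref{stat_poly}, which uses the same bulk weights as $Z_0$ along with independent inverse-Gamma$(\mu-\rho)$ boundary weights $\{\tilde I_k\}$ on the $x$-axis and inverse-Gamma$(\rho)$ weights $\{\tilde J_l\}$ on the $y$-axis. Burke's stationarity of increments gives the exact identity $\mathbb{E}[\log Z^\rho_{0,\mathbf{v}}] = -M\Psi_0(\mu-\rho) - N'\Psi_0(\rho) = 2Nf(\rho)$, the last equality coming from \eqref{char_dir}--\eqref{define_f}. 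Moreover, the exit-point decomposition $Z^\rho_{0,\mathbf{v}} = Z_0 + \sum_{k\geq 1} W^+_k + \sum_{l\geq 1} W^-_l$, with $W^+_k = (\prod_{i=1}^k \tilde I_i)\, Z_{(k,0),\mathbf{v}}$, has all nonnegative summands, so $Z^\rho_{0,\mathbf{v}} \geq Z_0$ pointwise and $\mathbb{E}[\log Z_0] \leq 2Nf(\rho)$ trivially. The theorem therefore reduces to the quantitative free-energy-gap estimate
\[
\mathbb{E}\bigl[\log Z^\rho_{0,\mathbf{v}} - \log Z_0\bigr] \geq c N^{1/3},
\]
which is one of the promised results of Section~\ref{stat_poly}.

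To produce the $N^{1/3}$ gap, note that $Z^\rho_{0,\mathbf{v}} \geq W^+_{k^*}$ for any particular exit point $k^*$. Let $S^+_k := \sum_{i=1}^k(\log \tilde I_i + \Psi_0(\mu-\rho))$ be the centered boundary random walk, independent of the bulk, with step variance $\Psi_1(\mu-\rho)$. Using the shape-function Taylor expansion $\Lambda(\mathbf{v}-(k,0)) = 2Nf(\rho) + k\Psi_0(\mu-\rho) - \Theta(k^2/N)$ from Propositions~\ref{time_const} and~\ref{reg_shape}, we have the identity
\[
\log W^+_k = 2Nf(\rho) + S^+_k - \Theta(k^2/N) + \bigl[\log Z_{(k,0),\mathbf{v}} - \Lambda(\mathbf{v}-(k,0))\bigr].
\]
The central limit theorem applied to $S^+$ over the window $k \in [N^{2/3}, 2N^{2/3}]$ shows that $\max_k S^+_k \geq \kappa N^{1/3}$ with positive probability $c_\kappa > 0$ for any constant $\kappa$; let $k^*$ be the maximizer, measurable with respect to the boundary alone. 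By independence of boundary and bulk, the left-tail bound of Proposition~\ref{low_ub} applied to $\log Z_{(k^*,0),\mathbf{v}}$ (whose endpoint $\mathbf{v}-(k^*,0)$ lies in a characteristic direction ${\boldsymbol\xi}[\rho']$ with $\rho'$ close to $\rho$, justified by Proposition~\ref{slope1}) yields $\log Z_{(k^*,0),\mathbf{v}} \geq \Lambda(\mathbf{v}-(k^*,0)) - TN^{1/3}$ with probability $\geq 1 - e^{-cT^{3/2}}$, while Proposition~\ref{up_ub} gives $\log Z_0 \leq 2Nf(\rho) + T'N^{1/3}$ with probability $\geq 1 - e^{-cT'^{3/2}}$. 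On the intersection of these three events, which has probability $\geq c_\kappa/2$ for $T, T'$ chosen sufficiently large,
\[
\log Z^\rho_{0,\mathbf{v}} - \log Z_0 \geq \log W^+_{k^*} - \log Z_0 \geq (\kappa - C - T - T')N^{1/3},
\]
where $C$ absorbs the $\Theta(k^{*2}/N) = \Theta(N^{1/3})$ curvature penalty. Choosing $\kappa > C + T + T'$ makes the right-hand side a positive multiple of $N^{1/3}$; integrating, and using $\log Z^\rho \geq \log Z_0$ almost surely on the complementary event, delivers the desired $cN^{1/3}$ bound.

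The main obstacle is coordinating the boundary random-walk maximum with the bulk moderate-deviation bound at the \emph{random} index $k^*$; this is handled by exploiting the independence of boundary and bulk weights, which lets the moderate-deviation bound be applied conditionally on the boundary and hence without any union bound over the $\Theta(N^{2/3})$ possible values of $k^*$. Uniformity in $\rho \in [\epsilon, \mu-\epsilon]$ is inherited from the uniformity of Propositions~\ref{up_ub} and~\ref{low_ub} in $\rho$, together with the $C^\infty$ regularity of ${\boldsymbol\xi}[\rho]$ and $f(\rho)$ on compact subsets of $(0,\mu)$ (Section~\ref{sec:reg}), which keeps the perturbed characteristic direction of the sub-polymer from $(k^*,0)$ to $\mathbf{v}$ inside a slightly smaller compact interval for all $N$ large.
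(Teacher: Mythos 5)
Your strategy is sound and genuinely different from the paper's. The paper obtains Theorem \ref{nr_lb} as a corollary of Theorem \ref{statiid_low}: after stepping the stationary base point back to $(-1,-1)$, the nonnegative gap $X=\log Z^\rho_{-1,\mathbf v}-\log I^\rho_{[\![(-1,-1),(0,-1)]\!]}-\log\wt Z_{0,\mathbf v}$ is expressed through the exit-point decomposition as a running maximum of a boundary-plus-profile walk $S_k$, which is bounded below by an i.i.d.\ walk with a slightly perturbed parameter via the random walk comparison (Theorem \ref{rwrw}), and the Nagaev estimate (Proposition \ref{rwest}) shows this running maximum exceeds $\delta N^{1/3}$ with probability tending to one as $\delta\to0$; no moderate-deviation input for the i.i.d.\ model is used. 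You instead beat the curvature penalty and the typical bulk fluctuations with a single CLT excursion of the boundary walk over $k\in[N^{2/3},2N^{2/3}]$, paying for the bulk with Propositions \ref{up_ub} and \ref{low_ub} applied conditionally at the boundary-measurable index $k^*$; positive (rather than near-one) probability suffices because the gap is nonnegative. There is no circularity in doing so, since Propositions \ref{up_ub} and \ref{low_ub} are established independently of Theorem \ref{nr_lb} (only the left-tail \emph{lower} bound, Proposition \ref{ptp_low}, relies on it), but your route does import the moderate-deviation machinery, whereas the paper's proof needs only the stationary-polymer tools.

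Two points need repair or precision. First, your opening reduction is misstated: in the stationary model based at the origin every admissible path uses at least one boundary weight, so the exit decomposition has no standalone $Z_0$ term, and ``$Z^\rho_{0,\mathbf v}\ge Z_{0,\mathbf v}$ pointwise'' does not parse, because $Z_{0,\mathbf v}$ in the i.i.d.\ model uses weights on the axes that are absent from the coupled stationary environment. The fix is exactly the paper's device in Theorem \ref{statiid_low}: base the stationary model at $(-1,-1)$, so the i.i.d.\ weights fill $\Z^2_{\ge0}$ and $Z^\rho_{-1,\mathbf v}\ge I^\rho_{[\![(-1,-1),(0,-1)]\!]}\,\wt Z_{0,\mathbf v}$ holds pointwise; this pointwise nonnegativity is also what your final integration step (``on the complementary event'') actually requires. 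Second, the constants must be ordered via the multiplicative conditional bound $\mathbb P(E)\ge c_\kappa\bigl(1-e^{-cT^{3/2}}-e^{-cT'^{3/2}}\bigr)$: fix $T,T'$ depending only on the tail constants so the second factor is at least $1/2$, and only then take $\kappa>C+T+T'$. An additive bound $c_\kappa-e^{-cT^{3/2}}-e^{-cT'^{3/2}}$ would be circular, since $c_\kappa$ decays like $e^{-c'\kappa^2}$ and forcing the error terms below it requires $T\gtrsim\kappa^{4/3}$, incompatible with $T\le\kappa$; your phrasing should make the noncircular order of choices explicit. Minor: Propositions \ref{time_const} and \ref{reg_shape} are stated only in the diagonal direction, so the curvature expansion of $\Lambda(\mathbf v-(k,0))$ around general $\rho\in[\epsilon,\mu-\epsilon]$ needs the (routine) off-diagonal analogue, and the CLT lower bound for the boundary walk should be made uniform in $\rho$ (e.g.\ Berry--Esseen, using that $\Psi_1(\mu-\rho)$ is bounded above and below on the compact interval).
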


\subsection{Stationary inverse-gamma polymer}\label{stat_poly}

The (increment) stationary inverse-gamma polymer (with southwest boundary) is defined on a quadrant. To start, we fix a parameter $\rho\in (0,\mu)$ and a base vertex ${\bf v}\in \mathbb{Z}^2$. For each ${\bf z}\in {\bf v} + \mathbb{Z}_{>0}^2$, the (vertex) bulk weights are defined by $Y_z \sim \text{Ga}^{-1}(\mu)$, {where $\text{Ga}^{-1}(\mu)$ denotes the inverse-gamma distribution with shape parameter $\mu$}. On the boundary ${\bf v} + k{\bf e}_1$, and ${\bf v} + k{\bf e}_2$, the (edge) weights {are denoted by $I$'s and $J$'s}, and they have the distributions
\begin{align}\label{stat_weights}
\begin{aligned}
I^{{\rho}}_{[\![v+(k-1)k{\bf e}_1, v+k{\bf e}_1]\!]} & \sim \text{Ga}^{-1}(\mu-{{\rho}})\\ J^{{\rho}}_{[\![v+(k-1)k{\bf e}_2, v+k{\bf e}_2]\!]} &\sim \text{Ga}^{-1}({{\rho}}).
\end{aligned}
\end{align}
All the weights in the quadrant are independent. 
We denote the probability measure for the stationary inverse-gamma polymer by $\mathbb{P}$ and record the parameter $\rho$ and the base point ${\bf v}$ in the notation of the partition function. For ${\bf w}\in {\bf v}+ \mathbb{Z}_{\geq 0}^2$, let us define  
$$Z^{\rho}_{{\bf v}, {\bf w}} = \sum_{\gamma\in \mathbb{X}_{{\bf v},{\bf w}}} \prod_{i=0}^{|{\bf v}-{\bf w}|_1} \wt{Y}_{\gamma_i} \qquad \text{ where }\wt{Y}_{\gamma_i} = \begin{cases}
1 \quad & \text{if $\gamma_i = {\bf v}$}\\
I^{{\rho}}_{[\![\gamma_i -{\bf e}_1, {\gamma_i }]\!]} \quad & \text{if $\gamma_i \cdot {\bf e}_2 = {\bf v}\cdot {\bf e}_2 $}\\
J^{{\rho}}_{[\![\gamma_i -{\bf e}_2, {\gamma_i }]\!]} \quad & \text{if $\gamma_i \cdot {\bf e}_1 = {\bf v}\cdot {\bf e}_1 $}\\
Y_z \quad & \text{otherwise.} \\
\end{cases}$$ 
And for $\gamma \in \mathbb{X}_{{\bf v},{\bf w}}$, the quenched polymer measure is defined by
$$Q^{{{\rho}}}_{{\bf v}, {\bf w}}(\gamma) = \frac{1}{Z^{{{\rho}}}_{{\bf v}, {\bf w}}}  \prod_{i=0}^{|{\bf v}-{\bf w}|_1} \wt{Y}_{\gamma_i}.$$

The name (increment) stationary inverse-gamma polymer is justified by the next theorem, which first appeared in \cite[Theorem 3.3]{poly2}.
\begin{theorem}\label{stat} 
For each ${\bf w}\in {\bf v} + \mathbb{Z}^2_{>0}$ . We have 
$$\frac{Z^\rho_{{\bf v}, {\bf w}}}{Z^\rho_{{\bf v}, {\bf w}- {\bf e}_1}} \sim \textup{Ga}^{-1}(\mu-\rho) \qquad \text{ and }\qquad \frac{Z^\rho_{{\bf v}, {\bf w}}}{Z^\rho_{{\bf v}, {\bf w}- {\bf e}_2 }}\sim \textup{Ga}^{-1}(\rho).$$
Furthermore, let $\eta = \{\eta_i\}$ be any finite or infinite down-right path in ${\bf v}+ \mathbb{Z}^2_{\geq 0}$. This means $\eta_{i+1} - \eta_i$ is either ${\bf e}_1$ or $-{\bf e}_2$. Then, the increments $\{Z^\rho_{{\bf v}, \eta_{i+1}}/Z^\rho_{{\bf v}, \eta_{i}}\}$ are independent.
\end{theorem}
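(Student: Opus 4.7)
The plan is to reduce the theorem to a single local distributional identity — the Burke-type property of the inverse-gamma distribution — and then propagate that identity across the quadrant by an induction on down-right paths that advances one corner at a time.

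The key local lemma I would prove first is: if $I$, $J$, $Y$ are independent with $I\sim\text{Ga}^{-1}(\mu-\rho)$, $J\sim\text{Ga}^{-1}(\rho)$, $Y\sim\text{Ga}^{-1}(\mu)$, and one defines
\[
  \widetilde I = Y\bigl(1+I/J\bigr),\qquad \widetilde J = Y\bigl(1+J/I\bigr),\qquad \widetilde Y = \bigl(1/I+1/J\bigr)^{-1},
\]
then $(\widetilde I,\widetilde J,\widetilde Y)$ are again mutually independent with the same marginal distributions as $(I,J,Y)$. I would prove this directly by change of variables on $(0,\infty)^3$. Inversion yields the symmetric formulas $I=(\widetilde I+\widetilde J)\widetilde Y/\widetilde J$, $J=(\widetilde I+\widetilde J)\widetilde Y/\widetilde I$, $Y=\widetilde I\widetilde J/(\widetilde I+\widetilde J)$, and two algebraic identities drive the matching of densities: $1/I+1/J=1/\widetilde Y$ together with its dual $1/Y=1/\widetilde I+1/\widetilde J$ combine the exponential factors into $e^{-1/\widetilde I}e^{-1/\widetilde J}e^{-1/\widetilde Y}$, while the polynomial factors and the Jacobian assemble into $\widetilde I^{-(\mu-\rho)-1}\widetilde J^{-\rho-1}\widetilde Y^{-\mu-1}$ after a direct if tedious computation.

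At the corner vertex ${\bf x}={\bf v}+{\bf e}_1+{\bf e}_2$, setting $I=I^\rho_{[{\bf v},{\bf v}+{\bf e}_1]}$, $J=J^\rho_{[{\bf v},{\bf v}+{\bf e}_2]}$ and $Y=Y_{\bf x}$, the recursion $Z^\rho_{{\bf v},{\bf x}}=Y(I+J)$ identifies $\widetilde I$ and $\widetilde J$ as the horizontal and vertical edge increments across ${\bf x}$. More generally, for any finite down-right path $\eta$ in ${\bf v}+\Z^2_{\geq 0}$ one can reach $\eta$ from the pair of coordinate axes by a finite sequence of corner flips: each flip removes an inside corner where two edges enter a vertex ${\bf x}$ from below and left and replaces it by the outside corner where two edges leave ${\bf x}$ going up and right, using exactly the three random variables $(I,J,Y)$ attached to that corner. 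On the initial axes the edge weights are by construction independent inverse-gamma with the prescribed parameters; the local lemma ensures that each flip produces a new collection of edge increments along the updated path which are again independent with the same marginals, and that the bulk weights at vertices not yet swept remain independent of the updated edge data. Iterating to $\eta$ proves both claims for $\eta$; the passage from finite to arbitrary down-right paths is immediate because independence of an arbitrary family is equivalent to independence of all its finite subfamilies.

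The main obstacle is the local Burke-type identity itself: its validity rests on the algebraic miracle that the inverse-gamma density reproduces itself under the prescribed rational transformation, a feature genuinely particular to this family. Once the identity is established, the rest of the proof is a routine combinatorial induction on corner flips that requires no further probabilistic input.
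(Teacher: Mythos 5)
Your proposal is correct. The paper itself gives no proof of this theorem — it quotes it from Theorem~3.3 of \cite{poly2} — and your argument, namely the inverse-gamma Burke-type triple lemma (with the identities $1/I+1/J=1/\widetilde Y$ and $1/Y=1/\widetilde I+1/\widetilde J$ driving the change of variables) propagated by corner-flip induction from the southwest boundary to an arbitrary down-right path, is exactly the standard proof given in that reference.
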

From Theorem \ref{stat} above, we have the following identity for the expectation of the free energy. {Recall from Section \ref{notation} that $\Psi_0$ is the digamma function,}
\begin{equation}\label{expect_stat}
\mathbb{E}\Big[\log Z^\rho_{0, (a,b)} \Big] = - a\Phi_0(\mu-\rho) -b \Phi_0(\rho).
\end{equation}

Because the weights appearing on the boundary are stochastically larger than the bulk weights, the sampled polymer paths tend to stay on the boundary. However, for each fixed ${{\rho}} \in (0, \mu)$, there is a unique direction for which this effect between the ${\bf e}_1$- and ${\bf e}_2$-boundary is balanced out, we call this the characteristic direction ${\bf \xi}[\rho]$, which is defined previously in \eqref{char_dir}. 

The first estimate below is the upper bound for the right tail of the free energy. It first appeared in the Ph.D. Thesis \cite{Xie2022}. Then, it was proven again in \cite{Lan-Sos-22-a-}. {From \eqref{expect_stat} and the definitions of $f(\rho)$ in \eqref{define_f} and $\boldsymbol\xi[\rho]$ in \eqref{char_dir}, by a substitution, we see that $2Nf(\rho)$ can be thought as the expectation of $\log Z^\rho_{0, 2N\boldsymbol{\xi}[\rho]}$, if we ignore the error from the integer rounding.}
\begin{theorem} \label{stat_up_ub}
Let $\epsilon \in(0,\mu/2)$. There exist positive constants $\newc\label{stat_up_ub_c1}, N_0 $ such that for each $N\geq N_0$, $t \geq 1$ and $\rho\in [\epsilon, \mu-\epsilon]$, we have 
$$\mathbb{P}\Big(\log Z^\rho_{0, 2N{\boldsymbol\xi}[\rho]} - 2Nf(\rho) \geq tN^{1/3}\Big) \leq e^{-\oldc{stat_up_ub_c1} \min\{t^{3/2}, tN^{1/3}\}}.$$

\end{theorem}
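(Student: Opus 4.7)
The plan is to bound the right tail via an exit-point decomposition of the stationary partition function, combining the i.i.d.\ bulk right-tail estimate of Proposition \ref{up_ub} with Bernstein--Cram\'er bounds for sums of i.i.d.\ $\log\textup{Ga}^{-1}$ boundary weights. Write $(A,B) := 2N{\boldsymbol\xi}[\rho]$, so $A+B \approx 2N$. Every up-right path from $(0,0)$ to $(A,B)$ first traverses a (possibly empty) horizontal boundary segment to an exit point $(k,0)$ before its first vertical step, or analogously exits the vertical axis. I would thus decompose
\[
Z^{\rho}_{0, (A,B)} = \sum_{k=0}^{A} \Big(\prod_{i=1}^k I^\rho_i\Big)\, Z^{\uparrow}_{(k,0), (A,B)} + \sum_{k=1}^{B} \Big(\prod_{j=1}^k J^\rho_j\Big)\, Z^{\rightarrow}_{(0,k), (A,B)},
\]
where $Z^{\uparrow}$ and $Z^{\rightarrow}$ denote the bulk i.i.d.\ partition functions of paths forced to take their first step vertically or horizontally, respectively. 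By the $\rho \leftrightarrow \mu - \rho$ symmetry it suffices to bound the first sum.

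The next step is a curvature estimate for the mean of each term. From \eqref{expect_stat} we have $\mathbb{E}[\log Z^\rho_{0,(A,B)}] = -A\Psi_0(\mu-\rho) - B\Psi_0(\rho) = 2Nf(\rho)$, and differentiating the shape identity $\Lambda(A,B) = -A\Psi_0(\mu-\rho(A,B)) - B\Psi_0(\rho(A,B))$ yields $\partial_A\Lambda = -\Psi_0(\mu-\rho)$ at the characteristic direction after the cancellation $A\Psi_1(\mu-\rho) = B\Psi_1(\rho)$. A second-order Taylor expansion of $\Lambda$ (using that $\Lambda$ is degree-one homogeneous and strictly concave transverse to the characteristic ray with transverse second derivative of order $1/N$, cf.\ Proposition \ref{time_const}) then gives
\[
-k\Psi_0(\mu-\rho) + \Lambda(A-k, B) \leq 2Nf(\rho) - ck^2/N \qquad \text{for } 0\leq k \leq cN.
\]
Hence the $k$-th exit term's mean is already $\Theta(k^2/N)$ below the target $2Nf(\rho)$, so any excess must come from fluctuations.

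For each fixed $k$, on the event that the $k$-th summand exceeds $\exp(2Nf(\rho) + sN^{1/3})$, either the boundary sum $\sum_{i=1}^k \log I^\rho_i$ or the bulk log-partition $\log Z^{\uparrow}_{(k,0),(A,B)}$ must overshoot its mean by at least $\tfrac12(sN^{1/3} + ck^2/N)$. The boundary sum is $k$ i.i.d.\ copies of an exponentially-tailed random variable, so a Bernstein--Cram\'er bound gives $\exp(-c\min\{a^2/k,\, a\})$ at deviation $a$. The bulk overshoot is supplied by Proposition \ref{up_ub}, giving $\exp(-c\min\{s^{3/2},\, sN^{1/3}\})$. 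Combining these two estimates at the optimal split produces the pointwise bound for each $k$.

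Finally I would sum over $k$ via a dyadic decomposition. The dominant contribution comes from $k \lesssim (tN^2)^{1/3}$, where Proposition \ref{up_ub} directly yields the clean exponent $c t^{3/2}$; for $k$ beyond this scale the curvature penalty $ck^2/N$ already forces the boundary sum into its Cram\'er regime, producing a fast-decaying $\exp(-c k^3/N^2)$ that is summable and dominated by $e^{-ct^{3/2}}$. The main technical obstacle is preserving the sharp exponent through the union bound over the $\sim N$ candidate exit points: a polylogarithmic prefactor appears that must be absorbed into the constant by tightening it slightly, using that for $t$ below an absolute threshold the claim is trivially true (any probability is at most $e^{-c}$ for sufficiently small $c$), while for $t \gtrsim (\log N)^{2/3}$ the polylog prefactor is swallowed by the exponential.
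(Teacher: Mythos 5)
The paper does not actually prove Theorem \ref{stat_up_ub}; it quotes it from \cite{Xie2022} and \cite{Lan-Sos-22-a-}, so your attempt can only be judged on its own merits. Your overall strategy (exit-point decomposition, first-order cancellation of $-\Psi_0(\mu-\rho)$ against $\partial_1\Lambda$ along the characteristic ray, quadratic curvature penalty $\asymp k^2/N$, Bernstein for the boundary sum, Proposition \ref{up_ub} for the bulk) is a sensible and standard route. However, there is a genuine gap in the last step. The union bound is over the $\sim N$ individual exit points $k$, and since the curvature penalty is flat for $k\lesssim N^{2/3}$, the resulting prefactor is \emph{polynomial} in $N$ (of order $N^{2/3}$), not polylogarithmic as you assert. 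Your absorption argument then fails precisely in the regime $C_0\le t\lesssim(\log N)^{2/3}$: there the claimed bound $e^{-ct^{3/2}}$ can be as small as a negative power of $N$, so it is not ``trivially true,'' while $N^{2/3}e^{-c_1t^{3/2}}$ is useless unless $c_1>2/3$, which you have no control over. Since the theorem is used downstream at fixed and moderate $t$, this regime cannot be discarded.

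The repair is standard but must be built in: do not union bound point by point. Either first restrict the exit point to $|k|\lesssim \sqrt{t}\,N^{2/3}$ using the exit-time control of Theorem \ref{exit_time} (as the paper does in its proof of Proposition \ref{low_ub}), or group the exit points into blocks of width $N^{2/3}$ indexed by $j$, bound each block's contribution by $\exp\bigl(\max_{k\in B_j}\sum_{i\le k}\log I^\rho_i\bigr)$ times an interval-to-point partition function, and control these by a running-maximum/Doob inequality for the boundary random walk (Appendix \ref{sec_sub_exp}, or Theorem \ref{rwrw}) together with an interval right-tail bound in the spirit of Theorem \ref{ptl_upper}. The block index $j$ then carries a curvature penalty $\asymp j^2$ in units of $N^{1/3}$, so the sum over blocks is $\sum_j e^{-c(t+j^2)^{3/2}}\le Ce^{-ct^{3/2}}$ with no $N$-dependent prefactor. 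Two smaller points also need attention: Proposition \ref{up_ub} is only uniform for directions $\boldsymbol\xi[\rho']$ with $\rho'\in[\epsilon',\mu-\epsilon']$, so exit points $k$ comparable to $A$ (where $(A-k,B)$ degenerates toward $\mathbf{e}_2$, and where your local Taylor bound $-ck^2/N$ also stops being valid) require a separate crude estimate, e.g.\ a fractional-moment or path-counting large-deviation bound as in \eqref{LDP}; and the $k=0$ term of your decomposition should be folded into the vertical-exit sum to keep the decomposition disjoint.
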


Along the characteristic direction, the sampled paths tend to stay on the boundary for order $N^{2/3}$ number of steps. Our next result is a corollary of this fact, which appears as {Corollary 4.2 in \cite{ras-sep-she-}. Fix ${\bf w} \in {\bf v} + \mathbb{Z}^2_{\geq 0}$ and any $k\in \mathbb{R}_{>0}$. Let
$
\{\tau_{{\bf v}, {\bf w}} \geq k\}
$
denote the subset of $\mathbb{X}_{{\bf v}, {\bf w}}$ such that the first $\floor k$ steps of the path are all ${\bf e}_1$-steps. Similarly, $\{\tau_{{\bf v}, {\bf w}} \leq -k\}$ is the subset of $\mathbb{X}_{{\bf v}, {\bf w}}$ whose first $\floor{k}$ steps are all ${\bf e}_2$-steps. When $\tau_{{\bf v}, {\bf w}}$ appears inside a quenched polymer measure as below, we will simplify the notation $\tau_{{\bf v}, {\bf w}} = \tau$ as the starting and end point of the paths are clear.

\begin{theorem}\label{exit_time}
Let $\epsilon \in(0,\mu/2)$. There exist positive constants $\newc\label{exit_time_c1}, \newc\label{exit_time_c2}, N_0$ such that for for all ${\rho} \in [\epsilon, \mu-\epsilon]$, 
$N\geq N_0$ and $r\ge 1$, we have  
$$\mathbb{P}^{{\rho}}(Q_{0, 2N{\boldsymbol\xi}[\rho] + rN^{2/3}{\bf e}_1}\{\tau \leq -1\} \geq e^{-\oldc{exit_time_c1}r^3}) \leq e^{-\oldc{exit_time_c2}r^{3}}.$$
\end{theorem}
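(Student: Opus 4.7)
\textbf{Plan for Theorem~\ref{exit_time}.} The plan is to write the quenched probability as the ratio $Q^\rho_{0,{\bf w}}\{\tau\le -1\} = Z^{\tau\le -1}_{0,{\bf w}}/Z^\rho_{0,{\bf w}}$ with ${\bf w} = 2N{\boldsymbol\xi}[\rho] + rN^{2/3}{\bf e}_1$, bound the numerator from above and the denominator from below using the moderate-deviation estimates from Section~\ref{mod_est} and the curvature of the shape function from Section~\ref{sec:reg}, and show that the log ratio is at most $-c\, r^3$ outside an event of probability $e^{-c'\, r^3}$.

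For the denominator, I would restrict the sum defining $Z^\rho_{0,{\bf w}}$ to paths whose first $k_0 := \lfloor r N^{2/3}/2 \rfloor$ steps lie on the ${\bf e}_1$-axis, giving
\[
\log Z^\rho_{0,{\bf w}} \;\ge\; \sum_{i=1}^{k_0} \log I^\rho_{[\![(i-1){\bf e}_1,\, i{\bf e}_1]\!]} \;+\; \log Z_{(k_0, 0),\, {\bf w}}.
\]
The bulk displacement ${\bf w} - (k_0, 0)$ has $\ell^1$-length $\sim 2N$ and lies within $O(rN^{-1/3})$ of the characteristic direction ${\boldsymbol\xi}[\rho]$, so the left-tail bound Proposition~\ref{low_ub} applied at scale $t = r^2$, together with the nonrandom-fluctuation estimate Theorem~\ref{nr_lb} and Chernoff-type concentration for the i.i.d.\ $\log \textup{Ga}^{-1}(\mu-\rho)$ sum (standard deviation $\sim r^{1/2} N^{1/3}$), yield
\[
\log Z^\rho_{0,{\bf w}} \;\ge\; \mathbb{E}[\log Z^\rho_{0,{\bf w}}] \;-\; C\, r^2 N^{1/3}
\]
outside an event of probability at most $e^{-c r^3}$.

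For the numerator, I would decompose over the location $(0, \ell)$ at which the path first leaves the ${\bf e}_2$-axis:
\[
Z^{\tau \le -1}_{0, {\bf w}} \;=\; \sum_{\ell \ge 1} \Big(\prod_{j=1}^\ell J^\rho_{[\![(j-1){\bf e}_2,\, j{\bf e}_2]\!]}\Big) \cdot Y_{(1, \ell)} \cdot Z_{(1, \ell),\, {\bf w}}.
\]
The key quantitative input is an expected-value deficit: from the identity $\mathbb{E}[\log Z^\rho_{0, (a, b)}] = -a \Psi_0(\mu-\rho) - b \Psi_0(\rho)$ and a Taylor expansion of $\Lambda$ around ${\boldsymbol\xi}[\rho]$ using Propositions~\ref{slope_z}, \ref{time_const} and \ref{reg_shape}, one checks that the expected log of the $\ell$-th summand lies below $\mathbb{E}[\log Z^\rho_{0, {\bf w}}]$ by at least $c_1 (r^2 + \ell^2 N^{-2/3}) N^{1/3}$. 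Proposition~\ref{up_ub} applied to $Z_{(1, \ell), {\bf w}}$ at scale $t_\ell = r^2 + \ell^2 N^{-2/3}$ then produces the matching upward tail $e^{-c t_\ell^{3/2}}$, and the product of $\log J^\rho$ concentrates at Gaussian scale $\ell^{1/2}$.

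On the intersection of these good events the log ratio is at most $-c r^3$; the sum over $\ell \in \{1, \ldots, w_2\}$ loses only a polynomial factor that is absorbed by taking $c' < c$. The main obstacle is carrying out the expected-value comparison for $\rho \neq \mu/2$: the first-order-in-$N^{2/3}$ contributions to the various log expectations do not vanish individually, and one must check that they cancel in the ratio, so as to isolate the genuine curvature deficit of order $r^2 N^{1/3}$ that drives the bound. Once this is in place, the calibration $t = r^2$ in the $e^{-c t^{3/2}}$ moderate-deviation tail supplies exactly the advertised $e^{-c r^3}$ scaling.
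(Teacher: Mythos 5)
Your proposal has a fundamental circularity problem. The paper does not prove Theorem~\ref{exit_time} at all: the authors explicitly import it from Corollary~4.2 of \cite{ras-sep-she-} as a black-box input. Your argument attempts to rederive it from the paper's moderate-deviation and nonrandom-fluctuation estimates, but both Proposition~\ref{low_ub} and Theorem~\ref{nr_lb} are themselves proved in this paper \emph{using} Theorem~\ref{exit_time}. Concretely: the proof of Proposition~\ref{low_ub} in Section~\ref{sec_tail} bounds the term~\eqref{no_need} directly by Theorem~\ref{exit_time}; and Theorem~\ref{nr_lb} is deduced from Theorem~\ref{statiid_low}, which rests on the random-walk comparison Theorem~\ref{rwrw}, whose proof in Appendix~\ref{rwrw_proof} invokes Theorem~\ref{exit_time} to construct the high-probability event $A$. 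So the two external inputs that drive your numerator and denominator bounds both lie strictly downstream of the very statement you are trying to establish; the argument cannot stand within the logical order of this paper.

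Setting the circularity aside, the mechanism you describe (bounding the quenched exit probability as a ratio of restricted to full partition functions, with a curvature-driven free-energy deficit of order $r^2N^{1/3}$ beaten against a fluctuation scale $\sqrt{r}\,N^{1/3}$, giving the $r^3$ exponent) does identify the correct scaling. But the reason Theorem~\ref{exit_time} can sit at the base of the paper's dependency chain is that the reference~\cite{ras-sep-she-} proves it by entirely elementary stationary-polymer arguments: the explicit i.i.d.\ $\log\textup{Ga}^{-1}$ increment structure of Theorem~\ref{stat} along down-right paths, a comparison between stationary models at two nearby parameters $\rho$ and $\rho\pm q_0 r N^{-1/3}$, and monotonicity of the exit point in the boundary weights (Proposition~\ref{monot2}); no point-to-point moderate-deviation bounds are required. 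A self-contained proof consistent with the paper's structure would need to follow that route rather than leaning on Propositions~\ref{low_ub} and~\ref{up_ub} and Theorem~\ref{nr_lb}.
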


Let $\wt Z$ denote the version of the partition function that also includes the weight at the beginning of the path. 
The following is essentially a lower bound for the difference between the free energies of the stationary boundary model and i.i.d.\ bulk polymer. We included an additional boundary weight with the i.i.d.\ bulk free energy in the estimate below because this version will be used to prove Theorem \ref{nr_lb}. Its proof will appear in  Section \ref{sec:nr}.
\begin{theorem}\label{statiid_low}
Let $\epsilon\in (0, \mu/2)$. There exist positive constants $\newc\label{statiid_low_c1}, N_0 $ such that for each $N\geq N_0$, $0< \delta \leq 1/2$ and $\rho\in [\epsilon, \mu-\epsilon]$, we have 
$$\mathbb{P}\Big(\log Z^\rho_{-1, 2N{\boldsymbol\xi}[\rho]} - \Big( \log I^\rho_{[\![(-1,-1), (0,-1) ]\!]} + \log \wt Z_{0, 2N{\boldsymbol\xi}[\rho]} \Big) \leq \delta N^{1/3}\Big) \leq \oldc{statiid_low_c1} |\log (\delta \vee N^{-1/3})|  \cdot (\delta \vee N^{-1/3}) .$$
\end{theorem}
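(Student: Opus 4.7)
The plan is to exploit the deterministic lower bound
\[Z^\rho_{-1, 2N{\boldsymbol\xi}[\rho]} \;\geq\; I^\rho_{[\![(-1,-1),(0,-1)]\!]} \cdot \wt Z_{0, 2N{\boldsymbol\xi}[\rho]},\]
obtained by restricting the sum $Z^\rho_{-1, 2N{\boldsymbol\xi}[\rho]}$ to the single polymer path $(-1,-1)\to (0,-1)\to (0,0)\to\cdots\to 2N{\boldsymbol\xi}[\rho]$, whose first edge is the $e_1$-boundary edge and whose second edge enters the bulk at $(0,0)$. Thus the quantity in the probability is deterministically non-negative, so the statement is genuinely a quantitative anti-concentration estimate near zero for the free-energy difference
\[D \;:=\; \log Z^\rho_{-1, 2N{\boldsymbol\xi}[\rho]} - \log I^\rho_{[\![(-1,-1),(0,-1)]\!]} - \log\wt Z_{0, 2N{\boldsymbol\xi}[\rho]} \;\geq\; 0.\]

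My first step would be to split $Z^\rho_{-1, 2N{\boldsymbol\xi}[\rho]}$ according to the first step of the polymer path and then according to whether or not the subsequent path visits $(0,0)$. This produces the algebraic factorization
\[D \;=\; \log(1+u) + \log(1+v),\]
where $u := J_0/I_0$ for $I_0 := I^\rho_{[\![(-1,-1),(0,-1)]\!]}$ and $J_0 := J^\rho_{[\![(-1,-1),(-1,0)]\!]}$, and $v := \Xi/\bigl((I_0 + J_0)\, Y_0\, Z_{0, 2N{\boldsymbol\xi}[\rho]}\bigr)$, with $Y_0 := Y_{(0,0)}$ the bulk weight at the origin and $\Xi$ the sum over all paths from $(-1,-1)$ to $2N{\boldsymbol\xi}[\rho]$ that avoid the vertex $(0,0)$. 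Both summands are non-negative.

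Next I would condition on every weight outside the triple $(I_0, J_0, Y_0)$, which renders $\Xi$ and $Z_{0, 2N{\boldsymbol\xi}[\rho]}$ deterministic. The conditional joint law of $(I_0, J_0, Y_0)$ is a product of inverse-gamma densities, so $u = J_0/I_0$ has the explicit Beta-prime density $\frac{\Gamma(\mu)}{\Gamma(\rho)\Gamma(\mu-\rho)}\, x^{\mu-\rho-1}(1+x)^{-\mu}$ and is independent of $Y_0 \sim \textup{Ga}^{-1}(\mu)$. The event $\{D \leq \delta N^{1/3}\}$ forces simultaneously $u \leq e^{\delta N^{1/3}}-1$ and $Y_0 \geq K(1+u)/(e^{\delta N^{1/3}}-1-u)$, where $K := \Xi/\bigl((I_0+J_0)\, Z_{0, 2N{\boldsymbol\xi}[\rho]}\bigr)$. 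Integrating the Beta-prime density of $u$ against the inverse-gamma tail of $Y_0$, then over the distribution of $K$, produces a bound of order $\delta N^{1/3}\cdot |\log(\delta N^{1/3})|$ provided $K$ stays in a typical range. Combining with a dyadic decomposition in the scale of $K$ and truncating via the moderate-deviation inputs (Theorem \ref{stat_up_ub} and Proposition \ref{up_ub}) together with the random walk comparison of Section \ref{sec_rw} produces the final form $C\,|\log(\delta \vee N^{-1/3})|\cdot (\delta \vee N^{-1/3})$; the $N^{-1/3}$ cutoff reflects precisely the scale at which those upper-tail inputs cease to be sharp.

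The hard part will be controlling the distribution of $K = \Xi/\bigl((I_0+J_0) Z_{0, 2N{\boldsymbol\xi}[\rho]}\bigr)$, in particular a probabilistic lower bound for $\log K$. The numerator $\log\Xi$ has mean close to $2Nf(\rho)$ via the exact identity \eqref{expect_stat} applied to $\log Z^\rho$, while the denominator $\log Z_{0, 2N{\boldsymbol\xi}[\rho]}$ has expectation shifted by the unknown nonrandom fluctuation. Bounding the left tail of $\log K$ is morally equivalent to proving the nonrandom fluctuation bound (Theorem \ref{nr_lb})---but I cannot invoke Theorem \ref{nr_lb} here, since this lemma is itself used to derive it in Section \ref{sec:nr}. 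The argument must therefore be self-contained, drawing only on: the exact mean of $\log Z^\rho$ via \eqref{expect_stat}; concentration of $\log Z^\rho$ on its mean through the Burke-type representation of Theorem \ref{stat}; the upper-tail moderate deviations (Theorem \ref{stat_up_ub} and Proposition \ref{up_ub}); and the random walk comparison of Section \ref{sec_rw}. In particular only upper bounds on $\log \wt Z_{0, 2N{\boldsymbol\xi}[\rho]}$ are available, never a lower bound on its negative mean---and it is this restriction that both dictates the overall structure of the argument and is responsible for the appearance of the $|\log(\delta \vee N^{-1/3})|$ correction.
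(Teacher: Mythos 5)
Your opening observations are sound: $Z^\rho_{-1,2N\boldsymbol{\xi}[\rho]}\ge I^\rho_{[\![(-1,-1),(0,-1)]\!]}\,\wt Z_{0,2N\boldsymbol{\xi}[\rho]}$ does hold (it is exactly the $\tau=1$ contribution), and the factorization $D=\log(1+u)+\log(1+v)$ is algebraically correct. But the plan has a genuine gap: it never supplies the mechanism that produces the factor $\delta\vee N^{-1/3}$. Your local integration over $(u,Y_0)$ cannot do it --- indeed the intermediate bound you state, of order $\delta N^{1/3}|\log(\delta N^{1/3})|$, exceeds $1$ throughout the regime of interest ($N^{-1/3}\le\delta$), so it yields nothing. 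The true source of smallness is the anti-concentration of $\log K$ near $0$ at scale $\delta N^{1/3}$: but $K$ is itself a stationary-minus-bulk free-energy ratio (the same object with the single vertex $(0,0)$ excised), so ``controlling the distribution of $K$'' is not a residual technicality, it \emph{is} the theorem, and your reduction is essentially circular. Moreover, the inputs you propose to close it with --- Theorem \ref{stat_up_ub}, Proposition \ref{up_ub} and ``a dyadic decomposition in the scale of $K$'' --- are one-sided moderate-deviation bounds with errors $e^{-ct^{3/2}}$; they can only discard atypical events, they cannot produce a small-ball estimate that is \emph{linear} in $\delta$. What is needed (and what the paper does) is a ballot-type argument: decompose $Z^\rho_{-1,\cdot}$ over the boundary exit time $\tau=k$ for $1\le k\le N^{2/3}$, so that $D$ dominates the running maximum $\max_{k\le N^{2/3}}S_k$ of partial sums of boundary increments plus bulk free-energy increments $\log Z_{(k,0),\cdot}-\log Z_{0,\cdot}$; lower-bound $S_k$ by a tilted i.i.d.\ walk via the random walk comparison (Theorem \ref{rwrw} with $s=|\log\delta|$, so the comparison fails with probability $e^{-Cs^3}\le\delta$); and then apply the Nagaev-type estimate of Proposition \ref{rwest}, whose bound $C(l+1)(|\mu|+1/\sqrt n)$ with $l=\delta N^{1/3}$, $n=N^{2/3}$ and drift of order $|\log\delta|N^{-1/3}$ gives exactly $C|\log\delta|\,\delta$. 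Your proposal names the random walk comparison as an ingredient but never deploys it in this (or any concrete) way, and without the exit-time/running-maximum structure there is no route from your decomposition to a bound of order $\delta$.

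A secondary but real error: conditioning on ``every weight outside the triple $(I_0,J_0,Y_0)$'' does \emph{not} render $\Xi$ deterministic. Every path from $(-1,-1)$ that avoids $(0,0)$ has $|\tau|\ge 2$ and therefore still collects $I_0$ or $J_0$ as its first boundary weight, so $\Xi=I_0A+J_0B$ with only $A,B$ measurable with respect to the conditioning. This is repairable (e.g.\ bound $v\ge\min(A,B)/(Y_0Z_{0,\cdot})$), but as written the claimed conditional independence, and hence the clean $u$--$Y_0$ integration, is incorrect. Your closing remark about not being allowed to invoke Theorem \ref{nr_lb} is the right instinct --- the paper indeed proves the present statement first and deduces Theorem \ref{nr_lb} from it --- but avoiding that circularity is accomplished by the exit-time/ballot argument above, not by the local-weight decomposition.
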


For completeness, we also record the following upper bound for the difference between the stationary and i.i.d.~free energy. This result follows directly from Theorem \ref{stat_up_ub} and Proposition \ref{low_ub} using a union bound, hence we omit its proof.
\begin{theorem}\label{statiid_up}
Let $\epsilon \in(0,\mu/2)$. There exist positive constants $\newc\label{statiid_up_c1}, N_0 $ such that for each $N\geq N_0$, $t \geq 1$ and $\rho\in [\epsilon, \mu-\epsilon]$, we have 
$$\mathbb{P}\Big(\log Z^\rho_{-1, 2N{\boldsymbol\xi}[\rho]} - \log Z_{0, {2N\boldsymbol\xi}[\rho]}  \geq tN^{1/3}\Big) \leq e^{-\oldc{statiid_up_c1} \min\{t^{3/2}, tN^{1/3}\}}.$$
\end{theorem}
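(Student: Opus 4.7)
The plan is to use a direct union bound on the centered deviations. Write
\begin{align*}
\log Z^\rho_{-1, 2N{\boldsymbol\xi}[\rho]} - \log Z_{0, 2N{\boldsymbol\xi}[\rho]}
= \bigl( \log Z^\rho_{-1, 2N{\boldsymbol\xi}[\rho]} - 2Nf(\rho) \bigr) - \bigl( \log Z_{0, 2N{\boldsymbol\xi}[\rho]} - 2Nf(\rho) \bigr).
\end{align*}
If the left-hand side is $\geq tN^{1/3}$, then at least one of the two centered quantities must be, in absolute value, at least $\tfrac{t}{2}N^{1/3}$, with the appropriate sign. Hence by a union bound it suffices to control
\[
A := \bigl\{\log Z^\rho_{-1, 2N{\boldsymbol\xi}[\rho]} - 2Nf(\rho) \geq \tfrac{t}{2}N^{1/3}\bigr\} \quad \text{and} \quad B := \bigl\{\log Z_{0, 2N{\boldsymbol\xi}[\rho]} - 2Nf(\rho) \leq -\tfrac{t}{2}N^{1/3}\bigr\}.
\]

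For the event $B$, the bound $\mathbb{P}(B) \leq e^{-C \min\{t^{3/2}, tN^{1/3}\}}$ is exactly Proposition \ref{low_ub}, after adjusting the constant $t$ by a factor of two (harmless for $t \geq 1$ at the cost of updating the constant in the exponent). For the event $A$, one cannot apply Theorem \ref{stat_up_ub} verbatim because the stationary partition function there is based at the origin, whereas here it is based at $(-1,-1)$. However, by translation invariance of the stationary polymer the distribution of $\log Z^\rho_{-1, 2N{\boldsymbol\xi}[\rho]}$ agrees with that of $\log Z^\rho_{0, 2N{\boldsymbol\xi}[\rho]+(1,1)}$, and the shifted endpoint lies at $\ell^1$-distance within $2$ of the original. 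Writing $2N{\boldsymbol\xi}[\rho] + (1,1) = 2N' {\boldsymbol\xi}[\rho']$ with $|N - N'| + |\rho - \rho'| = O(N^{-1})$, the smoothness of $f$ and ${\boldsymbol\xi}$ recorded in Section \ref{sec:reg} yields $|2N f(\rho) - 2N' f(\rho')| = O(1)$, while $\rho'$ still lies in a slightly smaller compact subinterval of $(0,\mu)$. Consequently Theorem \ref{stat_up_ub} applied to the basepoint--origin stationary polymer of parameter $\rho'$ gives $\mathbb{P}(A) \leq e^{-C' \min\{t^{3/2}, tN^{1/3}\}}$ for all $t \geq 1$ and $N$ large enough.

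Combining the two tail bounds by a union bound proves the claim with a constant $\oldc{statiid_up_c1}$ derived from the minimum of the constants in Theorem \ref{stat_up_ub} and Proposition \ref{low_ub}, after absorbing the $O(1)$ shift error into the range $t \geq 1$. The only mildly delicate step is the translation invariance bookkeeping for the shifted basepoint $(-1,-1)$, but since the shift is at the $O(1)$ scale while the event is at the $N^{1/3}$ scale, it causes only a negligible adjustment of $\rho$ and of the centering $2Nf(\rho)$. No other ingredients are needed.
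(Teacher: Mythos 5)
Your proposal is correct and is essentially the paper's own argument: the paper omits the proof precisely because the bound follows from Theorem \ref{stat_up_ub} and Proposition \ref{low_ub} via the same centering-and-union-bound decomposition you use. Your extra bookkeeping for the $(-1,-1)$ basepoint via translation invariance and the $O(1)$ adjustment of the centering is a harmless detail the paper leaves implicit.
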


\subsection{Random walk comparison for the free energy profile} \label{sec_rw}

The stationary polymer allows one to compare the free energy profile along a segment of a downright path to random walks. This technique has appeared previously in \cite{balzs2019nonexistence, cuberoot, Bus-Sep-22,ras-sep-she-, poly2, seppcoal} and many more places.

To start, fix $\rho \in (0, \mu)$ and define 
$${\bf v}_N = 2N {\boldsymbol \xi}[\rho].$$
Let $\Theta_k$ denote a  down-right path of $k$ (edge) steps that goes through the vertex ${\bf v}_N$.  Order the vertices of $\Theta_k$ as ${\bf z}_0, \dotsc, {\bf z}_k$, where ${\bf z}_0$ has the largest ${\bf e}_2$-coordinate value. Define the free energy profile to be the following collection of random variables
\begin{equation}\label{profile}
\log Z_{0, {\bf z}_{i}} - \log Z_{0, {\bf z}_{i-1}} \quad \text{ where $i = 1, \dots, k$}.
\end{equation}
The proof of the following theorem appears in Appendix \ref{rwrw_proof}.

\begin{theorem}\label{rwrw}
Fix $\epsilon \in (0, \mu/2)$.  There exist positive constants $\newc\label{rwrw_c1}, N_0, s_0, a_0, q_0$ such that for each $\rho \in [\epsilon, \mu-\epsilon]$, $N\geq N_0$, $s_0 \leq s \leq a_0 N^{1/3}$, $1\leq k \leq s N^{2/3}$ and each down-right path $$\Theta_k=\{{\bf z}_0, \dotsc, {\bf z}_k\} \ni \mathbf{v}_N,$$ there exist two collections of random variables $\{X_i\}$ and $\{Y_i\}$ such that the following holds. Set  
$$\lambda = \rho + q_0sN^{-1/3} \qquad \text{ and } \qquad \eta = \rho- q_0sN^{-1/3}.$$
The random variables  $\{X_i\}$ are mutually independent with marginal  distributions  
\begin{alignat*}{2}
X_i &\sim \log(\textup{Ga}^{-1}(\mu-\lambda ))  &&\quad \text{ if ${\bf z}_i - {\bf z}_{i-1} = {\bf e}_1 $} \\
-X_i &\sim \log(\textup{Ga}^{-1}(\lambda))  &&\quad\text{ if ${\bf z}_i - {\bf z}_{i-1} = -{\bf e}_2$ }. 
\end{alignat*}
The random variables  $\{Y_i\}$ are  mutually independent with marginal  distributions  
\begin{alignat*}{2}
Y_i &\sim \log(\textup{Ga}^{-1}(\mu-\eta ))   &&\quad\text{ if ${\bf z}_i - {\bf z}_{i-1} = {\bf e}_1 $} \\
-Y_i &\sim \log(\textup{Ga}^{-1}(\eta))   &&\quad\text{ if ${\bf z}_i - {\bf z}_{i-1} = -{\bf e}_2$ }. 
\end{alignat*}
Furthermore, the sums of $X_i$ and $Y_i$ bound the free energy profile with high probability.
$$\mathbb{P} \Big(\bigcap_{j=1}^k \Big\{\log \tfrac{9}{10}+ \sum_{i=1}^j Y_i \leq \log Z_{0, {\bf z}_{j}} - \log Z_{0, {\bf z}_{0}} \leq \log \tfrac{10}{9} + \sum_{i=1}^j X_i \Big\}\Big) \geq 1- e^{-\oldc{rwrw_c1}s^3}. $$
\end{theorem}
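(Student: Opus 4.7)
The plan is to realize both $\{X_i\}$ and $\{Y_i\}$ as exact log-increments of stationary polymer partition functions with perturbed parameters $\lambda = \rho + q_0 s N^{-1/3}$ and $\eta = \rho - q_0 s N^{-1/3}$, and then sandwich the i.i.d.~log-increments between them using exit-point concentration. Attach the stationary boundary axes with these two parameters at a common base point (for concreteness the origin, so that the vertices of $\Theta_k$ lie in the bulk region), and couple both stationary models to the given i.i.d.~polymer by sharing all bulk vertex weights. Define
\[
X_i := \log Z^\lambda_{{\bf 0}, {\bf z}_i} - \log Z^\lambda_{{\bf 0}, {\bf z}_{i-1}}, \qquad Y_i := \log Z^\eta_{{\bf 0}, {\bf z}_i} - \log Z^\eta_{{\bf 0}, {\bf z}_{i-1}}.
\]
Theorem \ref{stat} applied along the down-right sequence $({\bf z}_0, \dotsc, {\bf z}_k)$ immediately delivers the asserted mutual independence and the inverse-gamma marginals for both collections.

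For the upper half of the sandwich, set $R^\lambda({\bf z}) := \log Z^\lambda_{{\bf 0}, {\bf z}} - \log Z_{{\bf 0}, {\bf z}}$, so that the target inequality $\log Z_{{\bf 0}, {\bf z}_i} - \log Z_{{\bf 0}, {\bf z}_{i-1}} \leq X_i + \log(10/9)$ is equivalent to the one-step lower bound $R^\lambda({\bf z}_i) - R^\lambda({\bf z}_{i-1}) \geq -\log(10/9)$. Decomposing the stationary partition function over the point ${\bf u}$ at which the path first enters the bulk off the southwest boundary,
\[
Z^\lambda_{{\bf 0}, {\bf z}} = \sum_{{\bf u}} B^\lambda({\bf u}) \cdot Z_{{\bf u}, {\bf z}},
\]
the parameter shift $\lambda - \rho = q_0 s N^{-1/3}$ moves the characteristic direction of the $\lambda$-stationary model by $O(s N^{-1/3})$, so the quenched measure $Q^\lambda_{{\bf 0}, {\bf z}_i}$ concentrates its bulk-entry point within $O(s N^{2/3})$ of a typical location ${\bf u}^*$, with exceptional probability at most $e^{-C s^3}$; this is the content of (iterated forms of) the exit-time estimate Theorem \ref{exit_time} with rescaled parameter $r \sim s$. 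On the concentration event, $R^\lambda({\bf z}) \approx \log B^\lambda({\bf u}^*) + \log(Z_{{\bf u}^*, {\bf z}}/Z_{{\bf 0}, {\bf z}})$; the first summand does not depend on ${\bf z}$, and the second summand varies only slowly as ${\bf z}$ advances one lattice step, because the shift ${\bf u}^*$ is much smaller than the $2N$-length of the paths from ${\bf 0}$ or ${\bf u}^*$ to any vertex of $\Theta_k$, a fact quantified by the moderate-deviation estimates of Propositions \ref{up_ub} and \ref{low_ub}. Choosing $q_0$ small enough forces the per-step variation of $R^\lambda$ along $\Theta_k$ to lie within $\log(10/9)$.

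The lower half of the sandwich involving $Y_i$ is proved symmetrically, with $\eta < \rho$ and the exit onto the ${\bf e}_2$-boundary replacing the ${\bf e}_1$-boundary. The principal technical obstacle is upgrading the single-point exit-point concentration into a bound that holds \emph{uniformly} in $i$ along the entire down-right path, while keeping the per-step error trapped inside the fixed additive constants $\log(10/9)$ and $\log(9/10)$; a naive union bound over the $k \leq s N^{2/3}$ vertices would yield only a $k e^{-C s^3}$ tail, so one must either exploit the shared structure of the boundary-entry distribution across neighboring ${\bf z}_i$ (so the typical exit point ${\bf u}^*$ is essentially common to all of them) or take $s_0$ large enough that the polynomial loss is absorbed into the exponential factor. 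The correct matching of the parameter shift $q_0 s N^{-1/3}$ to the transversal fluctuation scale $s N^{2/3}$ is precisely what realizes this sharing and is the crux of the argument.
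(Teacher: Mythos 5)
You have correctly identified the coupling strategy (stationary polymers with shifted parameters $\lambda$ and $\eta$, sharing bulk weights with the i.i.d.\ model, with $X_i, Y_i$ realized as log-increments of the stationary partition functions), and you have also correctly identified the crux: a naive union bound over the $k\le sN^{2/3}$ steps of $\Theta_k$ destroys the $e^{-Cs^3}$ tail. But your proposal stops at that diagnosis rather than resolving it, and the ``concentration of exit points'' sketch you offer does not close the gap. The paper's actual mechanism is entirely deterministic and does not go through any approximate cancellation $R^\lambda(\mathbf{z}_i)\approx R^\lambda(\mathbf{z}_{i-1})$: it uses two monotonicity lemmas that hold pathwise in the coupled environment. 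Specifically, Proposition~\ref{monot1} gives the pointwise comparison $\frac{Z_{0,\mathbf{z}_i}}{Z_{0,\mathbf{z}_{i-1}}}\le \frac{Z^{\lambda,\textup{south}}_{(0,-1),\mathbf{z}_i}}{Z^{\lambda,\textup{south}}_{(0,-1),\mathbf{z}_{i-1}}}$ (by comparing the two starting points $(0,0)$ and $(0,-1)$ in the environment that superimposes the $\lambda$-boundary on the $y=-1$ row), and Proposition~\ref{monot2} gives $Q^\lambda_{-1,\mathbf{z}_i}\{\tau\ge1\}\ge Q^\lambda_{-1,\mathbf{z}_0}\{\tau\ge1\}$ for every $i$. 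The second lemma is exactly what converts a single-point exit estimate at $\mathbf{z}_0$ into a uniform bound over the whole path, with no union bound at all — the tail probability $e^{-Cs^3}$ comes from one application of Theorem~\ref{exit_time} at $\mathbf{z}_0$, and the factors $\log\tfrac{10}{9}$ and $\log\tfrac{9}{10}$ arise algebraically from the ratio $Q^\lambda_{-1,\mathbf{z}_i}(\tau\ge1)/Q^\lambda_{-1,\mathbf{z}_{i-1}}(\tau\ge1)\in[\tfrac{9}{10},\tfrac{10}{9}]$ on the good event, not from any $q_0$-tuned approximate equality.

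Two further points. First, the base point should be $(-1,-1)$, not the origin: you need the boundary weights to sit strictly outside the quadrant spanned by $0$ and $\Theta_k$, so that $Z_{0,\mathbf{z}}$ and $Z^{\lambda}_{-1,\mathbf{z}}$ share literally the same bulk weights — with the base at the origin the two polymers would disagree on the axis weights and the deterministic comparison would not hold. Second, the role of $q_0$ is not to make a ``per-step variation'' small; it is chosen \emph{large} enough (not small, as you suggest) so that the $-\boldsymbol{\xi}[\lambda]$-directed ray from $\mathbf{z}_0$ exits through the horizontal boundary $y=-1$ beyond $x=sN^{2/3}$, which is what makes Theorem~\ref{exit_time} applicable at the correct scale $r\sim s$ and yields $\mathbb{P}(Q^\lambda_{-1,\mathbf{z}_0}\{\tau\le-1\}\ge\tfrac{1}{10})\le e^{-Cs^3}$.
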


We also note that when $\Theta_k$ is vertical or horizontal, then $X_i$ and $Y_i$ can be coupled together with an explicit joint distribution that allows calculations, see \cite{balzs2019nonexistence, balzs2020local, Bus-Sep-22, ras-sep-she-, seppcoal}. However, we will not use this fact in this paper.

{
\subsection{Maximum bound for the free energy} \label{max_bd}
Finally, we document the following bound for the free energy in a maximized version, rendering the estimates closely resembling those in last-passage percolation. This specific argument persists throughout the remainder of the paper.
For illustrative purposes, we present this upper bound for the point-to-line free energy, bounding it with the maximum version.
\begin{proposition}
Let $\log Z_{0, \mathcal{L}_N}$ be the point-to-line free energy; then the following holds:
$$\log Z_{0, \mathcal{L}_N} \leq \max_{\mathbf{v} \in \mathcal{L}_N} \log Z_{0, \mathbf{v}} + \log (2N+1).$$
\end{proposition}
This directly follows from the fact that
$Z_{0, \mathcal{L}_N} = \sum_{\mathbf{v} \in \mathcal{L}_N} Z_{0, \mathbf{v}} \leq (2N+1) \cdot \max_{\mathbf{v} \in \mathcal{L}N} Z_{0, \mathbf{v}}$. We also note that the fluctuation of both $\log Z_{0, \mathcal{L}_N}$ and $\max_{\mathbf{v} \in \mathcal{L}_N} Z_{0, \mathbf{v}}$ are of order $N^{1/3}$; thus, the $\log (2N+1)$ term is significantly smaller, which does not affect the estimates significantly. Also, in the application of this upper bound, to simplify the notation we may use $2\log N$ instead of  $\log (2N+1)$.}

\section{Local fluctuations}\label{loc_fluc}

In this section, we look at fluctuations for the polymer near $0$ or $(N,N)$ where the time scale can be much smaller than the full scale $N$. 
We start with an estimate for the fluctuation of the free energy profile along the anti-diagonal line. 
This result was first proved for a zero-temperature model (Brownian last-passage percolation) in \cite{BR} using the Brownian Gibbs property. 
Other related results and extensions for the various zero-temperature models have appeared in \cite{timecorriid, timecorrflat, BRKPZ}. Compared to these, our proof does not rely on integrable probability which was used in \cite{BRKPZ, BR}, and we improve the tail estimate from \cite{timecorriid, timecorrflat} to optimal order. 

\begin{proposition} \label{compare_max}
There exist positive constants $\newc\label{compare_max_c1}, \newc\label{compare_max_c2},    c_0, N_0$ such that for each $N\geq N_0$, $1 \leq t \leq c_0 N^{1/2}$, and each $a \in \mathbb{Z}_{\geq 0}$, we have
$$\mathbb{P}\Big(\log Z_{0,\mathcal{L}^a_{N}} - \log Z_{0, N} \geq \oldc{compare_max_c1}t \sqrt{a}\Big) \leq 
e^{-\oldc{compare_max_c2} \min\{t^{2}\!,\, t\sqrt{a}\}}.
$$ 
\end{proposition}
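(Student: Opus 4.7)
The first step is the log-sum-exp reduction
$$\log Z_{0, \mathcal{L}_N^a} - \log Z_{0, N} \leq \log(2a+1) + \max_{{\bf z} \in \mathcal{L}_N^a}\bigl(\log Z_{0, {\bf z}} - \log Z_{0, N}\bigr),$$
which reduces the claim to a tail bound on the running maximum on the right; the prefactor $\log(2a+1)$ is absorbed into $C_1 t\sqrt{a}$ in the regimes where the statement is nontrivial. To study this maximum, I would fix the down-right zigzag path from $(N-a, N+a)$ to $(N+a, N-a)$ that alternates $+{\bf e}_1$ and $-{\bf e}_2$ steps. This path has $4a$ edges and passes through every lattice point of $\mathcal{L}_N^a$, with $(N,N)$ sitting at the middle.

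For the small-$a$ regime, I would apply Theorem \ref{rwrw} with $\rho = \mu/2$ and scale parameter
$$s = \max\bigl\{s_0,\ c_1 t^{2/3},\ c_2 a/N^{2/3}\bigr\}.$$
This places $s$ in the admissible range $[s_0, a_0 N^{1/3}]$ (using $t \leq c_0 N^{1/2}$ and $a \leq N$), forces the path of length $4a$ to fit ($4a \leq sN^{2/3}$), and guarantees $e^{-c s^3} \leq e^{-c\min(t^2,\, t\sqrt{a})}$. On the resulting good event, the partial sums $T_j$ of the approximating iid log-inverse-gamma random variables $\{X_i\}$ dominate $\log Z_{0,(N+j,N-j)} - \log Z_{0,N}$ for $0 \leq j \leq a$; this random walk has mean per step of order $sN^{-1/3}$ and variance of order one per step. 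A Bernstein sub-exponential concentration combined with a one-sided running-maximum inequality (Doob for the associated martingale, or Etemadi) yields
$$\mathbb{P}\Bigl(\max_{j \leq a}\bigl(T_j - \mathbb{E}[T_j]\bigr) \geq y\Bigr) \leq e^{-c\min(y^2/a,\,y)},$$
and the choice $y = C_1 t\sqrt{a}/2$ delivers the target tail once the total drift $O(asN^{-1/3})$ stays below $C_1 t\sqrt{a}/2$; this holds precisely when $a \lesssim (tN)^{2/3}$. The $j < 0$ side is symmetric, using the lower random walk $\{Y_i\}$.

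For the large-$a$ regime $a \gtrsim (tN)^{2/3}$, the constraints above collide, and I would instead use the crude inclusion $Z_{0,\mathcal{L}_N^a} \leq Z_{0,\mathcal{L}_N}$ together with the split
$$\log Z_{0, \mathcal{L}_N^a} - \log Z_{0, N} \leq \bigl(\log Z_{0, \mathcal{L}_N} - 2Nf_d\bigr) + \bigl(2Nf_d - \log Z_{0, N}\bigr).$$
With $\tilde t = C_1 t\sqrt{a}/(2N^{1/3})$, Theorem \ref{ptl_upper} (at $h = 0$) bounds the first term's tail by $e^{-c\min(\tilde t^{3/2},\,\tilde t N^{1/3})}$ and Proposition \ref{low_ub} does the same for the second. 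A direct check in the regime $a \gtrsim (tN)^{2/3}$ shows $\min(\tilde t^{3/2},\,\tilde t N^{1/3}) \geq c\min(t^2,\, t\sqrt{a})$, matching the target.

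The main obstacle is the three-way balance in the small-$a$ regime: $s$ must be simultaneously large enough for $e^{-cs^3}$ to beat the target, small enough that the cumulative drift of the approximating walk stays below $t\sqrt{a}$, and large enough that the path of length $4a$ fits the admissibility window. The threshold $a \sim (tN)^{2/3}$ that separates the two regimes is precisely the break-even point of these constraints; additional care is required to control the per-step comparison error supplied by Theorem \ref{rwrw} against the drift when $a$ is close to this threshold.
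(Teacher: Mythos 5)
Your proposal is correct and follows essentially the same route as the paper: reduce to the running maximum of the free-energy profile along $\mathcal{L}_N^a$, dominate it via Theorem \ref{rwrw} with a tilt of order $t^{2/3}N^{-1/3}$ by an i.i.d.\ sub-exponential random walk whose drift is controlled exactly when $a \lesssim t^{2/3}N^{2/3}$ (the paper's Theorem \ref{max_sub_exp} gives your claimed $e^{-c\min\{y^2/a,\,y\}}$ running-maximum tail), and in the complementary regime $a\gtrsim t^{2/3}N^{2/3}$ combine the point-to-line upper tail with the point-to-point lower tail, just as the paper does with Propositions \ref{lem_ptl} and \ref{low_ub}. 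The remaining differences (one zigzag through $(N,N)$ versus the paper's union bound over the two halves of $\mathcal{L}_N^a$, your choice of $s$ as a maximum of three terms versus the paper's $s=t^{2/3}$, and citing Theorem \ref{ptl_upper} instead of Proposition \ref{lem_ptl}) are cosmetic.
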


\begin{remark}
{Since the free energy profile $\{\log Z_{0, (N+k, N-k)} -  \log Z_{0, N}\}_{k\in \mathbb{Z}}$ is expected to be locally Brownian after the KPZ rescaling}, the difference of the free energies in the probability above should approximate the running maximum of a two-sided random walk. Thus the tail bound is of optimal exponential order. \end{remark}

\begin{proof}
The case $a=0$ is trivial, so we will always assume $a \in \mathbb{Z}_{>0}$.
As we previously discussed in Section \ref{max_bd}, we may prove the proposition with the maximum version of the free energy since 
$$\log Z_{0,N} \leq \log Z_{0,\mathcal{L}^a_{N}}  \leq \log Z^{\textup{max}}_{0,\mathcal{L}^a_{N}} + 10\log (a+1).$$

Let us also note that when $a\geq t^{2/3}N^{2/3}$, the estimate is straightforward. It holds that 
\begin{align*}
&\mathbb{P}\Big(\log Z^{\textup{max}}_{0,\mathcal{L}^a_{N}}  - \log  Z_{0, N}  \geq Ct\sqrt{a}\Big)\\
& \leq \mathbb{P}\Big(\log Z^{\textup{max}}_{0,\mathcal{L}_{N}}  - \log  Z_{0, N}  \geq C\tfrac{\sqrt{a}}{t^{1/3}N^{1/3}}t^{4/3} N^{1/3}\Big)\\
& \leq \mathbb{P}\Big(\log Z^{\textup{max}}_{0,\mathcal{L}_{N}}  - \log  Z_{0, N}  \geq Ct^{4/3}N^{1/3}\Big)\\
& \leq \mathbb{P}\Big(\log Z^{\textup{max}}_{0,\mathcal{L}_{N}}  - 2N{f_d} \geq \tfrac{C}{2}t^{4/3} N^{1/3}\Big) \\
& \qquad \qquad \qquad +\mathbb{P}\Big(\log 
 Z_{0, N}  - 2N{f_d} \leq -\tfrac{C}{2}t^{4/3} N^{1/3}\Big)
 \leq e^{-Ct^{2}},
\end{align*}
where the last inequality comes from Proposition \ref{lem_ptl} and Proposition \ref{low_ub}.

From now on, we will assume that the integer $a$ satisfies $1\leq a \leq t^{2/3}N^{2/3}$. In addition, note that our estimate for the difference of two free energies does not change if we included the weight $Y_{(0,0)}$ in both partition functions. For the remaining part of the proof, we will also assume this without introducing a new notation for this version of the partition function.

By a union bound, it suffices to prove our estimate for 
\begin{equation}\label{goal1}
\mathbb{P} \Big(\log Z^{\textup{max}}_{0,\mathcal{L}^{a, +}_{N}}  - \log  Z_{0, N} \geq C't\sqrt{a}\Big)
\end{equation}
where $\mathcal{L}^{a, +}_{N}$ is part of $\mathcal{L}^{a}_{N}$ above $(N,N)$.
For any fixed $k =0, \dots, a$, let us rewrite 
$$\log  Z_{0, (N-k, N+k)}  - \log  Z_{0, N} = \sum_{i=1}^k \log Z_{0, (N-i, N+i)} - \log Z_{0, (N-(i-1), N+(i-1))} = S_k.$$
This allows us to work with a running maximum of the walk $S_k$ since
\begin{equation}\label{max_walk}
\eqref{goal1} = \mathbb{P} \Big(\max_{0\leq k \leq a} S_k \geq C't\sqrt{a}\Big).
\end{equation}
The steps of $S_k$ are not i.i.d., however, Theorem \ref{rwrw} allows us to work with an i.i.d.~random walk $\wt S_k$ which upper bounds $S_k$ with high probability. More precisely, the down-right path $\Theta_{2a}$ will be the staircase from $(N-a, N+a)$ to $(N,N)$. Because the steps of $S_k$ and the free energy profile defined in \eqref{profile} differ by a negative sign, the perturbed parameter will be $\eta =  \mu/2 - q_0 t^{2/3}N^{-1/3}$, and the distribution of the steps of $\wt S_k $ is given by $\log(\textup{Ga}^{-1}(\eta)) -  \log(\textup{Ga}^{-1}(\mu-\eta))$.

Let $A$ denote the event that $\log \tfrac{10}{9} + \wt S_k \geq S_k$ for each $k = 0, 1 \dots, a$.
Then, we have \begin{align*}
\eqref{max_walk} &\leq \mathbb{P} \Big(\Big\{\max_{0\leq k \leq a} S_k \geq C'\sqrt{a}t^{3/4}\Big\} \cap A\Big) + \mathbb{P}(A^c)\\
&\leq \mathbb{P} \Big(\Big\{\log \tfrac{10}{9} + \max_{0\leq k \leq a} \wt{S}_k  \geq C'\sqrt{a}t^{3/4}\Big\} \Big) + \mathbb{P}(A^c).
\end{align*}
From Theorem \ref{rwrw}, we know $\mathbb{P}(A^c) \leq e^{-Ct^{2}}$. Absorb the constant $\log({10}/{9})$ into the constant $C$, and it suffices to obtain the  upper bound 
\begin{equation}\label{rw_before_E}
\mathbb{P} \Big(\max_{0\leq k \leq a} \wt{S}_k  \geq C't \sqrt{a} \Big) \leq e^{-C \min\{t^{2}, t\sqrt{a}\}}.
\end{equation}
This is a standard running maximum estimate for an i.i.d.~random walk whose steps are sub-exponential. We omit the details here and postpone the proof of \eqref{rw_before_E}  to the end of Appendix \ref{sec_sub_exp}.

\end{proof}

Next, we extend the value of $t$ in the previous proposition from $t_0 \leq t \leq c_0N^{1/2}$ to all $t\geq t_0$. The cost of this is a non-optimal exponent appearing in the exponential bound.

\begin{proposition}\label{max_all_t}
There exist positive constants $t_0, N_0$ such that for each $N\geq N_0$, $t\geq t_0$, and each $a \in \mathbb{Z}_{\geq 0}$, we have
$$\mathbb{P}\Big(\log Z_{0,\mathcal{L}^a_{N}}\ - \log Z_{0, N} \geq t\sqrt{a}\Big) \leq 
e^{-t^{1/10}}.
$$ 
\end{proposition}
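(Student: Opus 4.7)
The target bound $e^{-t^{1/10}}$ is substantially weaker than the sharp bound $e^{-C\min\{t^2,t\sqrt{a}\}}$ from Proposition~\ref{compare_max}, so the only real task is to extend the range of $t$ past the threshold $t\sim N^{1/2}$ at which that proposition ceases to apply.

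First I dispose of some trivial reductions. The case $a=0$ is vacuous, and for $a>N$ the segment $\mathcal{L}_N^a$ extends outside the first quadrant, so that $Z_{0,\mathcal{L}_N^a}=Z_{0,\mathcal{L}_N^N}$ while $t\sqrt{a}$ only grows; hence I may assume $1\leq a\leq N$. In the moderate regime $t_0\leq t\leq C_* N^{1/2}$, where $C_*$ is chosen as the product of the multiplicative constant and the threshold constant from Proposition~\ref{compare_max}, I apply Proposition~\ref{compare_max} with deviation parameter $t/C_*$ to obtain a bound $\exp(-C\min\{(t/C_*)^2,(t/C_*)\sqrt{a}\})\leq e^{-C't}$ (using $a\geq 1$), which dominates $e^{-t^{1/10}}$ once $t_0$ is taken large.

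The main regime is $t>C_*N^{1/2}$. Setting $u:=t\sqrt{a}/(2N^{1/3})$ and centering by $2Nf_d$, a union bound yields
\begin{align*}
    \mathbb{P}(\log Z_{0,\mathcal{L}_N^a}-\log Z_{0,N}\geq t\sqrt{a})
    &\leq \mathbb{P}(\log Z_{0,\mathcal{L}_N^a}-2Nf_d\geq uN^{1/3}) \\
    &\quad +\mathbb{P}(\log Z_{0,N}-2Nf_d\leq -uN^{1/3}).
\end{align*}
Proposition~\ref{low_ub} controls the second term by $\exp(-C\min\{u^{3/2},uN^{1/3}\})$. For the first, the inclusion of path families $\mathbb{X}_{0,\mathcal{L}_N^a}\subset \mathbb{X}_{\mathcal{L}_0^{N^{2/3}},\mathcal{L}_N}$ gives $\log Z_{0,\mathcal{L}_N^a}\leq \log Z_{\mathcal{L}_0^{N^{2/3}},\mathcal{L}_N}$, and Proposition~\ref{ptl_upper} applied with $h=1$ (for which the side constraint on $h$ is automatic) produces the same bound.

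It remains to check $\min\{u^{3/2},uN^{1/3}\}\geq t^{1/10}$ in this regime. Using $a\geq 1$ together with $N^{1/2}\leq t/C_*$ from the hypothesis, one has $u^{3/2}\geq t^{3/2}/(2^{3/2}N^{1/2})\geq C\sqrt{t}$; meanwhile $uN^{1/3}=t\sqrt{a}/2\geq t/2$. Either way the exponent dominates $t^{1/10}$ for $t\geq t_0$ large. The only potentially delicate point is the side constraint on $h$ in Proposition~\ref{ptl_upper}, which the choice $h=1$ renders trivial; everything else is bookkeeping across the two regimes.
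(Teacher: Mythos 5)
Your proof is correct and follows essentially the same route as the paper's: reduce to the regime $t \gtrsim N^{1/2}$ using Proposition~\ref{compare_max}, then center by $2Nf_d$ and split by a union bound into a right tail for a (point- or interval-)to-line quantity and the left tail from Proposition~\ref{low_ub}, finally checking that the resulting exponent dominates $t^{1/10}$. The only cosmetic difference is that for the right-tail term the paper bounds $\log Z_{0,\mathcal{L}_N^a} \le \log Z_{0,\mathcal{L}_N}$ and invokes the point-to-line bound Proposition~\ref{lem_ptl} directly, whereas you enlarge further to $\log Z_{\mathcal{L}_0^{N^{2/3}},\mathcal{L}_N}$ and use Theorem~\ref{ptl_upper} with $h=1$; since Theorem~\ref{ptl_upper} is itself built on Proposition~\ref{lem_ptl}, this is a slightly heavier tool for the same outcome, but entirely valid. (One small bookkeeping point: in the moderate regime you should feed $t/\oldc{compare_max_c1}$, not $t/C_*$, into Proposition~\ref{compare_max} so that the output deviation is exactly $t\sqrt{a}$; since the target $e^{-t^{1/10}}$ is so weak, this slip is inconsequential.)
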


\begin{proof}The case $a=0$ is trivial, so we will always assume $a \geq 1$. Due to Proposition \ref{compare_max}, we only have to show the estimate when $t\geq {\oldc{compare_max_c1}} c_0 N^{1/2}$ where both constants ${\oldc{compare_max_c1}}$ and $c_0$ are from Proposition \ref{compare_max}. 
Suppose $t = zN^{1/2}$ where $z\geq {\oldc{compare_max_c1}}c_0$. Then, 
\begin{align*}
&\mathbb{P}\Big(\log Z_{0,\mathcal{L}^a_{N}}\ - \log Z_{0, N} \geq t \sqrt{a}\Big)
 \leq \mathbb{P}\Big(\log Z_{0,\mathcal{L}^a_{N}}\ - \log Z_{0, N} \geq t\Big)\\
&\quad = \mathbb{P}\Big(\log Z_{0,\mathcal{L}^a_{N}}\ - \log Z_{0, N} \geq (zN^{1/6}) N^{1/3}\Big)\\
&\quad \leq \mathbb{P}\Big(\log Z_{0,\mathcal{L}^a_{N}}\ - 2Nf_d  \geq (\tfrac{1}{2}zN^{1/6}) N^{1/3}\Big) 
+ \mathbb{P}\Big( \log Z_{0, N} - 2Nf_d \leq -(\tfrac{1}{2} zN^{1/6}) N^{1/3}\Big)\\
&\quad\leq e^{-t^{1/10}}.
\end{align*}
The last inequality comes from Proposition \ref{lem_ptl} and Proposition \ref{low_ub}.
\end{proof}

Fix $0\leq r \leq N/2$. Recall that $\mathcal{L}_r$ is the anti-diagonal through the point $(r,r)$.  Let ${\bf p}_*$ denote the random maximizer in   
$$\max_{{\bf p}\in \mathcal{L}_r} \big\{\log Z_{0, {\bf p}}  + \log Z_{{\bf p}, N} \big\} = \log Z_{0, {\bf p}_*}  + \log Z_{{\bf p}_*, N} .$$
The proposition below captures the KPZ transversal fluctuation which says that the maximizer ${\bf p}_*$ cannot be too far from the diagonal on the local scale $r^{2/3}$. This can of course   be much smaller than the global fluctuation scale $N^{2/3}$. This result was first proved in the zero-temperature model in \cite{ubcoal}.

\begin{proposition}\label{fluc_bound_l}
There exist positive constants  $\newc\label{fluc_bound_l_c1}, c_0, t_0, N_0 $ such that for each $N\geq N_0$, $c_0 \leq r \leq N/2$ and $t\geq t_0$, we have
$$\mathbb{P}(|{\bf p}_*-(r,r)|_\infty > tr^{2/3}) \leq e^{-\oldc{fluc_bound_l_c1}{t}^3}.$$
\end{proposition}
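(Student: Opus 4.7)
I would prove the proposition by a dyadic peeling argument combined with the one-point tail estimates collected above. Because ${\bf p}_* = (r+j, r-j)$ must satisfy $|j| \leq r$ (else one of the polymer halves is impossible), the event is empty unless $tr^{2/3} < r$, so I may assume $t_0 \leq t \leq r^{1/3}$. By the symmetry $(x,y)\mapsto(y,x)$ it suffices to bound the one-sided event $\{{\bf p}_* \cdot {\bf e}_1 - r > tr^{2/3}\}$, and I would decompose it into dyadic annuli $I_{h_k} := \{(r+j,r-j) : h_k r^{2/3} \leq j \leq h_{k+1} r^{2/3}\}$ with $h_k := 2^k t$. On the slice $\{{\bf p}_* \in I_{h_k}\}$, the maximality of ${\bf p}_*$ together with the monotone bounds $\log Z_{0,{\bf p}_*} \leq \log Z_{0, I_{h_k}}$ and $\log Z_{{\bf p}_*, N} \leq \log Z_{I_{h_k}, N}$ forces, writing $h = h_k$,
\begin{equation*}
\bigl[\log Z_{0, I_h} - 2rf_d\bigr] \;+\; \bigl[\log Z_{I_h, N} - \log Z_{(r,r), N}\bigr] \;\geq\; \log Z_{0, (r,r)} - 2rf_d .
\end{equation*}

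The plan is then to show this is quantitatively impossible with probability $1 - O(e^{-c h_k^3})$. The first bracket is at most $-\oldc{trans_fluc_loss2_c1}\, h^2 r^{1/3}$ with probability $\geq 1 - e^{-ch^3}$ via Proposition \ref{trans_fluc_loss2} (taken with $N\to r$, $s=0$, parameter $h$), capturing the deterministic shape-function loss from displacement of order $hr^{2/3}$. The right-hand side is at least $-\alpha h^2 r^{1/3}$ with probability $\geq 1 - e^{-c \alpha^{3/2} h^3}$ by the left-tail estimate Proposition \ref{low_ub} applied at $(r,r)$. For the middle bracket I would invoke the locally diffusive bound of Proposition \ref{compare_max} on the reversed polymer: since the weights are i.i.d., the path-reversal map ${\bf z} \mapsto (N,N)-{\bf z}$ couples $\{\log Z_{{\bf p}, N}\}_{{\bf p} \in \mathcal{L}_r}$ with $\{\log Z_{0, {\bf q}}\}_{{\bf q} \in \mathcal{L}_{N-r}}$ for a polymer of length $N-r \geq r$; Proposition \ref{compare_max} with window $a = 2hr^{2/3}$ and parameter $t' = \alpha h^{3/2}$ then yields $\log Z_{I_h, N} - \log Z_{(r,r), N} \leq C\alpha h^2 r^{1/3}$ with probability $\geq 1 - e^{-c \alpha^2 h^3}$, the minimum in the tail exponent being attained by $(t')^2$ once one uses $h \leq r^{1/3}$.

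Choosing $\alpha$ small enough that $(1+C)\alpha < \oldc{trans_fluc_loss2_c1}$, the three bounds contradict the displayed inequality, so $\{{\bf p}_* \in I_{h_k}\}$ lies inside the union of three exceptional events of total probability $\leq C e^{-c h_k^3}$. Summing $\sum_{k\geq 0} e^{-c\,2^{3k} t^3} \leq C e^{-ct^3}$ and doubling for the $\pm j$ symmetry yields the claim. The main technical subtlety is the path-reversal step used to export Proposition \ref{compare_max} to the polymer terminating at $(N,N)$: because the paper's convention excludes the starting-vertex weight, the coupling has to absorb single boundary weights $Y_{\bf p}$ and $Y_{(N,N)}$ (each i.i.d.\ inverse-gamma) as harmless $O(1)$ factors, but otherwise the remainder of the argument is a careful bookkeeping of constants.
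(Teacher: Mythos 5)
Your argument is correct and follows the same scheme as the paper's proof: you union-bound over displacement shells on $\mathcal{L}_r$, and on each shell compare the shell-restricted free energies with those of the diagonal point, bounding the first half (from $0$) by a transversal-fluctuation-loss estimate (you invoke Proposition \ref{trans_fluc_loss2}; the paper uses Proposition \ref{trans_fluc_loss}, which is equivalent here) together with the left-tail bound of Proposition \ref{low_ub}, and the second half (to $(N,N)$) by the locally diffusive bound of Proposition \ref{compare_max} applied, via the reflection you describe, to a polymer of length $N-r$. The only notable divergence is that you verify the constraint $t'\le c_0(N-r)^{1/2}$ directly from $N-r\ge r$ (since $r\le N/2$) and $h\le r^{1/3}$, which lets you skip the case split on $r$ versus $N-r$ that the paper carries out before applying Proposition \ref{compare_max}; this is a mild but genuine simplification of the bookkeeping.
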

\begin{proof}
Abbreviate  $J^{h}=\mathcal{L}^{r^{2/3}}_{(r-2hr^{2/3}, r+2hr^{2/3})}$.  
We bound the probability  as follows. 
\begingroup
\allowdisplaybreaks
\begin{align}
&\mathbb{P}(|{\bf p}_*-(r,r)|_\infty > tr^{2/3}) \nonumber\\
&\leq \mathbb{P}\Big(\max_{{\bf p}\in \mathcal{L}_r\setminus \mathcal{L}_r^{tr^{2/3}}}\Big\{\log Z_{0,{\bf p}}  + \log Z_{{\bf p}, N} \Big \}  > \log Z_{0, r}  + \log Z_{r, N} \Big)\nonumber\\
&\leq \sum_{|h|= \floor{t/2}}^{r^{1/3}} \mathbb{P}\Big( \log Z^{\textup{max}}_{0, J^h} + \log Z^{\textup{max}}_{J^h, N}  > \log Z_{0, r}  + \log Z_{r, N}    \Big)\nonumber\\
&=  \sum_{|h|= \floor{t/2}}^{r^{1/3}}\mathbb{P}\Big(\Big[ \log Z^{\textup{max}}_{0, J^h} - \log Z_{0, r} \Big]   + \Big[\log Z^{\textup{max}}_{J^h, N} - \log Z_{r, N}  \Big]    > 0\Big)\nonumber\\
& \leq  \sum_{|h|= \floor{t/2}}^{r^{1/3}} \bigg[\mathbb{P}\Big( \log Z^{\textup{max}}_{0, J^h } - \log Z_{0, r}  \geq -Dh^2r^{1/3} \Big)\label{close_max}\\
&\qquad \qquad \qquad \qquad + \mathbb{P}\Big(\log Z^{\textup{max}}_{ J^h , N} - \log Z_{r, N}  \geq Dh^2r^{1/3}\Big)\bigg].\label{far_max}
\end{align}
\endgroup
where $D$ is a small positive constant that 
we will fix.

For \eqref{close_max}, provided $t_0$ is fixed sufficiently large,
we may upper bound \eqref{close_max} using Proposition \ref{trans_fluc_loss} and Proposition \ref{low_ub} as the following
\begin{align*}
& \mathbb{P}\Big([ \log Z^{\textup{max}}_{0, J^h} -2r{f_d}] -[\log Z_{0, r} -2r{f_d}] \geq -Dh^2r^{1/3} \Big)\\
& \leq \mathbb{P}\Big(\log Z^{\textup{max}}_{0, J^h} -2r{f_d} \geq -2Dh^2r^{1/3} \Big) + \mathbb{P}\Big(\log Z_{0, r} -2r{f_d} \leq - Dh^2r^{1/3} \Big) 
\leq e^{-C|h|^3}
\end{align*}
provided $D \leq \tfrac{1}{10}\oldc{trans_fluc_loss_c1}$ from Proposition \ref{trans_fluc_loss}.

For \eqref{far_max}, we will split the value of $r$ into two cases, whether $r\leq \epsilon_0 (N-r)$ or $r\geq \epsilon_0 (N-r)$, for $\epsilon_0$ which we will fix below (between \eqref{prob_estt} and \eqref{fix_e_0}).
When $r\leq \epsilon_0 (N-r)$, we upper bound \eqref{far_max} by 
\begin{equation}\label{bound_small_r}
\eqref{far_max} \leq \mathbb{P}\Big(\log Z^{\textup{max}}_{\mathcal{L}_r^{4|h|r^{2/3}}, N} - \log Z_{r, N}  \geq Dh^2r^{1/3}\Big), 
\end{equation}
and we would like to apply Proposition \ref{compare_max}. From there, we let $a = 4|h|r^{2/3}$ and $t = 8|h|^{3/2}\epsilon_0^{3/2}$. 
Then, continue from \eqref{bound_small_r}, 
\begin{align}\eqref{bound_small_r} &\leq \mathbb{P}\Big(\log Z^{\textup{max}}_{\mathcal{L}_r^{4|h|r^{2/3}}, N} - \log Z_{r, N}  \geq \tfrac{1}{10}D|h|^{3/2}\sqrt{4|h|r^{2/3}}\Big)\nonumber\\
&\leq  \mathbb{P}\Big(\log Z^{\textup{max}}_{\mathcal{L}_r^{a}, N} - \log Z_{r, N}  \geq \tfrac{\frac{1}{10}D}{8\epsilon_0^{3/2}}t\sqrt{a}\Big). \label{prob_estt}
\end{align}
Next, we fix $\epsilon_0$ sufficiently small so that $t  \leq \wt c_0 r^{1/2} $, where $\wt c_0$ is the constant $c_0$ from Proposition \ref{compare_max}. Then,  we lower the value of $D$ to get
\begin{equation}\label{fix_e_0}
\tfrac{\frac{1}{10}D}{8\epsilon_0^{3/2}} \leq \oldc{compare_max_c1}
\end{equation}
where \oldc{compare_max_c1} is the constant appearing in Proposition \ref{compare_max}. Finally, by Proposition \ref{compare_max}, the above probability in \eqref{prob_estt} will always be bounded by $e^{-Ct^{2}} = e^{-C|h|^3}$.

On the other hand, when $r\geq \epsilon_0(N-r)$, if the maximizer ${\bf p}_*$ is located more than $|h|r^{2/3}$ away from the diagonal, {it means it is more than $(\epsilon_0/2)^{2/3}|h|(N-r)^{2/3}$, which is the same order as  $(N-r)$, away from the diagonal}. Provided that $t_0$ is fixed sufficiently large depending on $\epsilon_0$, \eqref{far_max} can be upper bounded with a similar argument as in \eqref{close_max}. 

To summarize, the arguments above show that 
$$\sum_{|h| = \floor{t/2}}^{r^{1/3}} \eqref{close_max} + \eqref{far_max}\leq  \sum_{|h| = \floor{t/2}}^{\infty}e^{-C|h|^3} \leq e^{-Ct^3},$$
with this, we have finished the proof of this proposition.
\end{proof}

By symmetry, similar results also hold for the case when $N/2 \leq r \leq N$. We record this in the following proposition.  Let ${\bf p}^*$ denote the random maximizer of 
$$\max_{{\bf p}\in \mathcal{L}_r} \{\log Z_{0, {\bf p}}  + \log Z_{{\bf p}, N}\}.$$

\begin{proposition}\label{fluc_bound_u}
There exist positive constants $\newc\label{fluc_bound_u_c1}, c_0, N_0, t_0$ such that for each $N\geq N_0$, $N/2 \leq r \leq N-c_0$ and $t\geq t_0$, we have
$$\mathbb{P}(|{\bf p}^*-(r,r)|_\infty > t(N-r)^{2/3}) \leq e^{-\oldc{fluc_bound_u_c1}{t}^3}.$$
\end{proposition}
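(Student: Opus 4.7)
The plan is to reduce this statement directly to Proposition \ref{fluc_bound_l} via a $180^\circ$ rotational symmetry of the box $\{0,1,\dotsc,N\}^2$ around its center. Define $\sigma\colon\mathbb Z^2\to\mathbb Z^2$ by $\sigma({\bf z})=(N,N)-{\bf z}$, and let $\tilde Y_{\bf z}=Y_{\sigma({\bf z})}$. Since the weights $\{Y_{\bf z}\}$ are i.i.d., the family $\{\tilde Y_{\bf z}\}_{{\bf z}\in\{0,\dotsc,N\}^2}$ has the same joint distribution as $\{Y_{\bf z}\}_{{\bf z}\in\{0,\dotsc,N\}^2}$. The map $\sigma$ sends $\mathcal L_r$ bijectively onto $\mathcal L_{N-r}$, sends $(r,r)$ to $(N-r,N-r)$, and preserves $\ell^\infty$ distance from the diagonal.

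The next step is the bookkeeping for the free energies. Reversing an up-right path from $0$ to ${\bf p}$ and applying $\sigma$ yields an up-right path from $\sigma({\bf p})$ to $N$. Because the partition function convention excludes the weight at the \emph{starting} vertex, this bijection gives
\[
Z_{0,{\bf p}} \;=\; \frac{Y_{\bf p}}{Y_0}\,\tilde Z_{\sigma({\bf p}),N}
\qquad\text{and}\qquad
Z_{{\bf p},N} \;=\; \frac{Y_N}{Y_{\bf p}}\,\tilde Z_{0,\sigma({\bf p})},
\]
where $\tilde Z$ denotes partition functions built from the weights $\tilde Y$. Multiplying and taking logarithms,
\[
\log Z_{0,{\bf p}}+\log Z_{{\bf p},N} \;=\; \log(Y_N/Y_0) \;+\; \log \tilde Z_{0,\sigma({\bf p})} + \log \tilde Z_{\sigma({\bf p}),N}.
\]
The first term is independent of ${\bf p}$, so the maximizer ${\bf p}^*$ over ${\bf p}\in\mathcal L_r$ corresponds exactly to the maximizer $\tilde{\bf p}_*:=\sigma({\bf p}^*)$ of $\log\tilde Z_{0,{\bf q}}+\log\tilde Z_{{\bf q},N}$ over ${\bf q}\in\mathcal L_{N-r}$, and
\[
\big|{\bf p}^*-(r,r)\big|_\infty \;=\; \big|\tilde{\bf p}_*-(N-r,N-r)\big|_\infty.
\]

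Finally, the hypothesis $N/2\le r\le N-c_0$ translates to $c_0\le N-r\le N/2$, which is exactly the regime for which Proposition \ref{fluc_bound_l} applies with the parameter $r$ replaced by $N-r$. Because $\{\tilde Y_{\bf z}\}$ has the same law as $\{Y_{\bf z}\}$, the conclusion of Proposition \ref{fluc_bound_l} holds verbatim for $\tilde{\bf p}_*$, giving
\[
\mathbb P\big(|\tilde{\bf p}_*-(N-r,N-r)|_\infty>t(N-r)^{2/3}\big) \le e^{-\oldc{fluc_bound_l_c1}t^3},
\]
and the identity of $\ell^\infty$ distances above converts this to the desired bound on $|{\bf p}^*-(r,r)|_\infty$ with $\oldc{fluc_bound_u_c1}=\oldc{fluc_bound_l_c1}$.

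I expect no real obstacle here: the only thing to verify carefully is the weight-accounting under reversal (the extra factor $Y_N/Y_0$ is ${\bf p}$-independent and hence harmless for the argmax), and that the range of $r$ maps correctly onto the range of $N-r$ required by Proposition \ref{fluc_bound_l}. An alternative, if one preferred to avoid the symmetry argument, would be to re-run the proof of Proposition \ref{fluc_bound_l} with the roles of $0$ and $(N,N)$ exchanged and with local scale $(N-r)^{2/3}$ instead of $r^{2/3}$, using the same ingredients (Propositions \ref{trans_fluc_loss}, \ref{low_ub}, and \ref{compare_max}); but the symmetry route is cleaner.
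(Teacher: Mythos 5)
Your proposal is correct and is precisely the argument the paper intends: the paper simply states Proposition \ref{fluc_bound_u} follows from Proposition \ref{fluc_bound_l} ``by symmetry,'' and your rotation $\sigma({\bf z})=(N,N)-{\bf z}$ with the path-reversal weight bookkeeping (the ${\bf p}$-independent factor $Y_N/Y_0$, and the distributional identity $\{\tilde Y\}\overset{d}{=}\{Y\}$) is exactly the detailed version of that symmetry reduction. The range translation $c_0\le N-r\le N/2$ and the preservation of $\ell^\infty$ distance are handled correctly, so nothing further is needed.
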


The next estimate quantifies the effect of letting the crossing point of the path on an antidiagonal fluctuate in the KPZ scale versus forcing the path to go through a fixed point.
\begin{proposition} \label{nest_max}
There exist positive constants $c_0, t_0, N_0, \epsilon_0$ such that for each $N\geq N_0$, $N/2 \leq r \leq N-c_0$,  $t\geq t_0$, we have
\begin{align*}
\mathbb{P} \Big( \max_{{\bf p}\in \mathcal{L}_r^{t(N-r)^{2/3}}}\Big \{\log Z_{0,{\bf p}}  + \log Z_{{\bf p} , N} \Big\} 
- \Big[\log Z_{0, r} + \log Z_{r,N} \Big]\geq t(N-r)^{1/3}\Big) \leq e^{- t^{1/10}}.
\end{align*}
\end{proposition}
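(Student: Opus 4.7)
\smallskip
\noindent\textbf{Proof plan.} The plan is to split the maximum via subadditivity and then control each half using the local fluctuation estimates developed in this section. Since $\max(X+Y)\le \max X+\max Y$,
\begin{align*}
&\max_{{\bf p}\in \mathcal{L}_r^{t(N-r)^{2/3}}}\!\big\{\log Z_{0,{\bf p}}+\log Z_{{\bf p},N}\big\}-\big[\log Z_{0,r}+\log Z_{r,N}\big]\\
&\qquad\le \max_{{\bf p}\in \mathcal{L}_r^{t(N-r)^{2/3}}}\!\big(\log Z_{0,{\bf p}}-\log Z_{0,r}\big)+\max_{{\bf p}\in \mathcal{L}_r^{t(N-r)^{2/3}}}\!\big(\log Z_{{\bf p},N}-\log Z_{r,N}\big),
\end{align*}
so it suffices to bound each maximum on the right by $\tfrac{t}{2}(N-r)^{1/3}$ with probability at least $1-\tfrac{1}{2}e^{-t^{1/10}}$.

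For the first maximum, I would apply Proposition~\ref{max_all_t} with $N$ replaced by $r$ and window parameter $a=t(N-r)^{2/3}$. Note $\sqrt{a}=\sqrt{t}\,(N-r)^{1/3}$, so choosing the parameter $\sqrt{t}/2$ in the proposition matches the threshold $\tfrac{t}{2}(N-r)^{1/3}$. This is legal since $r\ge N/2$ is large by hypothesis. For the second maximum, I would exploit the time-reversal symmetry of the i.i.d.~model: the lattice reflection ${\bf z}\mapsto (N,N)-{\bf z}$ preserves the joint law of weights, so the process ${\bf p}\mapsto \log Z_{{\bf p},(N,N)}$ has the same law as ${\bf p}\mapsto \log Z_{(0,0),(N,N)-{\bf p}}$. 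Since $(N,N)-\mathcal{L}_r^{t(N-r)^{2/3}}=\mathcal{L}_{N-r}^{t(N-r)^{2/3}}$, the second maximum has the same distribution as $\max_{{\bf p}'\in \mathcal{L}_{N-r}^{a}}\!\big(\log Z_{0,{\bf p}'}-\log Z_{0,N-r}\big)$, to which Proposition~\ref{max_all_t} applies once more at scale $N-r\ge c_0$.

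The main obstacle will be reconciling the parameter-to-tail trade-off in Proposition~\ref{max_all_t}: a direct substitution with parameter $\sqrt{t}/2$ yields only an $e^{-ct^{1/20}}$ tail, which is weaker than the desired $e^{-t^{1/10}}$. The fix is to use the sharper Proposition~\ref{compare_max} in place of Proposition~\ref{max_all_t} whenever the validity condition $\sqrt{t}/2\le c_0\cdot(\text{scale})^{1/2}$ holds; in that range Proposition~\ref{compare_max} delivers the much stronger tail $e^{-Ct}$, which is far stronger than required. In the complementary regime where $t$ is very large relative to $N-r$, the window $\mathcal{L}_r^{t(N-r)^{2/3}}$ far exceeds the KPZ transversal scale $(N-r)^{2/3}$, and the parabolic curvature of the shape function (Proposition~\ref{time_const}) forces $\log Z_{{\bf p},N}-\log Z_{r,N}$ to be strongly negative for ${\bf p}$ near the boundary of the window; a direct union bound over the $O(t(N-r)^{2/3})$ integer points in the window, combined with the one-point estimates of Propositions~\ref{up_ub} and~\ref{low_ub} for $\log Z_{{\bf p},N}$ and $\log Z_{r,N}$, then yields tails far stronger than $e^{-t^{1/10}}$. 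The boundary sub-case $N-r=O(1)$, in which Proposition~\ref{max_all_t} at scale $N-r$ does not apply, is absorbed into this same union-bound argument since the window contains only $O(t)$ lattice points.
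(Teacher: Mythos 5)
Your route is essentially the paper's: the proof of Proposition \ref{nest_max} makes exactly your subadditivity split into the two one-sided maxima, bounds each by $\tfrac12 t(N-r)^{1/3}$, and quotes Proposition \ref{max_all_t} for both (for the second maximum after the same reflection that you make explicit). Your ``main obstacle'' is a fair catch: with window $a=t(N-r)^{2/3}$ the natural parameter in Proposition \ref{max_all_t} is $\sqrt t/2$, so a literal application yields $e^{-ct^{1/20}}$ rather than $e^{-t^{1/10}}$. Since the exponent $1/10$ is not optimized and every downstream use (Proposition \ref{nest_max1}, Theorem \ref{nest}, and the ensuing second-moment bounds) only needs some stretched-exponential decay, the paper glosses over this, whereas you attempt to restore the stated exponent via Proposition \ref{compare_max}; in the regime $\sqrt t\lesssim (\text{scale})^{1/2}$ that part of your patch is correct and indeed gives a much stronger tail.

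Where your patch has a genuine gap is the complementary regime. When $t\gtrsim N-r$ the window $\mathcal{L}_r^{t(N-r)^{2/3}}$ contains points ${\bf p}$ whose direction to $(N,N)$ is arbitrarily close to (or beyond) ${\bf e}_1,{\bf e}_2$, while Propositions \ref{up_ub} and \ref{low_ub} are stated only for directions $\boldsymbol\xi[\rho]$ with $\rho\in[\epsilon,\mu-\epsilon]$, and the curvature/regularity inputs (Propositions \ref{time_const}, \ref{reg_shape}) control the shape function only for small off-diagonal tilts; so the union bound as you describe it invokes these estimates outside their stated range. The sub-case $N-r=O(1)$ moreover violates the $N\ge N_0$ hypotheses of those propositions, and needs an elementary tail bound on the finitely many weights involved rather than the moderate-deviation inputs you cite. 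You also leave unaddressed the analogous out-of-range regime for the first maximum, namely $\sqrt t> c_0\, r^{1/2}$, i.e.\ $t\gtrsim N$. All of this is avoidable: in the extreme regime simply drop the window constraint and argue as in the proof of Proposition \ref{max_all_t} itself (control the maximum over the whole line by Proposition \ref{lem_ptl} and the subtracted term by the left-tail bound, at scale $N-r$ or $r$), or just accept the $e^{-ct^{1/20}}$ tail from a single application of Proposition \ref{max_all_t}, which is what the paper implicitly does.
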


\begin{proof}
Let us start by rewriting
\begin{align}
&\mathbb{P} \Big( \max_{{\bf p}\in \mathcal{L}_r^{t(N-r)^{2/3}}}\Big\{\log Z_{0,{\bf p}}  + \log Z_{{\bf p} , N}\Big\}  - \Big[\log  Z_{0, r}  + \log Z_{r,N}  \Big]\geq t(N-r)^{1/3}\Big)\nonumber\\
& \leq   \mathbb{P} \Big(\log Z^{\textup{max}}_{0,\mathcal{L}_r^{t(N-r)^{2/3}}}  - \log  Z_{0, r}  \geq \tfrac{1}{2}t(N-r)^{1/3}\Big) \label{far_term} \\
&\qquad \qquad + \mathbb{P} \Big(\log Z^{\textup{max}}_{\mathcal{L}_r^{t(N-r)^{2/3}}, N}  - \log Z_{r,N} \geq \tfrac{1}{2}t(N-r)^{1/3}\Big). \label{close_term}
\end{align}
Both probabilities \eqref{far_term} and \eqref{close_term} can be upper bounded by $e^{-Ct^{1/10}}$ using Proposition \ref{max_all_t}.
\end{proof}

We combine the  previous propositions into  the following statement.  
\begin{proposition} \label{nest_max1}
There exist positive constants $ c_0, t_0, N_0, \epsilon_0$ such that for each $N\geq N_0$, $N/2 \leq r \leq N-c_0$,  $t\geq t_0$, we have
\begin{align*}
&\mathbb{P} \Big( \max_{{\bf p}\in \mathcal{L}_r} \Big \{\log Z_{0,{\bf p}}  + \log Z_{{\bf p} , N} \Big\} - \Big[\log  Z_{0, r}  + \log Z_{r,N} \Big]\geq t(N-r)^{1/3}\Big) \leq e^{- t^{1/10}}.
\end{align*}
\end{proposition}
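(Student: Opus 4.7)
The plan is to decompose $\mathcal{L}_r$ into the near part $\mathcal{L}_r^{t(N-r)^{2/3}}$ and the far part $\mathcal{L}_r \setminus \mathcal{L}_r^{t(N-r)^{2/3}}$, apply Proposition \ref{nest_max} to the near part, and control the far part using the transversal fluctuation bound of Proposition \ref{fluc_bound_u} for the global maximizer ${\bf p}^*$ on $\mathcal{L}_r$. Since the maximum over $\mathcal{L}_r$ is automatically at least the maximum over $\mathcal{L}_r^{t(N-r)^{2/3}}$, the event in question forces one of two things to happen: either the maximum over the near part $\mathcal{L}_r^{t(N-r)^{2/3}}$ already exceeds $\log Z_{0,r} + \log Z_{r,N} + t(N-r)^{1/3}$, or the global maximizer ${\bf p}^*$ lies outside the near part.

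The first alternative is precisely the event controlled by Proposition \ref{nest_max}, which yields an upper bound of $e^{-t^{1/10}}$ once $t_0$ is large enough. The second alternative implies $|{\bf p}^* - (r,r)|_\infty > t(N-r)^{2/3}$, whose probability is bounded by $e^{-\oldc{fluc_bound_u_c1} t^3}$ via Proposition \ref{fluc_bound_u}. Taking a union bound over these two alternatives gives
\[
\mathbb{P}\Big(\max_{{\bf p}\in \mathcal{L}_r}\{\log Z_{0,{\bf p}} + \log Z_{{\bf p},N}\} - [\log Z_{0,r} + \log Z_{r,N}] \geq t(N-r)^{1/3}\Big) \leq e^{-t^{1/10}} + e^{-\oldc{fluc_bound_u_c1} t^3},
\]
which is at most $e^{-t^{1/10}}$ after adjusting $t_0$ (and absorbing constants into the exponent in a standard way, e.g.\ by further enlarging $t_0$ so that both summands are each at most $\tfrac{1}{2} e^{-t^{1/10}}$, which is possible since $t^{1/10}$ grows slower than both $t^{1/10}$ with a better constant and $t^3$).

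There is essentially no technical obstacle here: both input estimates are already in hand. The only minor point to verify is that the hypotheses of Proposition \ref{nest_max} and Proposition \ref{fluc_bound_u} are compatible — both require $N \geq N_0$, $N/2 \leq r \leq N - c_0$, and $t \geq t_0$ — so we simply take the maximum of the corresponding thresholds. The same careful choice of $\epsilon_0$ (which appears in Proposition \ref{nest_max}) is passed through unchanged. In summary, Proposition \ref{nest_max1} is an immediate two-line consequence of Propositions \ref{nest_max} and \ref{fluc_bound_u} via the dichotomy on whether the maximizer lies inside or outside the KPZ-scale window around $(r,r)$.
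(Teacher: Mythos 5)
Your proposal is correct and is essentially the paper's own argument: the paper likewise splits the maximum over $\mathcal{L}_r$ into ${\bf p}\in \mathcal{L}_r^{t(N-r)^{2/3}}$ (handled by Proposition \ref{nest_max}) and ${\bf p}\notin \mathcal{L}_r^{t(N-r)^{2/3}}$ (handled by Proposition \ref{fluc_bound_u}), and concludes by a union bound with the constants absorbed by enlarging $t_0$. The only cosmetic difference is that you phrase the far-part case through the location of the maximizer ${\bf p}^*$ rather than through the far-part maximum itself, which changes nothing of substance.
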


\begin{proof}
By a union bound, we split the above maximum over ${\bf p}\in \mathcal{L}_r$ to ${\bf p}\not\in \mathcal{L}_r^{t(N-r)^{2/3}}$ and ${\bf p}\in \mathcal{L}_r^{t(N-r)^{2/3}}$.  Propositions \ref{fluc_bound_u} and  \ref{nest_max} show that in both cases the probability can be upper bounded by $e^{-Ct^{1/10}}$. 
\end{proof}

The development culminates in the following theorem.
\begin{theorem}\label{nest}
There exist positive constants $c_0, t_0, N_0, $ such that for each $N\geq N_0$, $N/2 \leq r \leq N-c_0$,  $t\geq t_0$, we have
\begin{align*}
&\mathbb{P} \Big( \log Z_{0,N}  - [\log  Z_{0, r}  + \log Z_{r,N}  ]\geq t(N-r)^{1/3}\Big) \leq e^{-t^{1/10}}.
\end{align*}
\end{theorem}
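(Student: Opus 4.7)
The plan is to reduce the statement directly to Proposition \ref{nest_max1} by decomposing the point-to-point partition function over the anti-diagonal $\mathcal{L}_r$ and absorbing a logarithmic correction into the target $t(N-r)^{1/3}$.

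First, since every up-right path from $(0,0)$ to $(N,N)$ crosses $\mathcal{L}_r$ at exactly one lattice point, and because the convention \eqref{par_fun} is that the weight at the starting vertex is not counted in the partition function, we have the exact decomposition
\[
Z_{0,N} \;=\; \sum_{{\bf p}\in\mathcal{L}_r,\ {\bf p}\in[0,N]^2} Z_{0,{\bf p}}\,Z_{{\bf p},N}.
\]
The number of summands is at most $2(N-r)+1$ (using $r\ge N/2$, so the reachable portion of $\mathcal{L}_r$ has length $2(N-r)$). Bounding the sum by the number of terms times the maximum gives
\[
\log Z_{0,N} \;\le\; \log\bigl(2(N-r)+1\bigr) \;+\; \max_{{\bf p}\in\mathcal{L}_r}\bigl[\log Z_{0,{\bf p}}+\log Z_{{\bf p},N}\bigr].
\]

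Next I would choose $t_0$ and $c_0$ so that the logarithmic term can be absorbed into half of the target. Since $\log(N-r)/(N-r)^{1/3}\to 0$ as $N-r\to\infty$, there exist $c_0,t_0$ with the property that whenever $N-r\ge c_0$ and $t\ge t_0$, one has $\log(2(N-r)+1)\le \tfrac{t}{2}(N-r)^{1/3}$. (For the residual range where $(N-r)^{1/3}$ is bounded but $N$ is large, one simply enlarges $t_0$ as a function of $c_0$; the bound $\log(N-r)$ only grows in the $N-r$ direction.) Under this choice,
\[
\bigl\{\log Z_{0,N}-[\log Z_{0,r}+\log Z_{r,N}]\ge t(N-r)^{1/3}\bigr\}
\subseteq
\Bigl\{\max_{{\bf p}\in\mathcal{L}_r}[\log Z_{0,{\bf p}}+\log Z_{{\bf p},N}]-[\log Z_{0,r}+\log Z_{r,N}]\ge \tfrac{t}{2}(N-r)^{1/3}\Bigr\}.
\]

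Finally, Proposition \ref{nest_max1} bounds the probability of the right-hand event by $\exp\!\bigl(-(t/2)^{1/10}\bigr)$, which, after adjusting $t_0$ if necessary, is at most $\exp(-t^{1/10})$, completing the proof.

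The main obstacle here is essentially bookkeeping: verifying that the crude union-type bound $\log(\#\mathcal{L}_r\cap[0,N]^2)$ is negligible on the scale $t(N-r)^{1/3}$ uniformly in the allowed range of $r$. There is no genuine new probabilistic content beyond Proposition \ref{nest_max1}; the theorem is the ``averaged'' version of the ``max'' bound \ref{nest_max1}, with the price being only a logarithmic overhead that disappears in the prefactor.
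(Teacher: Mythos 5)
Your proposal is correct and is essentially the same as the paper's proof: both reduce to Proposition \ref{nest_max1} via the trivial bound $\log Z_{0,N}\le \max_{{\bf p}\in\mathcal{L}_r}\{\log Z_{0,{\bf p}}+\log Z_{{\bf p},N}\}+\log(\#\text{terms})$, the paper writing the additive correction as $2\log(N-r)$ and you as $\log(2(N-r)+1)$, with the log absorbed by enlarging $t_0$ relative to $c_0$.
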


\begin{proof}
This follows directly from the fact that 
$$\log Z_{0, N} \leq \max_{{\bf p}\in \mathcal{L}_r}\{\log Z_{0,{\bf p}}  + \log Z_{{\bf p} , N}\} + 2\log(N-r)$$
and Proposition \ref{nest_max1}.
\end{proof}

\section{Proof of Theorem \ref{thm_r_large}} \label{pf1}
This section proves Theorem \ref{thm_r_large}. Throughout, $c_0 \leq N-r \leq N/2$ is assumed. Using the following identity, {where the first equality below comes from performing the derivative test for $\lambda$ and finding the minimum,  }
\begin{equation}\label{var_id}
\begin{aligned}
    \Var(U-V) &\geq  \inf_{\lambda\in \mathbb R}\Var(U-\lambda V) = (1-\textup{$\mathbb{C}$orr}^2(U,V))\Var(U)\\
    &= (1-\textup{$\mathbb{C}$orr}(U,V))(1+\textup{$\mathbb{C}$orr}(U,V))\Var(U).
\end{aligned}\end{equation}
Apply this to bound $1-\textup{$\mathbb{C}$orr}(U,V)$ for $U = \log Z_{0,N} $ and $V= \log Z_{0,r} $. 
By the FKG inequality,  $\textup{$\mathbb{C}$orr}(\log Z_{0,N} ,\log Z_{0,r} ) \in [0,1]$.
\eqref{var_id} gives  
\begin{equation}\label{goal_thm_r_large}
 \frac{\inf_{\lambda\in \mathbb R}\Var(\log Z_{0,N} -\lambda \log Z_{0,r} )}{2\Var(\log Z_{0,N} )}\leq 1-\textup{$\mathbb{C}$orr}(\log Z_{0,N} ,\log Z_{0,r} ) \leq \frac{\Var(\log Z_{0,N} -\log Z_{0,r} )}{\Var (\log Z_{0,N} )}.
\end{equation}

Since Theorem \ref{var} gives $\Var(\log Z_{0,N}) \geq CN^{2/3}$, 
 the lower bound of Theorem \ref{thm_r_large} follows from the second inequality of \eqref{goal_thm_r_large} and 
\begin{equation}\label{nest_var}
\Var( \log Z_{0,N}  - \log Z_{0,r} ) \leq C(N-r)^{2/3}.
\end{equation}

 To show \eqref{nest_var}, apply the inequality $\Var(A) \leq 2(\Var(B) + \mathbb{E}[(A-B)^2])$ to $A = \log Z_{0,N}  - \log Z_{0,r}$ and $
B = \log Z_{r,N} $. 
$\Var(B) \leq C(N-r)^{2/3}$ follows from Theorem \ref{var}, and $\mathbb{E}[(A-B)^2]\leq C(N-r)^{2/3}$ follows from Theorem \ref{nest}.  The proof of the lower bound of Theorem \ref{thm_r_large} is complete. 

\medskip

We turn to prove the upper bound of Theorem \ref{thm_r_large},  by bounding a conditional variance.  
Recall that $[\![0, (N,N)]\!]$ is the square with lower left corner at $(0,0)$ and upper right corner at $(N, N)$.
Let $\mathcal{F}$ be the $\sigma$-algebra of the weights in $[\![0, (N,N)]\!]$ that lie on or below the anti-diagonal line $\mathcal{L}_r$. Note that $\log Z_{0,r}$ is $\mathcal F$-measurable
\begin{align} 
\Var(\log Z_{0,N}  | \mathcal F)  
&= \Var(\log Z_{0,N}  - \log Z_{0,r}  | \mathcal F) \nn \\
\label{lb_here}  &
=  \mathbb{E}\Big[\Big(\log Z_{0,N}  - \log Z_{0,r} - \mathbb{E}[\log Z_{0,N}  - \log Z_{0,r} |\mathcal{F}]\Big)^2\Big| \mathcal{F}\Big].
\end{align}
We develop a lower bound for the last conditional expectation above.

By Theorem \ref{nest}, 
\begin{equation} \label{expct_bd}
\Big|\mathbb{E}[\log Z_{0,N}  - \log Z_{0,r} ] - \mathbb{E}[\log Z_{r,N}]\Big| \leq C(N-r)^{1/3}.
\end{equation}
In Proposition \ref{up_lb} the centering  $2N{f_d}$ can be replaced with $\mathbb E[\log Z_{0, N}]$ because $\mathbb E[\log Z_{0, N}]\le 2N{f_d}$ by superadditivity. 
Thus altered, Proposition \ref{up_lb} and \eqref{expct_bd} give
\begin{align*}
 e^{-\oldc{up_lb_c1} t^{3/2}}  &\le   \mathbb{P}\bigl(\log Z_{r, N} - \mathbb E[\log Z_{r, N}] \geq t(N-r)^{1/3}\bigr) \\
 &\le \mathbb{P}\bigl(\log Z_{r, N}  - \mathbb{E}[\log Z_{0,N}  - \log Z_{0,r} ] \geq (t-C)(N-r)^{1/3}\bigr). 
\end{align*}
Let $s_0$ be a large constant and define the event 
\be\label{ArN}  A_{r,N}=\bigl\{ \log Z_{r,N}   - \mathbb{E}[\log Z_{0,N}  - \log Z_{0,r} ] \geq s_0(N-r)^{1/3}\bigr\} . 
\ee
   $A_{r,N}$ is independent of $\mathcal{F}$ and  $\mathbb P(A_{r,N})$ is bounded below   independently of $r$ and $N$.

Next, using Chebyshev's inequality
 we get
\begin{align*}
&\mathbb{P}\Big(\Big|\mathbb{E}[\log Z_{0,N}  - \log Z_{0,r} |\mathcal{F}]-\mathbb{E}[\log Z_{0,N}  - \log Z_{0,r} ]\Big|> t(N-r)^{1/3}\Big) \\
&\leq  \frac{\Var(\mathbb{E}[\log Z_{0,N}  - \log Z_{0,r} |\mathcal{F}])}{t^2 (N-r)^{2/3}}\\ 
\quad &\leq\frac{\Var(\log Z_{0,N}  - \log Z_{0,r} )}{t^2 (N-r)^{2/3}} 
\leq C/t^2 \qquad \textup{by \eqref{nest_var}}.
\end{align*}
By choosing $t$ and $s_0$ large enough, 
  there is an event $B_{r,N}\in \mathcal{F}$, with positive probability  bounded below  independently of $N$ and $r$, on which  
\be\label{BrN} 
\Big|\mathbb{E}[\log Z_{0,N}  - \log Z_{0,r} |\mathcal{F}]-\mathbb{E}[\log Z_{0,N}  - \log Z_{0,r} ]\Big|\leq \frac{s_0}{10}(N-r)^{1/3}.
\ee

On $A_{r,N}\cap B_{r,N}$ we have the following bound, using first superadditivity  $\log Z_{0,N}  - \log Z_{0,r}  \geq \log Z_{r,N} $, then \eqref{BrN} and last \eqref{ArN}: 
\begin{align*}
& \log Z_{0,N}  - \log Z_{0,r} - \mathbb{E}[\log Z_{0,N}  - \log Z_{0,r} |\mathcal{F}] \\
& \geq   \log Z_{r,N} - \mathbb{E}[\log Z_{0,N}  - \log Z_{0,r} ] - \frac{s_0}{10}(N-r)^{1/3}  
\geq \frac{9s_0}{10}(N-r)^{1/3}.
\end{align*}
Square this bound and insert  it inside   the conditional expectation on line  \eqref{lb_here}. 
Continuing from that line, we then have 
\begin{align*} 
\Var(\log Z_{0,N}  | \mathcal F) 
\ge C (N-r)^{2/3}  \,
\mathbb{E}[ \ind_{A_{r,N}}\ind_{B_{r,N}} |  \mathcal F  ]
\ge  C (N-r)^{2/3} \, 
 \ind_{B_{r,N}}. 
\end{align*} 
By the law of total variance,   for all $\lambda\in\R$,  
\begin{align*}
&\Var( \log Z_{0,N}  - \lambda \log Z_{0, r}  )\\
&= \mathbb{E}\big[\Var( \log Z_{0,N}  - \lambda \log Z_{0, r} | \mathcal F ) \big] + \Var\big[ \mathbb{E}(\log Z_{0,N}  - \lambda \log Z_{0, r} |\mathcal F ) \big]\\
& \geq \mathbb{E}\big[\Var( \log Z_{0,N}  - \lambda \log Z_{0, r} | \mathcal F) \big] \\
&= \mathbb{E}\big[\Var( \log Z_{0,N} | \mathcal F) \big]
\geq C(N-r)^{2/3} \, \mathbb P(B_{r,N}) 
\geq C(N-r)^{2/3} . 
\end{align*}
Apply this lower bound to the numerator of the first member of \eqref{goal_thm_r_large} and apply Theorem \ref{var} to the denominator. The upper bound of Theorem \ref{thm_r_large} has been established.

\section{Proof of Theorem \ref{thm_r_small}} \label{pf2}
We assume throughout that $c_0 \leq r \leq N/2$.
First, we prove the upper bound. By the Cauchy-Schwarz inequality and the independence of $Z_{0,r}$ and $Z_{r,N}$, 
\begin{align*}
\Cov (\log Z_{0,r} , \log Z_{0,N}) &= \Cov(\log Z_{0,r} , \log Z_{0,N} -\log Z_{r,N} ) \\
&\leq \Var(\log Z_{0,r})^{1/2} \cdot \Var( \log Z_{0,N}  -\log Z_{r,N}  )^{1/2}.
\end{align*}
It therefore suffices to show that both variances above have upper bounds of the order $r^{2/3}$. The first variance satisfies $\Var(\log Z_{0,r})   \leq Cr^{2/3}$ by  Theorem \ref{var}. The second variance can be  bounded again using the inequality  $\Var(A) \leq 2(\Var(B) + \mathbb{E}[(A-B)^2])$  with $A = \log Z_{0,N}  - \log Z_{r,N}$ and $
B = \log Z_{0,r}$. 
$\Var(B) \leq Cr^{2/3}$ follows from Theorem \ref{var}, and $\mathbb{E}[(A-B)^2]\leq Cr^{2/3}$ from Proposition \ref{nest} with the parameters $r$ and $N-r$ swapped and {the fact that $A-B \geq 0$}.
This finishes the proof of the upper bound. The remainder of this section is dedicated to the lower bound of Theorem \ref{thm_r_small}. 

Our approach follows  ideas from \cite{timecorriid, timecorrflat} that  we now describe. For $\theta>0$, let $\mathcal{F}_\theta$ denote the $\sigma$-algebra generated by the weights in the set $[\![(0,0), (N,N)]\!] \setminus R^{\theta r^{2/3}}_{0,r}$. In Section \ref{cov_lb}, we will show that there exists an event $\mathcal{E}_\theta\in \mathcal{F}_\theta$ with  $\mathbb{P}(\mathcal{E}_\theta) \geq \epsilon_0 > 0$ ($\epsilon_0$ independent of $r$ and $N$) such that  
\begin{equation}\label{main_goal}
\Cov(\log Z_{0,N} , \log Z_{0,r}  | \mathcal{F}_\theta)(\omega) \geq C r^{2/3} \quad \textup{ for } \omega\in \mathcal{E}_\theta.
\end{equation}
{Since the free energy is increasing in the i.i.d.\ environment}, by \eqref{main_goal} and applying the FKG inequality twice, , we have 
\begin{align*}
&\mathbb{E}[\log Z_{0, N}\log Z_{0,r}] \\& =\mathbb{E}\Big[\mathbb{E}[\log Z_{0, N}\log Z_{0,r}|\mathcal{F}_\theta]\Big]\\
& = \int_{\mathcal{E}_\theta} \mathbb{E}[\log Z_{0, N}\log Z_{0,r}|\mathcal{F}_\theta] \,d\mathbb P + \int_{\mathcal{E}_\theta^c} \mathbb{E}[\log Z_{0, N}\log Z_{0,r}|\mathcal{F}_\theta] \,d\mathbb P\\
&\geq \int_{\mathcal{E}_\theta} \mathbb{E}[\log Z_{0, N}|\mathcal{F}_\theta] \mathbb{E}[\log Z_{0,r}|\mathcal{F}_\theta]\,d\mathbb P + C\epsilon_0 r^{2/3} +  \int_{\mathcal{E}_\theta^c} \mathbb{E}[\log Z_{0, N}|\mathcal{F}_\theta] \mathbb{E}[\log Z_{0,r}|\mathcal{F}_\theta]\,d\mathbb P\\
&=\mathbb{E}\Big[\mathbb{E}[\log Z_{0, N}|\mathcal{F}_\theta]\mathbb{E}[\log Z_{0,r}|\mathcal{F}_\theta]\Big] + C\epsilon_0 r^{2/3}\\
&\geq \mathbb{E}[ \log Z_{0, N} ] \mathbb{E}[\log Z_{0,r} ] + C\epsilon_0 r^{2/3}.
\end{align*}
This shows $\Cov(\log Z_{0,N} , \log Z_{0,r}) \geq Cr^{2/3}$, hence the lower bound in our theorem. In the next few sections, we prove \eqref{main_goal}.

\subsection{Barrier event $\barevent$}

\begin{figure}
\begin{center}

\tikzset{every picture/.style={line width=0.75pt}} 

\tikzset{every picture/.style={line width=0.75pt}} 

\begin{tikzpicture}[x=0.75pt,y=0.75pt,yscale=-1,xscale=1]

\draw   (103.43,74.29) -- (249.55,190.22) -- (166.08,295.6) -- (19.97,179.66) -- cycle ;
\draw    (79.82,227.69) -- (163.56,121.27) ;
\draw    (104.22,246.99) -- (187.46,140.64) ;
\draw  [dash pattern={on 0.84pt off 2.51pt}]  (110.11,67.85) -- (181.51,123.31) ;
\draw [shift={(181.51,123.31)}, rotate = 217.83] [color={rgb, 255:red, 0; green, 0; blue, 0 }  ][line width=0.75]    (0,5.59) -- (0,-5.59)   ;
\draw [shift={(110.11,67.85)}, rotate = 217.83] [color={rgb, 255:red, 0; green, 0; blue, 0 }  ][line width=0.75]    (0,5.59) -- (0,-5.59)   ;
\draw  [dash pattern={on 0.84pt off 2.51pt}]  (74.22,234.98) -- (86.98,245.08) ;
\draw [shift={(86.98,245.08)}, rotate = 218.35] [color={rgb, 255:red, 0; green, 0; blue, 0 }  ][line width=0.75]    (0,5.59) -- (0,-5.59)   ;
\draw [shift={(74.22,234.98)}, rotate = 218.35] [color={rgb, 255:red, 0; green, 0; blue, 0 }  ][line width=0.75]    (0,5.59) -- (0,-5.59)   ;
\draw [line width=2.25]    (156.92,180.04) -- (217.59,229.85) ;
\draw [line width=2.25]    (139.92,202.66) -- (201.53,251.86) ;
\draw  [fill={rgb, 255:red, 0; green, 0; blue, 0 }  ,fill opacity=1 ] (90.62,237.48) .. controls (90.46,236.04) and (91.43,234.74) .. (92.8,234.58) .. controls (94.17,234.42) and (95.42,235.46) .. (95.59,236.9) .. controls (95.76,238.34) and (94.78,239.64) .. (93.41,239.8) .. controls (92.04,239.95) and (90.79,238.92) .. (90.62,237.48) -- cycle ;
\draw  [fill={rgb, 255:red, 0; green, 0; blue, 0 }  ,fill opacity=1 ] (172.93,131.73) .. controls (172.76,130.29) and (173.74,129) .. (175.11,128.84) .. controls (176.48,128.68) and (177.73,129.72) .. (177.9,131.16) .. controls (178.07,132.6) and (177.09,133.89) .. (175.72,134.05) .. controls (174.35,134.21) and (173.1,133.17) .. (172.93,131.73) -- cycle ;
\draw  [draw opacity=0] (402.18,72.13) -- (570.19,214.16) -- (505.63,290.53) -- (337.62,148.5) -- cycle ; \draw   (417.46,85.04) -- (352.9,161.41)(432.73,97.95) -- (368.17,174.32)(448,110.87) -- (383.44,187.23)(463.28,123.78) -- (398.72,200.15)(478.55,136.69) -- (413.99,213.06)(493.82,149.6) -- (429.27,225.97)(509.1,162.51) -- (444.54,238.88)(524.37,175.43) -- (459.81,251.79)(539.65,188.34) -- (475.09,264.71)(554.92,201.25) -- (490.36,277.62) ; \draw   (389.27,87.4) -- (557.28,229.43)(376.36,102.68) -- (544.37,244.71)(363.45,117.95) -- (531.46,259.98)(350.54,133.23) -- (518.55,275.26) ; \draw   (402.18,72.13) -- (570.19,214.16) -- (505.63,290.53) -- (337.62,148.5) -- cycle ;
\draw  [dash pattern={on 0.84pt off 2.51pt}]  (563.4,234.9) -- (576.4,219.9) ;
\draw [shift={(576.4,219.9)}, rotate = 130.91] [color={rgb, 255:red, 0; green, 0; blue, 0 }  ][line width=0.75]    (0,5.59) -- (0,-5.59)   ;
\draw [shift={(563.4,234.9)}, rotate = 130.91] [color={rgb, 255:red, 0; green, 0; blue, 0 }  ][line width=0.75]    (0,5.59) -- (0,-5.59)   ;
\draw  [dash pattern={on 0.84pt off 2.51pt}]  (438.36,91.76) -- (453.56,104.16) ;
\draw [shift={(453.56,104.16)}, rotate = 219.21] [color={rgb, 255:red, 0; green, 0; blue, 0 }  ][line width=0.75]    (0,5.59) -- (0,-5.59)   ;
\draw [shift={(438.36,91.76)}, rotate = 219.21] [color={rgb, 255:red, 0; green, 0; blue, 0 }  ][line width=0.75]    (0,5.59) -- (0,-5.59)   ;
\draw  [fill={rgb, 255:red, 155; green, 155; blue, 155 }  ,fill opacity=1 ] (455.09,195.42) -- (470.37,208.45) -- (457.45,223.61) -- (442.17,210.58) -- cycle ;
\draw [color={rgb, 255:red, 0; green, 0; blue, 0 }  ,draw opacity=1 ]   (384.3,266.45) -- (455.95,209.81) ;
\draw [shift={(458.3,207.95)}, rotate = 141.67] [fill={rgb, 255:red, 0; green, 0; blue, 0 }  ,fill opacity=1 ][line width=0.08]  [draw opacity=0] (8.93,-4.29) -- (0,0) -- (8.93,4.29) -- cycle    ;
\draw [line width=2.25]    (363.45,117.95) -- (531.46,259.98) ;
\draw  [color={rgb, 255:red, 0; green, 0; blue, 0 }  ,draw opacity=0 ][fill={rgb, 255:red, 155; green, 155; blue, 155 }  ,fill opacity=0.33 ] (79.82,227.69) -- (162.76,121.37) -- (187.29,140.51) -- (104.35,246.83) -- cycle ;

\draw (150.06,73.21) node [anchor=north west][inner sep=0.75pt]  [rotate=-0.03]  {$\phi _{2} r^{2/3}$};
\draw (47.04,99.25) node [anchor=north west][inner sep=0.75pt]  [rotate=-0.03]  {$U_{1}$};
\draw (192.47,264.84) node [anchor=north west][inner sep=0.75pt]  [rotate=-0.03]  {$U_{2}$};
\draw (39.47,238.75) node [anchor=north west][inner sep=0.75pt]  [rotate=-0.03]  {$\theta r^{2/3}$};
\draw (167.99,208.31) node [anchor=north west][inner sep=0.75pt]  [rotate=-0.03]  {${U_{2}^{k}}$};
\draw (86.98,216.77) node [anchor=north west][inner sep=0.75pt]  [font=\footnotesize,rotate=-0.03]  {$( 0,0)$};
\draw (151.02,134.81) node [anchor=north west][inner sep=0.75pt]  [font=\footnotesize,rotate=-0.03]  {$( r,r)$};
\draw (187.99,185.83) node [anchor=north west][inner sep=0.75pt]  [rotate=-0.03]  {$\overline{U_{2}^{k}}$};
\draw (150.47,231.8) node [anchor=north west][inner sep=0.75pt]  [rotate=-0.03]  {$\underline{U_{2}^{k}}$};
\draw (574.37,226.11) node [anchor=north west][inner sep=0.75pt]    {$r/L^{30}$};
\draw (450.6,77.9) node [anchor=north west][inner sep=0.75pt]    {$r^{2/3} /L^{20}$};
\draw (500,142.4) node [anchor=north west][inner sep=0.75pt]    {$v$};
\draw (526.96,266.38) node [anchor=north west][inner sep=0.75pt]    {$u$};
\draw (355.3,267.1) node [anchor=north west][inner sep=0.75pt]    {$R(u,v)$};
\draw (329.3,101.1) node [anchor=north west][inner sep=0.75pt]    {$\mathcal{L}(u)$};

\end{tikzpicture}

\captionsetup{width=0.8\textwidth}
\caption{\textit{Left}: A demonstration of the region used to define the barrier event $\barevent$ in \eqref{bar_cap}, {the gray rectangle above is $R^{\theta r^{2/3}}_{0, r}$.} \textit{Right}: construction used in the event $A$ in \eqref{define_A}. } \label{cutU}

\end{center}

\end{figure}

This section defines a barrier event $\barevent \in \mathcal{F}_\theta$ and investigates consequences of  conditioning on it. 
Fix the parameters $0<\theta<1/2, \phi_1 = \theta^{-10}, \phi_2=\theta^{-100}$, and $L = \theta^{-1000}.$ We have the freedom to decrease   $\theta$ if necessary, so that $0 < \theta \leq \theta_0$.

Next, we define a barrier event around the rectangle $R^{\theta r^{2/3}}_{0, r}$. This part of the construction is illustrated on the left of Figure \ref{cutU}. The region $R^{\phi_2 r^{2/3}}_{0, r} \setminus R^{\theta r^{2/3}}_{0, r}$ is formed by two disjoint rectangles,   $U_1$  above the diagonal and $U_2$  below the diagonal.

The anti-diagonal lines $\{\mathcal{L}_{kr/L}\}_{k=1}^{L-1}$ cut each of $U_1$ and $U_2$ into $L$ small rectangles. If we fix $r$ sufficiently large depending on $L$, these rectangles are not degenerate.  Denote these small rectangles by $U^k_i$  for $i = 1,2$ and  $k = 1,\dots, L$.
Let $\underline U_i^k$ and $\overline U_i^k$ denote the top and bottom sides (with slope $-1$) of the rectangle $U_i^k$. We define the event 
\begin{equation}\label{bar_cap}
\barevent = \bigcap_{i=1}^2 \bigcap_{k=1}^L\Big\{\log Z^{\textup{in}, U_i}_{\underline{U_i^k}, \overline{U_i^k}}   - 2(r/L){f_d} \leq - Lr^{1/3}\Big\}.
\end{equation}

\begin{lemma}\label{bar_lb}
There exists a positive constant $\theta_0$ such that for each $0<\theta \leq \theta_0$, there exists a positive constant $c_0$ depending on $\theta$ such that for each  $r\ge c_0$, we have
$$\mathbb{P}(\barevent) \geq e^{-e^{L^{100}}}.$$
\end{lemma}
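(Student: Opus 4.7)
The plan is to exploit the spatial disjointness of the $2L$ parallelograms $\{U_i^k\}$ so that $\barevent$ decomposes into an intersection of $2L$ independent events. The key geometric observation is that, by monotonicity of up-right paths (each of which crosses any anti-diagonal at most once), every path from $\underline{U_i^k}$ to $\overline{U_i^k}$ that stays inside $U_i$ must lie entirely in the slab $U_i^k$. Hence the constrained free energy $\log Z^{\textup{in}, U_i}_{\underline{U_i^k}, \overline{U_i^k}}$ is measurable with respect to the weights in $U_i^k$ alone, and since the $U_i^k$ are pairwise disjoint, the $2L$ events $A_i^k := \{\log Z^{\textup{in}, U_i}_{\underline{U_i^k}, \overline{U_i^k}} - 2(r/L)f_d \leq -Lr^{1/3}\}$ are mutually independent. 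Therefore
$$\mathbb{P}(\barevent) = \prod_{i=1}^{2} \prod_{k=1}^{L} \mathbb{P}(A_i^k).$$

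The heart of the proof is a per-event lower bound $\mathbb{P}(A_i^k) \geq e^{-C L^c}$ that is \emph{uniform in $r$}. Setting $N_s = r/L$, the required deviation $L r^{1/3} = L^{4/3} N_s^{1/3}$ measures $L^{4/3}$ standard deviations below the typical value at scale $N_s$. My approach is: (i) partition $\underline{U_i^k}$ and $\overline{U_i^k}$ into KPZ-scale sub-segments of length $N_s^{2/3}$, producing $O((\phi_2 L^{2/3})^2)$ endpoint cell-pairs, a number that depends on $L$ alone; (ii) invoke the shape-function curvature (Propositions \ref{time_const} and \ref{reg_shape}) to discard cell-pairs whose perpendicular offset $\gtrsim N_s^{2/3}$, since those already have mean free energy below the target; (iii) for each remaining central cell-pair, fix a representative point-to-point free energy and apply Proposition \ref{ptp_low} with $t \sim L^{4/3}$ to obtain a per-representative lower bound of order $e^{-CL^4}$; (iv) use the local fluctuation bounds of Section \ref{loc_fluc}---in particular Propositions \ref{compare_max} and \ref{trans_fluc_loss}---to control the sum over each cell-pair by its representative up to a correction small compared to $L r^{1/3}$; and (v) since the "representative is small" events are decreasing in the i.i.d.\ bulk weights, combine them by FKG to bound the joint probability below by the product of the individual ones.

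Taking the product over the $2L$ independent events yields $\mathbb{P}(\barevent) \geq e^{-C' L^{c+1}}$ with $c$ a fixed constant (roughly on the order of $L^{83/15}$, but the exact value does not matter). For $\theta \leq \theta_0$ small enough that $L = \theta^{-1000}$ is sufficiently large, $C' L^{c+1} \leq e^{L^{100}}$, so $\mathbb{P}(\barevent) \geq e^{-e^{L^{100}}}$ as claimed. The doubly exponential slack in the target reflects that we do not need to optimize the per-event exponent.

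The principal obstacle is the uniform-in-$r$ per-event bound in step (ii)--(v). A naive FKG application over all $O(r^{4/3})$ endpoint pairs in $\underline{U_i^k} \times \overline{U_i^k}$ would introduce an $r$-dependent factor in the exponent, which is fatal. The coarse-graining at the KPZ scale $N_s^{2/3}$ bypasses this by reducing to a number of representatives that depends only on $L$. The curvature of the shape function is what allows the non-central pairs to be discarded without probabilistic cost, and the local fluctuation estimates of Section \ref{loc_fluc} are what allow a single representative in each cell-pair to control the sum over that cell-pair.
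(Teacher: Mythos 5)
Your reduction to $2L$ single-slab events (disjoint weights, translation invariance) matches the paper, but the per-slab lower bound in steps (ii), (iv)--(v) has a genuine gap, and it is exactly the quantitative point the paper's proof is built to avoid. The events you can feed into FKG are only the decreasing ones (``representative point-to-point free energy is small''), and their product is of order $e^{-C\phi_2^2L^{4/3}\cdot L^{4}}$, since each application of Proposition \ref{ptp_low} at $t\sim L^{4/3}$ costs $e^{-CL^{4}}$. The auxiliary events you need in (ii) and (iv) --- that each cell-to-cell free energy exceeds its representative by at most $O(Lr^{1/3})$, and that off-diagonal cell-pairs are not atypically high --- are \emph{not} monotone (they are differences, or complements of increasing events), so they cannot be absorbed into the FKG product; and handling them by subtraction/union bound fails numerically. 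With cells of length $N_s^{2/3}$ ($N_s=r/L$) and an allowed correction of at most order $L^{4/3}N_s^{1/3}$, Proposition \ref{compare_max} only gives failure probability $e^{-c\min\{t^2,t\sqrt a\}}\approx e^{-cL^{8/3}}$, which is enormously larger than the main term $e^{-CL^{4}}$ (and a fortiori than the product over all cell-pairs); the same mismatch ($t^2$ or $t^3$ upper-tail exponents versus the $t^3$ left-tail cost) persists no matter how you rebalance the correction threshold, since the correction must stay below the total loss you are trying to force. Nor can you claim the correction bounds hold conditionally on the rare decreasing events: conditioning on pushing the representative free energies far down has no reason to keep the interval-minus-point differences small, and you have no tool for that conditional law.

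The paper sidesteps this entirely by never using a large-deviation left-tail input and never comparing an interval to a single representative. It drops to the much finer diagonal scale $r/L^{30}$, and for every box in a grid of $\sim L^{50}$ cells demands only that the \emph{interval-to-line} free energy $\log Z_{\underline{R(u,v)},\mathcal{L}(u)}$ be below typical by a fixed constant times the local scale $(r/L^{30})^{1/3}$; each such event is decreasing and has probability bounded below by a constant (Proposition \ref{itl_bound}, itself obtained by a step-back argument from the nonrandom fluctuation bound rather than from Proposition \ref{ptp_low}). FKG alone then gives $\mathbb{P}(A)\ge c^{L^{50}}\ge e^{-e^{L^{99}}}$ with no error terms to subtract, and the required total loss $Lr^{1/3}$ is obtained \emph{deterministically} on $A$ by chaining the constant losses over the $L^{29}$ diagonal levels, the interval-to-line form making the bound valid for every crossing regardless of the path's lateral position (so no cell-pair bookkeeping or curvature discarding is needed). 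If you want to salvage your outline, you would have to replace your representative-plus-fluctuation step by decreasing interval-to-interval smallness events with a uniform-in-$r$ probability lower bound at deviation $L^{4/3}N_s^{1/3}$ --- and the only available route to such a bound is essentially the paper's multi-scale construction itself.
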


\begin{proof}
Note $\barevent$ is the intersection of $2L$ events, of equal probability by translation invariance. {By the FKG inequality}, it therefore suffices to lower bound the probability 
\begin{equation}\label{one_term_lb}
\mathbb{P}\Big(\log Z^{\textup{in}, U_1}_{\underline{U_1^1}, \overline{U_1^1}}   - 2(r/L){f_d} \leq - Lr^{1/3}\Big).
\end{equation}

The following construction is illustrated on the right of Figure \ref{cutU}. Using diagonal and anti-diagonal lines, we cut the rectangle $U^1_1$ into smaller rectangles whose diagonal $\ell^\infty$-length is $\tfrac{r}{L^{30}}$ and anti-diagonal $\ell^\infty$-length $\tfrac{r^{2/3}}{L^{20}}$. Then, the number of rectangles in this grid (see the right of Figure \ref{cutU}) along the diagonal will be $L^{29}$. And the number of rectangles in the anti-diagonal direction is no more than $L^{21}$. Let us use $R(u, v)$ to enumerate these small rectangles, where the index $u = 1, 2, \dots L^{29}$ records the position along the diagonal direction, and $v =  1, 2, \dots, v_L \leq L^{21} $ enumerates the small rectangles along the each anti-diagonal line. Let us also use $\mathcal{L}(u)$ to denote the anti-diagonal line which contains the upper anti-diagonal side of ${R(u, v)}$, and let $\underline {R(u, v)}$ to denote the lower anti-diagonal side of ${R(u, v)}$.

Let $D$ be the small constant \oldc{itl_bound_c1} from Proposition \ref{itl_bound}, and we define the event
\begin{equation}\label{define_A}
A =  \bigcap_{u, v} \Big\{\log Z_{\underline {R(u, v)}, \mathcal{L}(u)} -  2(r/L^{30})f_d \leq -D (r/L^{30})^{1/3}  \Big\}.
\end{equation}
Using the FKG inequality and Proposition \ref{itl_bound}, we have $\mathbb{P}(A) \geq {e}^{-e^{L^{99}}}$. 

Next, the constrained free energy can be upper bounded {using the maximum bound introduced in Section \ref{max_bd}}
\begin{align*}
\log Z^{\textup{in}, U_1}_{\underline{U_1^1}, \overline{U_1^1}} & \leq \sum_{u=1}^{L^{29}}  \Big( 100 \log L + \max_{v} \log Z_{\underline {R(u, v)}, \mathcal{L}(u)} \Big).\\
& \leq L^{29} \Big( 100 \log L + \max_{u, v} \log Z_{\underline {R(u, v)}, \mathcal{L}(u)} \Big)\\
\text{ restrict to the event $A$} \quad & \leq L^{29} \Big( 100 \log L + 2(r/L^{30})f_d - D r^{1/3}/L^{10}  \Big)\\
& \leq 2(r/L)f_d - DL^{19} r^{1/3} + L^{30} \\
& \leq  2(r/L)f_d - Lr^{1/3}
\end{align*}
provided that $\theta_0$ is sufficiently small (which makes $L$ large) and then increase $c_0$. With this, { we have shown that on $A$, the event in \eqref{one_term_lb} holds}, thus
$$\eqref{one_term_lb} \geq e^{-e^{L^{100}}}.$$
and finished the proof of the lemma by the FKG inequality.

\end{proof}

\subsection{Concentration of the free energy between $(0,0)$ and  $\mathcal{L}_r$}
Our goal in this section is to show that when conditioned on $\barevent$ the free energy $\log Z_{0, \mathcal{L}_r}$ is concentrated on paths that go from $(0,0)$ to $\mathcal{L}_r^{r^{2/3}}$ and are contained between the diagonal sides of the rectangle $\mathcal{R}_{0, r-r/L}^{3\theta r^{2/3}}$. This is stated in Proposition \ref{loc1} at the end of this subsection.

Before stating Proposition \ref{loc1}, we  define our high probability events. To start, split the collection of paths from $(0,0)$ to $\mathcal{L}_r$ as follows. First, let 
\begin{align*}
\mathfrak{A} &= \textup{all paths from $(0,0)$ to $\mathcal{L}_r^{\phi_1 r^{2/3}}$ that stay inside $R_{0,r}^{\phi_2 r^{2/3}}$}.\\
\mathfrak{B} & = \text{all other paths from $(0,0)$ to $\mathcal{L}_r$}.
\end{align*}
Then among $\mathfrak{A}$, let us further split $\mathfrak{A} = \mathfrak{A}_1 \cup \mathfrak{A}_2 \cup \mathfrak{A}_3 \cup \mathfrak{A}_4$ where 
\begin{align}\label{3As}
\begin{split}
\mathfrak{A}_1 &= \textup{paths from $(0,0)$ to $\mathcal{L}_r^{r^{2/3}}$ that stay between the diagonal sides of $R_{0,r-r/L}^{3\theta r^{2/3}}$ and }\\
& \qquad \qquad \qquad\textup{touch each $R_{ir/L,(i+1)r/L}^{\theta r^{2/3}}$ for $i = 0,1,\dots, L-2$},\\
\mathfrak{A}_2&=\textup{paths that avoid at least one of $R_{ir/L,(i+1)r/L}^{\theta r^{2/3}}$ completely for $i = 0,1,\dots, L-2$}.\\
\mathfrak{A}_3 &=\textup{paths that exit from the diagonal sides of $R_{0,r}^{3\theta r^{2/3}}$ and }\\
& \qquad \qquad \qquad \textup{intersect $R_{ir/L,(i+1)r/L}^{\theta r^{2/3}}$ for all $i = 0,1,\dots, L-2$}.\\
\mathfrak{A}_4 &= \textup{paths from $(0,0)$ to $\mathcal{L}_r^{\phi_1 r^{2/3}} \setminus \mathcal{L}_r^{r^{2/3}}$ that stay between the diagonal sides of $R_{0,r-r/L}^{3\theta r^{2/3}}$ and }\\
& \qquad \qquad \qquad\textup{touch each $R_{ir/L,(i+1)r/L}^{\theta r^{2/3}}$ for $i = 0,1,\dots, L-2$},\\
\end{split}
\end{align}
And among $\mathfrak{B}$, we write $\mathfrak{B} = \mathfrak{A}_5 \cup\mathfrak{A}_6$ where 
\begin{align*}
\mathfrak{A}_5 &= \textup{all paths from $(0,0)$ to $\mathcal{L}_{r} \setminus \mathcal{L}_r^{\phi_1 r^{2/3}}$}\\
\mathfrak{A}_6 &= \text{all paths from $(0,0)$ to $\mathcal{L}_r^{\phi_1 r^{2/3}}$ that exit $R_{0,r}^{\phi_2 r^{2/3}}$}
\end{align*}

Let $\log Z_{0, \mathcal{L}_r}(\mathfrak{A}_i)$ denote the free energy when we sum over only the paths inside $\mathfrak{A}_i$, and define the following events:
\begin{align*}
\mathcal{A}_1 &= \big\{\log Z^{\text{in}, \theta r^{2/3}}_{0, r}   - 2r{f_d} \geq - \theta^{-5}r^{1/3}\big\},\\
\mathcal{A}_2 &= \big\{\log Z_{0, \mathcal{L}_r}(\mathfrak{A}_2)  -  2r{f_d} \leq -  \theta^{-900}  r^{1/3}\big\}\\
\mathcal{A}_3 & = \big\{\log Z_{0, \mathcal{L}_r}(\mathfrak{A}_3)  - 2r{f_d} \leq -  \theta^{-900}  r^{1/3}\big\}.\\
\mathcal{A}_4 & = \big\{\log Z_{0, \mathcal{L}_r}(\mathfrak{A}_4)  - 2r{f_d} \leq -  \theta^{-900}  r^{1/3}\big\}.\\
\mathcal{A}_5 &= \big\{\log Z_{0, \mathcal{L}_r}(\mathfrak{A}_5) - 2rf_d \leq -\theta^{-10}r^{1/3} \big\}\\
\mathcal{A}_6 &= \big\{\log Z_{0, \mathcal{L}_r}(\mathfrak{A}_6) - 2rf_d \leq -\theta^{-100}r^{1/3} \big\}
\end{align*}

Next, we show that all six events are likely events. 
\begin{proposition}\label{lem_Ai}
For $i = 1, \dots, 6$. There exists a positive constant $\theta_0$ such that for each $0< \theta \leq \theta_0$, there exists a positive constant $c_0$ such that for each $c_0 \leq r$, 
$$\mathbb{P}(\mathcal{A}_i|\barevent) \geq 1- e^{-\theta^{-2}}.$$
\end{proposition}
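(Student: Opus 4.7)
The plan is to handle the six events separately, built on a common principle: on the barrier event $\barevent$, any up-right path that crosses one of the strip rectangles $U^k_i$ entirely (from its bottom antidiagonal side $\underline{U^k_i}$ to its top antidiagonal side $\overline{U^k_i}$) loses at least $Lr^{1/3}$ in free energy. Since $L = \theta^{-1000}$ is far larger than all the thresholds $\theta^{-900}, \theta^{-100}, \theta^{-10}, \theta^{-5}$ appearing in the definitions of $\mathcal{A}_1,\dots,\mathcal{A}_6$, this penalty will comfortably dominate each target bound once I partition the relevant partition functions according to which piece of the geometry the paths cross.

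For $\mathcal{A}_1$ the situation is easiest: the constrained partition function $Z^{\textup{in}, \theta r^{2/3}}_{0, r}$ is measurable with respect to the weights strictly inside $R^{\theta r^{2/3}}_{0,r}$, while $\barevent$ is measurable with respect to weights in the disjoint strip $U_1 \cup U_2$. Hence $\mathcal{A}_1$ is independent of $\barevent$ and $\mathbb{P}(\mathcal{A}_1^c \mid \barevent) = \mathbb{P}(\mathcal{A}_1^c)$. Theorem \ref{wide_similar} applied with width $\theta$ and $t = \theta^{-5}$ yields $\mathbb{P}(\mathcal{A}_1^c) \leq \theta^{-7/2} e^{-C \theta^{-4}} \leq e^{-\theta^{-2}}$ for $\theta$ small.

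For $\mathcal{A}_2, \mathcal{A}_3, \mathcal{A}_4$ the idea is to decompose $\log Z_{0, \mathcal{L}_r}(\mathfrak{A}_i)$ according to where the path crosses the barrier strip. For $\mathfrak{A}_2$, any path avoiding $R^{\theta r^{2/3}}_{ir/L, (i+1)r/L}$ while staying inside $R^{\phi_2 r^{2/3}}_{0,r}$ must traverse some $U^{i+1}_j$ from $\underline{U^{i+1}_j}$ to $\overline{U^{i+1}_j}$. Splitting at the entry point $a$ and exit point $b$ and using the product form of the weights gives
\[
Z_{0, \mathcal{L}_r}(\mathfrak{A}_2) \leq \sum_{i=0}^{L-2}\sum_{j=1}^{2}\sum_{a \in \underline{U^{i+1}_j},\, b \in \overline{U^{i+1}_j}} Z_{0, a}\cdot Z^{\textup{in}, U_j}_{a, b}\cdot Z_{b, \mathcal{L}_r^{\phi_1 r^{2/3}}}.
\]
On $\barevent$ the middle factor is at most $e^{2(r/L)f_d - L r^{1/3}}$; the outer factors are controlled via the point-to-line upper bound of Theorem \ref{ptl_upper} on high-probability events, contributing $\log$-values at most $2|a|_1 f_d$ and $2(2r - |b|_1) f_d$ plus a much smaller error. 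Since $Lr^{1/3}$ dwarfs $\theta^{-900} r^{1/3}$, the desired bound for $\mathcal{A}_2$ follows. The event $\mathcal{A}_3$ is similar, with the observation that paths exiting through the diagonal sides of $R^{3\theta r^{2/3}}_{0,r}$ while still intersecting every small box must cross at least two barrier strips. For $\mathcal{A}_4$ I will apply Proposition \ref{trans_fluc_loss2} to the terminal segment from $\mathcal{L}_{r-r/L}$ to $\mathcal{L}_r$, where the transversal range grows from $3\theta r^{2/3}$ to at least $r^{2/3}$ over a diagonal length of only $r/L$, producing a penalty of order $L r^{1/3}$.

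Finally, for $\mathcal{A}_5$ and $\mathcal{A}_6$ the paths end very far from the diagonal (beyond $\phi_1 r^{2/3}$) or exit the outer rectangle $R^{\phi_2 r^{2/3}}_{0,r}$ entirely. I will split each of these into two sub-cases: paths remaining in $R^{\phi_2 r^{2/3}}_{0,r}$, which must cross barrier strips and are controlled exactly as in the $\mathcal{A}_2$ argument, and paths that exit $R^{\phi_2 r^{2/3}}_{0,r}$, which have global transversal fluctuation at least $\phi_2 r^{2/3}$ and are controlled by the unconditional bounds of Proposition \ref{trans_fluc_loss2} and Theorem \ref{trans_fluc_loss3} applied to portions of the path outside the barrier weights. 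The main technical obstacle throughout will be arranging the decomposition so that the outer factors $Z_{0,a}$ and $Z_{b,\cdot}$ are bounded on events \emph{independent} of $\barevent$ — either by dominating them by interval-to-line partition functions whose bounding events live outside the barrier strip, or by restricting attention to sub-partition functions whose weights are disjoint from $U_1 \cup U_2$. This careful bookkeeping, rather than any new probabilistic input, is where the bulk of the work lies.
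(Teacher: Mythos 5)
Your outline for $\mathcal{A}_1$, $\mathcal{A}_2$, $\mathcal{A}_4$ and $\mathcal{A}_6$ follows the paper's route, but the way you propose to handle $\mathcal{A}_3$ (and the ``stays inside $R^{\phi_2 r^{2/3}}_{0,r}$'' sub-case of $\mathcal{A}_5$) has a genuine gap. The barrier event \eqref{bar_cap} only penalizes paths that traverse an entire grid cell $U_i^k$ from its lower antidiagonal side $\underline{U_i^k}$ to its upper antidiagonal side $\overline{U_i^k}$ while staying inside the strip $U_i$, i.e.\ paths that remain at distance between $\theta r^{2/3}$ and $\phi_2 r^{2/3}$ from the diagonal for a full grid-aligned antidiagonal stretch of length $r/L$. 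A path in $\mathfrak{A}_3$ by definition touches every box $R^{\theta r^{2/3}}_{ir/L,(i+1)r/L}$, so it returns to within $\theta r^{2/3}$ of the diagonal in every window of length $r/L$ and is therefore \emph{never} forced to make such a traversal: its excursion beyond $3\theta r^{2/3}$ can have antidiagonal extent of order $\theta r^{2/3}\ll r/L$ (an up-right path needs only $O(\theta r^{2/3})$ steps to go out and come back), so it need not ``cross at least two barrier strips'' and incurs no penalty from $\barevent$ at all. The same objection applies to $\mathcal{A}_5$-paths confined to $R^{\phi_2 r^{2/3}}_{0,r}$: a path can hug the diagonal until level $r-O(\phi_1 r^{2/3})$ and then dart out past $\phi_1 r^{2/3}$ entirely inside the last grid window. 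The correct mechanism for $\mathcal{A}_3$ is not the barrier but the \emph{local} transversal-fluctuation loss on scale $r/L$: between $\mathcal{L}_{kr/L}$ and $\mathcal{L}_{(k+1)r/L}$ an excursion of size $\theta r^{2/3}=\theta L^{2/3}(r/L)^{2/3}$ costs of order $(\theta L^{2/3})^2(r/L)^{1/3}=\theta^{-998}r^{1/3}\gg\theta^{-900}r^{1/3}$ by Theorem \ref{trans_fluc_loss3} and Proposition \ref{trans_fluc_loss2}; this is exactly what the paper's Lemma \ref{lem_A3} exploits after decomposing by the crossing segments $I^i_{kr/L}$ and $J^j_{(k+1)r/L}$. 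Likewise $\mathcal{A}_5$ follows directly from Proposition \ref{trans_fluc_loss2} applied to the endpoint displacement $\geq\phi_1 r^{2/3}$, with no case split.

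A secondary issue is your plan to arrange the outer factors so that their bounding events are \emph{independent} of $\barevent$. This cannot work as written: a factor such as $Z_{0,\mathcal{L}_{kr/L}}$ genuinely uses weights in $U_1\cup U_2$, and replacing it by a sub-partition function avoiding the strip weights yields a lower bound, not the upper bound you need. The standard fix, and the one the paper uses throughout (e.g.\ \eqref{3_ptl_est} and Lemmas \ref{lem_A5}--\ref{lem_A6}), is the FKG inequality: $\barevent$ is a decreasing event and the bad events $\{\log Z\geq\cdots\}$ are increasing, so conditioning on $\barevent$ only decreases their probability and the unconditional tail bounds (Theorem \ref{ptl_upper}, Propositions \ref{trans_fluc_loss2}--\ref{trans_fluc_loss3}) suffice. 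With these two repairs your argument would align with the paper's proof.
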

 To prove Proposition \ref{lem_Ai}, we will split it into six separated lemmas, according to  $i = 1, \dots, 6.$

\begin{lemma}\label{lem_A1}
There exists a positive constant $\theta_0$ such that for each $0< \theta \leq \theta_0$, there exists a positive constant $c_0$ such that for each $c_0 \leq r$, 
$$\mathbb{P}(\mathcal{A}_1|\barevent) \geq 1- e^{-\theta^{-2}}.$$
\end{lemma}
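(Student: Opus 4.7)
The plan is to observe that the event $\mathcal{A}_1$ is in fact independent of the barrier event $\barevent$, and then bound $\mathbb{P}(\mathcal{A}_1)$ directly with Theorem \ref{wide_similar}.

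First I would verify the independence. The random variable $\log Z^{\text{in}, \theta r^{2/3}}_{0, r}$ is a measurable function of the weights $\{Y_{\bf z}: {\bf z}\in R^{\theta r^{2/3}}_{0,r}\}$, since every up-right path from $(0,0)$ to $(r,r)$ contained in $R^{\theta r^{2/3}}_{0, r}$ only picks up weights from that parallelogram. On the other hand, by the definition \eqref{bar_cap} and the construction of the sub-rectangles $U_i^k$, the event $\barevent$ is determined by the weights in $U_1 \cup U_2 \subset R^{\phi_2 r^{2/3}}_{0, r}\setminus R^{\theta r^{2/3}}_{0, r}$. These two collections of weights are disjoint and the $\{Y_{\bf z}\}$ are i.i.d., so
$$\mathbb{P}(\mathcal{A}_1\mid\barevent) = \mathbb{P}(\mathcal{A}_1).$$

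Next I would invoke Theorem \ref{wide_similar} with $N=r$, parameter $\theta$ (small, as in the statement of the lemma), any fixed $a_0 \geq 1$, endpoint ${\bf p}=(r,r)\in \mathcal{L}_r^{a_0\theta r^{2/3}}$, and the value $t=\theta^{-5}$. For $\theta$ small enough we have $t=\theta^{-5}\geq t_0$, and for $r$ large enough we have $r\geq N_0$, so the theorem yields
$$\mathbb{P}(\mathcal{A}_1^c)=\mathbb{P}\Big(\log Z^{\textup{in}, \theta r^{2/3}}_{0, r}  - 2r f_d \leq -\theta^{-5} r^{1/3}\Big) \leq \frac{\theta^{-5/2}}{\theta}\, e^{-\oldc{wide_similar_c1}\, \theta \cdot \theta^{-5}} = \theta^{-7/2} \, e^{-\oldc{wide_similar_c1}\theta^{-4}}.$$

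Finally, since $\theta^{-4}$ dominates both $\theta^{-2}$ and $\tfrac{7}{2}|\log\theta|$ as $\theta\to 0$, one can choose $\theta_0$ small enough (depending only on $\oldc{wide_similar_c1}$) so that the last expression is bounded by $e^{-\theta^{-2}}$, which finishes the proof. The only real content is the independence observation; everything else is a direct instance of the already-stated constrained lower-tail bound, so no step here poses a serious obstacle.
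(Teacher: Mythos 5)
Your proposal is correct and takes essentially the same approach as the paper: the paper's proof is a one-line "By independence and Theorem \ref{wide_similar}," and you have simply spelled out the details — the disjointness of $R^{\theta r^{2/3}}_{0,r}$ and $U_1\cup U_2\subset R^{\phi_2 r^{2/3}}_{0,r}\setminus R^{\theta r^{2/3}}_{0,r}$ for the independence, and the choice $t=\theta^{-5}$ in Theorem \ref{wide_similar} followed by absorbing the polynomial prefactor into the exponential for $\theta$ small.
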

\begin{proof}
We upper bound $\mathbb{P}(\mathcal{A}_1^c|\barevent)$. By independence and  Theorem \ref{wide_similar},
$$
\mathbb{P}(\mathcal{A}_1^c|\barevent)
 = \mathbb{P}\Big(\log Z^{\textup{in},  {\theta r^{2/3}}}_{0, r}   - 2r{f_d} \leq - \theta^{-5}r^{1/3} \Big) \leq e^{-\theta^{-2}}.
$$
\end{proof}

\begin{lemma}\label{lem_A2}
There exists a positive constant $\theta_0$ such that for each $0< \theta \leq \theta_0$, there exists a positive constant $c_0$ such that for each $c_0 \leq r$, 
$$\mathbb{P}(\mathcal{A}_2| \barevent ) \geq 1-e^{-\theta^{-10}}.$$
\end{lemma}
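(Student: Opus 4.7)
The plan is to exploit the observation that any path in $\mathfrak A_2$ is forced to traverse an entire anti-diagonal strip $U_j^{i+1}$ (for some $i\in\{0,\ldots,L-2\}$ and $j\in\{1,2\}$) across which the barrier event $\barevent$ makes the constrained free energy at most $2(r/L)f_d - Lr^{1/3}$. Since $L=\theta^{-1000}$ is enormous compared with $\theta^{-900}$, this single gain should by itself push $\log Z(\mathfrak A_2)$ well below the target $2rf_d-\theta^{-900}r^{1/3}$.

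The first step is a geometric decomposition $\mathfrak A_2=\bigcup_{i,j}\mathfrak A_2^{(i,j)}$, where $\mathfrak A_2^{(i,j)}$ consists of paths avoiding the $i$-th inner rectangle $R^{\theta r^{2/3}}_{ir/L,(i+1)r/L}$ and crossing the strip inside $U_j^{i+1}$. This covering is valid because a directed lattice path cannot switch from above to below the diagonal without visiting a diagonal vertex, and every diagonal vertex with first coordinate in $[ir/L,(i+1)r/L]$ lies in the forbidden inner rectangle. Splitting each path at its entry and exit of $U_j^{i+1}$ yields the three-factor bound
$$Z(\mathfrak A_2^{(i,j)}) \leq Z_{0,\underline{U_j^{i+1}}} \cdot Z^{\textup{in},U_j}_{\underline{U_j^{i+1}},\overline{U_j^{i+1}}} \cdot Z_{\overline{U_j^{i+1}},\mathcal L_r^{\phi_1 r^{2/3}}},$$
and a union bound over the $2(L-1)$ pairs $(i,j)$ costs only an additive $\log(2L)$ in the free energy.

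The middle factor is exactly the one controlled by $\barevent$. For the first and third factors I will apply Proposition \ref{ptl_upper}, after upper-bounding the target/source segments by the full anti-diagonal lines $\mathcal L_{ir/L}$ and $\mathcal L_{(i+1)r/L}$ respectively, with threshold of order $(L/8)r^{1/3}$. Since $\log\phi_2=O(\log(1/\theta))$ is negligible compared with $L^{3/2}$, the admissibility range of Proposition \ref{ptl_upper} is comfortably satisfied, giving failure probability at most $e^{-cL^{3/2}}$ per piece. A union bound over the $O(L)$ pairs leaves total failure probability at most $CL\,e^{-cL^{3/2}}\ll e^{-\theta^{-10}}$. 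On the intersection of $\barevent$ with these high-probability events I obtain
$$\log Z(\mathfrak A_2) - 2rf_d \leq -Lr^{1/3} + (L/4)r^{1/3} + O(r^{1/3}+\log L) \leq -\theta^{-900}r^{1/3}.$$

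The main technical obstacle I expect is the lack of literal independence between $\barevent$ and the right-tail events used for the first and third factors, since all these events depend on weights inside $U_1\cup U_2$. The way around this is FKG: each partition function is increasing in the weights, so the event that a free energy exceeds a given threshold is increasing, while $\barevent$, being an intersection of upper bounds on free energies, is decreasing. Consequently, conditioning on $\barevent$ can only make the first and third pieces stochastically smaller, and the unconditional right-tail estimates pass through the conditioning. Combined with the above bound, this yields $\mathbb P(\mathcal A_2^c\mid\barevent) \leq e^{-\theta^{-10}}$ for $\theta$ sufficiently small.
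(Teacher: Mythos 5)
Your proposal is correct and follows essentially the same route as the paper's proof of Lemma \ref{lem_A2}: decompose $\mathfrak{A}_2$ according to which box is avoided and on which side of the diagonal, split each path at its crossings of the two bounding anti-diagonal lines so that the middle factor is exactly the strip free energy controlled by $\barevent$, bound the two outer factors via Theorem \ref{ptl_upper} (with FKG absorbing the conditioning on the decreasing event $\barevent$), and finish with a union bound over the $O(L)$ classes. The only differences are cosmetic: you keep the middle factor explicitly constrained to $U_j$ and use slightly different thresholds and source segments for the outer factors, which if anything makes explicit what the paper's displays \eqref{1_est}--\eqref{2_est} use implicitly.
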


\begin{proof}
Let us further rewrite $\mathfrak{A}_2$ as a non-disjoint union of paths $\bigcup_{k=1}^{L-2}\mathfrak{A}_2^{k, +} \cup \mathfrak{A}_2^{k, -}$ where $\mathfrak{A}_2^{k, +}$ and $\mathfrak{A}_2^{k, -}$  are the collections of paths which avoid $R_{kr/L, (k+1)r/L}^{\theta r^{2/3}}$ by going above or below. For simplicity of the notation, let 
$$U_k^+ = \mathcal{L}_{(\frac{kr}{L} -  \frac{\phi_2 + \theta}{2} r^{2/3}, \frac{kr}{L} +  \frac{\phi_2 + \theta}{2} r^{2/3})}^{\frac{\phi_2 - \theta}{2}r^{2/3}}\quad \textup{ and } \quad U_k^- = \mathcal{L}_{(\frac{kr}{L} +  \frac{\phi_2 + \theta}{2} r^{2/3}, \frac{kr}{L} -   \frac{\phi_2 +\theta}{2} r^{2/3})}^{\frac{\phi_2 - \theta}{2}r^{2/3}}.$$
For each fixed $k\in [\![1, L-2]\!]$ and $\Box \in \{+, -\}$, we have the upper bound 
\begin{equation}\label{1_est}
\log Z_{0, \mathcal{L}_r}(\mathfrak{A}_2^{k,\Box})  \leq \log Z_{0, \mathcal{L}_{kr/L}}  + \log Z_{U^\Box_k, U^\Box_{k+1}}  + \log Z_{\mathcal{L}_{(k+1)r/L}, \mathcal{L}_{r}}. 
\end{equation}
Since we are conditioned on the event $\barevent$, then 
\begin{equation}\label{2_est}
\log Z_{U^\Box_k, U^\Box_{k+1}}  - 2(r/L){f_d} \leq - Lr^{1/3}.
\end{equation}
{Since the free energy is increasing in the environment while $\barevent$ decreases the environment}, using the FKG inequality and the interval to line estimate from Theorem \ref{ptl_upper}, we have 
\begin{align}
\begin{split}\label{3_ptl_est}
&\mathbb{P}\Big(\log Z_{0, \mathcal{L}_{kr/L}}  - 2(kr/L){f_d} \geq \sqrt{L} r^{1/3} \Big| \barevent\Big)\\
&\qquad\qquad \qquad \leq \mathbb{P}\Big(\log Z_{0, \mathcal{L}_{kr/L}}  - 2(kr/L){f_d} \geq \sqrt{L} r^{1/3} \Big)\leq e^{-\theta^{-100}}\\
&\mathbb{P}\Big(\log Z_{ \mathcal{L}_{(k+1)r/L},\mathcal{L}_r} - 2((L-k-1)r/L){f_d} \geq \sqrt{L} r^{1/3} \Big| \barevent\Big)\\
&\qquad\qquad \qquad \leq \mathbb{P}\Big(\log Z_{ \mathcal{L}_{(k+1)r/L}, \mathcal{L}_r}  - 2((L-k-1)r/L){f_d} \geq \sqrt{L} r^{1/3}\Big) \leq e^{-\theta^{-100}}.
\end{split}
\end{align}
From \eqref{1_est}, \eqref{2_est}, \eqref{3_ptl_est}, We obtain the following estimate for each fixed $k = 1,2, \dots, L-2$
\begin{equation}\label{fix_k_est}
\mathbb{P}\Big(\log Z_{0, \mathcal{L}_r}(\mathfrak{A}_2^{k,\Box})  - 2r{f_d} \geq - \tfrac{L}{2} r^{1/3}\Big| \barevent\Big) \leq e^{-\theta^{-90}}.
\end{equation}
Using \eqref{fix_k_est}, a union bound will give us the desired result from our lemma, 
\begin{align*}
&\mathbb{P}\Big(\mathcal{A}_2^c\Big| \barevent \Big) \\
& \leq \mathbb{P}\Big(\Big\{\log\Big(\sum_{k=1}^{L-2} Z_{0, \mathcal{L}_r}(\mathfrak{A}_2^{k, +}) + Z_{0, \mathcal{L}_r}(\mathfrak{A}_2^{k, -}) \Big)-  2r{f_d} \geq - \tfrac{L}{10} r^{1/3}\Big\} \Big| \barevent \Big) \\
& \leq \mathbb{P}\Big(\Big\{\max_{\substack{k \in [\![1, L-2]\!]\\ \Box \in\{ +, -\}}}\log Z_{0, \mathcal{L}_r}(\mathfrak{A}_2^{k, \Box})  + 2 \log  L-   2r{f_d} \geq - \tfrac{L}{10} r^{1/3}  \Big\} \Big| \barevent \Big) \\
& \leq \sum_{k=1}^{L-2} \sum_{\Box \in \{+, -\}}\mathbb{P}\Big(\Big\{\log  Z_{0, \mathcal{L}_r}(\mathfrak{A}_2^{k, \Box})  -  2r{f_d} \geq -\tfrac{L}{9}r^{1/3}\Big\} \Big| \barevent \Big) \\
&\leq e^{-\theta^{-10}} 
\end{align*}
where the last inequality comes from \eqref{fix_k_est} and provided $\theta_0$ is sufficiently small.
\end{proof}

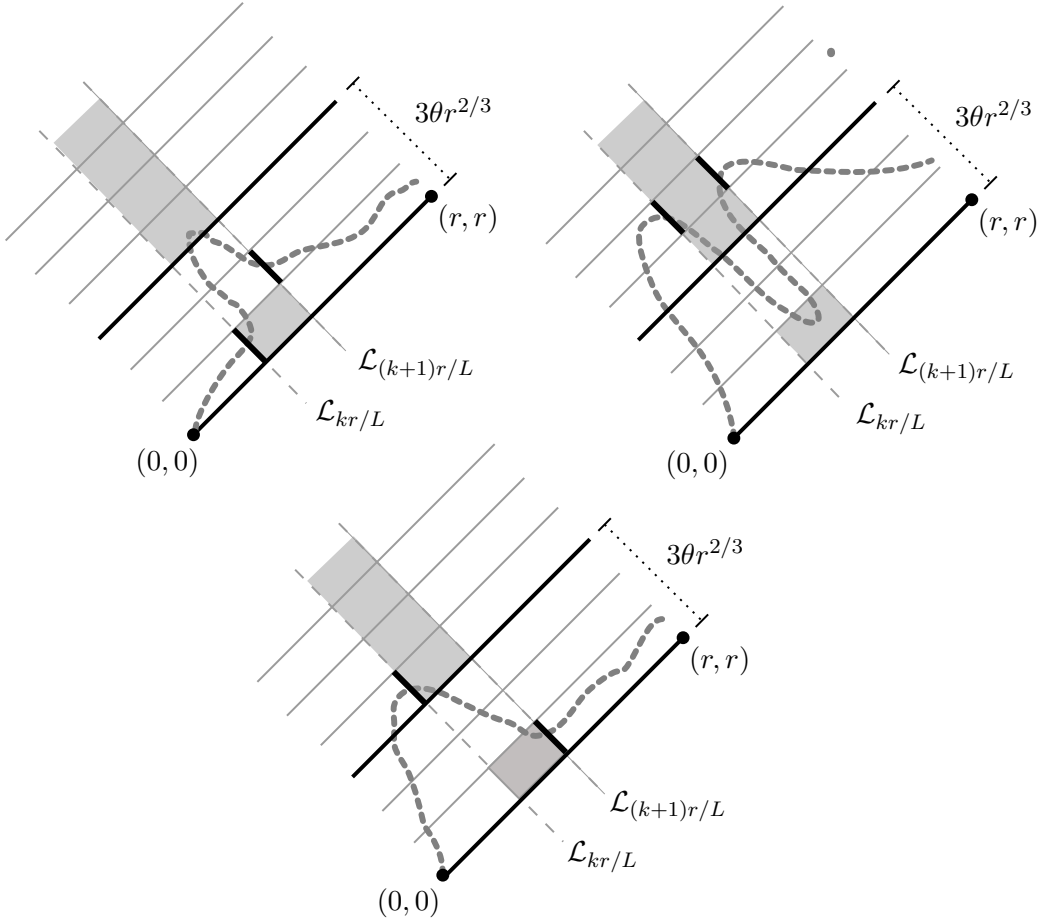
\begin{figure}
\begin{center}

\tikzset{every picture/.style={line width=0.75pt}} 

\begin{tikzpicture}[x=0.75pt,y=0.75pt,yscale=-0.8,xscale=0.8]

\draw [color={rgb, 255:red, 155; green, 155; blue, 155 }  ,draw opacity=1 ]   (118.76,261.59) -- (268.36,111.59) ;
\draw [color={rgb, 255:red, 155; green, 155; blue, 155 }  ,draw opacity=1 ] [dash pattern={on 4.5pt off 4.5pt}]  (41.16,90.92) -- (209.4,262.78) ;
\draw [color={rgb, 255:red, 155; green, 155; blue, 155 }  ,draw opacity=1 ][fill={rgb, 255:red, 155; green, 155; blue, 155 }  ,fill opacity=1 ] [dash pattern={on 4.5pt off 4.5pt}]  (68.96,60.12) -- (234.4,229.78) ;
\draw [color={rgb, 255:red, 155; green, 155; blue, 155 }  ,draw opacity=1 ]   (98.76,241.59) -- (248.36,91.59) ;
\draw  [dash pattern={on 0.84pt off 2.51pt}]  (238.87,60.87) -- (300.2,121.53) ;
\draw [shift={(300.2,121.53)}, rotate = 224.69] [color={rgb, 255:red, 0; green, 0; blue, 0 }  ][line width=0.75]    (0,5.59) -- (0,-5.59)   ;
\draw [shift={(238.87,60.87)}, rotate = 224.69] [color={rgb, 255:red, 0; green, 0; blue, 0 }  ][line width=0.75]    (0,5.59) -- (0,-5.59)   ;
\draw [color={rgb, 255:red, 155; green, 155; blue, 155 }  ,draw opacity=1 ]   (59.96,200.12) -- (209.56,50.12) ;
\draw [color={rgb, 255:red, 155; green, 155; blue, 155 }  ,draw opacity=1 ]   (38.63,181.45) -- (188.23,31.45) ;
\draw [color={rgb, 255:red, 155; green, 155; blue, 155 }  ,draw opacity=1 ]   (19.96,160.12) -- (169.56,10.12) ;
\draw  [draw opacity=0][fill={rgb, 255:red, 155; green, 155; blue, 155 }  ,fill opacity=0.5 ] (192.74,187.4) -- (213.32,208.14) -- (184.22,237.02) -- (163.64,216.27) -- cycle ;
\draw [line width=1.5]    (288.36,132.25) -- (253.02,167.69) -- (138.36,282.65) ;
\draw [color={rgb, 255:red, 128; green, 128; blue, 128 }  ,draw opacity=1 ][line width=2.25] [line join = round][line cap = round] [dash pattern={on 2.53pt off 3.02pt}]  (139.53,280.2) .. controls (139.53,269.45) and (156.62,242.45) .. (163.53,235.53) .. controls (167.66,231.4) and (174.3,224.09) .. (174.87,219.53) .. controls (175.6,213.7) and (169.92,205.26) .. (165.53,200.87) .. controls (164.71,200.05) and (159.93,198.74) .. (159.53,197.53) .. controls (158.44,194.26) and (150.38,190.85) .. (147.53,186.87) .. controls (140.95,177.65) and (134.89,168.42) .. (135.53,156.2) .. controls (135.57,155.5) and (136.83,155.6) .. (137.53,155.53) .. controls (139.97,155.31) and (142.46,155.1) .. (144.87,155.53) .. controls (145.29,155.61) and (150.39,160.58) .. (151.53,160.87) .. controls (165.18,164.28) and (170.65,178.03) .. (186.87,175.53) .. controls (195.22,174.25) and (199.23,168.39) .. (207.53,166.2) .. controls (220.73,162.73) and (233.89,162.23) .. (244.2,154.87) .. controls (255.5,146.79) and (255.46,130.97) .. (267.8,129) .. controls (273.08,128.15) and (273.47,123.44) .. (278.8,123) ;
\draw  [fill={rgb, 255:red, 0; green, 0; blue, 0 }  ,fill opacity=1 ] (134.93,282.65) .. controls (134.93,280.76) and (136.46,279.22) .. (138.36,279.22) .. controls (140.26,279.22) and (141.79,280.76) .. (141.79,282.65) .. controls (141.79,284.55) and (140.26,286.09) .. (138.36,286.09) .. controls (136.46,286.09) and (134.93,284.55) .. (134.93,282.65) -- cycle ;
\draw  [fill={rgb, 255:red, 0; green, 0; blue, 0 }  ,fill opacity=1 ] (284.93,132.25) .. controls (284.93,130.36) and (286.46,128.82) .. (288.36,128.82) .. controls (290.26,128.82) and (291.79,130.36) .. (291.79,132.25) .. controls (291.79,134.15) and (290.26,135.69) .. (288.36,135.69) .. controls (286.46,135.69) and (284.93,134.15) .. (284.93,132.25) -- cycle ;
\draw [line width=2.25]    (163.64,216.27) -- (184.22,237.02) ;
\draw [color={rgb, 255:red, 0; green, 0; blue, 0 }  ,draw opacity=1 ][line width=2.25]    (174,166.95) -- (193.56,186.59) ;
\draw [color={rgb, 255:red, 155; green, 155; blue, 155 }  ,draw opacity=1 ]   (459.43,263.72) -- (609.03,113.72) ;
\draw [color={rgb, 255:red, 155; green, 155; blue, 155 }  ,draw opacity=1 ] [dash pattern={on 4.5pt off 4.5pt}]  (384.5,90.93) -- (549,262.84) ;
\draw [color={rgb, 255:red, 155; green, 155; blue, 155 }  ,draw opacity=1 ][fill={rgb, 255:red, 155; green, 155; blue, 155 }  ,fill opacity=1 ] [dash pattern={on 4.5pt off 4.5pt}]  (412.5,61.43) -- (577,231.93) ;
\draw [color={rgb, 255:red, 155; green, 155; blue, 155 }  ,draw opacity=1 ]   (439.43,243.72) -- (589.03,93.72) ;
\draw  [fill={rgb, 255:red, 0; green, 0; blue, 0 }  ,fill opacity=1 ] (625.59,134.39) .. controls (625.59,132.49) and (627.13,130.95) .. (629.03,130.95) .. controls (630.92,130.95) and (632.46,132.49) .. (632.46,134.39) .. controls (632.46,136.28) and (630.92,137.82) .. (629.03,137.82) .. controls (627.13,137.82) and (625.59,136.28) .. (625.59,134.39) -- cycle ;
\draw  [dash pattern={on 0.84pt off 2.51pt}]  (579.53,63) -- (640.87,123.67) ;
\draw [shift={(640.87,123.67)}, rotate = 224.69] [color={rgb, 255:red, 0; green, 0; blue, 0 }  ][line width=0.75]    (0,5.59) -- (0,-5.59)   ;
\draw [shift={(579.53,63)}, rotate = 224.69] [color={rgb, 255:red, 0; green, 0; blue, 0 }  ][line width=0.75]    (0,5.59) -- (0,-5.59)   ;
\draw [color={rgb, 255:red, 155; green, 155; blue, 155 }  ,draw opacity=1 ]   (400.63,202.25) -- (550.23,52.25) ;
\draw [color={rgb, 255:red, 155; green, 155; blue, 155 }  ,draw opacity=1 ]   (379.29,183.59) -- (528.89,33.59) ;
\draw [color={rgb, 255:red, 155; green, 155; blue, 155 }  ,draw opacity=1 ]   (360.63,162.25) -- (510.23,12.25) ;
\draw  [draw opacity=0][fill={rgb, 255:red, 155; green, 155; blue, 155 }  ,fill opacity=0.5 ] (534.59,188.36) -- (553.97,207.88) -- (524.68,236.93) -- (505.31,217.4) -- cycle ;
\draw [line width=1.5]    (629.03,134.39) -- (479.03,284.79) ;
\draw [color={rgb, 255:red, 155; green, 155; blue, 155 }  ,draw opacity=1 ]   (277.43,540.05) -- (427.03,390.05) ;
\draw [color={rgb, 255:red, 155; green, 155; blue, 155 }  ,draw opacity=1 ] [dash pattern={on 4.5pt off 4.5pt}]  (202.44,367.79) -- (371.43,539.38) ;
\draw [color={rgb, 255:red, 155; green, 155; blue, 155 }  ,draw opacity=1 ][fill={rgb, 255:red, 155; green, 155; blue, 155 }  ,fill opacity=1 ] [dash pattern={on 4.5pt off 4.5pt}]  (230.04,338.59) -- (397.64,509.39) ;
\draw [color={rgb, 255:red, 155; green, 155; blue, 155 }  ,draw opacity=1 ]   (257.43,520.05) -- (407.03,370.05) ;
\draw  [fill={rgb, 255:red, 0; green, 0; blue, 0 }  ,fill opacity=1 ] (443.59,410.72) .. controls (443.59,408.82) and (445.13,407.28) .. (447.03,407.28) .. controls (448.92,407.28) and (450.46,408.82) .. (450.46,410.72) .. controls (450.46,412.61) and (448.92,414.15) .. (447.03,414.15) .. controls (445.13,414.15) and (443.59,412.61) .. (443.59,410.72) -- cycle ;
\draw  [dash pattern={on 0.84pt off 2.51pt}]  (397.53,339.33) -- (458.87,400) ;
\draw [shift={(458.87,400)}, rotate = 224.69] [color={rgb, 255:red, 0; green, 0; blue, 0 }  ][line width=0.75]    (0,5.59) -- (0,-5.59)   ;
\draw [shift={(397.53,339.33)}, rotate = 224.69] [color={rgb, 255:red, 0; green, 0; blue, 0 }  ][line width=0.75]    (0,5.59) -- (0,-5.59)   ;
\draw [color={rgb, 255:red, 155; green, 155; blue, 155 }  ,draw opacity=1 ]   (218.63,478.58) -- (368.23,328.58) ;
\draw [color={rgb, 255:red, 155; green, 155; blue, 155 }  ,draw opacity=1 ]   (197.29,459.92) -- (346.89,309.92) ;
\draw [color={rgb, 255:red, 155; green, 155; blue, 155 }  ,draw opacity=1 ]   (178.63,438.58) -- (328.23,288.58) ;
\draw  [color={rgb, 255:red, 155; green, 155; blue, 155 }  ,draw opacity=1 ][fill={rgb, 255:red, 194; green, 190; blue, 190 }  ,fill opacity=1 ] (353.79,463.5) -- (373.16,483.03) -- (343.7,512.25) -- (324.33,492.73) -- cycle ;
\draw [line width=1.5]    (447.03,410.72) -- (297.03,561.12) ;
\draw [color={rgb, 255:red, 128; green, 128; blue, 128 }  ,draw opacity=1 ][line width=2.25] [line join = round][line cap = round] [dash pattern={on 2.53pt off 3.02pt}]  (295.53,560.53) .. controls (295.53,550.49) and (292.82,543.56) .. (288.2,535.86) .. controls (287.13,534.07) and (283.7,529.27) .. (283.53,528.53) .. controls (280.74,516.4) and (278.4,507.02) .. (274.2,496.53) .. controls (271.29,489.26) and (265.18,483.7) .. (264.87,477.2) .. controls (264.64,472.53) and (265.09,467.86) .. (264.87,463.2) .. controls (264.86,462.95) and (264.01,455.77) .. (264.87,453.2) .. controls (269.39,439.63) and (285.75,440.97) .. (298.2,447.2) .. controls (309.6,452.9) and (324.35,460.9) .. (336.2,463.86) .. controls (343.51,465.69) and (347.37,473.7) .. (355.53,472.53) .. controls (371.97,470.18) and (377.37,454.03) .. (386.2,445.2) .. controls (390.85,440.55) and (397.77,435.41) .. (403.53,432.53) .. controls (405.14,431.72) and (408.66,432.07) .. (410.2,430.53) .. controls (417.12,423.61) and (424.27,399) .. (433.8,399) ;
\draw [color={rgb, 255:red, 0; green, 0; blue, 0 }  ,draw opacity=1 ][line width=2.25]    (353.6,463.39) -- (373.16,483.03) ;
\draw  [fill={rgb, 255:red, 0; green, 0; blue, 0 }  ,fill opacity=1 ] (292.1,560.53) .. controls (292.1,558.63) and (293.64,557.1) .. (295.53,557.1) .. controls (297.43,557.1) and (298.97,558.63) .. (298.97,560.53) .. controls (298.97,562.43) and (297.43,563.96) .. (295.53,563.96) .. controls (293.64,563.96) and (292.1,562.43) .. (292.1,560.53) -- cycle ;
\draw  [color={rgb, 255:red, 128; green, 128; blue, 128 }  ,draw opacity=1 ][line width=3] [line join = round][line cap = round] (540.3,42) .. controls (540.3,41.67) and (540.3,41.33) .. (540.3,41) ;
\draw [color={rgb, 255:red, 128; green, 128; blue, 128 }  ,draw opacity=1 ][line width=2.25] [line join = round][line cap = round] [dash pattern={on 2.53pt off 3.02pt}]  (479.36,284.79) .. controls (476.42,264.23) and (468.31,244.59) .. (455.76,227.6) .. controls (446.59,215.17) and (432.84,202.11) .. (427.36,187.59) .. controls (425.72,183.26) and (414.66,157.82) .. (421.36,150.79) .. controls (437.01,134.4) and (456.25,159.26) .. (469.36,167.6) .. controls (484.13,179.54) and (495.31,195.1) .. (511.47,204.21) .. controls (516.69,207.15) and (525.18,213.77) .. (530.74,211.55) .. controls (536.2,209.37) and (527.47,196.88) .. (524.13,192.88) .. controls (516.8,186.21) and (507.71,178.15) .. (500.8,168.88) .. controls (487.97,157.41) and (465.8,140.33) .. (470.56,119.2) .. controls (474.47,101.8) and (513.79,115.39) .. (518.56,115.59) .. controls (547,116.83) and (577.1,119.59) .. (603.76,109.6) ;
\draw  [fill={rgb, 255:red, 0; green, 0; blue, 0 }  ,fill opacity=1 ] (475.59,284.79) .. controls (475.59,282.89) and (477.13,281.35) .. (479.03,281.35) .. controls (480.92,281.35) and (482.46,282.89) .. (482.46,284.79) .. controls (482.46,286.68) and (480.92,288.22) .. (479.03,288.22) .. controls (477.13,288.22) and (475.59,286.68) .. (475.59,284.79) -- cycle ;
\draw  [draw opacity=0][fill={rgb, 255:red, 155; green, 155; blue, 155 }  ,fill opacity=0.5 ] (79.42,70.25) -- (153.55,147.17) -- (124.27,175.39) -- (50.14,98.47) -- cycle ;
\draw [color={rgb, 255:red, 0; green, 0; blue, 0 }  ,draw opacity=1 ][line width=1.5]    (78.75,222.17) -- (228.35,72.17) ;
\draw  [draw opacity=0][fill={rgb, 255:red, 155; green, 155; blue, 155 }  ,fill opacity=0.5 ] (422.39,71.47) -- (495.43,147.25) -- (465.94,175.67) -- (392.91,99.88) -- cycle ;
\draw [color={rgb, 255:red, 0; green, 0; blue, 0 }  ,draw opacity=1 ][line width=2.25]    (455.83,107.65) -- (475.43,127.25) ;
\draw [color={rgb, 255:red, 0; green, 0; blue, 0 }  ,draw opacity=1 ][line width=2.25]    (427.6,135.73) -- (447.16,155.37) ;
\draw [color={rgb, 255:red, 0; green, 0; blue, 0 }  ,draw opacity=1 ][line width=1.5]    (418.78,222.96) -- (513.11,128.37) -- (568.38,72.96) ;
\draw  [draw opacity=0][fill={rgb, 255:red, 155; green, 155; blue, 155 }  ,fill opacity=0.5 ] (239.3,346.67) -- (313.43,423.58) -- (284.15,451.8) -- (210.02,374.88) -- cycle ;
\draw [color={rgb, 255:red, 0; green, 0; blue, 0 }  ,draw opacity=1 ][line width=2.25]    (264.55,432.2) -- (284.15,451.8) ;
\draw [color={rgb, 255:red, 0; green, 0; blue, 0 }  ,draw opacity=1 ][line width=1.5]    (238.63,498.58) -- (388.23,348.58) ;

\draw (213.33,259.07) node [anchor=north west][inner sep=0.75pt]    {$\mathcal{L}_{kr/L}$};
\draw (240,227.07) node [anchor=north west][inner sep=0.75pt]    {$\mathcal{L}_{( k+1) r/L}$};
\draw (100,289.27) node [anchor=north west][inner sep=0.75pt]    {$( 0,0)$};
\draw (290.36,135.65) node [anchor=north west][inner sep=0.75pt]    {$( r,r)$};
\draw (276,67.07) node [anchor=north west][inner sep=0.75pt]    {$3\theta r^{2/3}$};
\draw (554,261.2) node [anchor=north west][inner sep=0.75pt]    {$\mathcal{L}_{kr/L}$};
\draw (580.67,229.2) node [anchor=north west][inner sep=0.75pt]    {$\mathcal{L}_{( k+1) r/L}$};
\draw (434,290.73) node [anchor=north west][inner sep=0.75pt]    {$( 0,0)$};
\draw (631.03,137.79) node [anchor=north west][inner sep=0.75pt]    {$( r,r)$};
\draw (616.67,69.2) node [anchor=north west][inner sep=0.75pt]    {$3\theta r^{2/3}$};
\draw (372,537.53) node [anchor=north west][inner sep=0.75pt]    {$\mathcal{L}_{kr/L}$};
\draw (398.67,505.53) node [anchor=north west][inner sep=0.75pt]    {$\mathcal{L}_{( k+1) r/L}$};
\draw (252,567.06) node [anchor=north west][inner sep=0.75pt]    {$( 0,0)$};
\draw (449.03,414.12) node [anchor=north west][inner sep=0.75pt]    {$( r,r)$};
\draw (434.67,345.53) node [anchor=north west][inner sep=0.75pt]    {$3\theta r^{2/3}$};

\end{tikzpicture}

\captionsetup{width=0.8\textwidth}
\caption{An illustration of the paths from the collection $\mathfrak{A}_3^{k, +}$. The paths in this collection must intersect the two gray rectangles shown in the picture. The top two paths cross the lines $\mathcal{L}_{kr/L}$ and $\mathcal{L}_{(k+1)r/L}$ at neighboring segments (the case $|i-j| \leq 1$ from the proof of Lemma \ref{lem_A3}). They must have a high transversal fluctuation between $\mathcal{L}_{kr/L}$ and $\mathcal{L}_{(k+1)r/L}$ because they have to reach the gray rectangles. The bottom picture is a path that crosses the lines $\mathcal{L}_{kr/L}$ and $\mathcal{L}_{(k+1)r/L}$ at non-neighboring positions (the case $|i-j| \geq 2$). This path has a high transversal fluctuation  between $\mathcal{L}_{kr/L}$ and $\mathcal{L}_{(k+1)r/L}$ because of these non-neighboring crossing positions.} \label{A3}
\end{center}
\end{figure}

\begin{lemma}\label{lem_A3}
There exists a positive constant $\theta_0$ such that for each $0< \theta \leq \theta_0$, there exists a positive constant $c_0$ such that for each $c_0 \leq r$, 
$$\mathbb{P}\Big(\mathcal{A}_3\Big| \barevent \Big) \geq 1- e^{-\theta^{-10}}.$$
\end{lemma}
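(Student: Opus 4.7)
The plan parallels Lemma \ref{lem_A2}, now exploiting the narrow-rectangle constraints to force an anomalously large transversal fluctuation on a local sub-scale. Any path in $\mathfrak{A}_3$ visits each narrow rectangle $R_{ir/L,(i+1)r/L}^{\theta r^{2/3}}$ yet also exits the wider parallelogram $R_{0,r}^{3\theta r^{2/3}}$; hence on some slab between $\mathcal{L}_{kr/L}$ and $\mathcal{L}_{(k+1)r/L}$ of diagonal length $r/L$, the path must bump transversally by at least order $\theta r^{2/3}$. In rescaled local KPZ coordinates, this corresponds to the fluctuation parameter $t \asymp \theta L^{2/3} = \theta^{-1997/3}$ in Theorem \ref{trans_fluc_loss3}, which is enormous since $L = \theta^{-1000}$.

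The execution is to decompose $\mathfrak{A}_3 = \bigcup_{k=0}^{L-2} \mathfrak{A}_3^k$ according to the diagonal position of the first exit from $R_{0,r}^{3\theta r^{2/3}}$. For each $k$, splitting a path in $\mathfrak{A}_3^k$ at its crossings with $\mathcal{L}_{kr/L}$ and $\mathcal{L}_{(k+1)r/L}$ yields
\begin{align*}
\log Z_{0,\mathcal{L}_r}(\mathfrak{A}_3^k) \leq \log Z_{0, \mathcal{L}_{kr/L}} + \log W_k + \log Z_{\mathcal{L}_{(k+1)r/L}, \mathcal{L}_r},
\end{align*}
where $W_k$ is the partition function of directed paths from $\mathcal{L}_{kr/L}$ to $\mathcal{L}_{(k+1)r/L}$ that exit one of the diagonal sides of $R_{kr/L,(k+1)r/L}^{3\theta r^{2/3}}$. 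The two outer terms are controlled by Theorem \ref{ptl_upper} exactly as in \eqref{3_ptl_est}, each exceeding its shape-function value by at most $\sqrt{L}\,r^{1/3}$ with probability $\geq 1 - e^{-\theta^{-100}}$ conditional on $\barevent$; the transfer from unconditional to conditional probability uses the FKG inequality as in Lemma \ref{lem_A2}.

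The main step is to bound $W_k$. Applying Theorem \ref{trans_fluc_loss3} with $N = r/L$, $sN^{2/3} \asymp \theta r^{2/3}$ and $(s+t)N^{2/3} = 3\theta r^{2/3}$ (so $s, t \asymp \theta L^{2/3}$) gives
\begin{align*}
\log W_k - 2(r/L)f_d \leq -C\theta^2 L\,r^{1/3} = -C\theta^{-998}\,r^{1/3}
\end{align*}
with probability $\geq 1 - e^{-C\theta^{-1997}}$, and the same FKG monotonicity transfers this to the conditional probability given $\barevent$ since $W_k$ is increasing in the weights while $\barevent$ is decreasing. To reduce the unrestricted anti-diagonal start/end sets in $W_k$ to the bounded segments appearing in Theorem \ref{trans_fluc_loss3}, partition each of $\mathcal{L}_{kr/L}$ and $\mathcal{L}_{(k+1)r/L}$ into transversal buckets of width $r^{2/3}$; by Proposition \ref{trans_fluc_loss}, bucket pairs with transversal offset exceeding one unit incur shape-function penalties of order $|j-j'|^2 L\,r^{1/3} \geq L\,r^{1/3}$, already dwarfing $\theta^{-900}r^{1/3}$, so only a constant-in-$r$ number of near-diagonal bucket pairs contribute.

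Combining the three bounds and using $\theta^{-998} \gg \sqrt{L} = \theta^{-500}$ for small $\theta$, we obtain $\log Z_{0,\mathcal{L}_r}(\mathfrak{A}_3^k) - 2rf_d \leq -\theta^{-900}\,r^{1/3}$. Since $\log Z_{0,\mathcal{L}_r}(\mathfrak{A}_3) \leq \log L + \max_k \log Z_{0,\mathcal{L}_r}(\mathfrak{A}_3^k)$ and $\log L = 1000|\log\theta|$ is negligible, a union bound over the $L$ choices of $k$ produces $\mathbb{P}(\mathcal{A}_3^c \mid \barevent) \leq L\bigl(e^{-\theta^{-100}} + e^{-C\theta^{-1997}}\bigr) \leq e^{-\theta^{-2}}$ for $\theta$ sufficiently small. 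The principal technical obstacle is dealing with the unrestricted anti-diagonal start/end sets in $W_k$; the bucket decomposition above resolves it cleanly.
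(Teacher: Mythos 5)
Your decomposition by slab $k$ and the plan to force an anomalously large local fluctuation is the right shape and broadly parallels the paper. The gap is in the slab piece: $W_k$ as you define it drops the constraint that the path intersects $R^{\theta r^{2/3}}_{kr/L,(k+1)r/L}$, and without it the claimed bound $\log W_k - 2(r/L)f_d \leq -C\theta^{-998}r^{1/3}$ is false. Concretely, a path that starts on $\mathcal{L}_{kr/L}$ at transversal offset $\approx 3\theta r^{2/3}$ and runs essentially parallel to the diagonal to a point on $\mathcal{L}_{(k+1)r/L}$ at the same offset exits (or skirts) the $3\theta r^{2/3}$-tube, yet its displacement vector is $(r/L, r/L)$, so its free energy concentrates near $2(r/L)f_d$. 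Such paths are summed in your $W_k$, hence $\log W_k - 2(r/L)f_d$ is of order $(r/L)^{1/3}$, not $-\theta^{-998}r^{1/3}$. Your bucket decomposition does not repair this: Proposition~\ref{trans_fluc_loss} penalizes bucket pairs by their \emph{relative} displacement $|j-j'|$, so a pair $(j,j)$ with $|j|$ large incurs no shape-function penalty, and Theorem~\ref{trans_fluc_loss3} as stated does not apply to start/end segments far from the reference diagonal. So the assertion that only near-diagonal bucket pairs contribute is wrong, and exactly those far-offset, small-relative-displacement pairs dominate $W_k$.

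What rescues the argument, and what the paper does, is to keep the touching constraint in the slab piece and to further decompose on the crossing windows $I^i_{kr/L}$, $J^j_{(k+1)r/L}$ of the slab boundaries, then split on $|i-j|$. When $|i-j|\leq 1$ the conjunction of touching $R^{\theta r^{2/3}}$ and exiting $R^{3\theta r^{2/3}}$ forces a transversal fluctuation of order $\theta r^{2/3}$ inside the slab and Theorem~\ref{trans_fluc_loss3} applies (centered at the crossing position); when $|i-j|\geq 2$ the displacement between the crossings alone yields the shape-function penalty via Proposition~\ref{trans_fluc_loss2}. Without the touching constraint, constant-offset near-boundary paths are never penalized and the slab estimate cannot close. (A secondary point: your final union bound reads $\leq e^{-\theta^{-2}}$, which is weaker than the $\leq e^{-\theta^{-10}}$ the lemma states; your intermediate bounds are in fact strong enough, so this is just a loose last step, but it should be tightened.)
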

\begin{proof}
As before, let us rewrite $\mathcal{A}_3$ as a non-disjoint union of paths $\bigcup_{k=0}^{L-2}\mathfrak{A}_3^{k, +} \cup \mathfrak{A}_3^{k, -}$ where $\mathfrak{A}_3^{k, +}$ and $\mathfrak{A}_3^{k, -}$  are the collections of paths which exit from the upper and lower diagonal sides of the rectangle $R_{kr/L,(k+1)r/L}^{3\theta r^{2/3}}$.

Let us fix $k$ and look at $\mathfrak{A}_3^{k, +}$. We will show that all paths in this collection must have high transversal fluctuations. This fact is illustrated in Figure \ref{A3}. First, we further break up this collection of paths by where they cross the lines $\mathcal{L}_{kr/L}$ and $\mathcal{L}_{(k+1)r/L}$. For $i,j \in \mathbb{Z}_{\geq 0}$, let 
\begin{align*}
I^i_{kr/L} &= \mathcal{L}_{(kr/L - (i+ \frac{1}{2})\theta r^{2/3},kr/L  + (i+ \frac{1}{2})\theta r^{2/3})}^{\frac{1}{2}\theta r^{2/3}}\\
J^{j}_{(k+1)r/L} &= \mathcal{L}_{((k+1)r/L - (j+\frac{1}{2})\theta r^{2/3},(k+1)r/L  + (j+\frac{1}{2})\theta r^{2/3})}^{\frac{1}{2}\theta r^{2/3}}.
\end{align*}
Then any path in $\mathfrak{A}_3^{k, +}$ must cross $I^i_{kr/L}$ and $J^{j}_{(k+1)r/L}$ for some $i, j \in [\![0, \phi_2 \theta^{-1}]\!]$. Thus we may rewrite $\mathfrak{A}_3^{k, +}$ as a non-disjoint union $\bigcup_{i,j = 0}^{\phi_2 \theta^{-1}} \mathfrak{A}_3^{k, +}(i,j)$ where $\mathfrak{A}_3^{k, +}(i,j)$ is the collection of paths inside $\mathfrak{A}_3^{k, +}$ that goes through $I^i_{kr/L}$ and $J^{j}_{(k+1)r/L}$. 
Next, we will split the case of $i$ and $j$ into two cases, when $|i-j|\leq 1$ or otherwise. 

By our assumption, the paths inside $\mathfrak{A}_3^{k, +}$ must intersect $R^{\theta r^{2/3}}_{kr/L, (k+1)r/L}$ while also exiting the upper diagonal side of $R^{3\theta r^{2/3}}_{kr/L, (k+1)r/L}$. If $|i-j|\leq 1$, there must be an unusually large transversal of size at least $\theta r^{2/3} = 
({\theta}{L^{2/3}}) (r/L)^{2/3}$ for the segment of the path $\mathfrak{A}_3^{k, +}(i, j)$ between $I^i_{kr/L}$ and $J^{j}_{(k+1)r/L}$. We may invoke Theorem \ref{trans_fluc_loss3} and obtain that for $|i-j| \leq 1$, 
\begin{equation}\label{ij_close}
\mathbb{P}\Big(\log Z_{I^i_{kr/L},J^{j}_{(k+1)r/L}}(\mathfrak{A}_3^{k, +}(i, j))  - 2(r/L){f_d} \geq - C(\theta L^{2/3})^2 (r/L)^{1/3}\Big) \leq e^{-\theta^{-100}}
\end{equation}
for some small constant $D$.

Next, when $|i-j| \geq 2$, then there is already a large transversal fluctuation of size at least $\theta r^{2/3}= ({\theta}{L^{2/3}}) (r/L)^{2/3}$ between for the segment of the path $\mathfrak{A}_3^{k, +}(i, j)$ between $I^i_{kr/L}$ and $J^{j}_{(k+1)r/L}$. By Proposition \ref{trans_fluc_loss2}, we obtain that for $|i-j|\geq 2$, 
\begin{equation}\label{ij_far}
\mathbb{P}\Big(\log Z_{I^i_{kr/L},J^{j}_{(k+1)r/L}}(\mathfrak{A}_3^{k, +}(i, j))  - 2(r/L){f_d} \geq -D(\theta L^{2/3})^2 (r/L)^{1/3}\Big) \leq e^{-\theta^{-100}}
\end{equation}
for some small constant $D$.

By the FKG inequality, \eqref{ij_close} and \eqref{ij_far} still hold if we replace the probability measure $\mathbb{P}$ with $\mathbb{P}(\boldsymbol{\cdot} | \barevent)$. 
Now as before, we upper bound the value of the free energy of the paths outside the region  between $\mathcal{L}_{kr/L}$ and $\mathcal{L}_{(k+1)r/L}$ by \eqref{3_ptl_est}. We obtain 
\begin{equation}\label{fix_kij_est}
\mathbb{P}\Big( \log Z_{0, \mathcal{L}_r}(\mathfrak{A}_3^{k,\Box}(i,j))  - 2r{f_d} \geq -  L^{0.99}   r^{1/3}  \Big| \barevent \Big) \leq e^{-\theta^{-90}}.
 \end{equation}
We have 
\begin{align*}
&\mathbb{P}\Big(\mathcal{A}_3^c\Big| \barevent \Big) \\
& \leq \mathbb{P}\Big(\Big\{\log\Big(\sum_{k=0}^{L-2} \sum_{\Box \in \{+, -\}} \sum_{i,j = 0}^{\phi_2 \theta^{-1}} Z_{0, \mathcal{L}_r}(\mathfrak{A}_3^{k, \Box}(i,j)) \Big)-  2r{f_d} \geq -  {L^{0.9}}  r^{1/3}\Big\} \Big| \barevent \Big) \\
& \leq \mathbb{P}\Big(\Big\{\max_{\substack{k \in [\![0, L-2]\!]\\ \Box \in\{ +, -\}\\i, j \in [\![0, \phi_2 \theta^{-1}]\!]}}\log  Z_{0, \mathcal{L}_r}(\mathfrak{A}_3^{k, \Box}(i,j))  + 100 \log  L-  2r{f_d} \geq -  L^{0.9}   r^{1/3}  \Big\}\Big| \barevent \Big) \\
& \leq \sum_{k=1}^{L-2} \sum_{\Box \in \{+, -\}} \sum_{i,j = 0}^{\phi_2 \theta^{-1}}\mathbb{P}\Big( \log Z_{0, \mathcal{L}_r}(\mathfrak{A}_2^{k, \Box})   -  2r{f_d} \geq - L^{0.95}  r^{1/3}  \Big| \barevent \Big) \\
& \leq  e^{-\theta^{-80}}
\end{align*}
where the last inequality uses  \eqref{fix_kij_est} and provided $\theta_0$ is sufficiently small.
\end{proof}

\begin{lemma}\label{lem_A4}
There exists a positive constant $\theta_0$ such that for each $0< \theta \leq \theta_0$, there exists positive constant $c_0$ such that for each $c_0 \leq r$, 
$$\mathbb{P}\Big(\mathcal{A}_4\Big| \barevent \Big) \geq 1- e^{-\theta^{-2}}.$$
\end{lemma}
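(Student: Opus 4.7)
The plan is to exploit the fact that any path in $\mathfrak{A}_4$ ends at transverse distance between $r^{2/3}$ and $\phi_1 r^{2/3}$ from the diagonal at $\mathcal{L}_r$, while being forced to stay within $3\theta r^{2/3}$ of the diagonal up to $\mathcal{L}_{r-r/L}$. Hence such a path must carry an anomalous transverse displacement of order $r^{2/3}$ over its final stretch of diagonal length $r/L$, which is a fluctuation of order $L^{2/3}$ on the local KPZ scale $(r/L)^{2/3}$ and therefore incurs a very large exponential penalty. Concretely, each path $\gamma \in \mathfrak{A}_4$ crosses $\mathcal{L}_{r-r/L}$ at a unique vertex, and the constraint $\gamma \subset R_{0,r-r/L}^{3\theta r^{2/3}}$ (below $\mathcal{L}_{r-r/L}$) forces that vertex to lie in $\mathcal{L}_{r-r/L}^{3\theta r^{2/3}}$; splitting at this vertex and using the elementary nonnegativity bound $\sum_{\mathbf p} Z_{0,\mathbf p} Z_{\mathbf p, A} \le Z_{0,I} \cdot Z_{I, A}$ for any intermediate interval $I$ gives
\begin{equation*}
\log Z_{0,\mathcal{L}_r}(\mathfrak{A}_4) \;\le\; \log Z_{0,\mathcal{L}_{r-r/L}^{3\theta r^{2/3}}} \;+\; \log Z_{\mathcal{L}_{r-r/L}^{3\theta r^{2/3}},\,\mathcal{L}_r \setminus \mathcal{L}_r^{r^{2/3}}}.
\end{equation*}

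The first summand is dominated by $\log Z_{0,\mathcal{L}_{r-r/L}}$ (which is in turn bounded by an interval-to-line free energy) and is controlled by Theorem \ref{ptl_upper} with parameter $t = \sqrt{L}$, giving $\log Z_{0,\mathcal{L}_{r-r/L}^{3\theta r^{2/3}}} - 2(r-r/L) f_d \le \sqrt{L}\, r^{1/3}$ off an event of probability at most $e^{-C L^{3/4}}$. The second summand is handled by Proposition \ref{trans_fluc_loss2}, translated so that $\mathcal{L}_{r-r/L}$ plays the role of $\mathcal{L}_0$: with $N = r/L$, initial width $s = 3\theta L^{2/3}$ and excess $t = (1-3\theta) L^{2/3} \ge L^{2/3}/2$, the proposition yields
\begin{equation*}
\log Z_{\mathcal{L}_{r-r/L}^{3\theta r^{2/3}},\,\mathcal{L}_r \setminus \mathcal{L}_r^{r^{2/3}}} - 2(r/L) f_d \;\le\; -C L r^{1/3}
\end{equation*}
off an event of probability at most $e^{-C L^{2}}$. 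Both of these exceptional events are increasing functions of the weights whereas $\barevent$ is a decreasing event, so the FKG inequality transfers both estimates to the conditional law $\mathbb{P}(\,\cdot \mid \barevent)$ without any loss in probability.

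On the intersection of the complements, using $L = \theta^{-1000}$ and taking $\theta_0$ small, one gets
\begin{equation*}
\log Z_{0,\mathcal{L}_r}(\mathfrak{A}_4) - 2r f_d \;\le\; \sqrt{L}\, r^{1/3} - C L r^{1/3} \;\le\; -\theta^{-900} r^{1/3},
\end{equation*}
while the total error probability is at most $e^{-C L^{3/4}} + e^{-C L^{2}} \le e^{-\theta^{-2}}$, as required. The main technical point is the correct choice of splitting line: inserting the intermediate line at $\mathcal{L}_{r-r/L}$ (rather than at one of the coarser $\mathcal{L}_{ir/L}$) is what makes the excess-fluctuation penalty $-CLr^{1/3}$ safely dominate the point-to-line correction $\sqrt{L}\,r^{1/3}$ by the huge margin needed to reach $-\theta^{-900} r^{1/3}$; once this is arranged, the remainder is a routine union bound together with the FKG comparison.
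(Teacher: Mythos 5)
Your proposal is correct and follows essentially the same route as the paper: split each path of $\mathfrak{A}_4$ at its crossing of $\mathcal{L}_{r-r/L}$ (necessarily inside $\mathcal{L}_{r-r/L}^{3\theta r^{2/3}}$), bound the long piece by the interval-to-line estimate (Theorem \ref{ptl_upper}) and the short piece by the transversal-fluctuation loss (Proposition \ref{trans_fluc_loss2}) rescaled to $N=r/L$, and remove the conditioning on $\barevent$ by FKG since the bad events are increasing while $\barevent$ is decreasing. The only cosmetic differences (using the crossing interval rather than the full line $\mathcal{L}_{r-r/L}$, and budgeting $\sqrt{L}\,r^{1/3}$ instead of $(L-L^{0.9})r^{1/3}$ for the long piece) do not change the argument, and with $L=\theta^{-1000}$ your margins and error probabilities match the paper's.
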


\begin{proof}
This is simply because the last part of the paths has a very large transversal fluctuation, 
\begin{align}
&\mathbb{P}\Big(\log Z_{0, \mathcal{L}_r^{\phi_1 r^{2/3}} \setminus \mathcal{L}_r^{r^{2/3}}}(\mathfrak{A}_4) - 2r{f_d} \geq -L^{0.9} r^{1/3} \Big|\barevent \Big)\nonumber \\
& \leq \mathbb{P}\Big(\log Z_{0, \mathcal{L}_{r-r/L}} + \log Z_{\mathcal{L}_{r-r/L}^{3\theta r^{2/3}}, \mathcal{L}_r^{\phi_1 r^{2/3}} \setminus \mathcal{L}_r^{r^{2/3}}}  - 2r{f_d} \geq -L^{0.9} r^{1/3} \Big|\barevent \Big)\nonumber\\
& \leq \mathbb{P}\Big(\log Z_{0, \mathcal{L}_{r-r/L}}  - 2(r- r/L){f_d} \geq (L -L^{0.9}) r^{1/3} \Big|\barevent \Big)\label{longterm}\\
&\qquad  \qquad \qquad  +  \mathbb{P}\Big(\log Z_{\mathcal{L}_{r-r/L}^{3\theta r^{2/3}}, \mathcal{L}_r^{\phi_1 r^{2/3}} \setminus \mathcal{L}_r^{r^{2/3}}}  - 2(r/L){f_d} \geq -L  r^{1/3} \Big|\barevent \Big)\label{shortterm}.
\end{align}
By the FKG inequality and Theorem \ref{ptl_upper}, 
$$\eqref{longterm} \leq \mathbb{P}\Big(\log Z_{0, \mathcal{L}_{r-r/L}}  - 2(r- r/L){f_d} \geq (L -L^{0.9}) r^{1/3} \Big) \leq e^{-L^{0.1}}.$$
By the FKG inequality and Proposition \ref{trans_fluc_loss2}, 
$$\eqref{shortterm} \leq \mathbb{P}\Big(\log Z_{\mathcal{L}_{r-r/L}^{3\theta r^{2/3}}, \mathcal{L}_r^{\phi_1 r^{2/3}} \setminus \mathcal{L}_r^{r^{2/3}}} - 2(r/L){f_d} \geq -L  r^{1/3}  \Big) \leq e^{-L^{0.1}}.$$
With these, we have finished the proof of this theorem.

\end{proof}

\begin{lemma}\label{lem_A5}
There exists a positive constant $\theta_0$ such that for each $0< \theta \leq \theta_0$, there exists positive constant $c_0$ such that for each $c_0 \leq r$, 
$$\mathbb{P}\Big(\mathcal{A}_5\Big| \barevent \Big) \geq 1- e^{-\theta^{-2}}.$$
\end{lemma}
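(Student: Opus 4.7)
The plan is to reduce directly to the transversal fluctuation estimate in Proposition \ref{trans_fluc_loss2}, and then transfer from the unconditional probability to the conditional one using the FKG inequality. This should be the simplest of the six lemmas, because $\mathfrak{A}_5$ consists of paths that already end far from the diagonal, so there is no need for the kind of path decomposition used in Lemma \ref{lem_A4} or \ref{lem_A3}.

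First, I would observe that by definition
$$\log Z_{0, \mathcal{L}_r}(\mathfrak{A}_5) \;=\; \log Z_{0,\, \mathcal{L}_r \setminus \mathcal{L}_r^{\phi_1 r^{2/3}}}.$$
Applying Proposition \ref{trans_fluc_loss2} with $N=r$, $s=0$ and $t=\phi_1 = \theta^{-10}$ (which satisfies $t\geq 1$ once $\theta\leq\theta_0$ is small) gives
$$\mathbb{P}\Big(\log Z_{0,\, \mathcal{L}_r \setminus \mathcal{L}_r^{\phi_1 r^{2/3}}} - 2r f_d \;\geq\; -\oldc{trans_fluc_loss2_c1}\,\theta^{-20}\, r^{1/3}\Big) \;\leq\; e^{-\oldc{trans_fluc_loss2_c2}\,\theta^{-30}}.$$
For $\theta\leq\theta_0$ small enough, $\oldc{trans_fluc_loss2_c1}\,\theta^{-20} \geq \theta^{-10}$, so the event above contains $\mathcal{A}_5^c$. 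Hence
$$\mathbb{P}(\mathcal{A}_5^c) \;\leq\; e^{-\oldc{trans_fluc_loss2_c2}\,\theta^{-30}} \;\leq\; \tfrac{1}{2}\,e^{-\theta^{-2}}$$
after possibly shrinking $\theta_0$ further.

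The remaining step is to pass from $\mathbb{P}(\mathcal{A}_5^c)$ to $\mathbb{P}(\mathcal{A}_5^c\mid \barevent)$. The event $\mathcal{A}_5^c = \{\log Z_{0,\mathcal{L}_r}(\mathfrak{A}_5) - 2r f_d > -\theta^{-10} r^{1/3}\}$ is increasing in the weights $\{Y_{\bf z}\}$, while each of the constituent events of $\barevent$ is decreasing in the weights (more weight makes the restricted partition functions larger), so $\barevent$ itself is decreasing. Since the $\{Y_{\bf z}\}$ are independent, the FKG inequality yields
$$\mathbb{P}(\mathcal{A}_5^c \cap \barevent) \;\leq\; \mathbb{P}(\mathcal{A}_5^c)\,\mathbb{P}(\barevent),$$
which is exactly $\mathbb{P}(\mathcal{A}_5^c \mid \barevent) \leq \mathbb{P}(\mathcal{A}_5^c) \leq e^{-\theta^{-2}}$, completing the proof.

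The only real obstacle is checking that the parameters line up: verifying that $s=0,\; t=\theta^{-10}$ is a legal choice in Proposition \ref{trans_fluc_loss2}, and confirming the direction of the FKG comparison (which works here precisely because $\barevent$ is a \emph{lower} bound on free energies of rectangles disjoint from the interior strip, hence decreasing). Everything else is bookkeeping on constants.
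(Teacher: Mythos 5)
Your proof is correct and follows essentially the same two-step argument as the paper: reduce the conditional probability to the unconditional one via FKG (using that $\mathcal{A}_5^c$ is increasing and $\barevent$ is decreasing in the weights), then bound $\mathbb{P}(\mathcal{A}_5^c)$ by Proposition \ref{trans_fluc_loss2} with $s=0$, $t=\phi_1=\theta^{-10}$. Your write-up just makes explicit the parameter bookkeeping and the monotonicity check that the paper's one-line proof leaves implicit.
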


\begin{proof}
By the FKG inequality, it suffices to show  $\mathbb{P}(\mathcal{A}_5^c)\leq e^{-\theta^{-2}}.$ Then, this estimate follows directly from 
 Proposition \ref{trans_fluc_loss2}. 

\end{proof}

\begin{lemma}\label{lem_A6}
There exists a positive constant $\theta_0$ such that for each $0< \theta \leq \theta_0$, there exists positive constant $c_0$ such that for each $c_0 \leq r$, 
$$\mathbb{P}\Big(\mathcal{A}_6\Big| \barevent \Big) \geq 1- e^{-\theta^{-2}}.$$
\end{lemma}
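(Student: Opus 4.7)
The plan is to first eliminate the conditioning on $\barevent$ via the FKG inequality, and then control $\log Z_{0,\mathcal{L}_r}(\mathfrak{A}_6)$ by a single application of Theorem \ref{trans_fluc_loss3}. Both $\mathcal{A}_6$ and $\barevent$ are decreasing events in the i.i.d.\ weights $\{Y_{\bf z}\}$, since each is an intersection of events of the form ``some restricted free energy is $\leq$ a threshold''. Hence FKG yields
$$\mathbb{P}(\mathcal{A}_6 \mid \barevent) \geq \mathbb{P}(\mathcal{A}_6),$$
reducing the task to the unconditional bound $\mathbb{P}(\mathcal{A}_6^c) \leq e^{-\theta^{-2}}$, exactly as in the proof of Lemma \ref{lem_A5}.

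For the unconditional estimate, observe that every path $\gamma \in \mathfrak{A}_6$ starts at $(0,0) \in \mathcal{L}_0^{\phi_1 r^{2/3}}$, ends on $\mathcal{L}_r^{\phi_1 r^{2/3}}$, and exits the parallelogram $R_{0,r}^{\phi_2 r^{2/3}}$. Since $\phi_1 < \phi_2$, both anti-diagonal endcaps $\mathcal{L}_0^{\phi_1 r^{2/3}}$ and $\mathcal{L}_r^{\phi_1 r^{2/3}}$ are contained in the corresponding endcaps of $R_{0,r}^{\phi_2 r^{2/3}}$, so such a path must exit through one of the two diagonal sides. Therefore
$$\log Z_{0, \mathcal{L}_r}(\mathfrak{A}_6) \;\leq\; \log Z^{\textup{exit},\, \phi_2 r^{2/3}}_{\mathcal{L}_0^{\phi_1 r^{2/3}},\, \mathcal{L}_r^{\phi_1 r^{2/3}}}.$$

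Now apply Theorem \ref{trans_fluc_loss3} with $N = r$, $s = \phi_1 = \theta^{-10}$, and $t \asymp \phi_2 - \phi_1 \asymp \theta^{-100}$. The hypotheses $1 \leq t \leq r^{1/3}$ and $s < e^{t}$ both hold once $\theta$ is small (which forces $e^{t} \gg s$) and $r$ is large enough in terms of $\theta$, which is permitted by the statement of the lemma. The theorem then gives
$$\mathbb{P}\Big(\log Z^{\textup{exit},\, \phi_2 r^{2/3}}_{\mathcal{L}_0^{\phi_1 r^{2/3}},\, \mathcal{L}_r^{\phi_1 r^{2/3}}} - 2r f_d \geq -\oldc{trans_fluc_loss3_c1}\, t^2 r^{1/3}\Big) \leq e^{-\oldc{trans_fluc_loss3_c2}\, t^{3}}.$$
Since $\oldc{trans_fluc_loss3_c1}\, t^2 \geq \theta^{-100}$ for $\theta$ sufficiently small, the event on the left contains $\mathcal{A}_6^c$, whence $\mathbb{P}(\mathcal{A}_6^c) \leq e^{-C \theta^{-300}} \leq e^{-\theta^{-2}}$.

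I do not anticipate a real obstacle here; the whole argument hinges on the quantitative hierarchy $\phi_1 = \theta^{-10} \ll \phi_2 = \theta^{-100}$, which is precisely what makes the cubic exponent $t^3 \asymp \theta^{-300}$ in Theorem \ref{trans_fluc_loss3} dominate the target threshold $\theta^{-100} r^{1/3}$ by a wide margin. The only minor care needed is to confirm that the endpoint set $\mathcal{L}_r^{\phi_1 r^{2/3}}$ is inside the endcap of the $\phi_2 r^{2/3}$-wide parallelogram so that ``exit'' in the sense of Theorem \ref{trans_fluc_loss3} (i.e., crossing the diagonal sides) coincides with exiting the parallelogram in the sense of the definition of $\mathfrak{A}_6$.
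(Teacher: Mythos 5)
Your proposal is correct and follows essentially the same route as the paper, which likewise removes the conditioning on $\barevent$ by FKG (both events being decreasing in the weights) and then invokes Theorem \ref{trans_fluc_loss3} for the unconditional bound. The details you supply — bounding $\log Z_{0,\mathcal{L}_r}(\mathfrak{A}_6)$ by $\log Z^{\textup{exit},\,\phi_2 r^{2/3}}_{\mathcal{L}_0^{\phi_1 r^{2/3}},\,\mathcal{L}_r^{\phi_1 r^{2/3}}}$ and applying the theorem with $N=r$, $s=\phi_1$, $t\asymp\phi_2-\phi_1$ so that $C t^2\geq\theta^{-100}$ and $e^{-Ct^3}\leq e^{-\theta^{-2}}$ — are exactly what the paper's terse ``follows directly from Theorem \ref{trans_fluc_loss3}'' leaves implicit, and they check out.
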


\begin{proof}
By the FKG inequality, it suffices to show $\mathbb{P}(\mathcal{A}_6)\leq e^{-\theta^{-2}}.$ Then, this estimate follows directly from Theorem \ref{trans_fluc_loss3}.

\end{proof}

With these lemmas, we have shown Proposition \ref{lem_Ai}.
Finally, we have the following proposition.
\begin{proposition}\label{loc1}
 On the event $\cap_{i=1}^6 \mathcal{A}_i$, we have
$$
\log Z_{0, \mathcal{L}_r}(\mathfrak{A}_1)  \leq \log Z_{0, \mathcal{L}_r} \leq \log Z_{0, \mathcal{L}_r}(\mathfrak{A}_1)  + \log 6.
$$
\end{proposition}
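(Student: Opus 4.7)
The lower bound $\log Z_{0, \mathcal{L}_r}(\mathfrak{A}_1) \leq \log Z_{0, \mathcal{L}_r}$ is immediate, since $\mathfrak{A}_1$ is a subcollection of all directed paths from $(0,0)$ to $\mathcal{L}_r$ and all weights are positive. The upper bound will follow from two ingredients: a partition-of-paths statement yielding $Z_{0, \mathcal{L}_r} \leq \sum_{i=1}^6 Z_{0, \mathcal{L}_r}(\mathfrak{A}_i)$, and a numerical comparison showing that on $\bigcap_{i=1}^6 \mathcal{A}_i$ each summand for $i\ge 2$ is dominated by the $i=1$ term.

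For the partition step, a routine case analysis confirms that every path from $(0,0)$ to $\mathcal{L}_r$ lies in some $\mathfrak{A}_i$. A path whose endpoint lies outside $\mathcal{L}_r^{\phi_1 r^{2/3}}$ is in $\mathfrak{A}_5$. A path with endpoint in $\mathcal{L}_r^{\phi_1 r^{2/3}}$ that exits $R_{0,r}^{\phi_2 r^{2/3}}$ is in $\mathfrak{A}_6$. The remaining paths belong to $\mathfrak{A}$; among these, those missing some $R_{ir/L,(i+1)r/L}^{\theta r^{2/3}}$ go into $\mathfrak{A}_2$, those touching every such box but exiting the diagonal sides of $R_{0,r}^{3\theta r^{2/3}}$ go into $\mathfrak{A}_3$, and the rest (which then stay within $3\theta r^{2/3}$ of the main diagonal throughout, hence in particular between the diagonal sides of $R_{0,r-r/L}^{3\theta r^{2/3}}$) go into $\mathfrak{A}_1$ or $\mathfrak{A}_4$ according to whether the endpoint lies in $\mathcal{L}_r^{r^{2/3}}$ or in $\mathcal{L}_r^{\phi_1 r^{2/3}} \setminus \mathcal{L}_r^{r^{2/3}}$.

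For the comparison step, note that every path contributing to $Z^{\mathrm{in},\,\theta r^{2/3}}_{0,r}$ belongs to $\mathfrak{A}_1$: the endpoint $(r,r)$ lies in $\mathcal{L}_r^{r^{2/3}}$; the path stays within $\theta r^{2/3} < 3\theta r^{2/3}$ of the diagonal; and when it crosses each $\mathcal{L}_{ir/L}$ the crossing point is within $\theta r^{2/3}$ of $(ir/L, ir/L)$, hence inside $R_{ir/L,(i+1)r/L}^{\theta r^{2/3}}$. Therefore, on $\mathcal{A}_1$,
\begin{equation*}
\log Z_{0, \mathcal{L}_r}(\mathfrak{A}_1) \;\geq\; \log Z^{\mathrm{in},\,\theta r^{2/3}}_{0,r} \;\geq\; 2r f_d - \theta^{-5} r^{1/3}.
\end{equation*}
On each $\mathcal{A}_i$ with $i = 2,\dots,6$, the defining inequality gives $\log Z_{0, \mathcal{L}_r}(\mathfrak{A}_i) \leq 2r f_d - \theta^{-10} r^{1/3}$, since each stated bound is at least this strong once $\theta < 1$ (we have $\theta^{-10} \leq \theta^{-100} \leq \theta^{-900}$). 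Subtracting,
\begin{equation*}
\log Z_{0, \mathcal{L}_r}(\mathfrak{A}_i) - \log Z_{0, \mathcal{L}_r}(\mathfrak{A}_1) \;\leq\; -(\theta^{-10} - \theta^{-5})\, r^{1/3} \;\leq\; 0
\end{equation*}
for $\theta$ small. Hence $Z_{0, \mathcal{L}_r}(\mathfrak{A}_i) \leq Z_{0, \mathcal{L}_r}(\mathfrak{A}_1)$ for each such $i$, and summing over the six classes yields $Z_{0, \mathcal{L}_r} \leq 6\, Z_{0, \mathcal{L}_r}(\mathfrak{A}_1)$, which is the desired upper bound after taking logarithms.

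There is no serious obstacle: all of the probabilistic content has already been absorbed into Proposition \ref{lem_Ai}, and what remains is essentially bookkeeping. The only points requiring care are the exhaustive case analysis in the first step and verifying the inclusion of the $R_{0,r}^{\theta r^{2/3}}$-constrained paths in $\mathfrak{A}_1$ used in the second step.
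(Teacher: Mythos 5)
Your proof is correct and follows essentially the same route as the paper: bound $\log Z_{0,\mathcal{L}_r}$ by $\max_j \log Z_{0,\mathcal{L}_r}(\mathfrak{A}_j) + \log 6$ via the six-class covering, and use the event definitions together with the inclusion $Z_{0,\mathcal{L}_r}(\mathfrak{A}_1) \geq Z^{\textup{in},\,\theta r^{2/3}}_{0,r}$ to see that on $\cap_{i=1}^6 \mathcal{A}_i$ the class $\mathfrak{A}_1$ dominates the others. You simply spell out the covering case analysis and the threshold comparison ($\theta^{-5}$ versus $\theta^{-10}, \theta^{-100}, \theta^{-900}$) that the paper leaves implicit.
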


\begin{proof}
This follows directly from the definition of our events $\mathcal{A}_i$ and the fact
$$
\max_{j}\{ \log Z_{0, \mathcal{L}_r}(\mathfrak{A}_j) \} \leq \log Z_{0, \mathcal{L}_r} \leq\max_{j}\{ \log Z_{0, \mathcal{L}_r}(\mathfrak{A}_j) \} + \log 6
$$
and $\log Z_{0, \mathcal{L}_r}(\mathfrak{A}_1)  \geq \log Z^{\textup{in},  {\theta r^{2/3}}}_{0, r}  $.
\end{proof}

\subsection{Concentration of the global free energy along $\mathcal{L}_r$}

Define ${\bf p}^*$ to be the maximizer in  
$$\max_{{\bf p}\in\mathcal{L}_r } \bigl\{ \log Z_{0, {\bf p}}  + \log Z_{{\bf p}, N}\bigr\}.$$
Our goal in this section is to show that when conditioned on $\barevent$, with high probability, ${\bf p}^* \in \mathcal{L}_r^{r^{2/3}}$. This is stated as Proposition \ref{loc2} at the end of this subsection.

Again, we start by defining our high probability events,
\begin{align*}
\mathcal{E}_1 &= \bigcap_{j=1}^{\phi_2} \Big\{\log Z^{\textup{max}}_{\mathcal{L}_r^{jr^{2/3}}, N}  - \log Z_{r,N}  \leq \theta^{-1}\sqrt{jr^{2/3}} \Big\}\\
\mathcal{E}_2 &= \Big\{\max_{{\bf p}\in\mathcal{L}_r \setminus\mathcal{L}_r^{\phi_1 r^{2/3}} } \log Z_{0, {\bf p}}  + \log Z_{{\bf p}, N}  \leq \log Z^{\textup{in} ,  {\theta r^{2/3}}}_{0, r}   + \log Z_{r, N} -\theta^{-1}r^{1/3} \Big\}.
\end{align*}

The next two lemmas show that $\mathcal{E}_1$ and $\mathcal{E}_2$ are high probability events. 

\begin{lemma}\label{E1_bd}
There exists a positive constant $\theta_0$ such that for each $0<\theta \leq \theta_0$, there exist positive constants $c_0, N_0$  such that for each $N\geq N_0$, $c_0\leq r \leq N/2$, we have
$$\mathbb{P}(\mathcal{E}_1|\barevent) \geq 1- e^{-\theta^{-1/100}}.$$
\end{lemma}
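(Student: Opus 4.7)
The plan is to reduce $\mathbb{P}(\mathcal{E}_1^c \mid \barevent)$ to a union bound of estimates of the type in Proposition \ref{max_all_t}. First, I would show that $\mathcal{E}_1$ is independent of $\barevent$, so the conditioning drops out. The barrier event $\barevent$ is measurable with respect to the weights $\{Y_{\bf z}\}$ for ${\bf z} \in R^{\phi_2 r^{2/3}}_{0,r} \setminus R^{\theta r^{2/3}}_{0,r}$, and every such vertex satisfies $z_1 + z_2 \leq 2r$. On the other hand, by the convention \eqref{par_fun} that excludes the starting weight from $Z_{{\bf u}, {\bf v}}$, both $\log Z^{\textup{max}}_{\mathcal{L}_r^{jr^{2/3}}, N}$ and $\log Z_{r, N}$ are measurable functions only of weights $Y_{\gamma_i}$ with $i \geq 1$ along up-right paths starting from $\mathcal{L}_r^{jr^{2/3}}$ or from $(r, r)$; all such vertices satisfy $(\gamma_i)_1 + (\gamma_i)_2 \geq 2r+1$. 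Hence $\mathcal{E}_1$ is independent of $\barevent$ and $\mathbb{P}(\mathcal{E}_1 \mid \barevent) = \mathbb{P}(\mathcal{E}_1)$.

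Second, I would bound each of the $\phi_2$ events in the intersection defining $\mathcal{E}_1$. By translation invariance of the i.i.d.\ weights, the random variable $\log Z^{\textup{max}}_{\mathcal{L}_r^{jr^{2/3}}, N} - \log Z_{r, N}$ has the same distribution as $\log Z^{\textup{max}}_{\mathcal{L}_0^{jr^{2/3}}, (N-r, N-r)} - \log Z_{0, (N-r, N-r)}$. The latter is a line-to-point analog of the quantity bounded in Proposition \ref{max_all_t}, and such an analog follows from the same random walk comparison argument (Theorem \ref{rwrw} applied along the starting anti-diagonal segment $\mathcal{L}_0^{jr^{2/3}}$ rather than along a terminal one), together with $\log Z^{\textup{max}}_{A,B} \leq \log Z_{A,B}$. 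With $a = jr^{2/3}$, $N-r \geq N/2$ in place of $N$, and $t = \theta^{-1}$ (which exceeds $t_0$ once $\theta \leq \theta_0$ is small enough), this yields
$$\mathbb{P}\!\left(\log Z^{\textup{max}}_{\mathcal{L}_r^{jr^{2/3}}, N} - \log Z_{r, N} \geq \theta^{-1}\sqrt{jr^{2/3}}\right) \leq e^{-\theta^{-1/10}}.$$

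Summing over $j = 1, \dots, \phi_2 = \theta^{-100}$ then gives
$$\mathbb{P}(\mathcal{E}_1^c) \leq \theta^{-100}\, e^{-\theta^{-1/10}} \leq e^{-\theta^{-1/100}}$$
for all $\theta \leq \theta_0$ sufficiently small, since $\theta^{-1/10}$ dominates $100 \log \theta^{-1} + \theta^{-1/100}$ as $\theta \to 0^+$. The main structural observation is the independence of $\mathcal{E}_1$ and $\barevent$ in the first step; after that, the bound is a routine application of the line-to-point variant of Proposition \ref{max_all_t} together with a union bound.
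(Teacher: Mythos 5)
Your proposal is correct and follows essentially the same route as the paper: the paper's proof likewise drops the conditioning by noting that $\mathcal{E}_1$ and $\barevent$ depend on disjoint (hence independent) sets of weights, and then applies Proposition \ref{max_all_t} with $t=\theta^{-1}$ together with a union bound over $j=1,\dots,\phi_2$. The only difference is that you spell out the independence via the weight convention and justify the line-to-point adaptation of Proposition \ref{max_all_t} (translation invariance plus rerunning the random walk comparison), steps the paper leaves implicit.
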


\begin{proof}
By independence, it suffices to prove this estimate for $\mathbb{P}(\mathcal{E}_1).$
We fix $\theta>0$ small. We will upper bound $\mathbb{P}(\mathcal{E}_1^c)$ using Proposition \ref{max_all_t}. In our application of this proposition, the variables $a = \sqrt{jr^{2/3}}$ and $t = \theta^{-1}$. 
By Proposition \ref{max_all_t},
$$\mathbb{P}(\mathcal{E}_1^c) \leq\sum_{j=1}^{\phi_2}
\mathbb{P}\Big( \log Z^{\textup{max}}_{\mathcal{L}_r^{jr^{2/3}}, N}  - \log Z_{r,N}  \geq \theta^{-1}\sqrt{jr^{2/3}}\Big) \leq \sum_{j=1}^{\phi_2}
e^{-\theta^{-1/50}} \leq e^{-\theta^{-1/100}}.$$
\end{proof}

\begin{lemma}\label{E2_bd}
There exists a positive constant $\theta_0$ such that for each $0<\theta \leq \theta_0$, there exist positive constants $c_0, N_0$  such that for each $N\geq N_0$, $c_0\leq r \leq N/2$, we have
$$\mathbb{P}(\mathcal{E}_2|\barevent) \geq 1- e^{-\theta^{-2}}.$$
\end{lemma}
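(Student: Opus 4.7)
The strategy is to reduce to an unconditional bound via FKG and then control the bad event by a union bound exploiting transversal fluctuation decay. Observe first that $\mathcal{E}_2$ is a decreasing event in the annulus weights $\{Y_{\bf z} : {\bf z} \in U_1 \cup U_2\}$: the RHS $\log Z^{\textup{in},\theta r^{2/3}}_{0,r} + \log Z_{r,N} - \theta^{-1} r^{1/3}$ is measurable with respect to weights in $R^{\theta r^{2/3}}_{0,r}$ (first term) and weights at sites with $|{\bf z}|_1 \geq 2r+1$ (second term), and both of these regions are disjoint from $U_1 \cup U_2$, which lies inside $\{|{\bf z}|_1 \leq 2r\}$; the LHS is increasing in annulus weights since $\log Z_{0,{\bf p}}$ for ${\bf p}\in\mathcal{L}_r$ may use them. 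Since $\barevent$ is also decreasing in annulus weights, FKG applied conditionally on all non-annulus weights yields $\mathbb{P}(\mathcal{E}_2^c \mid \barevent) \leq \mathbb{P}(\mathcal{E}_2^c)$, and it suffices to bound the unconditional probability.

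Split $\mathbb{P}(\mathcal{E}_2^c) \leq \mathbb{P}(\mathcal{A}_1^c) + \mathbb{P}(\mathcal{E}_2^c \cap \mathcal{A}_1)$. Theorem \ref{wide_similar} with $t = \theta^{-5}$ gives $\mathbb{P}(\mathcal{A}_1^c) \leq e^{-c\theta^{-4}}$. On $\mathcal{A}_1$ we have $\log Z^{\textup{in},\theta r^{2/3}}_{0,r} \geq 2rf_d - \theta^{-5} r^{1/3}$, so $\mathcal{E}_2^c \cap \mathcal{A}_1$ forces some ${\bf p} \in \mathcal{L}_r \setminus \mathcal{L}_r^{\phi_1 r^{2/3}}$ to satisfy
\[
\log Z_{0,{\bf p}} + \log Z_{{\bf p},N} - \log Z_{r,N} > 2rf_d - 2\theta^{-5} r^{1/3}.
\]
Decompose such ${\bf p}$ into annular strips $J_j := \mathcal{L}_r^{(j+1)r^{2/3}} \setminus \mathcal{L}_r^{jr^{2/3}}$ for integers $\phi_1 \leq j \leq r^{1/3}$, and use the crude bounds $\log Z_{0,{\bf p}} \leq \log Z_{0, \mathcal{L}_r \setminus \mathcal{L}_r^{jr^{2/3}}}$ and $\log Z_{{\bf p},N} \leq \log Z^{\max}_{\mathcal{L}_r^{(j+1)r^{2/3}}, N}$ for ${\bf p} \in J_j$.

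For each $j$, split the target using $A(j) = \tfrac{c}{2} j^2$ and $B(j) = \tfrac{c}{2} j^2 - 2\theta^{-5}$, so that $B(j) - A(j) = -2\theta^{-5}$; since $j \geq \phi_1 = \theta^{-10}$, $B(j) \geq \tfrac{c}{4} j^2$. Proposition \ref{trans_fluc_loss2} (with $s=0$, $t=j$) controls the first piece by $e^{-cj^3}$. For the second, a translation plus time-reversal shows $\log Z^{\max}_{\mathcal{L}_r^{(j+1)r^{2/3}},N} - \log Z_{r,N}$ is stochastically dominated by $\log Z_{0, \mathcal{L}_{N-r}^{(j+1)r^{2/3}}} - \log Z_{0,N-r}$, to which Proposition \ref{compare_max} applies with $a = (j+1)r^{2/3}$ and effective $t \sim j^{3/2}$ (valid since $j^{3/2} \leq r^{1/2} \leq (N-r)^{1/2}$ for $r \leq N/2$); using $j \leq r^{1/3}$ the bound becomes $e^{-c\min\{j^3, j^2 r^{1/3}\}} = e^{-cj^3}$. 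Summing over $j \geq \phi_1$ yields $\mathbb{P}(\mathcal{E}_2^c \cap \mathcal{A}_1) \leq Ce^{-c\theta^{-30}}$, which combined with the bound on $\mathcal{A}_1^c$ is at most $e^{-\theta^{-2}}$ once $\theta$ is small enough. The main delicate point is the FKG step, which rests on the $\ell^1$-norm argument showing that $\log Z_{{\bf p},N}$ and $\log Z_{r,N}$ depend only on weights disjoint from the annulus $U_1 \cup U_2$.
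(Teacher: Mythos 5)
Your proposal is correct in outline and follows essentially the same route as the paper: reduce to the unconditional bound on $\mathbb{P}(\mathcal{E}_2^c)$ via FKG (conditioning on the non-annulus weights, exactly as you spell out -- the paper states this reduction in one line), then decompose the region $\mathcal{L}_r\setminus\mathcal{L}_r^{\phi_1 r^{2/3}}$ into shells of width $r^{2/3}$, controlling the piece below $\mathcal{L}_r$ by a transversal-fluctuation loss plus Theorem \ref{wide_similar}, and the piece above $\mathcal{L}_r$ by the local running-maximum comparison with $\log Z_{r,N}$. Your reorganization (pulling out $\mathcal{A}_1$ once instead of invoking Theorem \ref{wide_similar} inside each shell term, and using Proposition \ref{trans_fluc_loss2} in place of shell-by-shell applications of Proposition \ref{trans_fluc_loss}) is only cosmetic; the paper instead bounds, for each shell $J^h$, the two differences in \eqref{E4_1}--\eqref{E4_2} and quotes the \eqref{far_max} analysis from the proof of Proposition \ref{fluc_bound_l} for the upper piece.

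The one step you justify too quickly is the application of Proposition \ref{compare_max} with ``effective $t\sim j^{3/2}$.'' That proposition bounds deviations above $\oldc{compare_max_c1}\,t\sqrt{a}$ and only for $t\le c_0(N-r)^{1/2}$ with its own, possibly small, constant $c_0$; the inequality $j^{3/2}\le r^{1/2}\le (N-r)^{1/2}$ does not by itself place your $t$ in the admissible range, nor does it absorb the fixed constant $\oldc{compare_max_c1}$ in the threshold. This is exactly the point the paper's \eqref{far_max} analysis handles via the $\epsilon_0$-dependent case split and the lowering of the constant $D$. The gap is repairable within your scheme: take $t=\min\{\kappa j^{3/2},\,c_0(N-r)^{1/2}\}$ with $\kappa$ small enough that $\oldc{compare_max_c1}\,t\sqrt{a}\le B(j)\,r^{1/3}$ (capping $t$ only helps this condition); in either regime one has $\min\{t^2,\,t\sqrt{a}\}\ge c\,j^3$ because $j\le r^{1/3}$ gives $j^3\le r\le N-r$, so the bound $e^{-cj^3}$ survives and the sum over $j\ge\phi_1=\theta^{-10}$ is still at most $e^{-c\theta^{-30}}$, which together with $\mathbb{P}(\mathcal{A}_1^c)\le e^{-c\theta^{-4}}$ gives the claim. (Alternatively, the extreme regime $\kappa j^{3/2}>c_0(N-r)^{1/2}$ can be handled by Proposition \ref{max_all_t} after enlarging the $\theta$-dependent lower bound $c_0$ on $r$.) With that adjustment your argument is complete and coincides with the paper's.
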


\begin{proof}
By the FKG inequality, it suffices to show $\mathbb{P}(\mathcal{E}_2)\geq 1- e^{-\theta^{-2}}.$
To do this, we upper bound  $\mathbb{P}(\mathcal{E}_2^c)$. 
For simplicity, let us denote $\mathcal{L}^{r^{2/3}}_{(r-2hr^{2/3}, r+2hr^{2/3})}$ simply as $J^{h}$.
\begin{align}
\mathbb{P}(\mathcal{E}_2^c)
& \leq \sum_{|h| = \phi_1/2}^{r^{1/3}} \mathbb{P}\Big( \max_{{\bf p}\in J^{h}} \log Z_{0, {\bf p}}  + \log Z_{{\bf p}, N} \geq \log Z^{\textup{in} ,  {\theta r^{2/3}}}_{0, r}  + \log Z_{r, N} - \theta^{-1}r^{1/3} \Big) \nonumber\\
&\leq \sum_{|h| = \phi_1/2}^{r^{1/3}} \mathbb{P}\Big( \log Z^\textup{max}_{0, J^h}  + \log Z^\textup{max}_{J^h, N}  - \log Z^{\textup{in} , {\theta r^{2/3}}}_{0, r}   - \log Z_{r, N} \geq - \theta^{-1}r^{1/3} \Big)\nonumber\\
&\leq \sum_{|h| = \phi_1/2}^{r^{1/3}} \mathbb{P}\Big( \log Z^\textup{max}_{0, J^h}   - \log Z^{\textup{in} ,  {\theta r^{2/3}}}_{0, r}   \geq -  C'h^2r^{1/3}\Big)\label{E4_1} \\
& \qquad \qquad \qquad\qquad  + \mathbb{P}\Big(\log Z^\textup{max}_{J^h, N}  - \log Z_{r, N} \geq  (C'h^2 -\theta^{-1}) r^{1/3}\Big)\label{E4_2}
\end{align}
where $C'$ is a positive constant which we will fix (independent of $\theta$). 

Next, since $h^2\geq \theta^{-5}$, we see that \eqref{E4_2} is bounded by $e^{-C|h|^3}$ as it is exactly the same as \eqref{far_max} appearing in the proof of Proposition \ref{fluc_bound_l}. 

The probability in \eqref{E4_1} can be bounded as 
\begin{align*}
\eqref{E4_1} \leq \mathbb{P}\Big( \log Z^\textup{max}_{0, J^h}   - 2r{f_d} \geq -2C'h^2r^{1/3} \Big) +\mathbb{P}\Big( \log  Z^{\textup{in} ,  {\theta r^{2/3}}}_{0, r}   - 2r{f_d}   \leq  -C'h^2r^{1/3}\Big).
\end{align*}
Provided that $C'$ is fixed sufficiently small, the two probabilities above are upper bounded by $e^{-Ch^2}$ using Proposition \ref{trans_fluc_loss} and Theorem \ref{wide_similar}.
To summarize, we have shown that 
$$\mathbb{P}(\mathcal{E}_2^c) \leq \sum_{|h| = \phi_1/2}^\infty e^{-Ch^2} \leq e^{-\theta^{-2}},$$
and this finishes the proof of the lemma.
\end{proof}

\begin{proposition}\label{loc2}
On the event $(\cap_{i=1}^6 \mathcal{A}_i )\bigcap( \cap_{j=1}^2 \mathcal{E}_j)$, we have 
${\bf p}^* \in \mathcal{L}_r^{r^{2/3}}.$
\end{proposition}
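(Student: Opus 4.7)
The plan is to argue by contradiction, splitting the possible location of ${\bf p}^*$ into the far regime $\mathcal{L}_r \setminus \mathcal{L}_r^{\phi_1 r^{2/3}}$ and the intermediate regime $\mathcal{L}_r^{\phi_1 r^{2/3}} \setminus \mathcal{L}_r^{r^{2/3}}$. Throughout, the baseline comparison is the inequality $\log Z_{0,{\bf p}^*} + \log Z_{{\bf p}^*,N} \ge \log Z_{0,r} + \log Z_{r,N}$, which holds by the very definition of the maximizer since $(r,r) \in \mathcal{L}_r$.

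For the far regime, I would use $\mathcal{E}_2$ directly. If ${\bf p}^* \in \mathcal{L}_r \setminus \mathcal{L}_r^{\phi_1 r^{2/3}}$, then $\mathcal{E}_2$ gives $\log Z_{0,{\bf p}^*} + \log Z_{{\bf p}^*,N} \le \log Z^{\textup{in},\theta r^{2/3}}_{0,r} + \log Z_{r,N} - \theta^{-1}r^{1/3}$, and bounding $\log Z^{\textup{in},\theta r^{2/3}}_{0,r} \le \log Z_{0,r}$ contradicts the baseline for $\theta$ small.

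For the intermediate regime, the key observation is that any up-right path from $(0,0)$ to such a ${\bf p}^*$ must lie in $\mathfrak{A}_2 \cup \mathfrak{A}_3 \cup \mathfrak{A}_4 \cup \mathfrak{A}_6$: the endpoint ${\bf p}^* \notin \mathcal{L}_r^{r^{2/3}}$ disqualifies $\mathfrak{A}_1$, while ${\bf p}^* \in \mathcal{L}_r^{\phi_1 r^{2/3}}$ disqualifies $\mathfrak{A}_5$. Therefore
\[
Z_{0,{\bf p}^*} \le Z_{0,\mathcal{L}_r}(\mathfrak{A}_2) + Z_{0,\mathcal{L}_r}(\mathfrak{A}_3) + Z_{0,\mathcal{L}_r}(\mathfrak{A}_4) + Z_{0,\mathcal{L}_r}(\mathfrak{A}_6),
\]
and on $\mathcal{A}_2 \cap \mathcal{A}_3 \cap \mathcal{A}_4 \cap \mathcal{A}_6$ each term is bounded above by $\exp(2rf_d - \theta^{-100}r^{1/3})$, yielding $\log Z_{0,{\bf p}^*} \le 2rf_d - \theta^{-100}r^{1/3} + \log 4$. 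Combining with the lower bound $\log Z^{\textup{in},\theta r^{2/3}}_{0,r} \ge 2rf_d - \theta^{-5}r^{1/3}$ from $\mathcal{A}_1$, one finds $\log Z_{0,{\bf p}^*} \le \log Z^{\textup{in},\theta r^{2/3}}_{0,r} - \tfrac{1}{2}\theta^{-100}r^{1/3}$ for small $\theta$.

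The complementary side is controlled by $\mathcal{E}_1$ at scale $j = \phi_1 = \theta^{-10} \le \phi_2$, giving $\log Z_{{\bf p}^*,N} \le \log Z_{r,N} + \theta^{-1}\sqrt{\phi_1 r^{2/3}} = \log Z_{r,N} + \theta^{-6}r^{1/3}$. Adding the two yields
\[
\log Z_{0,{\bf p}^*} + \log Z_{{\bf p}^*,N} \le \log Z_{0,r} + \log Z_{r,N} + \bigl(\theta^{-6} - \tfrac{1}{2}\theta^{-100}\bigr)r^{1/3},
\]
and the parenthesized term is strictly negative once $\theta$ is small, contradicting the baseline inequality. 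I do not anticipate a serious technical obstacle: the whole argument is a matched-scale bookkeeping exercise in which the exponent $\theta^{-100}$ produced by the barrier strongly dominates the losses $\theta^{-5}$ (from $\mathcal{A}_1$) and $\theta^{-6}$ (from $\mathcal{E}_1$); the main care needed is simply to confirm that $\phi_1 \le \phi_2$ so that $\mathcal{E}_1$ is applicable at the required scale, and that the upper bound on $Z_{0,{\bf p}^*}$ uses the appropriate partition of path collections.
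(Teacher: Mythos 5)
Your proof is correct and follows essentially the same route as the paper: $\mathcal{E}_2$ eliminates the regime ${\bf p}^*\notin\mathcal{L}_r^{\phi_1 r^{2/3}}$, and in the intermediate regime the decomposition of paths into $\mathfrak{A}_2\cup\mathfrak{A}_3\cup\mathfrak{A}_4\cup\mathfrak{A}_6$ together with $\mathcal{A}_1$ quantifies the loss (order $\theta^{-100}r^{1/3}$, versus the paper's stated $\theta^{-50}r^{1/3}$), which dominates the $\mathcal{E}_1$ gain of $\theta^{-1}\sqrt{\phi_1}\,r^{1/3}=\theta^{-6}r^{1/3}$. Your write-up just makes explicit the bookkeeping that the paper's proof states informally.
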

\begin{proof}
The main idea of the proof is the following. First, we know that the  inequality below  must hold
$$
\max_{{\bf p}\in\mathcal{L}_r } \{\log Z_{0, {\bf p}} + \log Z_{{\bf p}, N}\} -  \log Z_{r,N} - \log Z^{\textup{in} , {\theta r^{2/3}}}_{0, r} \geq 0.
$$
And we will show that if ${\bf p} \not \in \mathcal{L}_{r}^{r^{2/3}}$, then on the event $(\cap_{i=1}^6 \mathcal{A}_i )\bigcap( \cap_{j=1}^2 \mathcal{E}_j)$ ,
$$\log Z_{0, {\bf p}} + \log Z_{{\bf p}, N} -  \log Z_{r,N} - \log Z^{\textup{in} , {\theta r^{2/3}}}_{0, r} < - r^{2/3} < 0,$$
hence it must be true that the maximizer ${\bf p}^* \in \mathcal{L}_r^{r^{2/3}}$.

First, we note that because we are on the event $\mathcal{E}_2$, then ${\bf p}^*$ must be in $\mathcal{L}_r^{\phi_1 r^{2/3}}$.
Then, within $\mathcal{L}_r^{\phi_1 r^{2/3}}$, the event $\cap_{i=1}^6 \mathcal{A}_i$  says we would lose more than $\theta^{-50} r^{1/3}$ amount of free energy comparing with going from $(0,0)$ to ${\bf p} \in \mathcal{L}_r^{\phi_1 r^{2/3}} \setminus \mathcal{L}_r^{\phi_1 r^{2/3}}$ instead of going  to $(r,r)$. 
{And for the free energy from $(N,N)$ down to $\mathcal{L}_r$}, $\mathcal{E}_1$ says for any ${\bf p}$ inside $\mathcal{L}_r^{\phi_1 r^{2/3}}$, we gain at most $\theta^{-1} \sqrt{\phi_1} r^{1/3}$ amount of free energy  comparing with going from $(r,r)$ to $(N, N)$. Thus, the loss $\theta^{-50} r^{1/3}$ is greater than the gain  $\theta^{-1}\sqrt{\phi_1} r^{1/3}$, hence, ${\bf p}^* \in \mathcal{L}_r^{r^{2/3}}.$ 

\end{proof}

\subsection{Expectation bounds}
In this subsection, we prove two propositions about the expected difference of free energies when conditioning on $\barevent$.

\begin{proposition}\label{2term_bound}
There exist positive constants $\newc\label{2term_bound_c1}, \theta_0$ such that for $0< \theta < \theta_0$, there exists a positive constant $c_0$ such that for each $r\geq c_0$, we have
$$\mathbb{E}\Big[\Big(\log Z_{0, \mathcal{L}_{r}}  - \log Z^{\textup{in} , {3\theta r^{2/3}}}_{0, r} \Big)^2 \Big| \barevent \Big] \leq  \oldc{2term_bound_c1}r^{2/3}.$$
\end{proposition}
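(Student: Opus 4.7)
Observe first that $D:=\log Z_{0,\mathcal{L}_r}-\log Z^{\textup{in},3\theta r^{2/3}}_{0,r}\ge 0$, because $(r,r)\in\mathcal{L}_r$ and the second partition function sums over a subcollection of the paths in the first. Set $\mathcal{G}=\bigcap_{i=1}^6\mathcal{A}_i$. By Proposition~\ref{lem_Ai} and a union bound, $\mathbb{P}(\mathcal{G}^c\mid\barevent)\le 6e^{-\theta^{-2}}$. I would split
$$\mathbb{E}[D^2\mid\barevent]=\mathbb{E}[D^2\mathbf{1}_\mathcal{G}\mid\barevent]+\mathbb{E}[D^2\mathbf{1}_{\mathcal{G}^c}\mid\barevent]$$
and bound the two pieces separately.

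On $\mathcal{G}$, Proposition~\ref{loc1} gives $\log Z_{0,\mathcal{L}_r}\le\log Z_{0,\mathcal{L}_r}(\mathfrak{A}_1)+\log 6$, and since $\mathfrak{A}_1$-paths end on $\mathcal{L}_r^{r^{2/3}}$, we have $\log Z_{0,\mathcal{L}_r}(\mathfrak{A}_1)\le\log Z_{0,\mathcal{L}_r^{r^{2/3}}}$. Inserting $\pm\log Z_{0,r}$ yields
$$D\le\log 6+\underbrace{\bigl[\log Z_{0,\mathcal{L}_r^{r^{2/3}}}-\log Z_{0,r}\bigr]}_{A}+\underbrace{\bigl[\log Z_{0,r}-\log Z^{\textup{in},3\theta r^{2/3}}_{0,r}\bigr]}_{B},$$
with $A,B\ge 0$. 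Proposition~\ref{max_all_t} applied with $a=r^{2/3}$ gives $\mathbb{P}(A\ge tr^{1/3})\le e^{-t^{1/10}}$ for $t\ge t_0$, so $\mathbb{E}[A^2]\le Cr^{2/3}$. For $B$, Proposition~\ref{up_ub} controls the upper tail of $\log Z_{0,r}-2rf_d$ and Theorem~\ref{wide_similar} (applied with $3\theta$) controls the lower tail of $\log Z^{\textup{in},3\theta r^{2/3}}_{0,r}-2rf_d$, yielding $\mathbb{E}[B^2]\le C_\theta r^{2/3}$ unconditionally.

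To pass to the conditional expectation, I use two facts: (i) $\barevent$ is a decreasing event in the weights, so FKG gives $\mathbb{P}(\{\log Z\ge s\}\mid\barevent)\le\mathbb{P}(\log Z\ge s)$ for any increasing functional $\log Z$, controlling upper tails uniformly; and (ii) for the lower tail of $\log Z^{\textup{in},3\theta r^{2/3}}_{0,r}$ I use the monotone sandwich $\log Z^{\textup{in},3\theta r^{2/3}}_{0,r}\ge\log Z^{\textup{in},\theta r^{2/3}}_{0,r}$, noting that the right-hand side depends only on weights in $R^{\theta r^{2/3}}_{0,r}$, which is disjoint from $U_1\cup U_2$ and therefore independent of $\barevent$. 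Combining these gives $\mathbb{E}[A^2\mid\barevent]+\mathbb{E}[B^2\mid\barevent]\le C_\theta r^{2/3}$ on the restriction to $\mathcal{G}$. For the $\mathcal{G}^c$ piece I apply Cauchy--Schwarz with a crude $L^4$-moment bound on $\log Z_{0,\mathcal{L}_r}$ and $\log Z^{\textup{in},3\theta r^{2/3}}_{0,r}$ obtained from Proposition~\ref{up_ub}, Theorem~\ref{ptl_upper}, Proposition~\ref{low_ub} and Theorem~\ref{wide_similar}, together with $\mathbb{P}(\mathcal{G}^c\mid\barevent)\le 6e^{-\theta^{-2}}$; the exponentially small probability absorbs any power of $r$ coming from the moments. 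The final constant $\oldc{2term_bound_c1}$ will depend on $\theta$, which is acceptable as $\theta$ is fixed throughout.

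The main obstacle is that conditioning on $\barevent$ breaks independence: FKG only gives one-sided control on tails of monotone quantities, whereas a second-moment bound needs both tails. The fix is the observation in (ii) that certain lower tails can be dominated by strictly localized quantities supported in the inner strip $R^{\theta r^{2/3}}_{0,r}$, which is genuinely independent of $\barevent$. Once this decoupling is in place, the rest is an essentially mechanical combination of the tail estimates already assembled in Sections~\ref{mod_est}--\ref{loc_fluc}.
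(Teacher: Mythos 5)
Your plan correctly identifies the decomposition over $\mathcal{G}=\cap\mathcal{A}_i$ vs.\ $\mathcal{G}^c$, and your treatment of the $\mathcal{G}^c$ piece via Cauchy--Schwarz and crude moment bounds is fine and matches the paper. The problem is in your bound on $\mathcal{G}$, specifically the term $B=\log Z_{0,r}-\log Z^{\textup{in},3\theta r^{2/3}}_{0,r}$.

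You acknowledge $\mathbb{E}[B^2]\le C_\theta r^{2/3}$ with $C_\theta$ depending on $\theta$, and you claim this is acceptable because $\theta$ is fixed. It is not. The quantifier order in the statement (``\emph{There exist} $C_{\ref{2term_bound_c1}},\theta_0$ \emph{such that for} $0<\theta<\theta_0$\ldots'') together with the convention stated at the end of Section~\ref{notation} forces $C_{\ref{2term_bound_c1}}$ to be \emph{uniform} in $\theta$. More importantly, in Section~\ref{cov_lb} the event $\mathcal{B}''$ is defined with the fixed constant $100\,C_{\ref{2term_bound_c1}}$, and the final covariance lower bound requires the $\theta^{-1/2}r^{2/3}$ coming from $\mathcal{B}'$ to dominate the cross terms $\sqrt{\Var\cdot\Var'}$ for $\theta$ \emph{sufficiently small}. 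If $\Var'\lesssim C_\theta r^{2/3}$ with $C_\theta\to\infty$ as $\theta\to 0$, the argument collapses. And your $C_\theta$ does blow up: Theorem~\ref{wide_similar} gives $\mathbb{P}\bigl(\log Z^{\textup{in},3\theta r^{2/3}}_{0,r}-2rf_d\le -tr^{1/3}\bigr)\lesssim\tfrac{\sqrt t}{\theta}e^{-C\theta t}$, so $\mathbb{E}[B^2]\gtrsim\theta^{-7/2}r^{2/3}$ after integrating the tail. This is not a technicality --- it reflects the genuine cost of confining a polymer to a strip of width $\theta r^{2/3}\ll r^{2/3}$ over the full diagonal length $r$.

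The paper avoids this by a localization step you missed. On $\mathcal{G}$, Proposition~\ref{loc1} lets you pass from $\log Z_{0,\mathcal{L}_r}$ to $\log Z_{0,\mathcal{L}_r}(\mathfrak{A}_1)$, and \emph{every path in $\mathfrak{A}_1$ passes through the box} $R^*=R_{r-\theta^{3/2}r,\,r-\theta^{3/2}r+r/L}^{\theta r^{2/3}}$. Letting $\mathbf{p}^*$ be the maximizer over $\mathbf{p}\in R^*$ and splitting both partition functions at $\mathbf{p}^*$, the difference $\log Z_{0,\mathcal{L}_r}(\mathfrak{A}_1)-\log Z^{\textup{in},3\theta r^{2/3}}_{0,r}$ is bounded by the \emph{local} quantity $\log Z_{\mathbf{p}^*,\mathcal{L}_r}-\log Z^{\textup{in},R_{0,r}^{\theta r^{2/3}}}_{\mathbf{p}^*,r}$: everything below $R^*$ cancels. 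Since $R^*$ sits at diagonal distance $\sim\theta^{3/2}r$ from $(r,r)$, the strip width $\theta r^{2/3}=(\theta^{3/2}r)^{2/3}$ matches the KPZ scale of this \emph{short} segment, so the confinement costs nothing. Theorems~\ref{high_inf} and~\ref{btl_upper} then give the $L^2$ bound $C\theta r^{2/3}$ --- uniform (in fact vanishing) in $\theta$. Inserting $\pm\log Z_{0,r}$ as you propose loses exactly this cancellation and reintroduces the global confinement penalty.
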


\begin{proof}
Let us denote the high probability event 
$$\mathcal{D} = \cap_{i=1}^6 \mathcal{A}_i,$$
and we have $\mathbb{P}(\mathcal{D}^c| \barevent) \leq \mathbb{P}(\mathcal{D}^c)\leq e^{-\theta^{-2}}$ which is the statement of  Proposition \ref{lem_Ai}.

Now let us look at the expectation on the event $\mathcal{D}$. Using Proposition \ref{loc1}, we obtain the first inequality below. The second inequality follows from  $Z_{0, \mathcal{L}_{r}}(\mathfrak{A}_1) + \log 6 \geq \log Z^{\textup{in} ,  {3\theta r^{2/3}}}_{0, r}$ on the event $\mathcal{D}$, so reducing the value of $Z^{\textup{in} ,  {3\theta r^{2/3}}}_{0, r}$ makes the expectation bigger. To simplify the notation, let 
$R^* = R_{r-\theta^{3/2}r, r- \theta^{3/2}r + r/L}^{\theta r^{2/3}},$
and we have
\begin{align}
&\mathbb{E}\Big[\Big(\log Z_{0, \mathcal{L}_{r}}  - \log Z^{\textup{in} ,  {3\theta r^{2/3}}}_{0, r} \Big)^2 \mathbbm{1}_\mathcal{D} \Big| \barevent \Big] \leq \mathbb{E}\Big[\Big(\log Z_{0, \mathcal{L}_{r}}(\mathfrak{A}_1) - \log Z^{\textup{in} ,  {3\theta r^{2/3}}}_{0, r}  + \log 6 \Big)^2  \mathbbm{1}_\mathcal{D} \Big| \barevent \Big] \nonumber\\
&\qquad  \leq \mathbb{E}\Big[\Big(\log Z_{0, \mathcal{L}_{r}}(\mathfrak{A}_1) - \max_{R^*} \Big\{\log Z^{\textup{in}, R_{0,r}^{3\theta r^{2/3}}}_{0, {\bf p}} + \log Z^{\textup{in}, R_{0,r}^{\theta r^{2/3}}}_{ {\bf p}, r} \Big\}+  \log 6 \Big)^2 \Big| \barevent \Big] \label{exp_max}
\end{align}

Now, recall the definition of $\mathfrak{A}_1$ in \eqref{3As}, every path must touch 
$R^*.$
If we let ${\bf p}^*$ be the maximizer for 
$\max_{{\bf p } \in R^*} \log Z^{\text{in}, R_{0,r}^{3\theta r^{2/3}}}_{0, {\bf p}} + \log Z^{\text{in}, R_{0,r}^{\theta r^{2/3}}}_{{\bf p}, r},$
then 
\begin{align*}
\log Z_{0, \mathcal{L}}(\mathfrak{A}_1) &\leq 
 \log Z^{\textup{in} ,  {3\theta r^{2/3}}}_{0, {\bf p}^*} + \log Z_{{\bf p}^*, \mathcal{L}_{r}}.
\end{align*}
Therefore, the expectation \eqref{exp_max} can be upper bounded by  
\begin{align}
\eqref{exp_max} &\leq 10 \cdot \mathbb{E}\Big[\max_{{\bf p} \in  R^*} \Big(\log Z_{{\bf p},  \mathcal{L}_{r}}- \log Z^{\textup{in} ,  R_{0, r}^{\theta r^{2/3}}}_{{\bf p}, r}   \Big)^2 \big| \barevent\Big]\nonumber \\
&\leq  10 \cdot \mathbb{E}\Big[\max_{{\bf p} \in  R^*} \Big(\log Z_{{\bf p},  \mathcal{L}_{r}}- \log Z^{\textup{in} ,  R_{0, r}^{\theta r^{2/3}}}_{{\bf p}, r}\Big)^2 \Big]
\label{lastineq}\end{align}
{where the second inequality follows from $\log Z_{{\bf p},  \mathcal{L}_{r}}- \log Z^{\textup{in} ,  R_{0, r}^{\theta r^{2/3}}}_{{\bf p}, r} \geq 0$ and conditioning on $\barevent$ would decrease the difference by making $\log Z_{{\bf p},  \mathcal{L}_{r}}$ smaller.} 
Finally, to bound \eqref{lastineq}, from  monotonicity 
$$\log Z_{{\bf p}, \mathcal{L}_{r}^{r^{2/3}}} \geq \log Z^{\textup{in} , R_{0,r}^{\theta r^{2/3}}}_{{\bf p}, r},$$
and Theorem \ref{high_inf} and Theorem \ref{btl_upper} (note  $h = 1$ in these theorems), we have 
\begin{align}
\begin{split}\label{2l2bd}
\mathbb{E}\Big[\max_{{\bf p } \in R^*} \Big(\log Z_{{\bf p}, \mathcal{L}_{r}^{r^{2/3}}}  - (2r - |{\bf p}|_1)f_d\Big)^2\Big] &\leq   C \theta r^{2/3} \\
\mathbb{E}\Big[\max_{{\bf p } \in R^*}\Big(\log Z^{\textup{in} , R_{0,r}^{\theta r^{2/3}}}_{{\bf p}, r} - (2r - |{\bf p}|_1)f_d\Big)^2\Big]  &\leq C \theta r^{2/3}.
\end{split}
\end{align}
{Using the fact that 
$$\max_k (a_k - b_k)^2 \leq 10 \max_k \Big[ (a_k -c_k)^2 + (b_k-c_k)^2 \Big] \leq 10 \Big[\max_k (a_k -c_k)^2 + \max_{k'} (b_{k'}-c_{k'})^2\Big],$$}
the estimate \eqref{2l2bd} above  implies $\eqref{lastineq} \leq C \theta r^{2/3}$.

Next, for $\mathcal{D}^c$, we use the FKG inequality and Cauchy-Schwarz inequality, 
\begin{align*}
&\mathbb{E}\Big[\Big(\log Z_{0, \mathcal{L}_{r}}  - \log Z^{\textup{in} ,  {3\theta r^{2/3}}}_{0, r} \Big)^2 \mathbbm{1}_{\mathcal{D}^c} \Big| \barevent \Big]\\
& \leq \mathbb{E}\Big[\Big(\log Z_{0, \mathcal{L}_{r}}  - \log Z^{\textup{in} ,  {\theta r^{2/3}}}_{0, r} \Big)^2 \mathbbm{1}_{\mathcal{D}^c} \Big]\\
& \leq \mathbb{E}\Big[\Big(\log Z_{0, \mathcal{L}_{r}}  - \log Z^{\textup{in} ,  {\theta r^{2/3}}}_{0, r} \Big)^4 \Big]^{1/2} \mathbb{P}(\mathcal{D}^c)^{1/2}.
\end{align*}
Again, because $$\log Z_{0, \mathcal{L}_{r}}  - \log Z^{\textup{in} , {\theta r^{2/3}}}_{0, r} \geq 0,$$
the fourth moment term can be bounded by $C\theta^4 r^{4/3}$ using Theorem \ref{ptl_upper} and Theorem \ref{wide_similar}, which provide both  right and left tail upper bound for both.
Combined with the fact that $\mathbb{P}(\mathcal{D}^c)^{1/2} \leq e^{-\theta^{-1}}$, we have finished the proof.
\end{proof}

\begin{proposition}\label{4term_bound}
There exist  positive constants $ \newc\label{4term_bound_c1}, \theta_0$ such that for $0< \theta < \theta_0$, there exists a positive constant $c_0$ such that for each $r\geq c_0$, we have
$$\mathbb{E}\Big[\Big(\log Z_{0,N}  - \log Z_{r, N} - \log Z^{\textup{in} , {3\theta r^{2/3}}}_{0, r}\Big)^2 \Big| \barevent \Big] \leq \oldc{4term_bound_c1}r^{2/3}.$$
\end{proposition}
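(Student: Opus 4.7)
The plan is to mirror the template of Proposition~\ref{2term_bound}, now incorporating Proposition~\ref{loc2} to localize the maximizer ${\bf p}^*$ of $\{\log Z_{0, {\bf p}} + \log Z_{{\bf p}, N}\}_{{\bf p} \in \mathcal{L}_r}$. The quantity $X := \log Z_{0, N} - \log Z_{r, N} - \log Z^{\textup{in}, 3\theta r^{2/3}}_{0, r}$ is nonnegative: restricting in $Z_{0, N}$ to paths that pass through $(r, r)$ with initial segment contained in $R^{3\theta r^{2/3}}_{0, r}$ yields $Z_{0, N} \geq Z^{\textup{in}, 3\theta r^{2/3}}_{0, r} \cdot Z_{r, N}$. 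So the whole task is an upper bound in $L^2$.

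Let $\mathcal{G} := \mathcal{D} \cap \mathcal{E}_1 \cap \mathcal{E}_2$ with $\mathcal{D} = \cap_{i=1}^{6} \mathcal{A}_i$; by Propositions~\ref{lem_Ai}, \ref{E1_bd}, \ref{E2_bd}, $\mathbb{P}(\mathcal{G}^c \mid \barevent) \leq e^{-\theta^{-c}}$ for some $c > 0$. On the event $\mathcal{G}$, Proposition~\ref{loc2} gives ${\bf p}^* \in \mathcal{L}_r^{r^{2/3}}$; combining the path decomposition $Z_{0, N} = \sum_{{\bf p} \in \mathcal{L}_r} Z_{0, {\bf p}} Z_{{\bf p}, N} \leq |\mathcal{L}_r| \, Z_{0, {\bf p}^*} Z_{{\bf p}^*, N}$ with $Z_{0, {\bf p}^*} \leq Z_{0, \mathcal{L}_r}$ and $Z_{{\bf p}^*, N} \leq Z^{\max}_{\mathcal{L}_r^{r^{2/3}}, N}$ yields
$$X \;\leq\; \log |\mathcal{L}_r| \;+\; \bigl(\log Z_{0, \mathcal{L}_r} - \log Z^{\textup{in}, 3\theta r^{2/3}}_{0, r}\bigr) \;+\; \bigl(\log Z^{\max}_{\mathcal{L}_r^{r^{2/3}}, N} - \log Z_{r, N}\bigr).$$
I then bound the three right-hand terms in $L^2(\,\cdot\mid\barevent)$. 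The first is $O(\log r) = o(r^{1/3})$. The second has $L^2(\,\cdot\mid\barevent)$-norm $O(r^{1/3})$ by Proposition~\ref{2term_bound}. The third depends only on weights strictly above the anti-diagonal $\mathcal{L}_r$, hence is independent of the annular weights that define $\barevent$; by path reversal it is distributed as $\log Z^{\max}_{0, \mathcal{L}_{N-r}^{r^{2/3}}} - \log Z_{0, (N-r)}$, and Proposition~\ref{compare_max} applied with time parameter $N - r$ and segment width $a = r^{2/3}$ (permitted since $N - r \geq N/2 \geq r$) gives unconditional $L^2$ norm $O(r^{1/3})$. Applying $(a + b + c)^2 \leq 3(a^2 + b^2 + c^2)$ produces $\mathbb{E}[X^2 \mathbf{1}_\mathcal{G} \mid \barevent] \leq C r^{2/3}$.

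For the complement $\mathcal{G}^c$ I use Cauchy--Schwarz with the crude uniform bound $X \leq (\log Z_{0, \mathcal{L}_r} - \log Z^{\textup{in}, \theta r^{2/3}}_{0, r}) + (\log Z^{\max}_{\mathcal{L}_r, N} - \log Z_{r, N})$, handled exactly as the $\mathcal{D}^c$-step at the end of the proof of Proposition~\ref{2term_bound}. The first summand's fourth moment is $O(r^{4/3})$ via Theorems~\ref{ptl_upper} and \ref{wide_similar}; the second summand is independent of $\barevent$, with fourth moment controlled by Proposition~\ref{compare_max}. Combined with $\mathbb{P}(\mathcal{G}^c \mid \barevent)^{1/2} \leq e^{-\theta^{-c}/2}$ and taking $\theta < \theta_0$ small, the resulting contribution is absorbed into $C r^{2/3}$.

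The hardest step is the third bracketed term above: it is essential that it factors into a quantity depending only on weights strictly above $\mathcal{L}_r$, so that (i) the conditioning on $\barevent$ can be dropped by independence, and (ii) Proposition~\ref{compare_max} applies at the KPZ scale $a = r^{2/3}$ rather than over the full length of $\mathcal{L}_r$; bounding the latter gives only the Brownian scale $\sqrt{r}$, which is too weak. This is precisely where Proposition~\ref{loc2}, via the localization of ${\bf p}^*$ to $\mathcal{L}_r^{r^{2/3}}$, does its essential work.
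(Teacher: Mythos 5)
Your good-event step is essentially the paper's argument: localize the maximizer ${\bf p}^*$ to $\mathcal{L}_r^{r^{2/3}}$ via Proposition~\ref{loc2}, peel off $\log Z_{0,\mathcal{L}_r}-\log Z^{\textup{in},3\theta r^{2/3}}_{0,r}$ (Proposition~\ref{2term_bound}) and the window maximum $\log Z^{\max}_{\mathcal{L}_r^{r^{2/3}},N}-\log Z_{r,N}$, which is indeed independent of the weights defining $\barevent$. One small repair there: Proposition~\ref{compare_max} only covers $1\le t\le c_0(N-r)^{1/2}$, so it does not by itself control the full tail needed to integrate a second (let alone fourth) moment; you should invoke Proposition~\ref{max_all_t}, whose $e^{-t^{1/10}}$ bound holds for all $t\ge t_0$ and is exactly what the paper uses at this point.

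The genuine gap is in your treatment of $\mathcal{G}^c$. There you bound $X$ by splitting the joint maximum into two separate maxima over the whole antidiagonal, and the second term $\log Z^{\max}_{\mathcal{L}_r,N}-\log Z_{r,N}$ is \emph{not} of order $r^{1/3}$: the portion of $\mathcal{L}_r$ reachable from the origin has width of order $r$, and over a window of width $w$ the profile ${\bf p}\mapsto\log Z_{{\bf p},N}$ gains of order $\sqrt{w}$ as long as the curvature penalty $w^2/(N-r)$ does not kick in, so this maximum is typically of size $\min\{r^{1/2},(N-r)^{1/3}\}$ and its fourth moment is of order $\min\{r,(N-r)^{2/3}\}^2$, which is $\gg r^{4/3}$ in general (already $r^2$ whenever $r\le (N-r)^{2/3}$). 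After Cauchy--Schwarz your bad-event contribution is then at least of order $r\,e^{-\theta^{-c}/2}$ in that regime; since $\mathbb{P}(\mathcal{G}^c\mid\barevent)^{1/2}$ is a constant in $r$ and $N$, this cannot be absorbed into $C r^{2/3}$ with $C$ independent of $r$ and $N$ (let $r\to\infty$ with $N\ge r+r^{3/2}$). The point is that the confinement of the maximizer comes from the \emph{short} side: a far-off-diagonal ${\bf p}$ gains in $\log Z_{{\bf p},N}$ but loses more in $\log Z_{0,{\bf p}}$, so on the bad event you must keep the joint quantity $\max_{{\bf p}\in\mathcal{L}_r}\{\log Z_{0,{\bf p}}+\log Z_{{\bf p},N}\}-\log Z_{0,r}-\log Z_{r,N}$ and bound its fourth moment by $Cr^{4/3}$ using the reflected form of Proposition~\ref{nest_max1} (valid at scale $r^{1/3}$ since $r\le N/2$), adding Proposition~\ref{up_ub} and Theorem~\ref{wide_similar} to handle the remaining $\log Z_{0,r}-\log Z^{\textup{in},\theta r^{2/3}}_{0,r}$ piece; this is precisely how the paper closes the complement estimate.
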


\begin{proof}
First, note we may replace $\log Z_{0, N}$ by $\max_{{\bf p}\in\mathcal{L}_r } \{\log Z_{0, {\bf p}}  + \log Z_{{\bf p}, N}) \}$ because
$$\log Z_{0,N} \geq \log Z_{r, N} + \log Z^{\textup{in} , R_{0,r}^{3\theta r^{2/3}}}_{0, r} $$
and
$$\log Z_{0, N}  \leq \max_{{\bf p}\in\mathcal{L}_r } \{\log Z_{0, {\bf p}}  + \log Z_{{\bf p}, N} \} + 100 \log r.$$

Next, let us define the event 
$$\mathcal{D}' = (\cap_{i=1}^6 \mathcal{A}_i )\bigcap( \cap_{j=1}^2 \mathcal{E}_j).$$
We have $\mathbb{P}(\mathcal{D'}^c|\barevent) \leq \mathbb{P}(\mathcal{D'}^c) \leq e^{-\theta^{-1/100}}$ from  Lemma \ref{E1_bd} and Lemma \ref{E2_bd}.
We again will split the expectation into two parts according to $\mathcal{D}'$. 

On the event $\mathcal{D}$, by Proposition \ref{loc2}, the maximizer ${\bf p}^*$ is contained inside $\mathcal{L}_r^{r^{2/3}}$,
\begin{align*}
&\mathbb{E}\Big[\Big(\max_{{\bf p}\in\mathcal{L}_r } \{\log Z_{0, {\bf p}}  + \log Z_{{\bf p}, N} \}- \log Z_{r, N} - \log Z^{\textup{in} , {3\theta r^{2/3}}}_{0, r}\Big)^2 \mathbbm{1}_\mathcal{D'} \Big| \barevent \Big]\\
&\leq 2\mathbb{E}\Big[\Big(\max_{{\bf p}\in\mathcal{L}_r } \log Z_{0, {\bf p}}  - \log Z^{\textup{in} ,  {3\theta r^{2/3}}}_{0, r} \Big)^2  \Big| \barevent \Big] + 2\mathbb{E}\Big[\Big(\max_{{\bf p}\in\mathcal{L}_r^{r^{2/3}} }   \log Z_{{\bf p}, N} - \log Z_{r,N} \Big)^2 \Big]\\
& \leq Cr^{2/3}.
\end{align*}
where the last bound follows from Proposition \ref{2term_bound} and Proposition \ref{max_all_t}.

On the event $\mathcal{D'}^c$, we again just bound with the FKG and the Cauchy-Schwartz inequality,
\begin{align}
&\mathbb{E}\Big[\Big(\max_{{\bf p}\in\mathcal{L}_r } \{\log Z_{0, {\bf p}}  + \log Z_{{\bf p}, N} \} - \log Z_{r, N} - \log Z^{\textup{in} , {3\theta r^{2/3}}}_{0, r}\Big)^2 \mathbbm{1}_{\mathcal{D'}^c} \Big| \barevent \Big]\nonumber\\ 
& \leq \mathbb{E}\Big[\Big(\max_{{\bf p}\in\mathcal{L}_r } \{\log Z_{0, {\bf p}} + \log Z_{{\bf p}, N} \} - \log Z_{r, N} - \log Z^{\textup{in} , {\theta r^{2/3}}}_{0, r}\Big)^2 \mathbbm{1}_{\mathcal{D'}^c}\Big]\nonumber\\
& \leq \mathbb{E}\Big[\Big(\max_{{\bf p}\in\mathcal{L}_r } \{\log Z_{0, {\bf p}}  + \log Z_{{\bf p}, N} \} - \log Z_{r, N} - \log Z^{\textup{in} , {\theta r^{2/3}}}_{0, r}\Big)^4 \Big]^{1/2}\mathbb{P}(\mathcal{D'}^c)^{1/2}.\label{last_4}
\end{align}
The fourth moment term above can be bounded as 
\begin{align*}
& C\mathbb{E}\Big[\Big(\max_{{\bf p}\in\mathcal{L}_r } \{\log Z_{0, {\bf p}}  + \log Z_{{\bf p}, N} \} - \log Z_{r, N} - \log Z_{0, r}\Big)^4 \Big] + C\mathbb{E}\Big[\Big(\log Z_{0,r} - \log Z^{\textup{in} , {\theta r^{2/3}}}_{0, r}\Big)^4 \Big]\\
&\leq C\mathbb{E}\Big[\Big(\max_{{\bf p}\in\mathcal{L}_r } \{\log Z_{0, {\bf p}}  + \log Z_{{\bf p}, N} \} - \log Z_{r, N} - \log Z_{0, r}\Big)^4 \Big] + C\mathbb{E}\Big[\Big(\log Z_{0,r} - 2rf_d \Big)^4 \Big] \\ 
& \qquad \qquad \qquad \qquad\qquad \qquad \qquad \qquad  + C\mathbb{E}\Big[\Big(2rf_d- \log Z^{\textup{in} , {\theta r^{2/3}}}_{0, r}\Big)^4 \Big]\\
&\leq Cr^{4/3} + Cr^{4/3} + C\theta^{-4}r^{4/3}
\end{align*}
using  Proposition \ref{nest_max1}, Proposition \ref{up_ub}, and Theorem \ref{wide_similar}.
Since $\mathbb{P}(\mathcal{D}'^c)^{1/2} \leq e^{-\theta^{-1/200}}$, the expectation on $\mathcal{D}'^c$ is also upper bounded by $Cr^{2/3}$. With this, we have finished the proof.

\end{proof}

\subsection{Constrained variance lower bound}
The main purpose of this section is to prove the following theorem on the lower bound of the constrained variance. 
Recall $\mathcal{F}_\theta$ as the $\sigma$-algebra generated by the weights in the set $[\![(0,0), (N,N)]\!] \setminus R^{\theta r^{2/3}}_{0,r}$.
\begin{theorem}\label{con_var}
There exist positive constants $\newc\label{con_var_c1} ,\theta_0$ such that for each $0< \theta \leq \theta_0$, there exists a positive constant $c_0$ and an event $\mathcal{B}'\subset \barevent$ with $\mathbb{P}(\mathcal{B}'|\barevent) \geq 1/2$ such that for each $r\geq c_0$, we have 
$$\Var\Big( \log Z^{\textup{in},{3\theta r^{2/3}}}_{0,r} \Big | \mathcal{F}_\theta\Big) (\omega) \geq \oldc{con_var_c1} \theta^{-1/2}r^{2/3}\qquad \textup{ for each $\omega \in \mathcal{B}'$.}$$
\end{theorem}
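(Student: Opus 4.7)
The plan proceeds in two main stages. Set $W := \log Z^{\textup{in}, \theta r^{2/3}}_{0,r}$. Since $W$ depends only on the weights inside $R^{\theta r^{2/3}}_{0,r}$, it is independent of $\mathcal{F}_\theta$, so $\Var(W \mid \mathcal{F}_\theta) = \Var(W)$ almost surely. Monotonicity gives $\log Z^{\textup{in}, 3\theta r^{2/3}}_{0,r} \geq W$. The first task is to show that on a subset $\mathcal{B}'\subset \barevent$ with $\mathbb{P}(\mathcal{B}'\mid\barevent)\ge 1/2$,
$$0 \le \log Z^{\textup{in}, 3\theta r^{2/3}}_{0,r} - W \le H,$$
where the error $H$ satisfies $\mathbb{E}[H^2 \mid \mathcal{F}_\theta](\omega) \ll \theta^{-1/2} r^{2/3}$ on $\mathcal{B}'$. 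This is proved by the same path decomposition into the families $\mathfrak{A}_1,\dots,\mathfrak{A}_6$ used in Lemmas~\ref{lem_A1}--\ref{lem_A6}: on $\barevent$, any path entering the annular region $R^{3\theta r^{2/3}}_{0,r} \setminus R^{\theta r^{2/3}}_{0,r}$ loses at least $Lr^{1/3}$ of free energy, so its contribution is negligible compared to that of paths contained in $R^{\theta r^{2/3}}_{0,r}$. Combined with the elementary inequality
$$\Var(X+Y\mid\mathcal{G})\ge \tfrac{1}{2}\Var(X\mid\mathcal{G})-\Var(Y\mid\mathcal{G}),$$
this yields
$$\Var\bigl(\log Z^{\textup{in},3\theta r^{2/3}}_{0,r}\bigm|\mathcal{F}_\theta\bigr)(\omega) \ge \tfrac{1}{2}\Var(W) - 2\,\mathbb{E}[H^2\mid\mathcal{F}_\theta](\omega)$$
on $\mathcal{B}'$, reducing the problem to a lower bound on $\Var(W)$.

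The second and harder stage is to show $\Var(W)\ge c\,\theta^{-1/2}r^{2/3}$. Subdivide $R^{\theta r^{2/3}}_{0,r}$ along the diagonal into $K\asymp \theta^{-3/2}$ consecutive sub-parallelograms $R_1,\dots,R_K$ of diagonal extent $\theta^{3/2} r$ each; these have the natural KPZ aspect ratio, since $\theta r^{2/3} = (\theta^{3/2} r)^{2/3}$. Bound $W$ from below by a sum $\sum_i W_i$, where each $W_i$ is a constrained free energy across the $i$-th sub-parallelogram (with a fixed choice of anti-diagonal crossing interval for consecutive blocks). Each $W_i$ has marginal variance of order $(\theta^{3/2}r)^{2/3} = \theta r^{2/3}$, obtained by combining the right-tail lower bound of Theorem~\ref{c_up_lb} at scale $\theta^{3/2} r$ with the left-tail upper bounds from Theorems~\ref{wide_similar} and~\ref{ptp_low}. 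Although the $W_i$ are not literally independent, FKG together with the random-walk comparison of Theorem~\ref{rwrw} restricted to the inter-block antidiagonals decouples them enough that the variance of the sum is a constant multiple of $K\cdot\theta r^{2/3}\asymp \theta^{-1/2}r^{2/3}$---the $\theta^{-1/2}$ being the CLT-type enhancement from summing $\theta^{-3/2}$ nearly-independent contributions of variance $\theta r^{2/3}$.

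The main obstacle lies in this second stage: the block free energies $W_i$ share boundary structure, so a genuine decoupling estimate is needed to turn the single-block variance lower bound into the multi-block sum variance with the correct $\theta^{-1/2}$ factor. One viable route is to use Theorem~\ref{rwrw} to replace the true boundary free-energy profiles by independent random-walk profiles with controllable parameter shifts $\rho\pm q_0 s N^{-1/3}$, and then bound the cross-block correlation errors by the moderate deviation estimates of Section~\ref{mod_est}. The final step combines the first-stage approximation error with the second-stage variance bound and identifies $\mathcal{B}'$ as the intersection of $\barevent$ with the events on which both the path-localization and the block-decoupling succeed simultaneously; a crude union bound ensures $\mathbb{P}(\mathcal{B}'\mid\barevent)\ge 1/2$ for $\theta$ small enough and $r$ large enough.
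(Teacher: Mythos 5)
There are two genuine gaps, one in each stage. \textbf{Stage 1.} Your reduction hinges on the claim that $H=\log Z^{\textup{in},3\theta r^{2/3}}_{0,r}-\log Z^{\textup{in},\theta r^{2/3}}_{0,r}$ has conditional second moment (or variance) $\ll\theta^{-1/2}r^{2/3}$ on a large subevent of $\barevent$, because ``on $\barevent$ any path entering the annulus loses $Lr^{1/3}$.'' That is not what $\barevent$ gives: the barrier in \eqref{bar_cap} only penalizes path segments that traverse a full slab of diagonal length $r/L$ while staying inside $U_1$ or $U_2$. Paths that stay in the $3\theta r^{2/3}$ box and make short excursions outside the $\theta r^{2/3}$ box in every slab (the family $\mathfrak{A}_1$ of \eqref{3As}, which is exactly the family the paper shows carries the dominant mass in Proposition~\ref{loc1}) are not penalized at all. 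Quantitatively, the constraint-cost scale in Theorem~\ref{wide_similar} indicates that the means of the two constrained free energies differ by order $\theta^{-1}r^{1/3}$, so $\mathbb{E}[H^2]\gtrsim\theta^{-2}r^{2/3}\gg\theta^{-1/2}r^{2/3}$; and even if you only need $\Var(H\mid\mathcal{F}_\theta)$, nothing in the paper (or in your sketch) controls it at the required $o(\theta^{-1/2}r^{2/3})$ level. \textbf{Stage 2.} Even granting a per-block variance $\asymp\theta r^{2/3}$ (which is plausibly extractable from Theorem~\ref{c_up_lb} plus the left-tail bounds), the step ``bound $W$ from below by $\sum_i W_i$ and conclude $\Var(W)\gtrsim\Var(\sum_i W_i)\asymp K\theta r^{2/3}$'' is a non sequitur: pathwise domination $W\ge\sum_i W_i$ transfers no variance information, and FKG/Theorem~\ref{rwrw} give positive association and high-probability sandwiching of increments, not the decoupling of the blocks' contributions \emph{to $W$ itself} that a CLT-type variance enhancement would require. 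This is precisely the difficulty the paper's proof is built to avoid.

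The paper's route is different in kind: it never passes to the $\theta r^{2/3}$ box or to unconditional variance. It works directly with $\log Z^{\textup{in},3\theta r^{2/3}}_{0,r}$ conditioned on $\mathcal{F}_\theta$, defines the events $\mathcal{U}_i,\mathcal{V}_i,\mathcal{W}_i$ and the notion of good indices (Proposition~\ref{i_likely}, Lemma~\ref{find_B}, which is where $\mathcal{B}'$ with $\mathbb{P}(\mathcal{B}'\mid\barevent)\ge 1/2$ comes from), and then runs a Doob martingale over the sequentially revealed narrow blocks $R^{\theta r^{2/3}}_{i_j\theta^{3/2}r,(i_j+1)\theta^{3/2}r}$. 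Each squared martingale increment is bounded below by $c\,\theta r^{2/3}$ on a conditional-probability-$1/2$ event via a resampling argument: resample the block conditioned on the high-free-energy event $F$ (Theorem~\ref{c_up_lb}), couple monotonically by Strassen, and compare before/after free energies using $\mathcal{U}_{i_j},\mathcal{V}_{i_j},\mathcal{W}_{i_j}$ together with Theorems~\ref{high_inf} and~\ref{btl_upper} (Lemma~\ref{c_var_lem}). Summing over $\asymp\theta^{-3/2}$ good blocks yields the $\theta^{-1/2}r^{2/3}$ bound. If you want to salvage your outline, the fix is essentially to abandon both the $\theta$-box surrogate and the superadditive-sum variance step and adopt a conditional martingale/resampling decomposition of this type.
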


First, let us define the following sequence of events. For $i\in (\tfrac{1}{3}{\theta^{-3/2}}, \tfrac{2}{3}{\theta^{-3/2}})$ and a positive constant $q^*$ which we  fix  in Proposition \ref{i_likely} below, 
\begin{align*}
\mathcal{U}_i &= \Big\{\log Z_{0,\mathcal{L}_{i\theta^{3/2}r}} - \log Z^{\textup{in},{3\theta r^{2/3}}}_{0,i\theta^{3/2}r} \leq q^* \sqrt{\theta}r^{1/3}\Big\}\\
\mathcal{V}_i &= \Big\{\log Z_{\mathcal{L}_{(i+1)\theta^{3/2}r}, r} - \log Z^{\textup{in}, {3\theta r^{2/3}}}_{(i+1)\theta^{3/2}r,r} \leq q^* \sqrt{\theta}r^{1/3}\}\\
\mathcal{W}_i &= \Big\{\log Z_{\mathcal{L}_{i\theta^{3/2}r}^{3\theta r^{2/3}}, \mathcal{L}_{(i+1)\theta^{3/2}r}^{3\theta r^{2/3}}}   - 2\theta^{3/2}r{f_d} \leq q^* \sqrt{\theta}r^{1/3}\Big\}\\
\end{align*}

On the event $\barevent$, the events defined above happen with high probability.
\begin{proposition}\label{i_likely}
There exist positive constants $q^*, \theta_0$ such that for each $0 < \theta \leq \theta_0$, there exists a positive constant $c_0$ such that for each  $r\geq c_0$, 
$$\mathbb{P}(\mathcal{U}_i\cap \mathcal{V}_i\cap \mathcal{W}_i | \barevent) \geq 1- 10^{-4}.$$
\end{proposition}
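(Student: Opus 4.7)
The plan is to bound each of $\mathbb{P}(\mathcal{U}_i^c \mid \barevent)$, $\mathbb{P}(\mathcal{V}_i^c \mid \barevent)$, $\mathbb{P}(\mathcal{W}_i^c \mid \barevent)$ separately and combine via a union bound. By the left--right symmetry across the antidiagonal $\mathcal{L}_{(i+1/2)\theta^{3/2}r}$, the treatment of $\mathcal{V}_i$ mirrors that of $\mathcal{U}_i$. The tolerated deviation $q^*\sqrt{\theta}\,r^{1/3}$ matches the KPZ fluctuation scale $N^{1/3}$ for a sub-polymer of length $N \asymp \theta^{3/2}r$, so after appropriate decomposition each bound reduces to an $O(1)$-deviation estimate and $q^*$ absorbs the polynomial correction factors.

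For $\mathcal{W}_i$, the event $\mathcal{W}_i^c$ is an increasing event in the bulk weights while $\barevent$ is decreasing, so by the FKG inequality $\mathbb{P}(\mathcal{W}_i^c \mid \barevent) \le \mathbb{P}(\mathcal{W}_i^c)$. A union bound over the $O(\theta^{2}r^{4/3})$ pairs of lattice endpoints on the two antidiagonal segments of length $6\theta\,r^{2/3}$, combined with the point-to-point upper tail of Proposition~\ref{up_ub} at parameter $t=q^*$, gives an upper bound of the form $C r^{c_1}\theta^{-c_2}e^{-c(q^*)^{3/2}}$, which is arbitrarily small for large $q^*$.

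For $\mathcal{U}_i$, set $n = i\theta^{3/2}r$ and split the paths contributing to $Z_{0,\mathcal{L}_n}$ into three geometric classes: (A) paths that remain inside $R^{\theta r^{2/3}}_{0,r}$ and end on $\mathcal{L}_n^{\theta r^{2/3}}$; (B) paths that exit $R^{\theta r^{2/3}}_{0,r}$ but stay inside $R^{\phi_2 r^{2/3}}_{0,r}$; (C) paths that exit $R^{\phi_2 r^{2/3}}_{0,r}$. The contribution of class (C) is bounded by Proposition~\ref{trans_fluc_loss2}: unconditionally its log is at most $2nf_d - C\phi_2^2 r^{1/3}$, and the corresponding ``large'' event being increasing allows FKG to preserve this bound under $\barevent$. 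Class (B) is handled as in Lemma~\ref{lem_A3}: every such path traverses at least one barrier rectangle $U_j^k$, where $\barevent$ forces a loss of at least $Lr^{1/3}=\theta^{-1000}r^{1/3}$, far larger than $\sqrt{\theta}\,r^{1/3}$. For class (A) one drops the parallelogram constraint to bound $\log Z^{(A)} \le \log Z_{0,\mathcal{L}_n^{\theta r^{2/3}}}$, then applies Proposition~\ref{compare_max} with $a = \theta r^{2/3}$ to obtain $\log Z_{0,\mathcal{L}_n^{\theta r^{2/3}}} \le \log Z_{0,n} + C\sqrt{\theta}\,r^{1/3}$ with high probability. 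An analogous three-class decomposition applied to $\log Z_{0,n}$ (fixed endpoint at $(n,n)$) shows that on $\barevent$, $\log Z_{0,n} \le \log Z^{\textup{in},\,3\theta r^{2/3}}_{0,n} + O(1)$ with high conditional probability, because any path from $(0,0)$ to $(n,n)$ that stays inside the thin tube $R^{\theta r^{2/3}}_{0,r}$ is automatically contained in the wider tube $R^{3\theta r^{2/3}}_{0,n}$, while the (B$'$) and (C$'$) contributions are suppressed by $\barevent$ and Proposition~\ref{trans_fluc_loss2} respectively.

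The main technical obstacle is that $\mathcal{U}_i^c$ is not monotone in the weights, since it compares two free energies; a single application of FKG therefore does not reduce the conditional statement to the unconditional one. The three-class decomposition circumvents this by isolating the monotone pieces---classes (A) and (C), controlled through FKG plus unconditional tail estimates---from the class (B) piece, which is suppressed precisely \emph{because} of the definition of $\barevent$ rather than despite it. Choosing $q^*$ sufficiently large and $\theta_0$ sufficiently small makes each of the three conditional probabilities at most $10^{-5}$, which yields the claimed $0.9999$ via a union bound.
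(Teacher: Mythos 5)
Your overall skeleton (FKG to strip the conditioning where events are monotone, symmetry between $\mathcal{U}_i$ and $\mathcal{V}_i$, deviation scale $q^*\sqrt{\theta}\,r^{1/3}=(\theta^{3/2}r)^{1/3}$) matches the paper, but two of the three key steps do not go through as written. First, the $\mathcal{W}_i$ bound: after FKG you take a union bound over the $O(\theta^2 r^{4/3})$ endpoint pairs using the point-to-point tail of Proposition \ref{up_ub} at $t=q^*$, whose exponent $e^{-c(q^*)^{3/2}}$ does not grow with $r$; the product $Cr^{c_1}\theta^{-c_2}e^{-c(q^*)^{3/2}}$ therefore diverges as $r\to\infty$ for any fixed $q^*$, while the proposition requires a bound uniform in $r\ge c_0$ with $q^*$ a constant (it cannot depend on $r$, nor even on $\theta$). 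The entropy factor has to be avoided, e.g.\ by dominating the maximum by the summed interval-to-line partition function and invoking Theorem \ref{ptl_upper}, which is exactly what the paper does.

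Second, the $\mathcal{U}_i$ argument has two genuine gaps. (a) Your class (B) is not uniformly suppressed by $\barevent$: a path may leave the $\theta r^{2/3}$-tube yet keep touching every box $R^{\theta r^{2/3}}_{kr/L,(k+1)r/L}$ (for instance, stay within width $3\theta r^{2/3}$); such a path never crosses a barrier rectangle from $\underline{U_i^k}$ to $\overline{U_i^k}$ while remaining inside $U_i$, so the barrier forces no $Lr^{1/3}$ loss on it, and its contribution is comparable to the main term --- this is precisely why $\mathcal{U}_i$ is stated with $Z^{\textup{in},3\theta r^{2/3}}$ and why the paper keeps a separate class $\mathfrak{A}_1$ and treats the wide-but-box-touching paths ($\mathfrak{A}_3$) by transversal-fluctuation estimates rather than the barrier (Lemma \ref{lem_A3}, which you cite, is that fluctuation argument, not the barrier one, which is Lemma \ref{lem_A2}). (b) For class (A) you drop the tube constraint and apply Proposition \ref{compare_max}; the resulting event compares two free energies that share barrier weights, so it is neither increasing nor measurable with respect to the $\theta$-tube weights, and its unconditional bound (a constant of order $e^{-cq^{*2}}$) cannot be transferred to $\mathbb{P}(\,\cdot \mid \barevent)$: FKG does not apply, and the crude bound $\mathbb{P}(E\mid\barevent)\le \mathbb{P}(E)/\mathbb{P}(\barevent)$ is useless since $\mathbb{P}(\barevent)$ is only bounded below by $e^{-e^{L^{100}}}$ while $q^*$ must be fixed before $\theta$. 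The paper's proof is built to avoid exactly this: it first reduces, via Proposition \ref{loc1} applied at scale $i\theta^{3/2}r$, to paths forced through the box $R^*=R^{\theta r^{2/3}}_{(i-2)\theta^{3/2}r,(i-1)\theta^{3/2}r}$, and then pivots there, so that the line-versus-point comparison over the final $O(\theta^{3/2}r)$ stretch splits into an increasing event (box-to-line maximum, Theorem \ref{btl_upper}, FKG against the decreasing $\barevent$) and an event depending only on $\theta$-tube weights (constrained box-to-point minimum, Theorem \ref{high_inf}, independent of $\barevent$). Without some such pivot/measurability device your conditional estimate for (A), and the coverage of (B), both fail; your step (ii) is plausible but would itself need the Proposition \ref{loc1}-type machinery rather than a bare assertion.
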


\begin{proof}
We upper bound $\mathcal{U}_i^c$ and $\mathcal W_i^c$. And by symmetry, the estimate for $\mathcal V_i^c$ is the same as $\mathcal U_i^c$.

First, by the FKG inequality and Theorem \ref{ptl_upper},
$$\mathbb{P}(\mathcal W_i^c|\barevent) \leq \mathbb{P}\Big(\log Z^{\textup{in},{3\theta r^{2/3}}}_{\mathcal{L}_{i\theta^{3/2}r}^{3\theta r^{2/3}}, \mathcal{L}_{(i+1)\theta^{3/2}r}^{3\theta r^{2/3}}}  - 2\theta^{3/2}r{f_d} \leq q^* \sqrt{\theta}r^{1/3}\Big)  \leq e^{-Cq^*}.$$

Next, we upper bound $\mathbb{P}(\mathcal U_i^c|\barevent)$.  Let $\mathfrak{A}$ denote the collection of paths going from $(0,0)$ to $\mathcal{L}_{i\theta^{3/2}r}^{3\theta r^{2/3}}$ such that they stay within the diagonal sides of $R_{0, (i-1)\theta^{3/2}r}^{3\theta r^{2/3}}$ and they touch the box $R_{(i-2)\theta^{3/2}r, (i-1)\theta^{3/2}r}^{\theta r^{2/3}}$. (Note the $\mathfrak{A}$ here plays the role of $\mathfrak{A}_1$ from \eqref{3As}.) Applying Proposition \ref{loc1} (for the free energy from $0$ to $i\theta^{3/2}r$ instead of from $0$ to $r$), we know that for $\theta$ sufficiently small.
$$\mathbb{P}\Big(\log Z_{0,\mathcal{L}_{i\theta^{3/2}r}}  \leq \log Z_{0,\mathcal{L}_{i\theta^{3/2}r}}(\mathfrak{A}) + \log 6  \Big |\barevent \Big) \geq 1-10^{-8}.$$
Then, it suffices for us to upper bound the event 
\begin{equation}\label{weshow111}
\Big\{\log Z_{0,\mathcal{L}_{i\theta^{3/2}r}} (\mathfrak{A})  - \log Z^{\textup{in},{3\theta r^{2/3}}}_{0,i\theta^{3/2}r} >  q^* \sqrt{\theta}r^{1/3} - \log 6\Big\}.
\end{equation}

Now, since all the paths in $\mathfrak{A}$ enter the box 
$$R^* = R^{\theta r^{2/3}}_{(i-2)\theta^{3/2}r,(i-1)\theta^{3/2}r },$$ let $\mathfrak{A}_{\bf p}$ denote the collection of paths in $\mathfrak{A}$ that go through ${\bf p} \in R^*$. Let ${\bf p}^*$ be the maximizer of 
$$\max_{{\bf p} \in R^*} \Big\{\log Z_{0,\mathcal{L}_{i\theta^{3/2}r}}(\mathfrak{A}_{{\bf p}})\Big\}.$$
And it holds that 
$$ \log Z_{0,\mathcal{L}_{i\theta^{3/2}r} } (\mathfrak{A}) \leq \log Z_{0,\mathcal{L}_{i\theta^{3/2}r} } (\mathfrak{A}_{{\bf p}^*}) + 100 \log r.$$
Now, to bound \eqref{weshow111}, it suffices for us to upper bound 
$$
\mathbb{P}\Big(\log Z_{0,\mathcal{L}_{i\theta^{3/2}r} } (\mathfrak{A}_{\bf p^*}) - \log Z^{\textup{in}, {3\theta r^{2/3}}}_{0,i\theta^{3/2}r} >  \tfrac{1}{2} q^* \sqrt{\theta}r^{1/3}\Big| \barevent \Big).
$$
{We will replace these two free energies appearing above with the right side below}
\begin{align*}
\log Z_{0,\mathcal{L}_{i\theta^{3/2}r} } (\mathfrak{A}_{\bf p^*}) = \log Z_{0,{\bf p}^*}  + \log Z_{{\bf p}^*,\mathcal{L}_{i\theta^{3/2}r}}\\
\log Z^{\textup{in},{3\theta r^{2/3}}}_{0,i\theta^{3/2}r} \geq \log Z^{\textup{in},R_{0, r}^{3\theta r^{2/3}}}_{0,{\bf p}^*}   + \log Z^{\textup{in}, R_{0,r}^{\theta r^{2/3}}}_{{\bf p}^*,i\theta^{3/2}r }.
\end{align*}
{After the substitution, because 
$\log Z_{0,{\bf p}^*} - \log Z^{\textup{in},R_{0, r}^{3\theta r^{2/3}}}_{0,{\bf p}^*}$ is non-negative},
it suffices for us to upper bound
\begin{align} 
&\mathbb{P}\Big(\log Z_{{\bf p}^*,\mathcal{L}_{i\theta^{3/2}r}}  - \log Z^{\textup{in},R_{0,r}^{\theta r^{2/3}}}_{{\bf p}^*,i\theta^{3/2}r }  >  \tfrac{1}{2} q^* \sqrt{\theta}r^{1/3} \Big| \barevent \Big)\nonumber\\
& \leq \mathbb{P}\Big(\log Z_{{\bf p}^*,\mathcal{L}_{i\theta^{3/2}r}}  - \log Z^{\textup{in},R_{0,r}^{\theta r^{2/3}}}_{{\bf p}^*,i\theta^{3/2}r }  >  \tfrac{1}{2} q^* \sqrt{\theta}r^{1/3} \Big). \label{finalbd}
\end{align}
And we may bound \eqref{finalbd} as
\begin{align*}
\eqref{finalbd}
&\leq \mathbb{P}\Big(\max_{{\bf p}\in R^*}\log Z_{{\bf p},\mathcal{L}_{i\theta^{3/2}r} }  -  (2i\theta^{3/2}r - |{\bf p}|_1){f_d}  >  \tfrac{1}{4} q^* \sqrt{\theta}r^{1/3} \Big)\\
&\qquad \qquad + \mathbb{P}\Big(\min_{{\bf p}\in R^*}\log Z^{\textup{in},R_{0, r}^{\theta r^{2/3}}}_{{\bf p},i\theta^{3/2}r }   - (2i\theta^{3/2}r - |{\bf p}|_1){f_d} <  -\tfrac{1}{4} q^* \sqrt{\theta}r^{1/3} \Big).
\end{align*}
Both of these probabilities are bounded by $e^{-{q^*}^{1/10}}$ by Theorem \ref{high_inf} and Theorem \ref{btl_upper} . Finally, by fixing $q^*$ sufficiently large, this completes the proof of this proposition.
\end{proof}

We say an index $i\in (\tfrac{1}{3}{\theta^{-3/2}}, \tfrac{2}{3}{\theta^{-3/2}})$ is \textit{good} if 
$$\mathbb{P}(\mathcal{U}_i\cap \mathcal{V}_i\cap \mathcal{W}_i | \mathcal{F}_\theta)(\omega) \geq 1-10^{-2} \qquad  \textup{ where $\omega \in \barevent$}.$$
Note for a given $\omega \in \barevent$, the set of good indices is deterministic.

\begin{lemma} \label{find_B}
Let $\mathcal{B}' \subset \barevent$  denote the event that the number of good indices is at least $\tfrac{1}{6}\theta^{-3/2}$.  Then $\mathbb{P}(\mathcal{B}'| \barevent) \geq 1/2.$
\end{lemma}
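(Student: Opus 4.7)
The plan is to observe that $\barevent\in\mathcal{F}_\theta$ (since $\barevent$ is defined via weights in $R^{\phi_2 r^{2/3}}_{0,r}\setminus R^{\theta r^{2/3}}_{0,r}$, which lie outside $R^{\theta r^{2/3}}_{0,r}$), and then run a two-step Markov inequality using Proposition \ref{i_likely} as the sole analytic input. Concretely, set
\[
p_i(\omega) \;=\; \mathbb{P}(\mathcal{U}_i\cap \mathcal{V}_i\cap \mathcal{W}_i \mid \mathcal{F}_\theta)(\omega),
\]
which is $\mathcal{F}_\theta$-measurable. By definition, $i$ is good on $\omega\in\barevent$ exactly when $1-p_i(\omega)\leq 0.01$.

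Using Proposition \ref{i_likely} and $\barevent\in\mathcal F_\theta$,
\[
\mathbb{E}[\,1-p_i\mid \barevent\,]
\;=\; \mathbb{P}\big((\mathcal{U}_i\cap \mathcal{V}_i\cap \mathcal{W}_i)^c \mid \barevent\big)
\;\leq\; 0.0001.
\]
Applying Markov's inequality conditionally on $\barevent$,
\[
\mathbb{P}(\,i\text{ not good}\mid \barevent\,)
= \mathbb{P}(\,1-p_i > 0.01 \mid \barevent\,)
\;\leq\; \frac{0.0001}{0.01} \;=\; 0.01.
\]

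Let $\mathcal{I}$ denote the index set $i\in (\tfrac{1}{3}\theta^{-3/2}, \tfrac{2}{3}\theta^{-3/2})$, which has cardinality comparable to $\tfrac{1}{3}\theta^{-3/2}$ (take $\theta$ small enough that rounding is negligible). Summing the previous bound gives
\[
\mathbb{E}\Big[\,\#\{i\in\mathcal{I}: i\text{ not good}\}\,\Big|\,\barevent\Big]
\;\leq\; 0.01\cdot |\mathcal{I}|
\;\leq\; \tfrac{1}{300}\theta^{-3/2}.
\]
A second application of Markov then yields
\[
\mathbb{P}\Big(\#\{i\in\mathcal{I}: i\text{ not good}\} \geq \tfrac{1}{6}\theta^{-3/2} \,\Big|\, \barevent\Big)
\;\leq\; \frac{\tfrac{1}{300}\theta^{-3/2}}{\tfrac{1}{6}\theta^{-3/2}} \;=\; \tfrac{1}{50}\;<\;\tfrac12.
\]
Since the event that the number of good indices is at least $\tfrac{1}{6}\theta^{-3/2}$ is the complement (within $\barevent$) of the event bounded above, we conclude $\mathbb{P}(\mathcal{B}'\mid \barevent)\geq 1 - \tfrac{1}{50} \geq \tfrac12$.

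There is no real obstacle here: the argument is purely a Fubini/Markov bookkeeping step, and the only nontrivial ingredient is the $0.9999$ lower bound from Proposition \ref{i_likely}, whose quantitative value was chosen precisely so that both Markov steps have room to spare. The only thing to be slightly careful about is to verify $\barevent\in\mathcal F_\theta$, so that conditioning on $\barevent$ commutes with the $\mathcal F_\theta$-measurability of $p_i$; this is immediate from the definition \eqref{bar_cap}.
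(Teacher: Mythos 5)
Your proof is correct and takes essentially the same route as the paper: using that $\barevent\in\mathcal{F}_\theta$ together with Proposition \ref{i_likely}, a first Markov inequality bounds $\mathbb{P}(\textup{$i$ is bad}\mid\barevent)$, and a second Markov inequality applied to the conditional expected number of bad indices gives the claim. The only difference is in the bookkeeping of constants (you obtain $1/50$ where the paper settles for a cruder bound), which does not affect the conclusion.
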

\begin{proof} Since $\barevent$ is $\mathcal{F}_\theta$-measurable,  by Markov's inequality,
$$\mathbb{P}(\textup{$i$ is bad}|\barevent) \leq \mathbb{P}\Big(\{\omega : \mathbb{P}(\mathcal{U}_i^c\cup \mathcal{V}_i^c\cup \mathcal{W}_i^c|\mathcal{F}_\theta)(\omega)> 0.01\}\Big|\barevent\Big) \leq 100\mathbb{P}(\mathcal{U}_i^c\cup \mathcal{V}_i^c\cup \mathcal{W}_i^c|\barevent) \leq 1/10 .$$
Then, the expected number of bad indices (conditional on $\barevent$) is upper bounded by $\tfrac{1}{10}\cdot \tfrac{2}{3}\theta^{-3/2}$, and a further application of Markov's inequality completes the proof.
\end{proof}

Next we prove Theorem \ref{con_var} using the $\mathcal{B}'$ obtained in Lemma \ref{find_B}. Let us fix an 
$\omega'\in \mathcal{B}'$ for the remainder of this proof; {and for any configuration $\omega$ we shall define, its projection onto the vertices outside $R_{0,r}^{\theta r^{2/3}}$ will agree with $\omega'$.} Given this $\omega'$, the collection of good indices is known. Let us fix an enumeration of a portion of the good indices 
$$J = \{i_1, i_2, \dots, i_K\}$$
where $K \geq \tfrac{1}{6}\theta^{-3/2}$.
Now, define a sequence of $\sigma$-algebras $\mathcal{S}_0\subset \mathcal{S}_1 \subset \mathcal{S}_2 \subset \dots \subset \mathcal{S}_K$ where $\mathcal{S}_0$ is generated by the configuration $\omega'\in \mathcal{B}'$ together with the configuration on $R^{\theta r^{2/3}}_{i\theta^{3/2}r, (i+1)\theta^{3/2}r}$ for all $i\not\in J$, and for $j\geq 1$, $\mathcal{S}_j$ is the $\sigma$-algebra generated by $\mathcal{S}_{j-1}$ and the configuration inside $R^{\theta r^{2/3}}_{i_j\theta^{3/2}r, (i_j+1)\theta^{3/2}r}$. Note that  $S_K$ is the $\sigma$-algebra of the entire weight configuration.

Consider the Doob martingale 
$$M_j = \mathbb{E}\big[\log Z_{0,r}^{\text{in}, 3\theta r^{2/3}} \big|\mathcal{S}_j\big].$$ By the  variance decomposition of a Doob martingale, it follows that 
$$\Var\Big( \log Z^{\textup{in}, {3\theta r^{2/3}}}_{0,r} \Big | \mathcal{F}_\theta\Big)(\omega') \geq \sum_{j=1}^K \mathbb{E}[(M_{j} -  M_{j-1})^2|\mathcal{F}_\theta](\omega').$$
  Theorem \ref{con_var} follows  directly from the lemma below. The proof of the lemma goes by a re-sampling argument. This idea first appeared in the zero-temperature set-up in \cite{timecorriid, timecorrflat}, although the setup there is different from ours.
\begin{lemma}\label{c_var_lem}
There exist positive constants $\theta_0, 
\newc\label{c_var_lem_c1}$ such that for each $0< \theta < \theta_0$, there exists a positive constant $ c_0$ such that for each $r\geq c_0$, the following holds: for each $i_j\in J$, there is an $\mathcal{S}_{j-1}$ measurable event $\mathcal{G}_j$ with $\mathbb{P}(\mathcal{G}_j| \mathcal{F}_\theta)(\omega') \geq 1/2$ for each  $\omega'\in \mathcal{B}'$, and for each $\omega\in\mathcal{G}_j$ we have 
$$\mathbb{E}[(M_j - M_{j-1})^2|\mathcal{S}_{j-1}](\omega) \geq \oldc{c_var_lem_c1}\theta r^{2/3}.$$
\end{lemma}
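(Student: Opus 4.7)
\textbf{Proof plan for Lemma \ref{c_var_lem}.}

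Define the $\mathcal{S}_{j-1}$-measurable event
\[
\mathcal{G}_j := \bigl\{\omega\,:\, \mathbb{P}(\mathcal{U}_{i_j}\cap\mathcal{V}_{i_j}\cap\mathcal{W}_{i_j}\,|\,\mathcal{S}_{j-1})(\omega) \geq 9/10\bigr\}.
\]
For $\omega'\in\mathcal{B}'$ the good-index property combined with Proposition \ref{i_likely} (with $q^*$ chosen large enough that the total failure probability is at most $10^{-4}$) gives $\mathbb{P}((\mathcal{U}_{i_j}\cap\mathcal{V}_{i_j}\cap\mathcal{W}_{i_j})^c\,|\,\mathcal{F}_\theta)(\omega') \leq 10^{-4}$. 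Applying Markov's inequality to the $\mathcal{S}_{j-1}$-conditional probability of that complement then yields $\mathbb{P}(\mathcal{G}_j^c\,|\,\mathcal{F}_\theta)(\omega') \leq 10^{-3}$, so $\mathbb{P}(\mathcal{G}_j\,|\,\mathcal{F}_\theta)(\omega') \geq 1/2$.

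Now fix $\omega \in \mathcal{G}_j$ and work under $\mathbb{P}(\cdot\,|\,\mathcal{S}_{j-1})(\omega)$. Let $Y_j$ denote the random weights inside the slab $R_j := R^{\theta r^{2/3}}_{i_j\theta^{3/2}r,\,(i_j+1)\theta^{3/2}r}$ and $Y_{>j}$ those in the later good slabs; both are independent of $\mathcal{S}_{j-1}$. Introduce $F := \log Z^{\textup{in},3\theta r^{2/3}}_{0,r}$, $A := \log Z^{\textup{in},3\theta r^{2/3}}_{0,\,i_j\theta^{3/2}r}$ (which is $\mathcal{S}_{j-1}$-measurable), $B := \log Z^{\textup{in},3\theta r^{2/3}}_{(i_j+1)\theta^{3/2}r,\,r}$ (a function of $\mathcal{S}_{j-1}$ and $Y_{>j}$ only), and the narrow-tube slab-crossing free energy $W_j := \log Z^{\textup{in},\theta r^{2/3}}_{i_j\theta^{3/2}r,\,(i_j+1)\theta^{3/2}r}$, which depends only on $Y_j$. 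The inequality $F \geq A + W_j + B$ holds deterministically, by concatenating paths through the two diagonal corners of $R_j$. For the matching upper bound, decompose $F$ at the antidiagonal crossings of $\mathcal L_{i_j\theta^{3/2}r}^{3\theta r^{2/3}}$ and $\mathcal L_{(i_j+1)\theta^{3/2}r}^{3\theta r^{2/3}}$, bound the two outer factors by $A$ and $B$ (up to $q^*\sqrt\theta r^{1/3}$) via $\mathcal{U}_{i_j}, \mathcal{V}_{i_j}$, and bound the middle factor by $2\theta^{3/2}rf_d + q^*\sqrt\theta r^{1/3}$ via $\mathcal{W}_{i_j}$, to arrive at $F \leq A + B + 2\theta^{3/2}rf_d + 3q^*\sqrt\theta r^{1/3}$ on $\mathcal{U}_{i_j}\cap\mathcal{V}_{i_j}\cap\mathcal{W}_{i_j}$. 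Intersecting with the typicality event $\{\,|W_j - 2\theta^{3/2}rf_d| \leq q^*\sqrt\theta r^{1/3}\,\}$, whose conditional probability is bounded away from $0$ by Theorem \ref{var} and Theorem \ref{wide_similar}, pins
\[
0 \leq F - A - W_j - B \leq 4q^*\sqrt\theta\,r^{1/3}
\]
on an event of $\mathbb{P}(\cdot\,|\,\mathcal{S}_{j-1})(\omega)$-probability at least $3/4$.

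Taking conditional expectation with respect to $Y_{>j}$ then gives $M_j(Y_j) = A + W_j(Y_j) + c(\omega) + h(Y_j)$, where $c$ is $\mathcal{S}_{j-1}$-measurable; combining the pointwise bound above on the good event with moment estimates on the bad event (in the spirit of Propositions \ref{2term_bound}--\ref{4term_bound}) yields that the nonnegative defect $h$ satisfies $\|h\|_{L^2(\mathbb{P}(\cdot|\mathcal{S}_{j-1}))} \leq \varepsilon(q^*)\sqrt\theta\,r^{1/3}$ with $\varepsilon(q^*)$ made arbitrarily small by enlarging $q^*$. Since $W_j$ is a point-to-point free energy over diagonal length $\theta^{3/2}r$ restricted to its KPZ transversal tube of width $\theta r^{2/3}=(\theta^{3/2}r)^{2/3}$, and is independent of $\mathcal{S}_{j-1}$, combining the variance bound Theorem \ref{var} with the matching tube-constrained tails from Theorems \ref{wide_similar} and \ref{c_up_lb} (which together imply that the tube restriction only reduces the variance by a bounded factor) gives $\Var(W_j) \geq C'\theta r^{2/3}$. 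Hence
\[
\Var(M_j\,|\,\mathcal{S}_{j-1})(\omega) = \Var_{Y_j}(W_j + h) \geq \tfrac{1}{2}\Var(W_j) - 2\varepsilon(q^*)^2\theta r^{2/3} \geq \oldc{c_var_lem_c1}\,\theta r^{2/3},
\]
after fixing $q^*$ large enough so that $\varepsilon(q^*)^2 \ll C'/4$. The main obstacle is this approximate factorization $F \approx A + W_j + B$: converting the pointwise localization on the good events into an $L^2$-control of the defect $h$ requires careful handling of the (small-probability) bad-event contribution via moment bounds along the lines of Propositions \ref{2term_bound}--\ref{4term_bound}, and it is also at this step that the tube-constrained variance lower bound for $W_j$ (which does not appear as a packaged statement in the paper) must be extracted from the one-point tails Theorems \ref{wide_similar} and \ref{c_up_lb}.
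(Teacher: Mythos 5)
Your setup of $\mathcal{G}_j$ and the deterministic superadditivity bound $F \geq A + W_j + B$ are fine (the numerics in the Markov step are off — a good index only gives failure probability $0.01$ conditional on $\mathcal{F}_\theta$, not $10^{-4}$ — but that still yields $\mathbb{P}(\mathcal{G}_j|\mathcal{F}_\theta)(\omega')\geq 1/2$ after adjusting thresholds). The genuine gap is in the final variance step. Your defect $D = F - A - W_j - B$ is only controlled by the events $\mathcal{U}_{i_j},\mathcal{V}_{i_j},\mathcal{W}_{i_j}$ at scale $q^*\sqrt\theta\,r^{1/3}$, which is exactly the same order as (in fact, with a larger constant than) the standard deviation $\sqrt{\Var(W_j)} \asymp \sqrt\theta\,r^{1/3}$ you are trying to isolate. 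Enlarging $q^*$ goes the wrong way: it makes $\mathcal{U},\mathcal{V},\mathcal{W}$ more likely but weakens the quantitative bound $D \leq Cq^*\sqrt\theta\,r^{1/3}$, so $\varepsilon(q^*)$ cannot be made small — it grows with $q^*$. Moreover $h = \mathbb{E}[D\,|\,\mathcal{S}_j]$ is plausibly strongly (negatively) correlated with $W_j$: when the tube-constrained crossing $W_j$ is atypically low, paths gain by leaving the narrow tube and $D$ is large. So the inequality $\Var(W_j + h) \geq \tfrac12\Var(W_j) - 2\varepsilon^2\theta r^{2/3}$ cannot be justified; the fluctuations of $h$ can cancel those of $W_j$ at the same order, and the bad-event contribution (conditional probability only $\lesssim 1/10$, not exponentially small, under your definition of $\mathcal{G}_j$) likewise contributes at order $\theta r^{2/3}$ via Cauchy--Schwarz. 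Nothing in Propositions \ref{2term_bound}--\ref{4term_bound} is sharp enough to beat this, since they are themselves only $O(r^{2/3})$ bounds with constants comparable to the target.

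This cancellation problem is precisely what the paper's proof is engineered to avoid, and it does so by a monotone resampling argument rather than an additive factorization. One conditions the slab weights on the event $F$ of Theorem \ref{c_up_lb} that the tube-constrained crossing free energy exceeds its typical value by $100q^*\sqrt\theta\,r^{1/3}$ — an amount chosen \emph{larger} than the $O(q^*\sqrt\theta\,r^{1/3})$ slack coming from $\mathcal{U},\mathcal{V},\mathcal{W}$ — couples the original and conditioned slab configurations via FKG and Strassen so that the resampled weights dominate coordinatewise, and uses the one-sided bound $\mathbb{E}[(M_j-M_{j-1})^2\,|\,\mathcal{S}_{j-1}] \geq \mathbb{P}(F)\bigl(\tfrac{1}{\mathbb{P}(F)}\mathbb{E}[M_j\mathbf{1}_F\,|\,\mathcal{S}_{j-1}] - M_{j-1}\bigr)^2$. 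Monotonicity of the free energy in the weights makes the integrand $\log Z^{\textup{in},3\theta r^{2/3}}_{0,r}(\wt{\underline\omega}) - \log Z^{\textup{in},3\theta r^{2/3}}_{0,r}(\underline\omega)$ nonnegative, so restricting to the good events only decreases the integral, and on those events the gain is at least $50q^*\sqrt\theta\,r^{1/3}$ — the push of $100q^*$ beats the defect of a few $q^*$, so no cancellation can occur. If you want to salvage your route you would need a genuinely new input, e.g.\ a lower bound on $\Var\bigl(\mathbb{E}[F\,|\,Y_j]\bigr)$ that survives arbitrary correlations between $W_j$ and the rest of the environment; the conditioning-on-a-rare-upper-deviation trick is the standard way to manufacture exactly that.
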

\begin{proof}

Define the event
$$F = \Big\{ \log Z^{\textup{in}, {\theta r^{2/3}}}_{i_j\theta^{3/2}r, (i_j+1)\theta^{3/2} r}  - 2(\theta^{3/2}r){f_d} \geq 100q^*\sqrt{\theta} r^{1/3}\Big\}$$
where the fixed constant $q^*$ is from Proposition \ref{i_likely}.  By Theorem \ref{c_up_lb}, $\mathbb{P}(F) \geq c >0$. 
Let $\omega_1$ denote the configuration on $R_{i_j\theta^{3/2}r, (i_j+1)\theta^{3/2} r}^{\theta r^{2/3}}$ drawn from i.i.d.~inverse-gamma distribution. And let $\wt\omega_1$ denote the configuration on $R_{i_j\theta^{2/3}r, (i_j+1)\theta^{3/2} r}^{\theta r^{2/3}}$  which is drawn from i.i.d.~inverse-gamma distribution but conditioned on $F$. By the FKG inequality and Strassen's Theorem \cite{Stra-65}, there exists a coupling measure of the joint distribution $(\omega_1, \wt\omega_1)$ such that $\omega_1 \leq  \wt \omega_1$ coordinatewise. Let $\beta$ denote such a coupling measure.

Let $\omega_0$ denote the configuration on all vertices that are revealed in $\mathcal{S}_{j-1}$, and recall the projection of $\omega_0$ outside of $R^{\theta r^{2/3}}_{0,r}$ is the same as $\omega' \in \mathcal{B}'$, the environment which we have fixed previously. And $\omega_2$ is the remaining weight configurations besides $\omega_0, \omega_1, \widetilde \omega_1$.
Let $\underline{\omega} = (\omega_0, \omega_1, \omega_2)$ and $\wt{\underline{\omega}} = (\omega_0, \wt\omega_1, \omega_2)$ to denote the two coupled environments under $\beta$, which was defined in the previous paragraph. We have 
\begin{equation}\label{int_lb}
\frac{1}{\mathbb{P}(F)}\mathbb{E}[M_j \mathbbm{1}_F|\mathcal{S}_{j-1}] - M_{j-1} = \int\log(Z_{0,r}^{\text{in}, 3\theta r^{2/3}})(\wt{\underline{\omega}}) - \log Z_{0,r}^{\text{in}, 3\theta r^{2/3}}({\underline{\omega}})  \beta(d\omega_1, d\wt\omega_1) \mathbb{P}(d\omega_2).
\end{equation}
Since $F$ is independent of $\mathcal{S}_{j-1}$ and $M_{j-1}$, we also have 
\begin{equation}\label{exp_lb}
\begin{split}
\mathbb{E}[(M_j- M_{j-1})^2|\mathcal{S}_{j-1}] & \geq \pb(F)\frac{1}{\pb(F)} \mathbb{E}[(M_j- M_{j-1})^2 \mathbbm{1}_F| \mathcal{S}_{j-1}]\\
& \geq \mathbb{P}(F) \Big(\frac{1}{\pb(F)}\mathbb{E}[M_j\mathbbm{1}_F| \mathcal{S}_{j-1}]- M_{j-1}\Big)^2.
\end{split}
\end{equation}

Next, we construct the event $\mathcal{G}_j$. 
Note that $\mathcal{U}_{i_j}$ is $\mathcal{S}_{j-1}$ measurable, but $\mathcal{V}_{i_j}$ is not. Define another $\mathcal{S}_{j-1}$  measurable event  $$\wt{\mathcal{V}}_{i_j} = \{\omega_0 : \mathbb{P}(\mathcal{V}_{i_j}| \mathcal{S}_{j-1})(\omega_0) \geq 0.9\}.$$
We set $\mathcal{G}_{j}=\wt{\mathcal{U}}_{i_j}\cap\wt{\mathcal{V}}_{i_j}$
where $$\wt{\mathcal{U}}_{i_j} = \{\omega_0 : \mathbbm{1}_{\mathcal{U}_{i_j}} =1\}.$$
By the definition that $i_j$ is good, $\mathbb{P}(\mathcal{U}_{i_j}|\mathcal{F}_\theta)(\omega') \geq 1-10^{-2}$.   By Markov's inequality, 
$$\mathbb{P}(\wt{\mathcal{V}}_{i_j}^c| \mathcal{F}_\theta)(\omega') = \mathbb{P}(\{\omega_0 : \mathbb{P}(\mathcal{V}_{i_j}^c| S_{j-1})(\omega_0) \geq 0.1 \}|\mathcal{F}_\theta)(\omega') \leq 10 \mathbb{P}(\mathcal{V}_{i_j}^c|\mathcal{F}_\theta)(\omega') \leq 0.1,$$
which implies $\mathbb{P}(\wt{\mathcal{V}}_{i_j}^c| \mathcal{F}_\theta)(\omega') \geq 0.9$. Hence, $\mathcal{G}_j$ satisfies the requirement
$$\mathbb{P}(\mathcal{G}_j|\mathcal{F}_\theta)(\omega') \geq 1/2.$$


For $\omega_0\in \mathcal{G}_{j}$, {starting with the inequality \eqref{exp_lb} then applying \eqref{int_lb}}, we obtain that 
\begin{equation}\label{lb_w0}
\mathbb{E}[(M_j- M_{j-1})^2|\mathcal{S}_{j-1}](\omega_0) \geq \mathbb{P}(F) \Big(\int\log Z_{0,r}^{\text{in}, 3\theta r^{2/3}} (\wt{\underline{\omega}}) - \log Z_{0,r}^{\text{in}, 3\theta r^{2/3}} ({\underline{\omega}})  \beta(d\omega_1, d\wt\omega_1)  \mathbb{P}(d\omega_2) \Big)^2.
\end{equation}
Since the integrand above is non-negative, we further lower bound the integral in the right-hand side of \eqref{lb_w0} by 

\begin{equation}
    \label{lb_next}
    \int_{\omega_1\in \wt{\mathcal{W}}_{i_j}, \omega_{2}\in D} \left(\log Z_{0,r}^{\text{in}, 3\theta r^{2/3}} (\wt{\underline{\omega}}) - \log Z_{0,r}^{\text{in}, 3\theta r^{2/3}} ({\underline{\omega}})\right)  \beta(d\omega_1, d\wt\omega_1)  \mathbb{P}(d\omega_2)
\end{equation}
where $$\wt{\mathcal{W}}_{i_j} = \{\omega_1 : \mathbbm{1}_{\mathcal{W}_{i_j}}(\underline{\omega}) =1\}.$$
Note that since $\omega_0$ and $\omega'$ has been fixed, $\mathbbm{1}_{\mathcal{W}_{i_j}}$ is determined by $\omega_1$, and 
$$D=D(\omega_0)=\Big\{\omega_2 : \text{ $\mathcal{V}_{i_j}$ holds on $(\omega_0, \omega_2)$}\Big\}$$
where $\mathcal{V}_{i_j}$ is determined by $(\omega_0,\omega_2)$.


Now, it holds that 
$$\log Z_{0,r}^{\textup{in}, 3\theta r^{2/3}} (\wt{\underline{\omega}}) \geq \log Z_{0,i_j\theta^{3/2}r}^{\textup{in}, 3\theta r^{2/3}} (\wt{\underline{\omega}})+ \log Z_{i_j\theta^{3/2}r,(i_j+1)\theta^{3/2}r}^{\textup{in},\theta r^{2/3}} (\wt{\underline{\omega}})+\log Z_{(i_j+1)\theta^{3/2}r,r}^{\textup{in},3\theta r^{2/3}} (\wt{\underline{\omega}})$$
and 
$$\log Z_{0,r}^{\textup{in},3\theta r^{2/3}} ({\underline{\omega}}) \leq\log Z _{0, \mathcal{L}_{i_j\theta^{3/2}r} } ({\underline{\omega}})
+ \log Z^{\textup{in},{3\theta r^{2/3}}}_{\mathcal{L}_{i_j\theta^{3/2}r}^{3\theta r^{2/3}}, \mathcal{L}_{(i_j+1)\theta^{3/2}r}^{3\theta r^{2/3}}} ({\underline{\omega}})+ \log Z_{\mathcal{L}_{(i_j+1)\theta^{3/2}r},r} ({\underline{\omega}}).$$
Since $\omega_0 \in \mathcal{G}_j$, we have on  $\{\omega_1\in \wt{\mathcal{W}}_{i_j}, \omega_{2}\in D\}$,
\begin{align*}
&\log Z_{0,r}^{\textup{in}, 3\theta r^{2/3}} (\wt{\underline{\omega}}) - \log Z_{0,r}^{\textup{in}, 3\theta r^{2/3}} ({\underline{\omega}}) \\
& \qquad\qquad \geq    \log Z_{i_j\theta^{3/2}r,(i_j+1)\theta^{3/2}r}^{\textup{in}, \theta r^{2/3}} (\wt \omega_1) - \log Z^{\textup{in},{3\theta r^{2/3}}}_{\mathcal{L}_{i_j\theta^{3/2}r}^{3\theta r^{2/3}}, \mathcal{L}_{(i_j+1)\theta^{3/2}r}^{3\theta r^{2/3}}} (\omega_1) - 2q^* \sqrt{\theta}r^{1/3}\\
&\geq 50q^*\sqrt{\theta}r^{1/3}
\end{align*}
where the last inequality holds since $\wt{\omega}_1\in F$ by definition.

We can therefore lower bound \eqref{lb_next} by 
$$50q^*\sqrt{\theta}r^{1/3}\int_{ \omega_1\in \wt{\mathcal{W}}_{i_j}} \beta(d\omega_1, d\wt\omega_1)  \int_{\omega_2\in D} \mathbb{P}(d\omega_2).$$
Since, $\mathcal{W}_{i_j}(\underline{\omega})$ is determined by $\omega'$ and $\omega_1$, it follows that 
$$\int_{ \omega_1\in \wt{\mathcal{W}}_{i_j}} \beta(d\omega_1, d\wt\omega_1) = \mathbb{P}(\mathcal{W}_{i_j}|\mathcal{F}_\theta)(\omega')\geq 0.99$$ since $i_j$ is good. Since $\mathcal{G}_j \subseteq \wt{\mathcal{V}}_{i_j}$, it follows from the definition of $D$ that $\int_{\omega_2\in D} d\omega_2\ge 0.9$ for all $\omega_0\in \mathcal{G}_j$. We can therefore lower bound the right hand side of \eqref{lb_w0} by $\theta r^{2/3}\mathbb{P}(F)\times (0.9\times 0.99 \times 50q^*)^2$
thereby completing the proof. 

\end{proof}

\subsection{Covariance lower bound}\label{cov_lb}

To start with fix $\theta$ sufficiently small. Let us recall three subsets of $\barevent$ from Theorem \ref{con_var}, Proposition \ref{2term_bound} and Proposition \ref{4term_bound}, 
\begin{align*}
\mathcal{B}' &: \Var\Big( \log Z^{\textup{in},{3\theta r^{2/3}}}_{0,r} \Big | \mathcal{F}_\theta\Big)(\omega) \geq \oldc{con_var_c1} \theta^{-1/2}r^{2/3}\quad \textup{ for $\omega \in \mathcal{B}'$.}\\
\mathcal{B}'' &: \mathbb{E}\Big[\Big(\log Z_{0, \mathcal{L}_{r}}  - \log Z^{\textup{in} , {3\theta r^{2/3}}}_{0, r} \Big)^2 \Big| \mathcal{F}_\theta \Big](\omega) \leq  100\oldc{2term_bound_c1}r^{2/3} \quad \text{ for $\omega \in \mathcal{B}''$}\\
\mathcal{B}''' &: \mathbb{E}\Big[\Big(\log Z_{0,N}  - \log Z_{r, N} - \log Z^{\textup{in} , {3\theta r^{2/3}}}_{0, r}\Big)^2 \Big| \mathcal{F}_\theta \Big](\omega) \leq 100 \oldc{4term_bound_c1}r^{2/3} \quad \text{ for $\omega \in \mathcal{B}'''$}.
\end{align*}
Besides $\mathbb{P}(\mathcal{B}'|\barevent) \geq 1/2$, we also know $\mathbb{P}(\mathcal{B}''|\barevent) \geq 0.99$ and $\mathbb{P}(\mathcal{B}'''|\barevent) \geq 0.99$ by Markov inequality applied to their complements.

Now going back to \eqref{main_goal}, let us define
$$\mathcal{E}_\theta =  \mathcal{B}' \cap \mathcal{B}'' \cap \mathcal{B}'''.$$
We know $\mathbb{P}({\mathcal{E}_\theta}) > \epsilon_0 > 0$ because $\mathbb{P}( \mathcal{E}_\theta|\barevent) \geq 0.1$.
Finally, let us show \eqref{main_goal}. This follows directly from a sequence of inequalities. For $\omega \in \mathcal{E}_\theta$
\begin{align*}
&\Cov(\log Z_{0,r}, \log Z_{0, N} |\mathcal{F}_\theta)(\omega)\\
& = \Cov(\log Z_{0,r}, \log Z_{0, N} - \log Z_{r, N} |\mathcal{F}_\theta)(\omega) \\
& = \Cov\Big(\log Z^{\textup{in},{3\theta r^{2/3}}}_{0,r}, \log Z_{0, N} - \log Z_{r, N}\Big|\mathcal{F}_\theta\Big)(\omega) \\
&\qquad \qquad \qquad + \Cov\Big(\log Z_{0,r} - \log Z^{\textup{in},{3\theta r^{2/3}}}_{0,r}, \log Z_{0, N} - \log Z_{r, N}\Big|\mathcal{F}_\theta\Big)(\omega)\\
&= \Var \Big( \log Z^{\textup{in},{3\theta r^{2/3}}}_{0,r} \Big| \mathcal{F}_\theta \Big)(\omega)+ \Cov\Big(\log Z^{\textup{in},{3\theta r^{2/3}}}_{0,r}, \log Z_{0, N} - \log Z_{r, N} - \log Z^{\textup{in},{3\theta r^{2/3}}}_{0,r}\Big|\mathcal{F}_\theta\Big)(\omega)\\
&  \qquad \qquad \qquad  + \Cov \Big(\log Z_{0,r} - \log Z^{\textup{in},{3\theta r^{2/3}}}_{0,r}, \log Z^{\textup{in},{3\theta r^{2/3}}}_{0,r}  \Big| \mathcal{F}_\theta \Big)(\omega) \\
& \qquad \qquad \qquad + \Cov\Big(\log Z_{0,r} - \log Z^{\textup{in},{3\theta r^{2/3}}}_{0,r}, \log Z_{0, N} - \log Z_{r, N} - \log Z^{\textup{in},{3\theta r^{2/3}}}_{0,r}\Big|\mathcal{F}_\theta\Big)(\omega)\\
& \geq  \Var \Big( \log Z^{\textup{in},{3\theta r^{2/3}}}_{0,r} \Big| \mathcal{F}_\theta \Big)(\omega)-  \sqrt{\Var \Big( \log Z^{\textup{in},{3\theta r^{2/3}}}_{0,r} \Big| \mathcal{F}_\theta \Big)(\omega)} \sqrt{\Var \Big(\log Z_{0,r} - \log Z^{\textup{in},{3\theta r^{2/3}}}_{0,r} \Big| \mathcal{F}_\theta \Big)(\omega)}\\
& \qquad \qquad  -  \sqrt{\Var \Big( \log Z^{\textup{in},{3\theta r^{2/3}}}_{0,r} \Big| \mathcal{F}_\theta \Big)(\omega)}\sqrt{\Var \Big(\log Z_{0,N} - Z_{r, N} - \log Z^{\textup{in},{3\theta r^{2/3}}}_{0,r} \Big| \mathcal{F}_\theta \Big)(\omega)}\\
&\qquad \qquad  - \sqrt{\Var \Big(\log Z_{0,r} - \log Z^{\textup{in},{3\theta r^{2/3}}}_{0,r} \Big| \mathcal{F}_\theta \Big)(\omega)} \sqrt{\Var \Big(\log Z_{0,N} - Z_{r, N} - \log Z^{\textup{in},{3\theta r^{2/3}}}_{0,r} \Big| \mathcal{F}_\theta \Big)(\omega)}\\
& \geq C \theta^{-1/2} r^{2/3}
\end{align*}
where the last inequality holds by the definition of the event $\mathcal{E}_\theta$ for $\theta$ sufficiently small.

\section{Nonrandom fluctuation lower bound}\label{sec:nr}

First, let us prove Theorem \ref{statiid_low}.
\begin{proof}[Proof of Theorem \ref{statiid_low}]
To simplify the notation, let us simply use $Z$ (instead of $\wt Z$) to denote the version of the partition function where we also include the weight at the starting point. Note this does not apply to $Z^\rho$ as $Z^\rho$ does not pick up any vertex weight at its starting point.
Also, let us define 
$${\bf v}_N = 2N{\boldsymbol\xi}[\rho].$$
To prove the theorem, it suffices for us to assume that 
\begin{equation}\label{d_assump}
\delta \geq N^{-1/3}.
\end{equation}
Recall the definition of the exit time $\tau$ above Theorem \ref{exit_time}, and we start with a simple bound
\begin{align}
&\mathbb{P}\Big(\log Z^\rho_{-1, {\bf v}_N} - \Big( \log I^\rho_{[\![(-1,-1), (0,-1) ]\!]} + \log Z_{0, {\bf v}_N}\Big)  \leq \delta N^{1/3}\Big)\nonumber\\
& \leq \mathbb{P}\Big(\log Z^\rho_{-1, {\bf v}_N} (1\leq \tau \leq N^{2/3}) - \Big(\log I^\rho_{[\![(-1,-1), (0,-1) ]\!]} +  \log Z_{0, {\bf v}_N} \Big) \leq \delta N^{1/3}\Big)\nonumber\\
& \leq \mathbb{P}\Big(\max_{1\leq k \leq N^{2/3}} \log Z^\rho_{-1, \mathbf{v}_N} (\tau = k) - \Big(\log I^\rho_{[\![(-1,-1), (0,-1) ]\!]} +  \log Z_{0, {\bf v}_N} \Big) \leq \delta N^{1/3}\Big)\label{max_est}
.\end{align}
For each $k =1, \dots, N^{2/3}$, let us denote the term inside our maximum as 
\begin{align*}
 S_k  &= \log Z^{\rho}_{-1, {\bf v}_N}(\tau = k) - \Big(\log I^\rho_{[\![(-1,-1), (0,-1) ]\!]} +  \log Z_{0, {\bf v}_N} \Big) \\
&=   (\log Z_{(k, 0), {\bf v}_N} - \log Z_{0, {\bf v}_N}) + \sum_{i=1}^k I^\rho_{_{[\![(i-1,-1), (i, -1)]\!]}}.
\end{align*}

Then, our estimate can be written as a running maximum. 
\begin{equation}\label{max_est2}
\eqref{max_est} = \mathbb{P} \Big(\max_{1\leq k \leq N^{2/3}} S_k \leq \delta N^{1/3}\Big) .
\end{equation}
The steps of $S_k$ are not i.i.d.~because of the term $\log Z_{(k, 0), {\bf v}_N} - \log Z_{0, {\bf v}_N}.$ Our next step is to lower bound $S_k$ by a random walk with i.i.d. steps using Theorem \ref{rwrw}.
In the application of Theorem \ref{rwrw}, we will rotate our picture $180^\circ$ so the path $\Theta_{1+N^{2/3}}$ is the segment $[\![(0,0), (N^{2/3}, 0)]\!]$. And our perturbed parameter will be 
$\lambda =  \rho + q_0 sN^{-1/3}$ where 
$s = |\log \delta|$. Note our condition $\delta \geq N^{-1/3}$ verifies the assumption $s\leq a_0 N^{2/3}$ from Theorem \ref{rwrw}. 

Let us denote the lower bounding i.i.d.~random walk as $\wt S_k$, and the distribution of the steps of $\wt S_k $ is given by the independent sum $-\log(\textup{Ga}^{-1}(\mu - \lambda)) +\log(\textup{Ga}^{-1}(\mu-\rho))$. Define the event 
$$A = \{ S_k \geq \log \tfrac{9}{10} + \wt S_k \text{ for $k = 0, 1, \dots, N^{2/3}$}\}.$$
Continuing from \eqref{max_est2}, we have 
\begin{align*}
\eqref{max_est2} &\leq \mathbb{P} \Big(\Big\{\max_{0\leq k \leq  N^{2/3}} S_k \leq \delta N^{1/3}\Big\} \cap A\Big) + \mathbb{P}(A^c)\\
&\leq \mathbb{P} \Big(\Big\{\ \max_{0\leq k \leq  N^{2/3}} \wt{S}_k  \leq \delta N^{1/3} + \log \tfrac{10}{9}\Big\} \Big) + \mathbb{P}(A^c).
\end{align*}
By Theorem \ref{rwrw}, we know $\mathbb{P}(A^c) \leq e^{-Cs^3} \leq \delta$. It remains to upper bound the running maximum 
\begin{equation}\label{rw_last}
\mathbb{P} \Big(\max_{0\leq k \leq  N^{2/3}} \wt{S}_k  \leq \delta N^{1/3} + \log \tfrac{10}{9}\Big).
\end{equation}
Lastly, Proposition \ref{rwest} gives $\eqref{rw_last} \leq C|\log \delta|\delta$. With this, we have finished the proof of the theorem.

\end{proof}

Our nonrandom fluctuation lower bound follows directly from Theorem \ref{statiid_low}.

\begin{proof}[Proof of Theorem \ref{nr_lb}]
By Theorem \ref{statiid_low}, there exists $\delta_0, N_0$ such that for all $N\geq N_0$, we have
$$\mathbb{P}\Big(\log Z^\rho_{-1, 2N\boldsymbol{\xi}[\rho]} - \Big(\log I^\rho_{[\![(-1,-1), (0, -1)]\!]} + \log Z_{0, 2N\boldsymbol{\xi}[\rho]} \Big) \geq \tfrac{1}{2}\delta_0 N^{1/3}\Big) \geq 0.99.$$
Let us denote the above event as $A$, and let 
$$X = \log Z^\rho_{-1, 2N\boldsymbol{\xi}[\rho]} - \Big(\log I^\rho_{[\![(-1,-1), (0, -1)]\!]} + \log Z_{0, 2N\boldsymbol{\xi}[\rho]} \Big).$$
Note that $X\geq 0$. Using \eqref{define_f} and \eqref{expect_stat}, we have
$$\Big|\mathbb{E}\big[ Z^\rho_{-1, 2N\boldsymbol{\xi}[\rho]}\big] - 2Nf(\rho)\Big| \leq 10\max\{|\Phi_0(\rho)|, |\Phi_0(\mu - \rho)|\}.$$ 
With these, we have
\begin{align*}
&2Nf(\rho) - \Big(\mathbb{E}[\log I^\rho_{[\![(-1,-1), (0, -1)]\!]}] + \mathbb{E}[\log Z_{0, 2N\boldsymbol{\xi}[\rho]}]\Big)\\
&  \geq  \mathbb{E}[X]  - 10\max\{|\Phi_0(\rho)|, |\Phi_0(\mu - \rho)|\}\\
  &=  \mathbb{E}[ X \mathbbm{1}_A] + \mathbb{E}[ X \mathbbm{1}_{A^c}] - 10\max\{|\Phi_0(\rho)|, |\Phi_0(\mu - \rho)|\}\\
 & \geq \tfrac{1}{10} \delta_0 N^{1/3} - 10\max\{|\Phi_0(\rho)|, |\Phi_0(\mu - \rho)|\}.
\end{align*}
The calculation above translates to our desired lower bound 
$$2Nf(\rho) -  \mathbb{E}[\log Z_{0, 2N\boldsymbol{\xi}[\rho]}] \geq \tfrac{1}{10} \delta_0 N^{1/3} - 10\max\{|\Phi_0(\rho)|, |\Phi_0(\mu - \rho)|\} - |\mathbb{E}[\log I^\rho_{[\![(-1,-1), (0, -1)]\!]}]| .$$ 
provided that $N_0$ is fixed sufficiently large depending on $\epsilon$ (recall $\rho \in [\epsilon, \mu- \epsilon]$). 
\end{proof}

\section{Moderate deviation bounds for the left tail}\label{sec_tail}

\subsection{Upper bound for the left tail}

Without introducing a new notation, let us assume that we are working with the version of the partition function $Z$ which includes the weight at the starting point. Once the upper bound for the left tail is proved for this version of $Z$, by a union bound it easily implies that the same result holds for the partition function which does not include the starting weight.

\begin{proof}[Proof of Proposition \ref{low_ub}]
For simplicity of the notation, let us denote 
$${\bf v}_N  = 2N{\boldsymbol\xi}[\rho].$$

We will first show the upper bound in the theorem for $t$ in the range 
\begin{equation}\label{first_t}
t_0 \leq t \leq  a_0 N^{2/3}
\end{equation}
for some positive $a_0$ which we will fix during the proof.
Because of Theorem \ref{stat_up_ub}, it suffices for us to show that 
\begin{equation}\label{stat_compare}
\mathbb{P}\Big(\frac{ Z^{\rho}_{-1, {\bf v}_N}}{  Z_{0, {\bf v}_N} }\geq e^{C'tN^{1/3}}\Big) \leq e^{-Ct^{3/2}}.
\end{equation}
We start the estimate  
\begin{align}
\textup{left side of  }\eqref{stat_compare} &= \mathbb{P}\Big(\frac{ Z^{\rho}_{-1, {\bf v}_N}(|\tau| \leq  \sqrt t N^{2/3})}{ Z_{0, {\bf v}_N} \cdot Q^\rho_{-1,{\bf v}_N} \{|\tau|\leq  \sqrt t N^{2/3}\}}\geq e^{C'tN^{1/3}}\Big)\nonumber\\
& \leq \mathbb{P}\Big(\frac{ Z^{\rho}_{-1, {\bf v}_N}(|\tau| \leq  \sqrt t N^{2/3})}{ Z_{0, {\bf v}_N}}\geq \tfrac{1}{2}e^{C'tN^{1/3}}\Big) \label{we_est}\\& \qquad \qquad  + \mathbb{P} (Q^\rho_{-1, {\bf v}_N}\{|\tau|\leq \sqrt t N^{2/3}\} \leq 1/2). \label{no_need} 
\end{align}
By Theorem \ref{exit_time}, the  probability in \eqref{no_need} above is bounded by $e^{-Ct^{3/2}}$. Also note that by a union bound, 
$$  \eqref{we_est} \leq \mathbb{P}\Big(\frac{ Z^{\rho}_{-1, {\bf v}_N}(1 \leq \tau \leq \sqrt t N^{2/3})}{ Z_{0, {\bf v}_N}}\geq \tfrac{1}{4}e^{C'tN^{1/3}}\Big) + \mathbb{P}\Big(\frac{ Z^{\rho}_{-1, {\bf v}_N}(-\sqrt t N^{2/3} \leq -\tau \leq -1)}{ Z_{0, {\bf v}_N}}\geq \tfrac{1}{4}e^{C'tN^{1/3}}\Big).$$
The estimate for these two terms are similar, so we work with 
\begin{equation}\label{we_est1}
\mathbb{P}\Big(\frac{ Z^{\rho}_{-1, {\bf v}_N}(1 \leq \tau \leq \sqrt t N^{2/3})}{ Z_{0, {\bf v}_N}}\geq \tfrac{1}{4}e^{C'tN^{1/3}}\Big) 
\end{equation}

As the numerator appearing in the probability measure of  \eqref{we_est1} can be bounded as follows:
$$Z^{\rho}_{-1, {\bf v}_N}(1 \leq \tau \leq \sqrt t N^{2/3}) \leq \max_{1\leq k \leq \sqrt t N^{2/3}} Z^{\rho}_{-1, {\bf v}_N}(\tau = k) + 100\log N,$$
to get \eqref{we_est1}, we will upper bound the probability 
\begin{equation}\label{we_est2}
\mathbb{P}\Big(\frac{ \max_{1\leq k \leq \sqrt t N^{2/3}} Z^{\rho}_{-1, {\bf v}_N}(\tau = k)}{ Z_{0, {\bf v}_N}}\geq \tfrac{1}{5}e^{C'tN^{1/3}}\Big). 
\end{equation}

For any fixed $k =1, \dots, \sqrt{t}N^{2/3}$, let us denote
\begin{align*}
&\log Z^{\rho}_{-1, {\bf v}_N}(\tau = k) - \log Z_{0, {\bf v}_N} \\
&= \log I^\rho_{[\![(-1,-1), (0, -1)]\!]} + \Big(  (\log Z_{(k, 0), {\bf v}_N}- \log Z_{0, {\bf v}_N} ) + \sum_{i=1}^k I^\rho_{_{[\![(i-1,-1), (i, -1)]\!]}} \Big) \\
&= \log I^\rho_{[\![(-1,-1), (0, -1)]\!]} + S_k .
\end{align*}
By a union bound
\begin{equation}\label{max_walk2}
\eqref{we_est2} \leq  \mathbb{P} \Big(\max_{1\leq k \leq \sqrt{t}N^{2/3}} S_k \geq \tfrac{C'}{2}tN^{1/3}\Big) + \mathbb{P} \Big(\log I_{[\![(-1,-1), (0, -1)]\!]}^\rho  \geq \tfrac{C'}{10}tN^{1/3}\Big) .
\end{equation}
The second probability decays as $e^{-CtN^{1/3}}$, so it remains to bound the first probability with the running maximum.

The steps of $S_k$ are not i.i.d.\ because of the term $\log Z_{(k, 0), {\bf v}_N}- \log Z_{0, {\bf v}_N} $, but we can upper bound $S_k$ with a random walk with i.i.d.\ steps with exponentially high probability, using Theorem \ref{rwrw}.
In the application of Theorem \ref{rwrw}, we will rotate our picture $180^\circ$ so the path $\Theta_{1+\sqrt{t}N^{2/3}}$ is the horizontal segment $[\![(0,0), (\sqrt{t}N^{2/3}, 0)]\!]$. Our perturbed parameter will be 
$\eta =  \rho - q_0 \sqrt{t}N^{-1/3}$. Let us denote the upper bounding i.i.d.~random walk as $\wt S_k$, and the distribution of the steps of $\wt S_k $ is given by the independent sum $-\log(\textup{Ga}^{-1}(\mu - \eta)) +\log(\textup{Ga}^{-1}(\mu-\rho))$. Define the event 
$$A = \{ S_k \leq \log \tfrac{10}{9} + \wt S_k \text{ for $k = 0, 1, \dots, \sqrt{t}N^{2/3}$}\}.$$
Continuing from \eqref{max_walk2}, we have 
\begin{align*}
\eqref{max_walk2} &\leq \mathbb{P} \Big(\Big\{\max_{0\leq k \leq \sqrt t N^{2/3}} S_k \geq \tfrac{C'}{2}tN^{1/3}\Big\} \cap A\Big) + \mathbb{P}(A^c)\\
&\leq \mathbb{P} \Big(\Big\{\ \max_{0\leq k \leq \sqrt t N^{2/3}} \wt{S}_k  \geq \tfrac{C'}{3}tN^{1/3}\Big\} \Big) + \mathbb{P}(A^c).
\end{align*}
By Theorem \ref{rwrw}, we know $\mathbb{P}(A^c) \leq e^{-Ct^{3/2}}$. It remains to upper bound the running maximum 
\begin{equation}\label{rw_before}
\mathbb{P} \Big(\max_{0\leq k \leq \sqrt t N^{2/3}} \wt{S}_k  \geq \tfrac{C'}{3}tN^{1/3} \Big).
\end{equation}
This is now a classic random walk bound, and it is actually a special case of \eqref{rw_before_E} where $a = t^{1/4}N^{1/3}$. Thus, \eqref{rw_before} is upper bounded by $e^{-Ct^{3/2}}$. With this, we have finished the proof for the case $t_0 \leq t \leq a_0 N^{2/3}$.  

Next,  we generalize the range of $t$ from \eqref{first_t}.  First, we extend the range to $t_0 \leq t \leq \alpha N^{2/3}$ for any large positive  $\alpha \geq a_0$. To see this, suppose $t = z N^{2/3}$ for $z\in [a_0, \alpha]$. Then, $\tfrac{a_0}{z} t$ satisfies our previous assumption \eqref{first_t}.
Hence, we have for each $t_0 \leq t \leq \alpha N^{2/3}$,
$$\mathbb{P}(\log Z_{0,  {\bf v}_N} - 2Nf(\rho) \leq -tN^{1/3}) \leq \mathbb{P}(\log Z_{0,  {\bf v}_N} - 2Nf(\rho) \leq -(\tfrac{a_0}{z} t)N^{1/3}) \leq e^{-C(\tfrac{a_0}{\alpha})^2 t^{3/2}}.$$

Finally, we will show that for a $\alpha$ sufficiently large, and for $t\geq \alpha N^{2/3}$
$$\mathbb{P}(\log Z_{0,  {\bf v}_N} - 2Nf(\rho) \leq -tN^{1/3}) \leq e^{-CtN^{1/3}}.$$
Let us define $t = zN^{2/3}$ where $z\geq \alpha$. Then, we may replace the free energy with the sum of weights along a single path $\gamma \in \mathbb{X}_{0, {\bf v}_N}$, which has a smaller value. Then, fix $\alpha$ sufficiently large, for $z\geq \alpha$, we have  
\begin{equation}
\begin{aligned}\label{LDP}
\mathbb{P}(\log Z_{0, {\bf v}_N} - 2Nf(\rho) \leq -tN^{1/3}) &\leq \mathbb{P}\Big(  \sum_{i=1}^{2N} \log Y_{\gamma_i}  \leq -\tfrac{1}{2}zN\Big)\\
& = \mathbb{P}\Big(  \sum_{i=1}^{2N} \log Y^{-1}_{\gamma_i}  \geq \tfrac{1}{2}zN\Big) \\
&\leq e^{-CzN} = e^{-CtN^{1/3}},
\end{aligned}
\end{equation}
where the last inequality follows Theorem \ref{max_sub_exp}. With this, we have finished the proof of our theorem. 
\end{proof}

\subsection{Lower bound for the left tail}

The approach we employ here follows from the idea of Theorem 4 in \cite{bootstrap}, which proves the optimal lower bound in the setting of last-passage percolation. The same idea was adapted to the O'Connell-Yor polymer in \cite{OC_tail}.
To start, we have the following proposition. 
\begin{proposition}\label{first_lb}
Let $\rho \in (0, \mu)$. There exist positive constants $\newc\label{first_lb_c1}, \newc\label{first_lb_c2}, N_0$ such that for each $N \geq N_0$, we have
$$
\mathbb{P}(\log Z_{0, 2N\boldsymbol{\xi}[\rho]}  - 2Nf_d \leq -\oldc{first_lb_c1} N^{1/3}) \geq \oldc{first_lb_c2}.$$
\end{proposition}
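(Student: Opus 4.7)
\medskip

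The plan is to derive this constant-probability lower bound from the nonrandom fluctuation estimate combined with the moderate deviation tail bounds, via a Paley-Zygmund style second moment argument. (I read the right-hand centering $2N{f_d}$ as $2Nf(\rho)$; in the diagonal case they coincide, and the proof below is written for general $\rho$.)

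Set $X := \log Z_{0, 2N\boldsymbol{\xi}[\rho]} - 2Nf(\rho)$ and write $X^+ = \max(X,0)$, $X^- = \max(-X,0)$, so $X = X^+ - X^-$. The first step is to control $\mathbb{E}[X]$ from above: by the nonrandom fluctuation estimate of Theorem \ref{nr_lb}, we have
\begin{equation*}
\mathbb{E}[X] \;=\; \mathbb{E}[\log Z_{0, 2N\boldsymbol{\xi}[\rho]}] - 2Nf(\rho) \;\leq\; -\oldc{nr_lb_c1}\, N^{1/3}.
\end{equation*}
Since $X^+\ge 0$, this gives $\mathbb{E}[X^-] \geq -\mathbb{E}[X] \geq \oldc{nr_lb_c1} N^{1/3}$.

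The second step is a second moment bound on $X^-$. Using the left tail upper bound of Proposition \ref{low_ub}, $\mathbb{P}(X^- \geq tN^{1/3}) \leq e^{-c\min\{t^{3/2}, tN^{1/3}\}}$ for $t\ge 1$, integration in the tail gives
\begin{equation*}
\mathbb{E}[(X^-)^2] \;=\; \int_0^\infty 2s\,\mathbb{P}(X^- \ge s)\,ds \;\leq\; C N^{2/3}
\end{equation*}
for all $N\ge N_0$, uniformly in $\rho \in [\epsilon,\mu-\epsilon]$. (The complementary right tail bound from Proposition \ref{up_ub} is not strictly needed here, but could be used to give the same control on $\mathbb{E}[X^+]$, which would yield the sharper $\mathbb{E}[X^-]\ge (\oldc{nr_lb_c1}-o(1))N^{1/3}$ if one wanted it.)

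The final step is Paley-Zygmund applied to the nonnegative random variable $Y = X^-$:
\begin{equation*}
\mathbb{P}\bigl(Y > \tfrac{1}{2}\mathbb{E}[Y]\bigr) \;\geq\; \tfrac{1}{4}\cdot\frac{(\mathbb{E}[Y])^2}{\mathbb{E}[Y^2]} \;\geq\; \frac{(\oldc{nr_lb_c1})^2}{4C} \;=:\; \oldc{first_lb_c2}.
\end{equation*}
On the event $\{X^- > \tfrac{1}{2}\mathbb{E}[X^-]\}$ we have $X < -\tfrac{1}{2}\oldc{nr_lb_c1} N^{1/3}$, which yields the claim with $\oldc{first_lb_c1} = \tfrac{1}{2}\oldc{nr_lb_c1}$.

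There is no real obstacle here: all three ingredients (the negative mean of size $N^{1/3}$, the variance control of order $N^{2/3}$, and Paley-Zygmund) are available from results already stated in the paper, and the proof is a three-line deduction. The only conceptual content lies in Theorem \ref{nr_lb}, whose proof (via the stationary comparison of Theorem \ref{statiid_low}) is the hard input being exploited here.
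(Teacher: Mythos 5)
Your argument is correct and its core is essentially the same as the paper's: both hinge on the nonrandom fluctuation lower bound of Theorem \ref{nr_lb}. The paper's proof is terser; it simply notes that on the event $\{\log Z_{0, 2N\boldsymbol{\xi}[\rho]} \leq \mathbb{E}[\log Z_{0, 2N\boldsymbol{\xi}[\rho]}]\}$ (declared to be a ``positive probability event'') the conclusion holds, where positivity of the mean gap is supplied by Theorem \ref{nr_lb} together with Proposition \ref{time_const1}. Your Paley--Zygmund step is a useful fleshing out of precisely this point: the uniform-in-$N$ lower bound on $\mathbb{P}(X \le \mathbb{E}[X])$ is not automatic without quantitative control (indeed one cannot use Proposition \ref{ptp_low} here, since its proof ultimately cycles back to this very proposition via Proposition \ref{itl_bound}), and your combination of the mean bound $\mathbb{E}[X^-] \ge \oldc{nr_lb_c1} N^{1/3}$ with the second moment bound $\mathbb{E}[(X^-)^2] \le CN^{2/3}$ from Proposition \ref{low_ub} is exactly the kind of argument needed to make that claim rigorous. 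So your version is if anything a bit more careful.

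One small imprecision: you say you ``read the right-hand centering $2Nf_d$ as $2Nf(\rho)$,'' as though these were interchangeable, and note only that they coincide in the diagonal case. The statement really does have the diagonal shape value $f_d$ even for off-diagonal $\rho$, and this is not a typo. What you should say instead is that by Proposition \ref{time_const1} one has $f_d \ge f(\rho)$, so $\{\log Z_{0, 2N\boldsymbol{\xi}[\rho]} - 2Nf(\rho) \le -cN^{1/3}\} \subseteq \{\log Z_{0, 2N\boldsymbol{\xi}[\rho]} - 2Nf_d \le -cN^{1/3}\}$; thus proving the (stronger) $f(\rho)$-centered version, which is what your Paley--Zygmund argument delivers, implies the proposition as stated. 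This is exactly the role Proposition \ref{time_const1} plays in the paper's proof, and it should be cited rather than absorbed into a reinterpretation of the statement.
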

\begin{proof}
This follows directly from Proposition \ref{time_const1} which says $2Nf_d \geq 2Nf(\rho)$, and Theorem \ref{nr_lb} which says $2Nf(\rho) \geq \mathbb{E}[\log Z_{0, 2N\boldsymbol{\xi}[\rho]}] + CN^{1/3}$. Note the probability of $\{\log Z_{0, 2N\boldsymbol{\xi}[\rho]}  - \mathbb{E}[\log Z_{0, 2N\boldsymbol{\xi}[\rho]}] \leq 0\}$ is {bounded uniformly from below because of the lower bound of the right tail in Proposition \ref{up_lb}}. Then, on the event $\{\log Z_{0, 2N\boldsymbol{\xi}[\rho]}  - \mathbb{E}[\log Z_{0, 2N\boldsymbol{\xi}[\rho]}] \leq 0\}$, we have $\{\log Z_{0, 2N\boldsymbol{\xi}[\rho]}  - 2Nf_d  \leq -CN^{1/3}\}$.
\end{proof}

Using a step-back argument, we obtain an interval to interval lower bound.
\begin{proposition}\label{iti_bound}
There exist positive constants $\newc\label{iti_bound_c1}, \newc\label{iti_bound_c2}, \eta, N_0$ such that for each $N\geq N_0$ and each integer $h\in [-\eta N^{1/3}, \eta N^{1/3}]$,  we have 
$$\mathbb{P}\Big(\log Z^{\textup{max}}_{\mathcal{L}_{0}^{N^{2/3}}, \mathcal{L}^{N^{2/3}}_{(N-2hN^{2/3}, N+2hN^{2/3})}}  - 2N{f_d}\leq -\oldc{iti_bound_c1} N^{1/3}\Big) \geq \oldc{iti_bound_c2}.$$
\end{proposition}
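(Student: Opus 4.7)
The plan is to prove Proposition \ref{iti_bound} by a step-back argument that combines the one-point left-tail lower bound of Proposition \ref{first_lb} with uniform profile control via Theorem \ref{rwrw} and the moderate deviation left-tail upper bound of Proposition \ref{low_ub}. Fix a large constant $c>0$ (to be tuned at the end) and set $\mathbf{u}_- = (-cN,-cN)$ and $\mathbf{u}_+ = ((1+c)N - 2hN^{2/3}, (1+c)N + 2hN^{2/3})$. A direct calculation using Proposition \ref{slope_z} shows that the displacement $\mathbf{u}_+ - \mathbf{u}_-$ has $\ell^1$-length $2(1+2c)N$ and lies in the characteristic direction $\boldsymbol{\xi}[\rho^*]$ for some $\rho^*$ with $|\rho^* - \mu/2| = O(\eta/(1+2c))$; for $\eta$ small enough (depending on $c$), $\rho^*$ lies in a compact subset of $(0,\mu)$.

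The crucial polymer inequality comes from restricting the partition function $Z_{\mathbf{u}_-,\mathbf{u}_+}$ to paths that pass through a fixed pair $(\mathbf{a}^*, \mathbf{b}^*) \in \mathcal{L}_0^{N^{2/3}} \times \mathcal{L}^{N^{2/3}}_{(N-2hN^{2/3}, N+2hN^{2/3})}$ that maximizes $Z_{\mathbf{a},\mathbf{b}}$ over the two intervals. Since the weight at each vertex is counted exactly once under the convention on $Z$ from Section \ref{main_results}, the partition function factorizes as $Z_{\mathbf{u}_-,\mathbf{u}_+} \geq Z_{\mathbf{u}_-,\mathbf{a}^*} \cdot Z^{\max}_{\mathcal{L}_0^{N^{2/3}},\,\mathcal{L}^{N^{2/3}}_{(\cdots)}} \cdot Z_{\mathbf{b}^*,\mathbf{u}_+}$; taking logarithms and rearranging gives
\[
\log Z^{\max}_{\mathcal{L}_0^{N^{2/3}},\,\mathcal{L}^{N^{2/3}}_{(\cdots)}} \;\leq\; \log Z_{\mathbf{u}_-,\mathbf{u}_+} \;-\; \min_{\mathbf{a} \in \mathcal{L}_0^{N^{2/3}}} \log Z_{\mathbf{u}_-,\mathbf{a}} \;-\; \min_{\mathbf{b} \in \mathcal{L}^{N^{2/3}}_{(\cdots)}} \log Z_{\mathbf{b},\mathbf{u}_+}.
\]

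I would then bound the three terms. \emph{(i)} For the first, Proposition \ref{first_lb} applied at scale $(1+2c)N$ and direction $\rho^*$ yields the event $E_1 = \{\log Z_{\mathbf{u}_-,\mathbf{u}_+} \leq 2(1+2c)N f_d - \widetilde{C}_1 ((1+2c)N)^{1/3}\}$ with probability at least $\widetilde{C}_2$. \emph{(ii)} For the minimum over $\mathbf{a}$, I would first apply Proposition \ref{low_ub} at the diagonal reference point $\mathbf{a}_0 = (0,0)$ (whose direction from $\mathbf{u}_-$ is exactly diagonal, so $f(\rho)=f_d$) to get $\log Z_{\mathbf{u}_-,\mathbf{a}_0} \geq 2cNf_d - \alpha N^{1/3}$ with probability at least $1 - e^{-C\alpha^{3/2}/c^{1/2}}$, and then invoke Theorem \ref{rwrw} along the down-right zig-zag through $\mathcal{L}_0^{N^{2/3}}$ (of length $O(N^{2/3})$) to extend this bound uniformly: the random-walk sandwich fluctuates by $O(N^{1/3})$ with exponential tails, while the residual deterministic drift $2cN|f_d - f(\rho(\mathbf{a}))|$ is $O(N^{1/3}/c)$ by Propositions \ref{slope_z} and \ref{time_const}. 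Together these produce an event $E_2 = \{\min_{\mathbf{a}} \log Z_{\mathbf{u}_-,\mathbf{a}} \geq 2cNf_d - KN^{1/3}\}$ of high probability, for an explicit constant $K = K(c,\alpha)$. \emph{(iii)} An analogous event $E_3$ handles the $\mathbf{b}$-side by symmetry.

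On $E_1 \cap E_2 \cap E_3$, the displayed inequality reduces to $\log Z^{\max}_{\ldots} - 2Nf_d \leq -[\widetilde{C}_1(1+2c)^{1/3} - 2K]N^{1/3}$, and a union bound shows $\mathbb{P}(E_1 \cap E_2 \cap E_3) \geq \widetilde{C}_2/2$ once the tail probabilities of $E_2^c, E_3^c$ are made small. The main obstacle will be tuning $c$ so that the bracketed coefficient is a positive constant: as $c \to \infty$, the gain $\widetilde{C}_1(1+2c)^{1/3} \sim 2^{1/3}\widetilde{C}_1 c^{1/3}$, while the requirement $\alpha \gtrsim t_0 c^{1/3}$ (so that the parameter $t = \alpha/c^{1/3}$ in Proposition \ref{low_ub} lies in its moderate deviation range) forces $K \gtrsim c^{1/3}$ as well. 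A careful accounting of these constants, exploiting that both quantities have the same $c^{1/3}$ scaling with explicit leading coefficients, should show that $c$ can be chosen large enough to make $\widetilde{C}_1(1+2c)^{1/3} - 2K$ a strictly positive constant multiple of $c^{1/3}$, which delivers the desired $C_1 N^{1/3}$ deviation with positive probability $C_2$.
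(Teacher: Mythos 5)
Your overall skeleton (step back, superadditivity, one-point lower bound from Proposition \ref{first_lb} for the long free energy, uniform control of the two side terms) is the right family of argument, but the direction in which you take the step-back parameter is fatal. With a macroscopic step-back $cN$ and $c\to\infty$, the gain from Proposition \ref{first_lb} is $\widetilde C_1\bigl((1+2c)N\bigr)^{1/3}\asymp \widetilde C_1\,c^{1/3}N^{1/3}$, while the loss you must allow for the side terms $\log Z_{\mathbf u_-,\mathbf a}$, $\log Z_{\mathbf b,\mathbf u_+}$ is at least of order $c^{1/3}N^{1/3}$ \emph{no matter which tool you use}: these are free energies over distance $cN$, whose typical fluctuations are of order $(cN)^{1/3}$ (Theorem \ref{var}, Proposition \ref{ptp_low}), so to keep the failure probability below a fixed fraction of $\widetilde C_2$ you are forced to take $K\gtrsim t_1 c^{1/3}$ with $t_1$ dictated by the (unknown) constants in Proposition \ref{low_ub} / Theorem \ref{high_inf}. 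Both sides scale as $c^{1/3}$, so no choice of $c$ helps; positivity of $\widetilde C_1(1+2c)^{1/3}-2K$ reduces to comparing the non-random-fluctuation constant $\widetilde C_1$ of Proposition \ref{first_lb} (which could be arbitrarily small) against the left-tail constants (which could force $t_1$ large). These constants are unrelated and none of the paper's inputs lets you arrange the needed inequality, so the ``careful accounting'' you defer to at the end cannot be carried out. This is exactly the point you flag as the main obstacle, and it is a genuine gap, not a bookkeeping issue.

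The paper avoids this by sending the step-back distance to zero rather than infinity: with ${\bf a}=\wt{\bf a}-\epsilon(N,N)$, ${\bf b}=\wt{\bf b}+\epsilon(N,N)$, the gain stays $\approx \widetilde C_1 N^{1/3}$ (Proposition \ref{first_lb} at scale $(1+2\epsilon)N$, after comparing shape functions via Proposition \ref{time_const1}), while the side-term loss is $M(\epsilon N)^{1/3}=M\epsilon^{1/3}N^{1/3}$, where $M$ is fixed first (via Theorem \ref{high_inf}, to make $e^{-CM}$ small relative to $\widetilde C_2$) and then $\epsilon$ is chosen small so that $M\epsilon^{1/3}\le \widetilde C_1/2$. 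The price of the small step-back is that the box-to-point minimum bound only controls endpoints in an interval of width $(\epsilon N)^{2/3}$, much narrower than the full $N^{2/3}$; hence the paper first cuts $\mathcal L_0^{N^{2/3}}$ and $\mathcal L^{N^{2/3}}_{(N-2hN^{2/3},N+2hN^{2/3})}$ into $O(\epsilon^{-1})$ sub-intervals of width $(\epsilon N)^{2/3}$, proves the constant-probability bound for each pair, and glues the $O(\epsilon^{-2})$ decreasing events with the FKG inequality. If you want to salvage your write-up, replace ``$c$ large'' by ``$c=\epsilon$ small,'' add the interval decomposition so your uniform-minimum step applies at scale $\epsilon N$, and finish with FKG rather than a single union bound.
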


\begin{proof}
For simplicity of the notation, let us denote $$J^{h} = \mathcal{L}^{N^{2/3}}_{(N-2hN^{2/3}, N+2hN^{2/3})} \quad \textup{ and } \quad I = \mathcal{L}_{0}^{N^{2/3}}.$$

The proof uses a step-back argument.
For any $\epsilon >0$, let us first define $I^\epsilon = \epsilon^{2/3}I$. We may increase the cutoff $N_0$ depending on $\epsilon$ so that $I^\epsilon$ is non-empty.  We cover $I$ by a sequence of shifted $I^\epsilon$'s, i.e. $$I \subset \bigcup_{i=-K}^K I^\epsilon_i $$
where $I^\epsilon_i = (-2i(\epsilon N)^{2/3}, 2i (\epsilon N)^{2/3}) + I^\epsilon$ and $K = \floor{1/\epsilon}+ 1$. We do the same for $J^h$ and obtain the collection $\{J^{h, \epsilon}_j\}_{j=-K}^K$.
Next, we will show that for each pair $i, j \in [\![-K, K]\!]$, there exists $c, c'$ such that 
\begin{equation}\label{e_bound}
\mathbb{P}(\log Z^{\textup{max}}_{I^\epsilon_i, J^{h, \epsilon}_j}  - 2Nf(\rho)\leq -c N^{1/3}) \geq c'.
\end{equation}
Let us define ${\bf u}^*\in I^\epsilon_i$ and ${\bf v}^*\in J^{h, \epsilon}_j$ be the pair of points such that 
$$Z_{{\bf u}^*, {\bf v}^*} = Z^{\textup{max}}_{I^\epsilon_i, J^{h, \epsilon}_j}.$$
And let us denote the midpoints of $I^\epsilon_i$ and $J^{h, \epsilon}_j$ as $\wt{\bf a}$ and $\wt{\bf b}$.

Next, we define the step back points ${\bf a} = \wt{\bf a} - \epsilon (N,N)$ and ${\bf b} = \wt{\bf b} + \epsilon(N,N)$. With these new endpoints, we have
\begin{equation}\label{step_back}
\log  Z_{{\bf a}, {\bf b}}  \geq \log Z_{{\bf a}, {\bf u}^* }  + \log Z^{\textup{max}}_{I^\epsilon_i, J^{h, \epsilon}_j}  + \log Z_{{\bf v}^*, {\bf b}} .
\end{equation}

Let us look at the term $\log  Z_{{\bf a}, {\bf b}} $ on the left side. By Proposition \ref{time_const1},   we have
$$\Lambda({\bf b} - {\bf a}) \leq  2(N+2\epsilon N)f_d.$$
By Proposition \ref{first_lb}, we know there exists an event $A$ with $\mathbb{P}(A) \geq \oldc{first_lb_c2}$ such that on the event $A$, we have
\begin{equation}\label{ab_bound}
\log  Z_{{\bf a}, {\bf b}}   - 2(N+2\epsilon N)f_d \leq - \oldc{first_lb_c1} (N+2\epsilon N)^{1/3}.
\end{equation}
Next, we show that on a high probability event $B$ with $\mathbb{P}(B) \geq 1- \oldc{first_lb_c2}/2$, we have
\begin{equation}\label{pti}
\log Z_{{\bf a}, {\bf u}^* }  + \log Z_{{\bf v}^*, {\bf b}}  - 4\epsilon N f(\rho) \geq - \tfrac{\oldc{first_lb_c1}}{2} N^{1/3}.
\end{equation}
Once we have these, on the event $A\cap B$ which has probability at least $\oldc{first_lb_c2}/2$, estimates \eqref{step_back}, \eqref{ab_bound} and \eqref{pti} will imply
$$\log Z^{\textup{max}}_{I^\epsilon_i, J^{h, \epsilon}_j}  -  2Nf_d  \leq - \tfrac{\oldc{first_lb_c1}}{2} N^{1/3},$$
which is the statement in \eqref{e_bound}.

By symmetry, we will work with the term $\log Z_{{\bf a}, {\bf u}^* } $. By Theorem \ref{high_inf}, 
$$\mathbb{P}(\log Z_{{\bf a}, {\bf u}^* }  - 2 \epsilon N f_d < - M(\epsilon N)^{1/3}) \leq e^{-CM} \leq \tfrac{\oldc{first_lb_c2}}{10} $$
provided $M$ is fixed sufficiently large. Let $B_1$ denote the complement of the event above, and let $B_2$ be the similar event defined for $\log Z_{{\bf v}^*, {\bf b} }$. We define $B = B_1 \cap B_2$, and $\mathbb{P}(B) \geq 1- \tfrac{\oldc{first_lb_c2}}{2}.$
Let us fix $\epsilon$ sufficiently small so that $M \epsilon^{1/3} \leq \tfrac{\oldc{first_lb_c1}}{2}$. With this, we have shown \eqref{pti}, thus finishing the proof for \eqref{e_bound}.

Finally, to prove the proposition, note 
$$\{\log Z^{\textup{max}}_{I, J^h} - 2Nf_d \leq -c N^{1/3}\} \supset \bigcap_{i,j = -K}^K \Big\{\log(Z^{\textup{max}}_{I^\epsilon_i, J^{h, \epsilon}_j}) - 2Nf_d \leq -c N^{1/3}\Big \}.$$
By the FKG inequality 
$$\mathbb{P}\Big(\bigcap_{i,j = -K}^K \Big\{\log Z^{\textup{max}}_{I^\epsilon_i, J^{h, \epsilon}_j}  - 2Nf_d \leq -c N^{1/3}\Big \}\Big) \geq \prod_{i, j = -K}^K\mathbb{P}\Big(\log Z^{\textup{max}}_{I^\epsilon_i, J^{h, \epsilon}_j} - 2Nf(\rho)\leq -c N^{1/3}\Big),$$
and \eqref{pti} says each term inside the product is lower bounded by some positive $c'$. Hence, we obtain that 
$$\mathbb{P}\Big(\log Z^{\textup{max}}_{I, J^h}  - 2Nf(\rho)\leq -c N^{1/3}\Big) \geq (c')^{(2/\epsilon)^2} = \oldc{iti_bound_c2}$$
and we have finished the proof of this proposition.
\end{proof}

Using the FKG inequality, we will further improve our lower bound to the following. 
\begin{proposition}\label{itl_bound}
There exist positive constants $\newc\label{itl_bound_c1}, \newc\label{itl_bound_c2}, N_0$ such that for all $N\geq N_0$ 
$$\mathbb{P}\Big(\log Z^{\textup{max}}_{\mathcal{L}_0^{N^{2/3}}, \mathcal{L}_{N}}  - 2Nf_d\leq -\oldc{itl_bound_c1} N^{1/3}\Big) \geq \oldc{itl_bound_c2}.$$
\end{proposition}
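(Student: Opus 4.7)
The plan is to reduce the interval-to-line bound to the interval-to-interval bound (Proposition \ref{iti_bound}) by controlling the contribution from paths ending far from the diagonal via the transversal-fluctuation estimate (Proposition \ref{trans_fluc_loss2}), and then combining the relevant events through the FKG inequality. Fix a large integer $T$ (chosen below, independent of $N$). For each integer $h$ with $|h|\le T$, set
$$A_h \;=\; \Big\{\log Z^{\textup{max}}_{\mathcal{L}_0^{N^{2/3}},\,J^h} - 2Nf_d \;\le\; -\oldc{iti_bound_c1}\,N^{1/3}\Big\}, \qquad J^h \;=\; \mathcal{L}^{N^{2/3}}_{(N-2hN^{2/3},\,N+2hN^{2/3})}.$$
By Proposition \ref{iti_bound}, $\mathbb{P}(A_h)\ge \oldc{iti_bound_c2}$ for all $N$ large (since $T\le \eta N^{1/3}$ eventually). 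In addition, let
$$B \;=\; \Big\{\log Z_{\mathcal{L}_0^{N^{2/3}},\,\mathcal{L}_N\setminus\mathcal{L}_N^{(2T+1)N^{2/3}}} - 2Nf_d \;\le\; -\oldc{trans_fluc_loss2_c1}(2T)^2\,N^{1/3}\Big\}.$$
Proposition \ref{trans_fluc_loss2} with $s=1$, $t=2T$ gives $\mathbb{P}(B)\ge 1-e^{-\oldc{trans_fluc_loss2_c2}(2T)^3}$.

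Now choose $T$ large enough that $\mathbb{P}(B)\ge 1/2$ and $\oldc{trans_fluc_loss2_c1}(2T)^2\ge \oldc{iti_bound_c1}$. Each $A_h$ and $B$ is a \emph{decreasing} event in the i.i.d.\ positive weights $\{Y_{\bf z}\}$, because both $Z_{A,B}$ and $Z^{\textup{max}}_{A,B}$ are non-decreasing in the weights. The FKG inequality therefore yields
$$\mathbb{P}\Big(\bigcap_{|h|\le T} A_h \,\cap\, B\Big) \;\ge\; \mathbb{P}(B)\cdot\prod_{|h|\le T}\mathbb{P}(A_h) \;\ge\; \tfrac{1}{2}\,\oldc{iti_bound_c2}^{\,2T+1},$$
which is a positive constant depending only on $\mu$. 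This will serve as $\oldc{itl_bound_c2}$. On this intersection, every endpoint ${\bf b}\in\mathcal{L}_N$ either lies in some $J^h$ with $|h|\le T$ (since $\bigcup_{|h|\le T}J^h\supseteq \mathcal{L}_N^{(2T+1)N^{2/3}}$), in which case $Z_{{\bf a},{\bf b}}\le Z^{\textup{max}}_{\mathcal{L}_0^{N^{2/3}},J^h}$, or it lies in the tail, in which case $Z_{{\bf a},{\bf b}}\le Z_{\mathcal{L}_0^{N^{2/3}},\mathcal{L}_N\setminus\mathcal{L}_N^{(2T+1)N^{2/3}}}$ (max $\le$ sum). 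Taking the logarithm of the max of these two deterministic upper bounds gives
$$\log Z^{\textup{max}}_{\mathcal{L}_0^{N^{2/3}},\mathcal{L}_N} - 2Nf_d \;\le\; -\oldc{iti_bound_c1}\,N^{1/3},$$
so the proposition holds with $\oldc{itl_bound_c1}=\oldc{iti_bound_c1}$.

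The conceptual obstacle, and the point of the argument, is that a naive FKG-product over an exhaustive cover of $\mathcal{L}_N$ by $O(N^{1/3})$ intervals $J^h$ would give a bound of order $\oldc{iti_bound_c2}^{\,CN^{1/3}}\to 0$, which is useless. The transversal-fluctuation estimate of Proposition \ref{trans_fluc_loss2} is precisely what lets us restrict the FKG product to an $N$-independent finite number of central slices while absorbing all other endpoints into a single high-probability event $B$.
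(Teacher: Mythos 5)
Your proof is correct and follows essentially the same route as the paper: split the line $\mathcal{L}_N$ into a bounded central region covered by finitely many slices $J^h$ (handled via Proposition \ref{iti_bound} and the FKG inequality) and a tail region (handled via a transversal-fluctuation estimate), and combine the events through FKG. The only cosmetic difference is that you package the entire tail into a \emph{single} high-probability event $B$ using the union-bound version, Proposition \ref{trans_fluc_loss2}, whereas the paper applies Proposition \ref{trans_fluc_loss} to each individual $J^h$ with $|h|\ge h_0$ and takes an infinite FKG product $\prod_{|h|\ge h_0}(1-e^{-C|h|^3})$. Both yield an $N$-independent positive constant; your formulation is marginally cleaner since it avoids verifying the convergence of that product, but it carries no substantive advantage over the paper's argument.
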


\begin{proof}
For simplicity of the notation, let us denote $$J^{h} = \mathcal{L}^{N^{2/3}}_{(N-2hN^{2/3}, N+2hN^{2/3})} \quad \textup{ and } \quad I = \mathcal{L}_{0}^{N^{2/3}}.$$

The main idea is to cover the line  $\mathcal{L}$ by $J^h$ for $h \in \mathbb{Z}$. For some large fixed $h_0$ which will be chosen later, we then split the possible values of $h$ into two parts $[\![- h_0, h_0]\!]$ and $\mathbb{Z}\setminus [\![- h_0, h_0]\!]$. For $h\in [\![- h_0, h_0]\!]$ we use the FKG inequality and the lower bound from Proposition \ref{iti_bound}. 
On the other hand, for $h\in \mathbb{Z}\setminus [\![- h_0, h_0]\!]$, Proposition \ref{trans_fluc_loss} show that the probability is actually exponentially high, i.e. 
$$\mathbb{P}(\log Z^{\textup{max}}_{I, J^h}  - 2Nf(\rho)\leq -c N^{1/3}) \geq 1- e^{-C|h|^3},$$
provided $c$ is sufficiently small. 
Thus, we have the lower bound
$$\mathbb{P}(\log Z^{\textup{max}}_{I, \mathcal{L}}  - 2N{f_d}\leq -c N^{1/3}) \geq \oldc{iti_bound_c2}^{100h_0}\prod_{|h|= h_0}^\infty (1-e^{-C|h|^3})  = \oldc{itl_bound_c2},$$
where $\oldc{iti_bound_c2}$ is the probability lower bound from Proposition \ref{iti_bound}. With this, we have finished the proof of this proposition.
\end{proof}

We prove a lower bound for the constrained free energy.

\begin{proposition}\label{con_lb}
There exists constants $\newc\label{con_lb_c1}, \newc\label{con_lb_c2} N_0, t_0, a_0$ such that for each $N\geq N_0$, $t_0 \leq t \leq a_0N^{2/3}/(\log N)^2$ and $0< l \leq N^{1/3}$, we have
$$\mathbb{P}(\log Z^{\text{in}, l N^{2/3}}_{0, N}  - 2Nf_d \leq - \oldc{con_lb_c2}t N^{1/3}) \geq e^{-\oldc{con_lb_c2}lt^{5/2}}.$$
\end{proposition}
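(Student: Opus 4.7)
The plan is a multi-scale decomposition along the diagonal, combining the interval-to-interval lower bound (Proposition~\ref{iti_bound}) via FKG on each piece with independence across pieces.  Fix an integer $K = \lceil C_0 t^{3/2}\rceil$ for an absolute constant $C_0$ to be chosen, and set $r_i = iN/K$ for $i=0,1,\dotsc,K$. Writing
\[
W_i := \log Z^{\max}_{\mathcal{L}^{lN^{2/3}}_{r_{i-1}},\,\mathcal{L}^{lN^{2/3}}_{r_i}},
\]
and decomposing the constrained partition function by summing over the crossing points on the intermediate anti-diagonals,
\[
\log Z^{\textup{in},\,lN^{2/3}}_{0,N} \le (K-1)\log(ClN^{2/3}) + \sum_{i=1}^K W_i,
\]
since $\mathcal{L}_{r_i}\cap R^{lN^{2/3}}_{0,N}$ has at most $ClN^{2/3}$ lattice points and each sub-interval partition function is bounded by the corresponding maximum. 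Define $A_i := \{W_i \le 2(N/K)f_d - c(N/K)^{1/3}\}$ for a small absolute $c>0$. On $\bigcap_i A_i$ we have $\sum_i W_i \le 2Nf_d - cK^{2/3}N^{1/3} = 2Nf_d - c'tN^{1/3}$, while the crossing-point error $(K-1)\log(ClN^{2/3}) \lesssim t^{3/2}\log N$ is negligible against $tN^{1/3}$ precisely under the hypothesis $t \le a_0 N^{2/3}/(\log N)^2$. Thus $\log Z^{\textup{in},\,lN^{2/3}}_{0,N} - 2Nf_d \le -C_1 tN^{1/3}$ on $\bigcap_i A_i$.

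It remains to lower bound $\mathbb{P}(\bigcap_i A_i)$. The events $A_i$ live on disjoint weights (the slab between consecutive $\mathcal{L}_{r_i}$'s) and are therefore independent, so $\mathbb{P}(\bigcap_i A_i) = \prod_i \mathbb{P}(A_i)$. Within a single segment, cover each of the two anti-diagonal intervals $\mathcal{L}^{lN^{2/3}}_{r_{i-1}},\mathcal{L}^{lN^{2/3}}_{r_i}$ by $M := \lceil lK^{2/3}\rceil = \lceil lt\rceil$ consecutive sub-intervals $\{I_j\}$ of natural width $(N/K)^{2/3}$, and for each pair $(I_j^{r_{i-1}},I_k^{r_i})$ with shift $h:=|j-k|$ consider the decreasing event
\[
E_{j,k} := \bigl\{\log Z^{\max}_{I_j^{r_{i-1}},\,I_k^{r_i}} \le 2(N/K)f_d - c(h^2+1)(N/K)^{1/3}\bigr\}.
\]
Taking the maximum over $j,k$, the intersection $\bigcap_{j,k} E_{j,k}$ implies $A_i$. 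Since each $E_{j,k}$ is decreasing in the i.i.d.\ weights, by FKG $\mathbb{P}(A_i) \ge \prod_{j,k}\mathbb{P}(E_{j,k})$.

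For $h$ bounded by an absolute constant $H_0$, Proposition~\ref{iti_bound} applied to the rescaled problem of size $N/K$ gives $\mathbb{P}(E_{j,k}) \ge c_0 > 0$, provided $c$ is small enough that $c(H_0^2+1)$ lies below the constant $C_{\ref{iti_bound_c1}}$. The number of such pairs is $O(H_0 M) = O(lt)$, so these contribute $\ge c_0^{Clt} = e^{-C'lt}$ to the product. For $h>H_0$, Proposition~\ref{trans_fluc_loss} applied at scale $N/K$ (with its parameter $t$ chosen as $(C_{\ref{trans_fluc_loss_c1}}-c)h^2$) yields $\mathbb{P}(E_{j,k}^c) \le e^{-Ch^3}$; for the extreme $h$ where ${\bf q}-{\bf p}$ has a negative coordinate the partition function vanishes and $E_{j,k}$ is vacuous. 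Since $\sum_{h > H_0} e^{-Ch^3}$ is finite and summing this over the $M=O(lt)$ choices of $j$ gives an $O(lt)$ total, using $\log(1-x) \ge -2x$ for small $x$ these pairs contribute $\ge e^{-C''lt}$. Combining,
\[
\mathbb{P}(A_i) \ge e^{-C'''lt}, \qquad \mathbb{P}\Bigl(\bigcap_{i=1}^K A_i\Bigr) \ge e^{-C'''lt\cdot K} \ge e^{-C_2\,l\,t^{5/2}}.
\]

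The main obstacle is balancing the crossing-point correction $K\log(lN^{2/3}) \sim t^{3/2}\log N$ against the target gain $tN^{1/3}$, which is exactly why the statement restricts $t \le a_0 N^{2/3}/(\log N)^2$. A secondary delicacy is the uniform choice of $c$ so that $E_{j,k}$ both is nontrivially lower-bounded for small $h$ via Proposition~\ref{iti_bound} and automatic with high probability for large $h$ via Proposition~\ref{trans_fluc_loss}, yielding the correct exponent $lt$ per segment.
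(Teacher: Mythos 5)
Your proposal is correct, and its skeleton is the same as the paper's: chop the constrained rectangle into roughly $t^{3/2}$ diagonal levels of length $N/t^{3/2}$, force the free energy across each level to fall below typical by order $(N/t^{3/2})^{1/3}$ using order $lt$ decreasing events of constant probability per level, combine them by FKG/independence to get $e^{-Clt^{5/2}}$, and absorb the combinatorial overhead (your $t^{3/2}\log(lN^{2/3})$, the paper's $t^{3/2}\log(tl)$) using exactly the hypothesis $t\le a_0N^{2/3}/(\log N)^2$. The implementation differs in one respect: the paper cuts each level into $lt$ boxes of width $(N/t^{3/2})^{2/3}$ and applies the interval-to-line lower bound, Proposition \ref{itl_bound}, once per box, whereas you decompose at the crossing points of the intermediate anti-diagonals and control the full-width interval-to-interval maximum $W_i$ directly, splitting the shifts $h$ into a bounded range handled by Proposition \ref{iti_bound} (FKG over constantly many constant-probability events per sub-interval) and a large range handled by Proposition \ref{trans_fluc_loss}. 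This is in effect a re-derivation, on a segment of width $lN^{2/3}$, of Proposition \ref{itl_bound}, whose proof in the paper is precisely this small-$h$/large-$h$ dichotomy; so your route is sound but somewhat longer than necessary, and citing Proposition \ref{itl_bound} per box (as the paper does) would let you skip the shift analysis and the crossing-point entropy count at scale $\log(lN^{2/3})$ in favor of the smaller $\log(tl)$ factor. Minor points that deserve a sentence if written up: the slabs are weight-disjoint only because of the convention that the initial weight is excluded (otherwise replace independence by FKG, since all events are decreasing), and the choice of the absolute constants $c$, $H_0$ must be made jointly so that $c(H_0^2+1)$ sits below the constant of Proposition \ref{iti_bound} while $c$ also sits below the curvature constant of Proposition \ref{trans_fluc_loss} and $e^{-CH_0^3}\le 1/2$ for the $\log(1-x)\ge-2x$ step.
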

\begin{proof}
Using diagonal and anti-diagonal lines, we cut the rectangle $R_{0, N}^{l N^{2/3}}$ into smaller rectangles with diagonal $\ell^\infty$-length $N/t^{3/2}$ and anti-diagonal $\ell^\infty$-length $(N/t^{3/2})^{2/3}$,  Let us denote these small rectangles as $R(u, v)$ where the index $u = 1, 2, \dots, t^{3/2}$ indicates the anti-diagonal level, and $v =  1, 2, \dots, lt$ enumerates the rectangle inside the same anti-diagonal level. Recall the notation $\overline{R(u,v)} $ and  $\underline{R(u,v)}$ denote the upper and lower anti-diagonal sides of $R(u,v)$. Let us also finally define $\mathcal{L}(u)$ to denote the anti-diagonal line which contains $\overline{R(u,v)} $.

Let us define the event 
$$A  = \bigcap_{u, v} \Big\{\log Z_{\underline {R(u, v)}, \mathcal{L}(u)} -  2(N/t^{3/2})f_d \leq -\oldc{itl_bound_c1} (N/t^{3/2})^{1/3}  \Big\},$$
where the constant $\oldc{itl_bound_c1}$ is from  Proposition \ref{itl_bound}. By the FKG inequality and Proposition \ref{itl_bound},  we know $\mathbb{P}(A) \geq e^{-Cl t^{5/2}}$.

Next, we see that our constrained free energy can be upper-bounded as follows. 
\begin{align*}
\log Z^{\text{in}, l N^{2/3}}_{0, N}  &\leq t^{3/2} \Big(\log (tl) + \max_{u,v} \log Z_{\underline {R(u, v)}, \mathcal{L}(u)}\Big)\\
\text{ on the event $A$} \quad & \leq t^{3/2} \Big(\log (tl) + 2(N/t^{3/2}) f_d -\oldc{itl_bound_c1}(N/t^{3/2})^{1/3}\Big)\\
&\leq 2N f_d  -  \oldc{itl_bound_c1}tN^{1/3} + t^{3/2}\log (tl).
\end{align*}
Finally, fix $a_0$ sufficiently small; the assumption $t \leq a_0 N^{2/3}/ (\log N)^2$ implies that 
$\tfrac{1}{2} \oldc{itl_bound_c1}tN^{1/3} \geq  t^{3/2}\log (tl).$
With this, we have shown that 
$$\log Z^{\text{in}, l N^{2/3}}_{0, N}  - 2Nf_d \leq - \tfrac{1}{2}\oldc{itl_bound_c1}t  N^{1/3} \qquad \text{ on the event $A$}.,$$
hence, finished the proof.
\end{proof}

Finally, we prove Proposition \ref{ptp_low}. 

\begin{proof}[Proof of Proposition \ref{ptp_low}]
This follows from the FKG inequality, Proposition \ref{con_lb} and Theorem \ref{trans_fluc_loss3}. 
Set the parameter $l = \sqrt {t}$ in Proposition \ref{con_lb}, then,
\begin{align*}
&\mathbb{P}(\log Z_{0, N}  - 2Nf_d \leq - C't N^{1/3}) \\
&= \mathbb{P}\Big(\Big\{\log Z^{\text{in}, \sqrt{t}N^{2/3}}_{0, N}  - 2Nf_d \leq - C't N^{1/3}) \Big\}\cap \Big\{\log Z^{\text{exit}, \sqrt{t}N^{2/3}}_{0, N}  - 2Nf_d \leq - C't N^{1/3}) \Big\}\Big)\\
& \geq \mathbb{P}\Big(\log Z^{\text{in}, \sqrt{t}N^{2/3}}_{0, N}  - 2Nf_d \leq - C't N^{1/3}) \Big) \mathbb{P}\Big(\log Z^{\text{exit}, \sqrt{t}N^{2/3}}_{0, N}  - 2Nf_d \leq - C't N^{1/3}) \Big)\\
&\geq e^{-Ct^3} \cdot (1-e^{-Ct^{3/2}}).
\end{align*}
provided that $C'$ is fixed sufficiently small.
\end{proof}

\appendix
\section{Proofs of the free energy estimates of Section \ref{free_est}} \label{appen}

\subsection{Free energy and path fluctuations}\label{free fluc}

\subsubsection{Proof of Proposition \ref{trans_fluc_loss} }

\begin{proof}[Proof of Proposition \ref{trans_fluc_loss}]

For this proof, let us define
$$I = \mathcal{L}_{0}^{N^{2/3}} \qquad J^h = \mathcal{L}_{(N-2hN^{2/3}, N+ 2hN^{2/3})}^{N^{2/3}}.$$ 

Without the loss of generality, we will assume $h\in \mathbb{Z}_{\geq 0}$ and it satisfies
\begin{equation}\label{h_range}
0\leq h \leq \frac{1}{2} N^{1/3},
\end{equation}
since $Z_{I, J^h}$ is $0$ otherwise. We also note that {by the maximum bound introduced in Section \ref{max_bd}}, it suffices to prove this estimate for $Z^\textup{max}_{I, J^h}$, since 
$$\log Z_{I, J^h}  \leq 100\log N  + \log Z^\textup{max}_{I, J^h}.$$

\begin{figure}
\begin{center}

\tikzset{every picture/.style={line width=0.75pt}} 

\begin{tikzpicture}[x=0.75pt,y=0.75pt,yscale=-1,xscale=1]

\draw  [dash pattern={on 0.84pt off 2.51pt}]  (299.67,130.17) -- (289.67,170.17) ;
\draw  [dash pattern={on 0.84pt off 2.51pt}]  (289.67,170.17) -- (220.67,240.17) ;
\draw  [dash pattern={on 0.84pt off 2.51pt}]  (299.67,130.17) -- (370.67,58.17) ;
\draw  [fill={rgb, 255:red, 0; green, 0; blue, 0 }  ,fill opacity=1 ] (217.13,240.17) .. controls (217.13,238.21) and (218.71,236.63) .. (220.67,236.63) .. controls (222.62,236.63) and (224.21,238.21) .. (224.21,240.17) .. controls (224.21,242.12) and (222.62,243.71) .. (220.67,243.71) .. controls (218.71,243.71) and (217.13,242.12) .. (217.13,240.17) -- cycle ;
\draw  [fill={rgb, 255:red, 0; green, 0; blue, 0 }  ,fill opacity=1 ] (367.13,58.17) .. controls (367.13,56.21) and (368.71,54.63) .. (370.67,54.63) .. controls (372.62,54.63) and (374.21,56.21) .. (374.21,58.17) .. controls (374.21,60.12) and (372.62,61.71) .. (370.67,61.71) .. controls (368.71,61.71) and (367.13,60.12) .. (367.13,58.17) -- cycle ;
\draw  [fill={rgb, 255:red, 0; green, 0; blue, 0 }  ,fill opacity=1 ] (286.13,170.17) .. controls (286.13,168.21) and (287.71,166.63) .. (289.67,166.63) .. controls (291.62,166.63) and (293.21,168.21) .. (293.21,170.17) .. controls (293.21,172.12) and (291.62,173.71) .. (289.67,173.71) .. controls (287.71,173.71) and (286.13,172.12) .. (286.13,170.17) -- cycle ;
\draw  [fill={rgb, 255:red, 0; green, 0; blue, 0 }  ,fill opacity=1 ] (296.13,130.17) .. controls (296.13,128.21) and (297.71,126.63) .. (299.67,126.63) .. controls (301.62,126.63) and (303.21,128.21) .. (303.21,130.17) .. controls (303.21,132.12) and (301.62,133.71) .. (299.67,133.71) .. controls (297.71,133.71) and (296.13,132.12) .. (296.13,130.17) -- cycle ;
\draw    (269.33,149.58) -- (310,190.75) ;
\draw    (279.33,109.58) -- (320,150.75) ;

\draw (351,44.4) node [anchor=north west][inner sep=0.75pt]    {$\mathbf{b}$};
\draw (201.13,233.57) node [anchor=north west][inner sep=0.75pt]    {$\mathbf{a}$};
\draw (284,177.4) node [anchor=north west][inner sep=0.75pt]    {$\wt{\mathbf{a}}$};
\draw (293,104.4) node [anchor=north west][inner sep=0.75pt]    {$\wt{\mathbf{b}}$};
\draw (258,142.4) node [anchor=north west][inner sep=0.75pt]    {$I$};
\draw (320,142.4) node [anchor=north west][inner sep=0.75pt]    {$J^{h}$};

\end{tikzpicture}
\captionsetup{width=0.8\textwidth}
\caption{ An illustration of the points $\wt {\bf a}, \wt {\bf b}, {\bf a},  {\bf b}$ from the step back argument. }\label{show_ab}
\end{center}
\end{figure}
Next, we describe the step back argument, the points $\wt {\bf a}, \wt {\bf b}, {\bf a},  {\bf b}$ below are illustrated in Figure \ref{show_ab}.
Let $\wt {\bf a}$ and $\wt {\bf b}$ denote the midpoints of $I$ and $J^h$. Let us define the step back points ${\bf a} = \wt{\bf a}-w_0(N, N)$ and ${\bf b} = \wt{\bf b} + w_0(N, N)$ where $w_0$ is a constant that we will fix later. Let us use ${\bf u}^* \in I$ and ${\bf v}^* \in J^h$ to denote the random points such that $Z^{\textup{max}}_{I, J^h} = Z_{{\bf u}^*, {\bf v}^*}$. Then, we have 
\begin{equation}\label{step_back2}
\log Z_{{\bf a}, {\bf b}} \geq \log Z_{{\bf a}, {\bf u}^* } + \log Z^{\textup{max}}_{I, J^{h}}  + \log Z_{{\bf v}^*, {\bf b}}.
\end{equation}


Since $|{\bf b} - {\bf a}|_1 = 2(1+2w_0)N$, 
we rewrite the vector ${\bf b}- {\bf a}$ as $2(1+2w_0)N{\boldsymbol\xi}[\mu/2+z_{{\bf a}, {\bf b}}]$  for some nonnegative constant $z_{{\bf a}, {\bf b}}$.
Note the perpendicular $\ell^1$-distance from ${\bf b}- {\bf a}$ to the diagonal line is 
$$({\bf b}- {\bf a}) \cdot ({\bf e}_1 - {\bf e}_2) = (\wt{\bf b}- \wt{\bf a}) \cdot ({\bf e}_1 - {\bf e}_2),$$
which is the same as the $\ell^1$-distance from $\wt{\bf b} - \wt{\bf a}$ to the diagonal line. For each fixed $h$ in our range \eqref{h_range}, it holds that
$$(\wt{\bf b}- \wt{\bf a}) \cdot ({\bf e}_1 - {\bf e}_2) = 2hN^{2/3}.$$ 
From the perpendicular distance to the diagonal, we see that
\begin{equation}\label{slope_int}
\textup{slope of ${\bf b}-{\bf a}$ } = \tfrac{2(1+2w_0)N + hN^{2/3}}{2(1+2w_0)N - hN^{2/3}}   = 1 + \tfrac{2h}{2(1+2w_0)N^{1/3} - h}.
\end{equation}
Because of the upper bound $h\leq \tfrac{1}{2}N^{1/3}$ from \eqref{h_range}, we can choose $w_0$ and $N_0$ to be large enough so  that for all $N\geq N_0$ and $0 \leq h \leq \tfrac{1}{2}N^{1/3}$, the slope in \eqref{slope_int} is contained inside the interval $[1-\epsilon, 1+ \epsilon]$ from Proposition \ref{slope_z}. Then, by Proposition \ref{slope_z} and possibly increasing the value $w_0$ if necessarily, there exists a positive constant $C$ such that
\begin{equation}\label{relate_z_h}
 z_{{\bf a}, {\bf b}}\in \Big[ \tfrac{1}{C} \tfrac{2h}{2(1+2w_0)N^{1/3} - h}, C \tfrac{2h}{2(1+2w_0)N^{1/3} - h}\Big]
\end{equation}
where the constant $C$ above is independent of $h$ as long as $0 \leq h\leq \tfrac{1}{2}N^{1/3}$.  

Subsequently, if required, we can further increase $w_0$ so that the interval from \eqref{relate_z_h} falls within the small interval $[-\epsilon, \epsilon]$ from Proposition \ref{time_const}. Then, by leveraging Proposition \ref{time_const} and further increasing $w_0$ if necessary, we obtain the first inequality below, while the second inequality is derived from \eqref{relate_z_h}.
\begin{equation}\label{curv2}
2(1+2w_0)N\big[f(\mu/2+z_{{\bf a}, {\bf b}}) - {f_d}\big] \leq (1+2w_0)N\big[-Cz_{{\bf a}, {\bf b}}^2\big] \leq - \wt{C} h^2 N^{1/3}.
\end{equation}
Note that $w_0$ has now been fixed, so we absorb it into $\wt{C}$.

Finally, let us upper bound the probability stated in the proposition, 
\begin{align}
&\mathbb{P} \Big(\log Z^{\textup{max}}_{I, J^h}  - 2N{f_d} \geq (-C h^2 + t) N^{1/3}\Big)\nonumber\\
&\leq \mathbb{P}\Big(\log Z_{{\bf a}, {\bf b}} - \log Z_{{\bf a}, {\bf u}^* } - \log Z_{{\bf v}^*, {\bf b}}  - 2N{f_d} \geq (-C h^2 +t)N^{1/3}\Big)\nonumber\\
&= \mathbb{P}\Big([\log  Z_{{\bf a}, {\bf b}} -  (2+4w_0)N{f_d}]  - [\log Z_{{\bf a}, {\bf u}^*}  - 2w_0N{f_d}] \nonumber\\
&\qquad \qquad \qquad \qquad - [\log Z_{{\bf v}^*, {\bf b}} -2w_0N{f_d}]\geq (-C h^2+t) N^{1/3}\Big)\nonumber\\
\textup{ by \eqref{curv2} } \quad &\leq \mathbb{P}\Big([\log  Z_{{\bf a}, {\bf b}}  -  (2+4w_0)Nf(\mu/2+z_{{\bf a}, {\bf b}})]  - [\log(Z_{{\bf a}, {\bf u}^* }) - 2w_0N{f_d}] \nonumber\\
&\qquad \qquad \qquad \qquad - [\log Z_{{\bf v}^*, {\bf b}} -2w_0N{f_d}]\geq  ((\wt{C} -C )h^2 + t)N^{1/3} \Big)\nonumber\\
&\leq \mathbb{P}\Big(\log Z_{{\bf a}, {\bf b}} -  2(1+2w_0)Nf(\mu/2+z_{{\bf a}, {\bf b}}) \geq  \tfrac{1}{3}((\wt{C} -C )h^2 + t)N^{1/3}\Big) \label{one}\\
& \qquad + \mathbb{P}\Big( \log Z_{{\bf a}, {\bf u}^* } - 2w_0N{f_d} \leq  -\tfrac{1}{3}((\wt{C} -C )h^2 + t)N^{1/3} \Big) \label{two}\\
& \qquad + \mathbb{P}\Big(\log Z_{{\bf v}^* , {\bf b}}  - 2w_0N{f_d} \leq  -\tfrac{1}{3}((\wt{C} -C )h^2 + t)N^{1/3} \Big)\label{three}
\end{align}
Fix $C$ small so that $\wt C- C>0$ in the expression above, we will now show that all three probabilities decay faster than $e^{-C(|h|^3+ \min\{t^{3/2}, tN^{1/3}\})}$.
The term $\eqref{one} \leq e^{-C(|h|^3+ \min\{t^{3/2}, tN^{1/3}\})}$ follows from Proposition \ref{up_ub}.
For the remaining two other terms, by symmetry, their estimates are the same. Let us work with \eqref{two}. 
Since $u^*$ only depends on the edges between the lines $\mathcal{L}_0$ and $\mathcal{L}_N$, then by Proposition \ref{low_ub}
\begin{align*}
&\mathbb{P}\Big(\log Z_{{\bf a}, {\bf u}^* }  - 2w_0 N{f_d} < -\tfrac{1}{3}((\wt{C} -C_1 )h^2 + t) N^{1/3}\Big) \\
&\leq \sup_{{\bf u}\in I} \mathbb{P}\Big(\log Z_{{\bf a}, {\bf u} } - 2w_0 N{f_d} < -\tfrac{1}{3}((\wt{C} -C_1 )h^2 + t) N^{1/3}\Big)\\
& \leq e^{-C(|h|^3 + \min\{t^{3/2}, tN^{1/3}\})}.
\end{align*}
With this, we have finished the proof of this theorem.
\end{proof}

\subsubsection{Proof of proposition \ref{trans_fluc_loss2}}

\begin{proof}[Proof of Proposition \ref{trans_fluc_loss2}]
For this proof, let us define
$$I^k = \mathcal{L}_{(-2kN^{2/3}, 2kN^{2/3})}^{N^{2/3}} \qquad J^h = \mathcal{L}_{(N-2hN^{2/3}, N+ 2hN^{2/3})}^{N^{2/3}}.$$ 

{Because $\mathcal{L}^{sN^{2/3}}_0$ and $ \mathcal{L}_{0}\setminus\mathcal{L}_{0}^{(s+t)N^{2/3}}$ are disjoint}, then the number of points between $\mathcal{L}^{sN^{2/3}}_0$ and $ \mathcal{L}_{N}\setminus\mathcal{L}_{N}^{(s+t)N^{2/3}}$ which are connected by directed paths is at most $N^{100}$. We may work with the maximum version of the free energy as
$$\log Z_{\mathcal{L}^{sN^{2/3}}_0, \mathcal{L}_{N}\setminus\mathcal{L}_{N}^{(s+t)N^{2/3}}} \leq \log Z^\textup{max}_{\mathcal{L}^{sN^{2/3}}_0, \mathcal{L}_{N}\setminus\mathcal{L}_{N}^{(s+t)N^{2/3}}} + 100 \log N.$$

By translation invariance and Proposition \ref{trans_fluc_loss}, 
\begin{align*}
&\mathbb{P}(\log Z_{I^{-k}, J^h}  - 2N{f_d} \geq -\oldc{trans_fluc_loss_c1}(h+k)^2N^{1/3})\\ &= \mathbb{P}(\log Z_{I^{0}, J^{h+k}}  - 2N{f_d} \geq -\oldc{trans_fluc_loss_c1}(h+k)^2N^{1/3})\\
& \leq e^{-\oldc{trans_fluc_loss_c2}(h+k)^3}.
\end{align*}
Then, the following union bound finishes the proof
\begin{align*}
&\mathbb{P}\Big(\log\Big(Z^{\textup{max}}_{\mathcal{L}^{sN^{2/3}}_0, \mathcal{L}_{N}\setminus\mathcal{L}_{N}^{(s+t)N^{2/3}}}\Big)  - 2N{f_d} \geq -\tfrac{\oldc{trans_fluc_loss_c1}}{10}t^2N^{1/3}\Big)\\
& \leq \mathbb{P}\Big(\bigcup_{\substack{h\geq s+t\\k \leq s}}\{\log(Z^\textup{max}_{I^{k}, J^{h}}) - 2N{f_d} \geq -\tfrac{\oldc{trans_fluc_loss_c1}}{5}(h+k)^2N^{1/3}\}\Big)\\
& \qquad \qquad \qquad + \mathbb{P}\Big(\bigcup_{\substack{h\geq s+t\\k \leq s}}\{\log(Z^\textup{max}_{I^{-k}, J^{-h}}) - 2N{f_d} \geq -\tfrac{\oldc{trans_fluc_loss_c1}}{5}(h+k)^2N^{1/3}\}\Big)\\
&\leq \sum_{\substack{h, k \geq t}} e^{-C(h+k)^3} \leq e^{-Ct^3}.
\end{align*}
\end{proof}

\subsubsection{Proof of Theorem \ref{trans_fluc_loss3}}

We start with the following proposition which states that the restricted partition function obtained by summing over paths with high fluctuation at the halfway time will be much smaller than typical.
Fix any $s\in \mathbb{Z}_{\geq 0}$. Let $Z^{\textup{mid}, (s+t)N^{2/3}}_{\mathcal{L}_0^{sN^{2/3}}, \mathcal{L}_N^{sN^{2/3}}}$ denote the partition function which sums over all paths between $\mathcal{L}_0^{sN^{2/3}}$ and $\mathcal{L}_N^{sN^{2/3}}$, and they all avoid the segment $\mathcal{L}^{(s+t)N^{2/3}}_{N/2}$.
\begin{proposition}\label{trans_mid}
There exist positive constants $\newc\label{trans_mid_c1}, \newc\label{trans_mid_c2},   N_0, t_0$ such that for each $N\geq N_0$, $t\geq t_0$  and $s\geq 0$ we have
$$\mathbb{P}\Big(\log Z^{\textup{mid},   (s+t)N^{2/3}}_{\mathcal{L}_0^{sN^{2/3}}, \mathcal{L}_N^{sN^{2/3}}} - 2N{f_d} \geq -\oldc{trans_mid_c1}t^2N^{1/3}\Big) \leq e^{-\oldc{trans_mid_c2}t^{3}}.$$
\end{proposition}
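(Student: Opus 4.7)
The plan is to decompose the restricted partition function at the midpoint antidiagonal $\mathcal{L}_{N/2}$ and control each half independently using the tail bound from Proposition \ref{trans_fluc_loss2}. Because every path in the restricted collection must cross $\mathcal{L}_{N/2}$ at a vertex ${\bf p} \in \mathcal{L}_{N/2}\setminus \mathcal{L}_{N/2}^{(s+t)N^{2/3}}$, we have the factorization bound
\[
Z^{\textup{mid}, (s+t)N^{2/3}}_{\mathcal{L}_0^{sN^{2/3}}, \mathcal{L}_N^{sN^{2/3}}}
= \sum_{{\bf p} \in \mathcal{L}_{N/2}\setminus \mathcal{L}_{N/2}^{(s+t)N^{2/3}}} Z_{\mathcal{L}_0^{sN^{2/3}},\,{\bf p}}\cdot Z_{{\bf p},\, \mathcal{L}_N^{sN^{2/3}}}
\leq Z^{(1)} \cdot Z^{(2)},
\]
where $Z^{(1)} = Z_{\mathcal{L}_0^{sN^{2/3}},\, \mathcal{L}_{N/2}\setminus \mathcal{L}_{N/2}^{(s+t)N^{2/3}}}$ and $Z^{(2)} = Z_{\mathcal{L}_{N/2}\setminus \mathcal{L}_{N/2}^{(s+t)N^{2/3}},\, \mathcal{L}_N^{sN^{2/3}}}$. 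Crucially, $Z^{(1)}$ and $Z^{(2)}$ depend on disjoint sets of weights (those strictly below and strictly above $\mathcal{L}_{N/2}$, respectively), so they are independent.

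Next I would apply Proposition \ref{trans_fluc_loss2} to each half, working on the scale $N/2$. For $Z^{(1)}$ this is exactly the object bounded there (with $N$ replaced by $N/2$, and with the values of $s,t$ rescaled by the harmless factor $2^{2/3}$ to convert between the $N$-scale transversal parameters and the $(N/2)$-scale parameters). The same bound holds for $Z^{(2)}$ by the reflection symmetry of the inverse-gamma weights about the antidiagonal through the center of the rectangle, which swaps the roles of the two endpoints. Absorbing the constant factors coming from the $2^{2/3}$ rescaling into $C_1$ and $C_2$, one obtains positive constants $C_1', C_2'$ such that for $i=1,2$,
\[
\mathbb{P}\Big(\log Z^{(i)} - 2\cdot (N/2) f_d \geq -C_1' t^2 N^{1/3}\Big) \leq e^{-C_2' t^3}.
\]

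Finally, by a union bound and the factorization above, off an event of probability at most $2 e^{-C_2' t^3}$ we have
\[
\log Z^{\textup{mid}, (s+t)N^{2/3}}_{\mathcal{L}_0^{sN^{2/3}}, \mathcal{L}_N^{sN^{2/3}}} - 2N f_d \leq \log Z^{(1)} + \log Z^{(2)} - 2N f_d \leq -2 C_1' t^2 N^{1/3},
\]
which is the claimed bound after renaming constants (choosing $t_0$ large enough so that $2 e^{-C_2' t^3} \leq e^{-\frac{1}{2} C_2' t^3}$, which gives the desired single-exponential form). There is no real obstacle here; the only small point to verify carefully is that Proposition \ref{trans_fluc_loss2} applies after the $N \mapsto N/2$ rescaling, including the requirement $t \geq 1$, which is handled by the assumption $t \geq t_0$ with $t_0$ chosen sufficiently large.
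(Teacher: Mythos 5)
Your proposal is correct and is essentially the paper's own argument: the paper likewise splits at the segment $G=\mathcal{L}_{N/2}\setminus\mathcal{L}_{N/2}^{(s+t)N^{2/3}}$ via subadditivity, $\log Z^{\textup{mid}}\le \log Z_{\mathcal{L}_0^{sN^{2/3}},G}+\log Z_{G,\mathcal{L}_N^{sN^{2/3}}}$, and applies Proposition \ref{trans_fluc_loss2} to each factor. Your extra care about the $N\mapsto N/2$ rescaling and the reflection symmetry for the second factor is accurate (and the independence remark, while true, is not even needed since you only use a union bound).
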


\begin{proof}
Let $G$ denote the segment 
$$G = \mathcal{L}_{N/2} \setminus \mathcal{L}^{(s+t)N^{2/3}}_{N/2}.$$
This follows directly from subadditivity 
$$ \log  Z^{\textup{mid},   (s+t)N^{2/3}}_{\mathcal{L}_0^{sN^{2/3}}, \mathcal{L}_N^{sN^{2/3}}} \leq  \log Z_{\mathcal{L}_0^{sN^{2/3}}, G} + \log Z_{G , \mathcal{L}_N^{sN^{2/3}}}$$
and applying Proposition \ref{trans_fluc_loss2} to $\log Z_{\mathcal{L}_0^{sN^{2/3}}, G}$ and $ \log Z_{G , \mathcal{L}_N^{sN^{2/3}}}$.
\end{proof}

\begin{figure}[t]
\begin{center}
\begin{tikzpicture}[x=0.75pt,y=0.75pt,yscale=-0.8,xscale=0.8]

\draw    (240,171) -- (340,271) ;
\draw    (320,100) -- (420,200) ;
\draw    (231,81) -- (420,279.5) ;
\draw  [dash pattern={on 4.5pt off 4.5pt}]  (320,100) -- (240,171) ;
\draw  [color={rgb, 255:red, 155; green, 155; blue, 155 }  ,draw opacity=1 ][dash pattern={on 2.53pt off 3.02pt}][line width=2.25] [line join = round][line cap = round] (221,252.5) .. controls (227.57,249.22) and (230.73,241.47) .. (235,235.5) .. controls (249.53,215.15) and (262,191.49) .. (259,165.5) .. controls (257.48,152.31) and (233.05,125.77) .. (228,113.5) .. controls (222.7,100.62) and (206.81,63.63) .. (221,53.5) .. controls (231.72,45.84) and (244.4,48.2) .. (255,53.5) .. controls (285.87,68.93) and (292.79,86.86) .. (313,113.5) .. controls (317.69,119.68) and (324.63,124.9) .. (332,127.5) .. controls (356.14,136.02) and (387.32,113.87) .. (411,107.5) .. controls (422.49,104.41) and (434.33,102.83) .. (446,100.5) ;

\draw (224,169.4) node [anchor=north west][inner sep=0.75pt]    {$a$};
\draw (326,81.4) node [anchor=north west][inner sep=0.75pt]    {$b$};
\draw (290,128.4) node [anchor=north west][inner sep=0.75pt]    {$c$};
\draw (226,61.4) node [anchor=north west][inner sep=0.75pt]    {$d$};
\draw (216,271.4) node [anchor=north west][inner sep=0.75pt]    {$\mathcal{L}^{(s+t\sum_{i=0}^{k-1}{2^{-i/5}})N^{2/3}}_{(l-1)N/2^{k+1}}$};
\draw (410,204.4) node [anchor=north west][inner sep=0.75pt]    {$\mathcal{L}^{(s+t\sum_{i=0}^{k-1}{2^{-i/5}})N^{2/3}}_{(l+1)N/2^{k+1}}$};
\draw (437,272.4) node [anchor=north west][inner sep=0.75pt]    {$\mathcal{L}^{(s+ t\sum_{i=0}^{k}{2^{-i/5}})N^{2/3}}_{lN/2^{k+1}}$};

\end{tikzpicture}
\end{center}
\captionsetup{width=0.8\textwidth}
\caption{The $\ell^\infty$-distance between $a$ and $b$ is $N/2^{k+1}$ and the $\ell^\infty$-distance between $c$ and $d$ is $2^{-k/5}tN^{2/3}$. If $|a-b|_\infty < |c-d|_\infty$, then there would not exist a directed path (shown in gray) which goes through $\mathcal{L}^{(s+t\sum_{i=0}^{k-1}{2^{-i/5}})N^{2/3}}_{(l-1)N/2^{k+1}}$ and $\mathcal{L}^{(s+t\sum_{i=0}^{k-1}{2^{-i/5}})N^{2/3}}_{(l+1)N/2^{k+1}}$ while avoiding $\mathcal{L}^{(s+t\sum_{i=0}^{k}{2^{-i/5}})N^{2/3}}_{lN/2^{k+1}}$.} 
\label{high_fluc}
\end{figure}
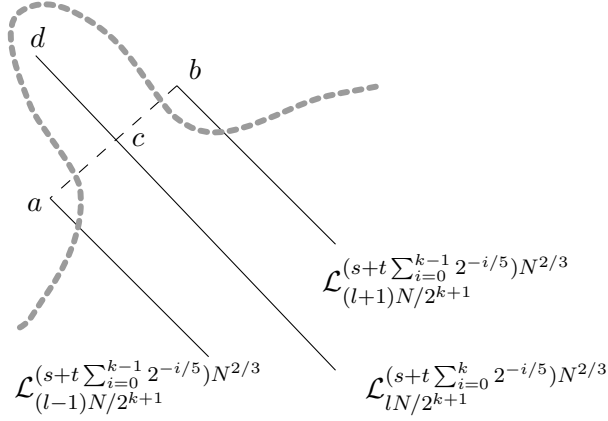

Next, we are ready to prove  Theorem \ref{trans_fluc_loss3}.
\begin{proof}[Proof of Theorem \ref{trans_fluc_loss3}]
First, let us rewrite the collection of directed paths that exit the rectangle $R_{0, N}^{(s+t\sum_{i=0}^\infty {2^{-i/5}})N^{2/3}} $
as the following disjoint union. 
For each $j\in \mathbb{Z}_{\geq 0}$, let $T_j$ denote the collection of paths between $\mathcal{L}_0^{sN^{2/3}}$ and $\mathcal{L}_{N}^{sN^{2/3}}$ that avoids at least one of the segments  $\mathcal{L}^{(s+t\sum_{i=0}^j{2^{-i/5}})N^{2/3}}_{lN/2^{j+1}}$ where  $l \in \{ 1 \dots, 2^{j+1}-1\}$. The set complement in \eqref{define_A1} below is taken over the collection of paths between $\mathcal{L}_0^{sN^{2/3}}$ and $\mathcal{L}_{N}^{sN^{2/3}}$. Our disjoint union will be
\begin{equation}\label{define_A1}
A_0 = T_0, \quad  A_1 = T_0^c \cap T_1, \quad A_2 = T_1^c \cap T_2, \quad  \dots, \quad  A_{k_0} = T_{k_0-1}^c \cap T_{k_0}
\end{equation}
where $k_0$ is the smallest index such that if $k > k_0$, $A_k$ will be an empty set. This is demonstrated in Figure \ref{high_fluc}. As seen from the figure, the following upper bound holds for $k_0$,
\begin{equation}\label{asm_k0}
2^{4k_0/5} \leq N^{1/3}/t.
\end{equation}

Using this decomposition of the paths, we have
\begin{align*}
\log Z^{\textup{exit}, { (s+t\sum_{i=0}^\infty {2^{-i/5}})N^{2/3}}}_{\mathcal{L}_0^{sN^{2/3}}, \mathcal{L}_N^{sN^{2/3}}}
&= \log\Big(\sum_{k=0}^{k_0}Z_{\mathcal{L}_0^{sN^{2/3}}, \mathcal{L}_N^{sN^{2/3}}}(A_k)\Big)\\
&\leq \log(k_0) + \max_{0\leq k \leq k_0} \{ \log Z_{\mathcal{L}_0^{sN^{2/3}}, \mathcal{L}_N^{sN^{2/3}}}(A_k)\}\\
&\leq \log N  + \max_{0\leq k \leq k_0} \{ \log Z_{\mathcal{L}_0^{sN^{2/3}}, \mathcal{L}_N^{sN^{2/3}}}(A_k) \}.
\end{align*}
Since our estimate is on the scale $N^{1/3}$, we may ignore the $\log N$ term above. Now, it suffices for us to upper bound bound
\begin{align}
&\mathbb{P}\Big(\max_{0\leq k \leq k_0} \big\{ \log Z_{\mathcal{L}_0^{sN^{2/3}}, \mathcal{L}_N^{sN^{2/3}}}(A_k) \big\} - 2N{f_d} \geq -\wt Ct^2N^{1/3}\Big)\label{goal} \\
&\leq \sum_{k=0}^{k_0} \mathbb{P}\Big(\log Z_{\mathcal{L}_0^{sN^{2/3}}, \mathcal{L}_N^{sN^{2/3}}}(A_k)  - 2N{f_d} \geq -\wt  Ct^2N^{1/3}\Big)\nonumber
\end{align}

Next, let us upper bound each term inside the sum above,
\begin{equation} \label{2_decay}
\mathbb{P}\Big(\log Z_{\mathcal{L}_0^{sN^{2/3}}, \mathcal{L}_N^{sN^{2/3}}}(A_k) - 2N{f_d} \geq -\wt Ct^2N^{1/3}\Big).
\end{equation}
Define $U_k = \{2m-1: m = 1, \dots 2^k\}$. For $l\in U_k$, we can write $A_k$ as a (non-disjoint) union of $A_k^l$ where $A_k^l$ contains the collection of paths between $\mathcal{L}_0^{sN^{2/3}}$ and $\mathcal{L}_N^{sN^{2/3}}$ that
go through the segments $\mathcal{L}^{(s+t\sum_{i=0}^{k-1}{2^{-i/5}})N^{2/3}}_{(l-1)N/2^{k+1}}$ and  $\mathcal{L}^{(s+t\sum_{i=0}^{k-1}{2^{-i/5}})N^{2/3}}_{(l+1)N/2^{k+1}}$ while avoiding the segment $\mathcal{L}^{(s+t\sum_{i=0}^{k}{2^{-i/5}})N^{2/3}}_{ lN/2^{k+1}}$ in between.
Then, we have
\begin{align}
\eqref{2_decay} &\leq \mathbb{P}\Big(\log \Big (\sum_{l\in U_k} Z_{\mathcal{L}_0^{sN^{2/3}}, \mathcal{L}_N^{sN^{2/3}}}(A_k^l)\Big) - 2N{f_d} \geq -\wt Ct^2N^{1/3}\Big) \nonumber\\
&\leq \mathbb{P}\Big(\log(2^{k_0}) + \max_{l\in U_k} \Big\{ \log Z_{\mathcal{L}_0^{sN^{2/3}}, \mathcal{L}_N^{sN^{2/3}}}(A_k^l)\Big\} - 2N{f_d} \geq -\wt Ct^2N^{1/3}\Big)\nonumber \\
&\leq \mathbb{P}\Big( \max_{l\in U_k} \Big\{ \log Z_{\mathcal{L}_0^{sN^{2/3}}, \mathcal{L}_N^{sN^{2/3}}}(A_k^l)\Big\} - 2N{f_d} \geq -2\wt Ct^2N^{1/3}\Big) \quad \textup{by \eqref{asm_k0}}  \nonumber \\
&\leq \sum_{l\in U_k} \mathbb{P}\Big( \log Z_{\mathcal{L}_0^{sN^{2/3}}, \mathcal{L}_N^{sN^{2/3}}}(A_k^l)  - 2N{f_d} \geq -2\wt Ct^2N^{1/3}\Big).\label{the_sum}
\end{align}
Again, let us look at the probability inside the sum \eqref{the_sum}. First, note we have the following upper bound 
\begin{align}
\log Z_{\mathcal{L}_0^{sN^{2/3}}, \mathcal{L}_N^{sN^{2/3}}}(A_k^l) &\leq \log Z_{\mathcal{L}_0^{sN^{2/3}}, \mathcal{L}_{(l-1)N/2^{k+1}}}  \label{ptl1} \\
&\qquad + \log Z^{\textup{ mid}, (s+t\sum_{i=0}^{k}{2^{-i/5}})N^{2/3}}_{\mathcal{L}^{(s+t\sum_{i=0}^{k-1}{2^{-i/5}})N^{2/3}}_{(l-1)N/2^{k+1}}, \mathcal{L}^{(s+t\sum_{i=0}^{k-1}{2^{-i/5}})N^{2/3}}_{(l+1)N/2^{k+1}}} \label{high1}\\
&\qquad \qquad +  \log Z_{\mathcal{L}_{(l+1)N/2^{k+1}}, \mathcal{L}_N^{sN^{2/3}}} . \label{ptl2}
\end{align}
Now, note for \eqref{high1}, the transversal fluctuation of the paths between $\mathcal{L}^{(s+t\sum_{i=0}^{k-1}{2^{-i/5}})N^{2/3}}_{(l-1)N/2^{k+1}}$ and $\mathcal{L}^{(s+t\sum_{i=0}^{k-1}{2^{-i/5}})N^{2/3}}_{(l+1)N/2^{k+1}}$ is more than 
$$2^{-k/5}tN^{2/3} = \frac{2^{2k/3}}{2^{k/5}}t(N/2^{k})^{2/3}.$$ 
Thus, by Proposition \ref{trans_mid}, for some positive constants $C'$ and $C''$,
\begin{equation}\label{high_bound}
\mathbb{P}\Big(\eqref{high1} - 2\frac{N}{2^k}{f_d}> -C' \Big(\frac{2^{2k/3}}{2^{k/5}}t\Big)^2(N/2^{k})^{1/3}\Big) \leq e^{-C''\big(\frac{2^{2k/3}}{2^{k/5}}t\big)^3}.
\end{equation}
And note that 
$$\Big(\frac{2^{2k/3}}{2^{k/5}}t\Big)^2(N/2^{k})^{1/3} = 2^{3k/5}t^2N^{1/3}$$
With this, we may upper bound the probability inside the sum \eqref{the_sum}  as 
\begin{align}
&\mathbb{P}\Big( \log Z_{\mathcal{L}_0^{sN^{2/3}}, \mathcal{L}_N^{sN^{2/3}}}(A_k^l) - 2N{f_d} \geq -{2}\wt{C}t^2N^{1/3}\Big)\nonumber\\
& \qquad \leq  \mathbb{P}\Big(\log Z^{\textup{ mid}, (s+t\sum_{i=0}^{k}{2^{-i/5}})N^{2/3}}_{\mathcal{L}^{(s+t\sum_{i=0}^{k-1}{2^{-i/5}})N^{2/3}}_{(l-1)N/2^{k+1}}, \mathcal{L}^{(s+t\sum_{i=0}^{k-1}{2^{-i/5}})N^{2/3}}_{(l+1)N/2^{k+1}}}  - 2\frac{N}{2^{k}}{f_d} \geq - C' 2^{3k/5}t^2N^{1/3} \Big)\label{high_bound2}\\
& \;\;\quad \qquad + \mathbb{P}\Big(\log Z_{\mathcal{L}_0^{sN^{2/3}}, \mathcal{L}_{{(l-1)N}/{2^{k+1}}}}  + \log\ Z_{\mathcal{L}_{{(l+1)N}/{2^{k+1}}}, \mathcal{L}_N^{sN^{2/3}}}  \nonumber\\ 
& \qquad \qquad \qquad \qquad\qquad \qquad - \Big(2N - 2\frac{N}{2^{k}}\Big){f_d} \geq  {C'}2^{3k/5}t^2N^{1/3} - 2\wt Ct^2N^{1/3}\Big)\label{two_ptl}
\end{align}
Note that we have seen in \eqref{high_bound} that $\eqref{high_bound2} \leq e^{-C''2^{k/100}t^3}$. To bound \eqref{two_ptl}, note that by lowering the value of $\wt C$ if needed, 
$${C'}2^{3k/5}t^2N^{1/3} - 2\wt Ct^2N^{1/3} \geq \frac{1}{2} {C'}2^{3k/5} t^2 N^{1/3},$$
then the event in \eqref{two_ptl} should be rare because the free energy is unusually large. By a union bound
\begin{align}
\eqref{two_ptl}
& \leq \mathbb{P}\Big(\log Z_{\mathcal{L}_0^{sN^{2/3}}, \mathcal{L}_{{(l-1)N}/{2^{k+1}}}}   - 2\tfrac{(l-1)N}{2^{k+1}}{f_d} \geq  \tfrac{1}{4}C'2^{3k/5}t^2N^{1/3}\Big)\label{we_bd}\\
& \qquad \qquad+\mathbb{P}\Big( \log Z_{\mathcal{L}_{{(l+1)N}/{2^{k+1}}}, \mathcal{L}_N^{sN^{2/3}}}   - (2N - 2\tfrac{(l+1)N}{2^{k+1}}){f_d} \geq  \tfrac{1}{4}C'2^{3k/5}t^2N^{1/3}\Big).\nonumber
\end{align}
By symmetry, let us bound \eqref{we_bd} above. For simplicity of the notation, let $M =  \frac{(l-1)N}{2^{k+1}}$, then 
\begin{align*}
\eqref{we_bd} = \mathbb{P}\Big(\log Z_{\mathcal{L}_0^{s(N/M)^{2/3}M^{2/3}}, \mathcal{L}_{M}}   - 2M{f_d} \geq  \tfrac{C'2^{3k/5}t^2N^{1/3}}{4M^{1/3}}M^{1/3}\Big)
\end{align*}
To upper bound this term, we would like to apply the interval-to-line bound from Theorem \ref{ptl_upper}. The only assumption from Theorem \ref{ptl_upper} that we need to verify here is the width of the interval can not be too wide. A sufficient bound that guarantees the assumption is
$$s(N/M)^{2/3} \leq e^{\tfrac{C'2^{3k/5}t^2N^{1/3}}{4M^{1/3}}}.$$
The inequality above holds by our assumption that $s \leq e^{t}.$
Thus, we obtain
$$\eqref{we_bd} \leq e^{-C\min\big\{\big(\tfrac{2^{3k/5}t^2N^{1/3}}{M^{1/3}}\big)^{3/2}, \tfrac{2^{3k/5}t^2N^{1/3}}{M^{1/3}} M^{1/3}\big\}} \leq e^{-C2^{k/100}t^3}.$$ 

To summarize this last part, we have shown that 
$$\eqref{2_decay} \leq  \sum_{l\in U_k} e^{-C2^{k/100}t^{3}} + e^{-C2^{k/100}t^{3}} \leq 2^k\cdot e^{-{C}2^{t/100}t^{3}}.$$
And going back to our goal \eqref{goal}, we have shown 
$$\mathbb{P}\Big(\max_{0\leq k \leq k_0}  \log Z_{0, N}(A_k)) - 2N{f_d} \geq -Ct^2N^{1/3}\Big) \leq \sum_{k=0}^{k_0} 2^k\cdot e^{-{C}2^{k/100}t^{3}} \leq e^{-Ct^{3}}.$$
With this, we have finished the proof of this theorem.
\end{proof}

\subsubsection{Proof of Corollary \ref{trans_fluc_loss4}}
\begin{proof}
Because of the choice $s<t/10$, we see that 
$$R_{0, N}^{(s+\tfrac{t}{2})N^{2/3}} \subset R_{(-sN^{2/3}, sN^{2/3}), N}^{tN^{2/3}}.$$
Then, we have the following bound for the free energy 
$$\log Z^{\textup{exit},{tN^{2/3}}}_{(-sN^{2/3}, sN^{2/3}), N}  \leq \log Z^{\textup{exit}, {(s+\frac{t}{2})N^{2/3}}}_{\mathcal{L}_{0}^{sN^{2/3}}, \mathcal{L}_N^{sN^{2/3}}} .$$
Our corollary follows directly from Theorem \ref{trans_fluc_loss4} when applied to the right side above.
\end{proof}

\subsection{Interval-to-line free energy}

We start with a point-to-line bound.
\begin{proposition}\label{lem_ptl}
There exist positive constants $\newc\label{lem_ptl_c1}, N_0$ such that for each $N \geq N_0$ and each $t\geq 1$, we have 
$$\mathbb{P} \Big( \log Z_{0,\mathcal{L}_N}  - 2N{f_d} \geq tN^{1/3}\Big) \leq 
e^{-\oldc{lem_ptl_c1} \min\{t^{3/2}, tN^{1/3}\}}.  
$$
\end{proposition}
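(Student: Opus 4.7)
The plan is to decompose the endpoints on $\mathcal{L}_N$ according to distance from the diagonal on the KPZ scale. Set $K := C_0 \max\{\sqrt{t},1\}$ for a large constant $C_0$ to be chosen, and first restrict to the regime $1 \leq t \leq \epsilon_0 N^{2/3}$, so that $KN^{2/3} \ll N$ and every point of $\mathcal{L}_N^{KN^{2/3}}$ lies in a bulk direction $\rho \in [\epsilon, \mu-\epsilon]$ from the origin. Split
\[
Z_{0,\mathcal{L}_N} \;\leq\; Z_{0,\mathcal{L}_N^{KN^{2/3}}} \;+\; Z_{0,\mathcal{L}_N \setminus \mathcal{L}_N^{KN^{2/3}}}
\]
and treat the two pieces separately.

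For the far piece, Proposition \ref{trans_fluc_loss2} applied with $s=0$ and parameter $K$ gives
\[
\log Z_{0,\mathcal{L}_N \setminus \mathcal{L}_N^{KN^{2/3}}} - 2Nf_d \leq -\oldc{trans_fluc_loss2_c1}K^2 N^{1/3}
\]
with probability at least $1-e^{-\oldc{trans_fluc_loss2_c2} K^3} \geq 1-e^{-Ct^{3/2}}$, which is certainly below $2Nf_d + tN^{1/3}$. For the near piece, estimate
\[
\log Z_{0,\mathcal{L}_N^{KN^{2/3}}} \leq \log\bigl|\mathcal{L}_N^{KN^{2/3}} \cap \mathbb{Z}^2\bigr| + \max_{{\bf p} \in \mathcal{L}_N^{KN^{2/3}} \cap \mathbb{Z}^2} \log Z_{0,{\bf p}},
\]
absorb the logarithmic prefactor at the $tN^{1/3}$ scale, and union-bound over the $O(\sqrt{t}\, N^{2/3})$ lattice points. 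For each such ${\bf p}$, Proposition \ref{time_const1} (concavity and symmetry of the shape function) gives $\Lambda({\bf p}) \leq 2Nf_d$, so
\[
\bigl\{\log Z_{0,{\bf p}} - 2Nf_d \geq \tfrac{t}{2}N^{1/3}\bigr\} \subseteq \bigl\{\log Z_{0,{\bf p}} - \Lambda({\bf p}) \geq \tfrac{t}{2}N^{1/3}\bigr\},
\]
and Proposition \ref{up_ub} (applicable since the direction to ${\bf p}$ is in $[\epsilon,\mu-\epsilon]$) bounds the probability of the right-hand event by $e^{-C\min\{t^{3/2}, tN^{1/3}\}}$. The polynomial union-bound factor is absorbed into a slight decrease in the exponent constant.

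The complementary regime $t \gtrsim N^{2/3}$ is a large-deviation statement: the target $e^{-CtN^{1/3}}$ can be obtained directly by bounding $Z_{0,\mathcal{L}_N}$ by $4^N$ times the largest weight product along any single directed path of length $2N$, and applying a Chernoff/subexponential tail bound to $\sum_i \log Y_{\gamma_i}$ as in \eqref{LDP}; the combinatorial factor $4^N$ is dominated by $e^{-CtN^{1/3}}$ once $t \gg N^{2/3}$. The main delicate point is verifying the bulk-direction condition for Proposition \ref{up_ub} in the near-diagonal step, which amounts to choosing $C_0$ and $\epsilon_0$ consistently via Proposition \ref{reg_shape} and matching the union-bound prefactor against the exponential decay uniformly across $1 \leq t \leq \epsilon_0 N^{2/3}$. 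Both are routine once the scales $K = C_0\sqrt{t}$ and $KN^{2/3} = o(N)$ are in hand.
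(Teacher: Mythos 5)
There is a genuine gap in the near-diagonal step. You union-bound over the individual lattice points of $\mathcal{L}_N^{KN^{2/3}}$, of which there are of order $\sqrt{t}\,N^{2/3}$, and claim the polynomial prefactor can be ``absorbed into a slight decrease in the exponent constant.'' Absorbing a factor of order $\sqrt{t}\,N^{2/3}$ into $e^{-C\min\{t^{3/2},\,tN^{1/3}\}}$ requires $\log N \lesssim t^{3/2}$, i.e.\ $t \gtrsim (\log N)^{2/3}$. The proposition, however, is asserted (and later used, e.g.\ inside the proofs of Propositions \ref{compare_max} and \ref{max_all_t}) for all $t\geq 1$ with a constant independent of $N$; for $t$ of constant order your bound reads $CN^{2/3}e^{-ct^{3/2}}>1$ and proves nothing. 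This is not a removable technicality of the union bound: for constant $t$ the per-point right-tail probability from Proposition \ref{up_ub} genuinely is of constant order, so any argument that treats the $\sim N^{2/3}$ endpoints in the window as independent trials must fail. What saves the statement is the strong dependence of the profile across an $N^{2/3}$-window, and this must enter through an interval-level estimate rather than a point-level one.

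The paper's proof does exactly this: after replacing $Z_{0,\mathcal{L}_N}$ by $Z^{\textup{max}}_{0,\mathcal{L}_N}$ at the cost of $100\log N$, it covers $\mathcal{L}_N$ by the intervals $J^h=\mathcal{L}^{N^{2/3}}_{(N-2hN^{2/3},\,N+2hN^{2/3})}$, $h\in\Z$, and applies the interval-to-interval bound of Proposition \ref{trans_fluc_loss} (itself proved by a step-back argument), whose exponent $e^{-C(|h|^3+\min\{t^{3/2},tN^{1/3}\})}$ makes the sum over $h$ converge with only an $N$-independent constant to absorb. Your far-field step via Proposition \ref{trans_fluc_loss2} and your crude large-deviation treatment of $t\gtrsim N^{2/3}$ (path count times a Chernoff bound on $\sum_i\log Y_{\gamma_i}$) are fine, but the near window must likewise be handled at the scale of $N^{2/3}$-intervals — e.g.\ by invoking Proposition \ref{trans_fluc_loss} for $|h|\leq K$ — at which point the near/far split becomes unnecessary and you recover the paper's one-line union bound over $h\in\Z$.
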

\begin{proof}
Note it suffices to prove the same estimate for 
\begin{equation}\label{replace_max}
\mathbb{P}(\log Z^{\textup{max}}_{0,\mathcal{L}_N}  - 2N{f_d} \geq tN^{1/3})
\end{equation}
since 
$\log Z_{0,\mathcal{L}_N}   \leq \log(Z^\textup{ max}_{0,\mathcal{L}_N}) + 100\log N.$
Let  $J^h = \mathcal{L}_{(N-2hN^{2/3}, N+ 2hN^{2/3})}^{N^{2/3}}$. By a union bound and Proposition \ref{trans_fluc_loss}, we have
$$\eqref{replace_max}\leq \sum_{h\in \mathbb{Z}} \mathbb{P}(\log Z^{\textup{max}}_{0,J^h}  - 2N{f_d} \geq tN^{1/3}) \leq \sum_{h\in \mathbb{Z}} e^{-C(|h|^3 + \min\{t^{3/2}, tN^{1/3}\})} \leq e^{-C \min\{t^{3/2}, tN^{1/3}\}}.$$
\end{proof}

Next, we use a step-back argument to upgrade the point-to-line bound of Proposition \ref{lem_ptl} to Theorem \ref{ptl_upper}.

\begin{proof}[Proof of Theorem \ref{ptl_upper}]
First, we prove the case when $h=1$. Since 
$$\log Z_{\mathcal{L}_0^{N^{2/3}},\mathcal{L}_N}  \leq \max_{{\bf p}\in \mathcal{L}_{0}^{N^{2/3}}} \log Z_{{\bf p }, \mathcal{L}_N}  + 100\log N,$$
it suffices to work with the maximum above. Let ${\bf p}^*$ denote the random maximizer that  
$$\max_{{\bf p}\in \mathcal{L}_{0}^{N^{2/3}}} \log Z_{{\bf p }, \mathcal{L}_N}= \log Z_{{\bf p }^*, \mathcal{L}_N}.$$ 
Then, we have
$$\log Z_{-N, \mathcal{L}_N}  \geq \log Z_{-N, {\bf p}^*}  + \log Z_{{\bf p}^*, \mathcal{L}_N} .$$
With this, we see that 
\begin{align}
&\mathbb{P}\Big( \max_{{\bf p}\in \mathcal{L}_{0}^{N^{2/3}}}\log Z_{{\bf p}, \mathcal{L}_N}  - 2N{f_d} \geq tN^{1/3}\Big)\nonumber\\
&\leq \mathbb{P}\Big( [\log Z_{-N, \mathcal{L}_N}  - 4N{f_d}] - [ \log Z_{-N, {\bf p}^*}  - 2N{f_d}] \geq tN^{1/3}\Big)\nonumber\\
&\leq \mathbb{P}\Big( \log Z_{-N, \mathcal{L}_N}  - 4N{f_d} \geq \tfrac{t}{2}N^{1/3}\Big) \label{eq_ptl} \\
&\qquad \qquad\qquad \qquad + \mathbb{P}\Big(  \log Z_{-N, {\bf p}^*}  - 2N{f_d} \leq -\tfrac{t}{2}N^{1/3}\Big)\label{eq_lowtail}.
\end{align}
From Proposition \ref{lem_ptl}, we obtain $\eqref{eq_ptl} \leq e^{-C\min\{t^{3/2}, tN^{1/3}\}}$. Because ${\bf p}^*$ only depends on weights between $\mathcal{L}_0$ and $\mathcal{L}_N$, and by Proposition \ref{low_ub}, we have 
\begin{equation}\label{h_one}
\eqref{eq_lowtail} \leq \max_{{\bf p} \in \mathcal{L}_{0}^{N^{2/3}}}  \mathbb{P}\Big(  \log Z_{-aN, {\bf p}} - aN{f_d} \leq -\tfrac{t}{2}N^{1/3}\Big) \leq e^{-\wt{C}\min\{t^{3/2}, tN^{1/3}\}}.
\end{equation}
This finishes the case when $h=1$.

Next, let us define
$I^j = \mathcal{L}_{(-2jN^{2/3}, + 2jN^{2/3})}^{N^{2/3}}$ where $j$ is the collection of integers in $[\![-h,h]\!]$
Then, it holds that
$$\log Z_{\mathcal{L}_0^{hN^{2/3}},\mathcal{L}_N}  \leq \max_{j\in [\![-h,h]\!]} \log Z_{I_j,\mathcal{L}_N}  + \log h.$$
Using this and a union bound, 
\begin{align*}
\mathbb{P}\Big(\log Z_{\mathcal{L}_0^{hN^{2/3}},\mathcal{L}_N} - 2N{f_d} \geq tN^{1/3}\Big) 
& \leq \mathbb{P}\Big( \max_{j\in [\![-h,h]\!]} \log Z_{I_j,\mathcal{L}_N}  - 2N{f_d} \geq \tfrac{t}{3}N^{1/3}\Big) \\
& \leq 10h \mathbb{P}\Big(\log Z_{I^0,\mathcal{L}_N}  - 2N{f_d} \geq \tfrac{t}{3}N^{1/3}\Big) \\
& \leq 10 e^{\oldc{ptl_upper_c1}\min \{t^{3/2}, tN^{1/3}\}} e^{-\wt C\min \{t^{3/2}, tN^{1/3}\}}\\
& \leq e^{-C\min\{t^{3/2}, tN^{1/3}\}}.
\end{align*}
where the last inequality holds if we fix $\oldc{ptl_upper_c1} \leq \tfrac{1}{2} \wt C$ where $\wt C$ is the constant appearing in \eqref{h_one}. With this, we have finished the proof of the theorem.
\end{proof}




\subsection{Estimates for the constrained free energy}

\subsubsection{Proof of Theorem \ref{wide_similar}}
\begin{proof}
First, we prove the estimate when 
$$t_0 \leq t \leq N^{2/3}.$$ 
To do this, we break the line segment from $(0,0)$ to ${\bf p}$ into equal pieces with $\ell^1$ length $2N\theta/\sqrt{t}$. And let us denote the endpoints in between as $\{{\bf p}_i\}$. 

Let $0< C' \leq 1/2$ which we will fix later. By a union bound, we have  
\begin{align}
&\mathbb{P}\Big(\log Z^{\textup{in}, {\theta N^{2/3}}}_{0, {\bf p}}  - 2N{f_d} \leq -C't^2N^{1/3}\Big)  \nonumber\\
& \leq \frac{\sqrt{t}}{\theta} \mathbb{P}\Big(\log Z^{\textup{in}, {\theta N^{2/3}}}_{0, {\bf p}_1} - 2(N \theta /\sqrt{t}){f_d} \leq -C't^{2/3}\theta^{2/3}(N\theta/\sqrt{t})^{1/3}\Big)\label{union_bd}
\end{align}
Using the fact that 
$$\log Z_{0, {\bf p}_1}  \leq \log 2 + \max\Big \{\log Z^{\textup{in}, {\theta N^{2/3}}}_{0, {\bf p}_1} , \log Z^{\textup{exit}, {\theta N^{2/3}}}_{0, {\bf p}_1}\Big\},$$
we may continue the bound 
\begin{align}
\eqref{union_bd} &\leq \frac{\sqrt{t}}{\theta} \Big[\mathbb{P}\Big(\log Z_{0, {\bf p}_1}  - 2(N \theta /\sqrt{t}){f_d} \leq -C' t^{2/3}\theta^{2/3}(N\theta/\sqrt{t})^{1/3} + \log 2\Big) \label{constrain_p1}\\
& \quad \quad + \mathbb{P}\Big(\log Z^{\textup{exit},{t^{1/3} \theta^{1/3} (\theta N/\sqrt{t}) ^{2/3}}}_{0, {\bf p}_1}  - 2(N \theta/\sqrt{t}){f_d} \geq -C'(t^{1/3}\theta^{1/3})^2(N\theta/\sqrt{t})^{1/3}\Big)\Big]. \label{constrain_p2}
\end{align}
It remains to upper bound each of the probabilities above and this would finish the proof of this theorem.

First, we show that the probability in \eqref{constrain_p1} is bounded by $e^{-C\theta t }$. There exists an absolute constant $a_0$ such that   
$$\Big|(N\theta/\sqrt{t}, N\theta/\sqrt{t}) - {\bf p}_1\Big|_\infty \leq \frac{a_0 \theta^{4/3}}{t^{1/6}}(N\theta/\sqrt{t})^{2/3}.$$
 Then,  by Proposition \ref{reg_shape}
$$\Big|\Lambda ({\bf p}_1) - 2(N\theta/\sqrt{t}) f_d\Big| \leq C\Big(\frac{a_0 \theta^{4/3}}{t^{1/6}}\Big)^2 (N\theta/\sqrt{t})^{1/3},$$
and the fraction $\frac{a_0 \theta^{4/3}}{t^{1/6}}$ is bounded for $a_0$ and $0<\theta\leq 100$. Hence, we may replace the $2(N\theta/\sqrt{t}) f_d$ in \eqref{constrain_p1} by $\Lambda({\bf p}_1)$ and Proposition \ref{low_ub} can be applied.

For the probability in \eqref{constrain_p2},  we may apply Theorem \ref{trans_fluc_loss3} and obtain
$$\mathbb{P}\Big(\log Z^{\textup{exit}, {t^{1/3}\theta^{1/3} (N\theta/\sqrt{t}) ^{2/3}}}_{0, {\bf p}_1}  - 2(N \theta /\sqrt{t}){f_d} \geq -C'(t^{1/3}\theta^{1/3})^2(N\theta/\sqrt{t})^{1/3}\Big) \leq e^{-C\theta t},$$
provided that $C'$ is fixed sufficiently small. Note here the assumption $t^{1/3}\theta^{1/3} \leq (N\theta/\sqrt{t})^{1/3}$ in Theorem \ref{trans_fluc_loss3} is satisfied because of our current assumption $t\leq N^{2/3}$. This finishes the proof of the estimate when $t_0 \leq t \leq  N^{2/3}$, as we have shown that the probability appearing in our theorem is upper bounded by $\tfrac{\sqrt{t}}{\theta}e^{-C\theta t}$. 

Finally, to generalize the range of $t$, the steps are exactly the same as how we generalized the range of $t$ in the proof of Proposition \ref{low_ub}. First, we can trivially generalize to the range $t_0 \leq t \leq \alpha N^{2/3}$ for any large positive $\alpha$. This only changed the $C$ from our upper bound $\tfrac{\sqrt{t}}{\theta}e^{-C\theta t}$. For $t\geq \alpha N^{2/3}$, we replace the constrained free energy $\log Z^{\textup{in}, {\theta N^{2/3}}}_{0, {\bf p}}$ by a sum of weights from a single deterministic path inside our parallelogram $R_{0, {\bf p}}^{\theta N^{2/3}}$.  Then, our estimate follows from Theorem \ref{max_sub_exp}, as shown in \eqref{LDP}.
\end{proof}

\subsubsection{Proof of Theorem \ref{c_up_lb}}
\begin{proof}
We may lower bound the constrained free energy by an i.i.d~sum
\begin{equation}\label{iidsum}
\log Z^{\textup{in}, {s N^{2/3}}}_{0, N}\geq  \sum_{i=1}^{ks^{-3/2}} \log Z^{\textup{in}, {s N^{2/3}}}_{(i-1)\tfrac{s^{3/2}N}{k}, i\tfrac{s^{3/2}N}{k}}.
\end{equation}
Note that 
\begin{align}
&\mathbb{P}\Big(\log Z^{\textup{in}, {s N^{2/3}}}_{0, \tfrac{s^{3/2}N}{k}}  - 2(s^{3/2}N/k){f_d} \geq (s^{3/2}N/k)^{1/3}\Big) \label{lb0}\\
& \geq \mathbb{P}\Big(\log Z_{0, \tfrac{s^{3/2}N}{k}}  - 2(s^{3/2}N/k){f_d} \geq  2 (s^{3/2}N/k)^{1/3}\Big) \label{lb1}\\
&\qquad \qquad \qquad  - \mathbb{P}\Big(\log Z^{\textup{exit},  {s N^{2/3}}}_{0, \tfrac{s^{3/2}N}{k}}  - 2(s^{3/2}N/k){f_d} \geq (s^{3/2}N/k)^{1/3}\Big).\label{lb2}
\end{align}
The probability $\eqref{lb1}$ is lower bounded by an absolute constant $c_0 \in (0,1)$ by Proposition \ref{up_lb}, provided  $s^{3/2}N/k \geq N_0^*$ where $N_0^*$ is the $N_0$ from Theorem \ref{c_up_lb}. 
And the probability \eqref{lb2} is upper bounded by $e^{-Ck}$ from Theorem \ref{trans_fluc_loss3} when $k \leq \sqrt{s} N^{1/3}$ and \eqref{lb2} is zero when $k >\sqrt{s} N^{1/3}$. Thus, $$\eqref{lb0} \geq c_0 - e^{-Cs^{3/2}t^{3/2}} > c_0/10$$
when $t$ is large.

Finally, let $k = \tfrac{1}{N_0^*}s^{3/2}t^{3/2}$. On the intersection of $k$ independent events that each term of the sum in $\eqref{iidsum}$ is large like in \eqref{lb0},
$$\mathbb{P}\Big(\log Z^{\textup{in},  {s N^{2/3}}}_{0, N}  - 2N{f_d} \geq \tfrac{1}{N_0^*} tN^{1/3}\Big)  \geq \big(c_0 - e^{-Cs^{3/2}t^{3/2}}\big)^{Ct^{3/2}} \geq e^{-Ct^{3/2}}$$
where the last constant $C$ depends on $s$.
With this, we have finished the proof of this theorem.
\end{proof}

\subsection{Minimum and Maximum of the constrained free energy in a box}

\subsubsection{Proof of Theorem \ref{high_inf}}
\begin{proof}
First, we will prove the following estimate, 
\begin{equation}\label{start_est}
\mathbb{P}\Big( \min_{{\bf p} \in R_{0, N/16}^{N^{2/3}}} \log Z^{\textup{in}, R_{0,N}^{N^{2/3}}}_{{\bf p}, N}  - (2N - |{\bf p}|_1){f_d} \leq - tN^{1/3}\Big)  \leq e^{-Ct}.
\end{equation}
Then, the statement of the theorem follows from a union bound, which we will show at the end of the proof. To start, we construct a tree $\mathcal{T}$ with the base at $(N, N)$. Define $\mathcal{T}_0 = \{(N,N)\}$ and we will define the remaining part of the tree. 
Fix a positive constant $J$ such that 
\begin{equation}\label{J_range}
N^{1/4}\leq N8^{-J} \leq N^{1/3 - 0.01},
\end{equation}
such $J$ always exists provided that $N_0$ is sufficiently large.
Next, for $j = 1,2, \dots, J$, $\mathcal{T}_j$ is the collection of $32^j$ vertices which we now define. 
For $i=1, 2, \dots, 8^j$, from each segment $\mathcal{L}_{\frac{2i+1}{32}8^{-j}N}^{N^{2/3}}$, we collect $4^j$ vertices which split the  the segment $\mathcal{L}_{\frac{2i+1}{32}8^{-j}N}^{N^{2/3}}$ into $4^j+1$ equal pieces. 
We define the vertices of our tree $\mathcal{T}$ as the union $\bigcup_{j=0}^J \mathcal{T}_j$. 

Now, we form the edges between the vertices. Let us label the vertices in $\mathcal{T}_j$ as
$$\{x_{(i,k)}^j: 1\leq i \leq 8^j, 1\leq k \leq 4^j\}.$$ A fixed index $i$ records the anti-diagonal segment  $\mathcal{L}_{\frac{2i+1}{32}8^{-j}N}^{N^{2/3}}$. And along this segment, we label the $4^j$ chosen vertices by their ${e}_2$-coordinate values with index $k = 1, \dots, 4^j$. For $k=1$, we choose $x^j_{(i,1)}$ to be the vertex with the smallest ${e}_2$-coordinate (which could be negative). 
Next, for $j=1, 2, \dots, J-1$, we connect the vertex ${\bf x}^{j}_{(i,k)}\in \mathcal{T}_j$ with the 32 vertices inside $\mathcal{T}_{j+1}$ which are of the form $x^{j+1}_{8(i-1) + i', 4(k-1) + j'}$ where $1\leq i' \leq 8$ and $1\leq j' \leq 4.$ This completes the construction of the tree $\mathcal{T}$.

For fixed $j, i$ and $k$, let us denote the collection of 32 points in $T_{j+1}$ which are connected to ${\bf x}^{j}_{(i,k)}$ as $V_{j,i,k}$. 
Now, for each ${\bf v} \in V_{j,i,k}$, the diagonal distance between the ${\bf v}$ and ${\bf x}^{j}_{(i,k)}$ satisfies 
\begin{equation}\label{dia_grid} |{\bf x}^{j}_{(i,k)}|_1 -  |{\bf v}|_1 \in \Big[2\frac{8^{-j}}{32}N, 6\frac{8^{-j}}{32}N\Big],
\end{equation}
and their anti-diagonal distance is upper bounded as
\begin{equation}\label{adia_grid}
\Big| {\bf x}^{j}_{(i,k)}\cdot ({\bf e}_1 - {\bf e}_2) -  {\bf v} \cdot ({\bf e}_1 - {\bf e}_2)\Big| \leq 2\cdot 4^{-j} N^{2/3}.
\end{equation}

Similarly, we look at the vertices inside $\mathcal{T}_J$, they form a grid inside the rectangle $R_{0, \frac{2+8^{-J}}{32}N}^{N^{2/3}}$ which contains $R_{0, N/16}^{N^{2/3}}$. Then, for each ${\bf p} \in R_{0, N/16}^{N^{2/3}}$, there exists an $x^J_{(i[{\bf p}], k[{\bf p}])} = x^J_{\bf p} \in \mathcal{T}^J$ such that 
\begin{equation}\label{dia_end}
|x^J_{\bf p}|_1 -  |{\bf p}|_1 \in \Big[2 \frac{8^{-J}}{32}N, 6\frac{8^{-J}}{32}N\Big]
\end{equation}
and 
\begin{equation}\label{adia_end}
\Big| x^J_{\bf p}\cdot ({\bf e}_1 - {\bf e}_2) -  {\bf p}\cdot ({\bf e}_1 - {\bf e}_2) \Big|  \leq 2\cdot 4^{-J} N^{2/3}.
\end{equation}
Provided that $N_0$ is sufficiently large, the collection of up-right paths from ${\bf p}$ to $x^J_{\bf p}$ while remaining inside $R_{0, N/16}^{N^{2/3}}$ has to be non-empty.
This is because by our choice of $J$ from \eqref{J_range}, the estimates \eqref{dia_end} and \eqref{adia_end} imply the diagonal distance between ${\bf p}$ and $x^J_{\bf p}$ is lower bounded  by $N^{1/4}/100$ , while their anti-diagonal distance is upper bounded by $2(N8^{-J})^{2/3} \leq N^{2/9 }.$

Now, for each ${\bf v} \in V_{j,i,k}$, let us define the event 
\begin{equation}\label{oneterm_est}
\mathcal{R}^{{\bf v}}_{j,i,k} = \Big\{\log Z^{\textup{in},  {{h4^{-(j+1)}N^{2/3}}}}_{{\bf v}, {\bf x}^{j}_{(i,k)}}   - (|{\bf x}^{j}_{(i,k)}|_1 - |{\bf v}|_1){f_d} \geq - 2^{-j/5}tN^{1/3} \Big\}.
\end{equation}
Here note that the path constrain in the parallelogram in the definition above also satisfies the global constraint as
$$R_{{\bf v}, {\bf x}^{j}_{(i,k)}}^{{h4^{-(j+1)}N^{2/3}}} \subset R_{0,N}^{N^{2/3}}.$$ 
From \eqref{dia_grid}, \eqref{adia_grid} and the choice for the width of the rectangle $R_{v, {\bf x}^{j}_{(i,k)}/4^j}^{{h4^{-(j+1)}N^{2/3}}}$, by Theorem \ref{wide_similar}, 
$$\mathbb{P}(({\mathcal{R}^{\bf v}_{j,i,k}})^c) = \mathbb{P}\Big(\log Z^{\textup{in},  {{\tfrac{h}{4}(8^{-j}N)^{2/3}}}}_{{\bf v}, {\bf x}^{j}_{(i,k)}}   - (|{\bf x}^{j}_{(i,k)}|_1 - |{\bf v}|_1){f_d} \geq - 2^{-j/5}tN^{1/3} \Big)\leq e^{-C 2^{j/10} t}.$$
Hence, 
$$\mathbb{P}\Big(\cup_{j=0}^{J-1} \cup_{i=1}^{8^j} \cup_{k=1}^{4^j}\cup_{v\in V_{j,i,k}} ({\mathcal{R}^{\bf v}_{j,i,k}})^c\Big) \leq \sum_{j=0}^\infty 100^j e^{-C 2^{j/10} t} \leq e^{-Ct},$$
provided that $t$ is sufficiently large.  

Next, let us define the event 
$$\mathcal{R}_{\textup{start}} = \Big\{100\cdot 8^{-J}N \min_{{\bf z} \in R_{0, N}^{N^{2/3}}}\{  \log  Y_{\bf z} \} \geq - tN^{1/3}\Big\}.$$
Recall that $ N8^{-J} \leq N^{1/3-0.01}$, then 
\begin{align*}
\mathbb{P}((\mathcal{R}_{\textup{start}})^c) 
&\leq \mathbb{P}\Big(\min_{{\bf z} \in R_{0, N}^{N^{2/3}}}\{ \log  Y_{\bf z} \}\leq -  tN^{0.001}\Big)
 \leq N^2 \cdot \mathbb{P}\Big( \log Y_{\bf 0} \leq -  tN^{0.001}\Big) \\
& \leq e^{-CN^{0.001}t} \leq e^{-Ct}.
\end{align*}
Then, on the event $R_{\textup{start}} \cap \big(\cap_{j-0}^{J-1}\cap_{i=1}^{8^j}\cap_{k=1}^{4^j} \cap_{{\bf v} \in V_{j,i,k}} \mathcal{R}^{\bf v}_{j,i,k}\big)$ which has probability at least $1-e^{-Ct}$, we must have 
$$\min_{{\bf p} \in \mathcal{L}_0^{N^{2/3}}} \log Z^{\textup{in}, R_{0,N}^{N^{2/3}}}_{{\bf p}, N} - (2N - |{\bf p}|_1){f_d} \geq - \Big(1+ \sum_{j=0}^\infty 2^{-j/5} \Big)tN^{1/3}.$$
To see this, for each ${\bf p} \in R_{0, N/16}^{N^{2/3}}$, we may go to  $x^J_{{\bf p}}$. Then, from $x^J_{{\bf p}}$, we obtain a sequence of points $x^j_{{\bf p}}$ which traces back to $(N,N)$. Then, we have 
$$\log Z^{\textup{in}, R_{0,N}^{N^{2/3}}}_{{\bf p}, N}  \geq |{\bf p} - x^J_{\bf p}|_1 \min_{{\bf z} \in R_{0, N}^{N^{2/3}}}\{ 0\wedge \log Y_{\bf z} \}  +  \sum_{j=0}^{J-1} \log Z^{\textup{in}, {h4^{-(j+1)N^{2/3}}}}_{x^{j+1}_{\bf p}, x^{j}_{\bf p}} .$$
And on the event $R_{\textup{start}} \cap \big(\cap_{j-0}^{J-1}\cap_{i=1}^{8^j}\cap_{k=1}^{4^j} \cap_{{\bf v} \in V_{j,i,k}} \mathcal{R}^{\bf v}_{j,i,k}\big)$, the right side above is greater than $(2N-|{\bf p}|_1) {f_d} - \left(1+ \sum_{j=0}^\infty 2^{-j/5} \right)tN^{1/3}$. With this, we have finished the proof of \eqref{start_est}.

Finally, the estimate from our theorem simply follows from a union bound using \eqref{start_est}. We rewrite the rectangle $R_{0, 9N/10}^{N^{2/3}}$ as a union of smaller rectangles
$$R_{0, \frac{9}{10}N}^{N^{2/3}} = \bigcup_{k=0}^{143}R_{\frac{kN}{160}, \frac{(k+1)N}{160}}^{N^{2/3}}.$$
Then, \eqref{start_est} can be applied to each one of these rectangles in the form  
$$\mathbb{P} \Big(\min_{{\bf p} \in R_{\frac{kN}{160}, \frac{(k+1)N}{160}}^{N^{2/3}}} \log Z^{\textup{in}, R_{0,N}^{N^{2/3}}}_{{\bf p}, N}  - (2N - |{\bf p}|_1){f_d} \leq - tN^{1/3}\Big) $$
and a union bound finishes the proof.  
\end{proof}



\subsubsection{Proof of Theorem \ref{btl_upper}}

\begin{proof}[Proof of  Theorem \ref{btl_upper}]
This follows from a step-back argument. First, let ${\bf p}^*$ denote the random maximizer of  $$\max_{{\bf p} \in R_{0, \frac{9}{10}N}^{N^{2/3}}}\log Z_{{\bf p},\mathcal{L}_{N}}  - (2N - |{\bf p}|_1) {f_d}.$$
Then, 
$$\log Z_{-N, \mathcal{L}_N}  \geq \log Z_{-N, {\bf p}^*}  + \log Z_{{\bf p}^*, \mathcal{L}_N} .$$
By a union bound, 
\begin{align*}
&\mathbb{P}\Big(\log Z_{{\bf p}^*, \mathcal{L}_N}  - (2N - |{\bf p}^*|_1){f_d} \geq tN^{1/3}\Big)\\
& \leq \mathbb{P}\Big( \log Z_{-N, \mathcal{L}_N}  - 4N{f_d} \geq \tfrac{1}{2}tN^{1/3}\Big) \\
& \qquad \qquad + \mathbb{P}\Big(\log Z_{-N, {\bf p}^*}  - (2N + |{\bf p}^*|_1) {f_d} \leq -\tfrac{1}{2}tN^{1/3}\Big).
\end{align*}
The two probabilities are bounded by $e^{-Ct}$ by Proposition \ref{low_ub} and Theorem \ref{high_inf}. 
\end{proof}

\section{Proof of the random walk comparison in Section \ref{sec_rw}}\label{rwrw_proof}

\begin{figure}[t]
\captionsetup{width=0.8\textwidth}
\begin{center}

\begin{tikzpicture}[>=latex, scale=0.8]

\draw[line width=1mm, lightgray, ->] (-1,-1) -- (-1,7);
\draw[line width=1mm, lightgray, ->] (-1,-1) -- (9,-1);

\draw[line width=0.3mm, ->] (0,0) -- (8,0);
\draw[line width=0.3mm, ->] (0,0) -- (0,6);

\draw[line width=0.3mm, loosely dotted, ->] (6.5,6.5) -- (1.8,-1.8);
\node at (2.5,3) {\small$-{\boldsymbol\xi}[\lambda]$-directed};


\draw[ fill=white](0,0)circle(1mm);
\node at (-0.3,-0.3) {\small$0$};

\draw[line width=0.3mm, black] (6.5,6.5) -- (6.5,6.5- 0.4) -- (6.5+ 0.3,6.5- 0.4)-- (6.5+ 0.3,6.5- 0.8)-- (6.5+ 0.8,6.5- 0.8)-- (6.5+ 0.8,6.5- 1.1)-- (6.5+ 1.4,6.5- 1.1)--(6.5+ 1.4,6.5- 1.6);

\fill[color=white] (6.5,6.5)circle(1.7mm);
\draw[ fill=lightgray](6.5,6.5)circle(1mm);
\node at (6.8,6.8) {\small ${\bf z_0}$};

\fill[color=white] (6.5+ 0.8,6.5- 0.8)circle(1.7mm);
\draw[ fill=lightgray](6.5+ 0.8,6.5- 0.8)circle(1mm);
3\node at (6.5+ 0.8+ 0.3,6.5- 0.8+ 0.5) {\small ${\bf v}_N$};

\node at (7.6,4.8) {\small $\Theta_k$};

\fill[color=white] (1,-1)circle(1.7mm);
\draw[ fill=lightgray](1,-1)circle(1mm);
\node at (-0.1,-1.6) {\small $(-1, sN^{2/3})$};

\fill[color=white] (-1,-1)circle(1.7mm);
\draw[ fill=lightgray](-1,-1)circle(1mm);
\node at (-2.3,-1) {\small $(-1,-1)$};

\node at (4,-1.5) {\small $\textup{Ga}^{-1}(\mu - \lambda)$};
\node at (-2.3,3) {\small $\textup{Ga}^{-1}(\lambda)$};

\end{tikzpicture}
	\end{center}	
	\caption{\small The step up in the proof of Theorem \ref{rwrw}.}
	\label{fig1}
\end{figure}
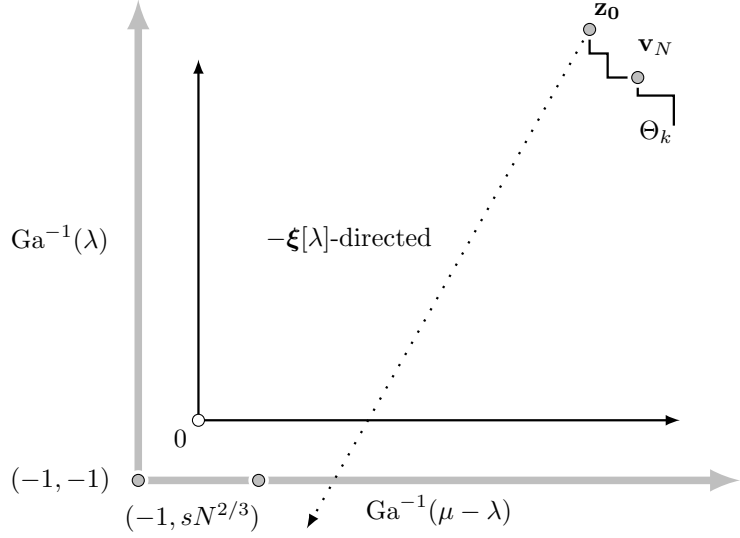

\begin{proof}[Proof of Theorem \ref{rwrw}]
We will construct the upper bound $X_i$. The construction of $Y_i$ follows from a similar argument, which we sketch at the end of the proof. In addition, we will assume that the partition functions include the weight $Y_{(0,0)}$, which does not change the profile.

To start, recall the profile that we are looking at is along $\Theta_k = \{{\bf z}_0, \dots, {\bf z}_k\}$. Let us first fix an $a_0$ sufficiently small that 
${\bf z}_0 \cdot {\bf e}_1 \geq \tfrac{1}{2} {\bf v}_N \cdot {\bf e}_1$.
Next, we will fix the constant $q_0$, and the idea is illustrated in Figure \ref{fig1}. 
Recall $\lambda =  \rho + q_0 sN^{-1/3}.$  
By Proposition \ref{slope1}, for $q_0 > 0$, 
$$\textup{the slope of the vector } \boldsymbol\xi[\lambda]  \geq m_\rho(0)+ c q_0r N^{-1/3}.$$ 
This means increasing $q_0$ will make the dotted line appearing in Figure \ref{fig1} more vertical.
Then, because the slope between $(0,0)$ and $v_N$ is $m_\rho(0)$ and
$|{\bf z}_0 - {\bf v}_N|_\infty \leq sN^{2/3}$, there exists a positive constant ${q_0}$ sufficiently large such that the $-{\boldsymbol\xi}[\lambda]$-directed ray starting at ${\bf z}_0$ (the dotted line) will cross the horizontal line $y = -1$ on the right of the vertical line $x =  sN^{2/3}$, as shown in Figure \ref{fig1}.

Once $q_0$ is fixed,  we may lower the value of $a_0$ further if necessary, so the parameters $\lambda$ and $\mu-\lambda$ are both contained inside $[\tfrac{\epsilon}{2}, \mu-\tfrac{\epsilon}{2}]$. We will place $\textup{Ga}^{-1}(\mu-\lambda)$ and $\textup{Ga}^{-1}(\lambda)$ on the ${\bf e}_1$- and ${\bf e}_2$-boundaries based at the base $(-1,-1)$.  

By Theorem \ref{exit_time}, we have 
\begin{equation}\label{exit1}
\mathbb{P}\Big(Q^{\lambda}_{-1, {\bf z}_0}\{\tau \leq -1\} \geq 1/10\Big) \leq e^{-Cs^3},
\end{equation} 
and let us define the complement of \eqref{exit1} as 
$$A = \Big\{Q^{\lambda}_{-1, {\bf z}_0}\{\tau \geq 1\} \geq 9/10\Big\}.$$
Let $Z^{\lambda, \textup{south}}_{(0,-1), \bbullet}$ be the partition function that uses the same weights as $Z^{\lambda}_{-1, \bbullet}$, except that $Z^{\lambda, \textup{south}}_{(0,-1), \bbullet}$ does not see or use any of the weights on the vertical boundary along $x= -1$.  Then, for each $j = 1, \dots, k$, we can upper bound $\log Z_{0, {\bf z}_{j}} - \log Z_{0, {\bf z}_{0}}$ as follows,
\begin{align*}
e^{\log Z_{0, {\bf z}_{j}} - \log Z_{0, {\bf z}_{0}}} &= \frac{Z_{0, {\bf z}_j}}{ Z_{0, {\bf z}_{0}}}  = \prod_{i=1}^j\frac{Z_{0, {\bf z}_i}}{ Z_{0, {\bf z}_{i-1}}} \\
\textup{by Proposition \ref{monot1}} \quad &\leq \prod_{i=1}^j\frac{Z^{\lambda, \textup{south}}_{{(0,-1), {\bf z}_i}}}{ Z^{\lambda, \textup{south}}_{(0,-1), {\bf z}_{i-1}}}
 =  \frac{Z^{\lambda, \textup{south}}_{{(0,-1), {\bf z}_j}}}{ Z^{\lambda, \textup{south}}_{(0,-1), {\bf z}_{0}}}\cdot \frac{I^{\lambda,\textup{south}}_{[\![(-1,-1), (0,-1)]\!]}}{I^{\lambda,\textup{south}}_{[\![(-1,-1), (0,-1)]\!]}}\\
 &= \frac{Z^{\lambda }_{-1, {\bf z}_{j}}(\tau \geq 1)}{ Z^{\lambda}_{-1,{\bf z}_{0}}(\tau \geq 1)}
= \frac{Q^{\lambda }_{-1, {\bf z}_{j}}(\tau\geq  1)}{Q^{\lambda }_{-1,{\bf z}_{0}}(\tau\geq  1)}\cdot \frac{Z^{\lambda }_{-1, {\bf z}_{j}}}{Z^{\lambda }_{-1,{\bf z}_{0}}}\\
\textup{ on the event $A$ } \quad & \leq \frac{10}{9}\frac{Z^{\lambda }_{-1, {\bf z}_{j}}}{Z^{\lambda }_{-1, {\bf z}_{0}}}.
\end{align*}
Choosing $X_i = \log\frac{Z^{\lambda }_{-1, {\bf z}_{i}}}{Z^{\lambda }_{-1, {\bf z}_{i-1}}}$  for $i = 1, \dots, j$ finishes the proof of the upper bound. Note the distributional proprieties of $X_i$ are guaranteed by Theorem \ref{stat}.

For the lower bound, by increasing the value $q_0$ if necessary , the $-{\boldsymbol\xi}[\eta]$ directed ray starting from ${\bf z}_k$ will hit the vertical line $x=-1$ above the horizontal line $y = sN^{2/3}$. We place $\textup{Ga}^{-1}(\mu-\eta)$ and $\textup{Ga}^{-1}(\mu)$ on the ${\bf e}_1$- and ${\bf e}_2$-boundaries based at the base $(-1,-1)$. Then by Theorem  \ref{exit_time}, we have
$$\mathbb{P}\Big(Q^{\eta}_{-1, {\bf z}_k}\{\tau \geq 1\} \geq 1/10\Big) \leq e^{-Cs^3}.$$
Let us define the complement of the event above as 
$$B = \Big\{Q^{\eta}_{-1, {\bf z}_k}\{\tau \leq -1\} \geq 9/10\Big\}.$$
By Proposition \ref{monot2},
it holds that for $j = 1, \dots, k-1$,
$$Q^{\eta}_{-1, {\bf z}_j}\{\tau \leq -1\} \geq Q^{\eta}_{-1, {\bf z}_k}\{\tau \leq -1\}.$$
Then, for each $j = 1, \dots k$, we lower bound $\log Z_{0, {\bf z}_{j}} - \log Z_{0, {\bf z}_{0}}$ as follows,
\begin{align*}
e^{\log Z_{0, {\bf z}_{j}} - \log Z_{0, {\bf z}_{0}}} &= \frac{Z_{0, {\bf z}_j}}{ Z_{0, {\bf z}_{0}}} = \prod_{i=1}^j\frac{Z_{0, {\bf z}_i}}{ Z_{0, {\bf z}_{i-1}}} \\
\textup{by Proposition \ref{monot1}} \quad &\geq \prod_{i=1}^j\frac{Z^{\eta, \textup{west}}_{{(-1,0), {\bf z}_i}}}{ Z^{\eta, \textup{west}}_{(-1,0), {\bf z}_{i-1}}}
 =  \frac{Z^{\eta, \textup{west}}_{{(-1,0), {\bf z}_j}}}{ Z^{\eta, \textup{west}}_{(-1,0), {\bf z}_{0}}}\cdot \frac{J^{\eta,\textup{west}}_{[\![(-1,-1), (-1,0)]\!]}}{J^{\eta,\textup{west}}_{[\![(-1,-1), (-1,0)]\!]}}\\
 &= \frac{Z^{\eta }_{-1, {\bf z}_{j}}(\tau \leq -1)}{ Z^{\eta}_{-1,{\bf z}_{0}}(\tau \leq -1)}
= \frac{Q^{\eta }_{-1, {\bf z}_{j}}(\tau\leq  -1)}{Q^{\eta }_{-1,{\bf z}_{0}}(\tau\leq  -1)}\cdot \frac{Z^{\eta }_{-1, {\bf z}_{j}}}{Z^{\eta }_{-1,{\bf z}_{0}}}\\
\textup{ on the event $B$ } \quad & \geq \frac{9}{10}\frac{Z^{\eta }_{-1, {\bf z}_{j}}}{Z^{\eta }_{-1, {\bf z}_{0}}}.
\end{align*}
Choosing $Y_i = \log\frac{Z^{\eta }_{-1, {\bf z}_{i}}}{Z^{\eta }_{-1, {\bf z}_{i-1}}}$ for $i = 1, \dots, k$ will give us the desired lower bound.

\end{proof}

\section{Monotonicity for the polymer model}
The following two propositions hold for arbitrary positive weights on the lattice, and there is no probability involved. The first proposition is Lemma A.2 from \cite{Bus-Sep-22}, and the second proposition is Lemma A.5 from \cite{ras-sep-she-}.

\begin{proposition} \label{monot1}
Let ${\bf x}, {\bf y}, {\bf z}\in \mathbb{Z}^2$ be such that ${\bf x}\cdot {\bf e}_1 \leq {\bf y} \cdot {\bf e}_1$, ${\bf x}\cdot {\bf e}_2 \geq {\bf y} \cdot {\bf e}_2$, and coordinate wise ${\bf x}, {\bf y} \leq {\bf z}$ , then $$
\frac{Z_{{\bf x}, {\bf z}}}{Z_{{\bf x}, {\bf z}-{\bf e}_1}} \leq \frac{Z_{{\bf y},{\bf z}}}{Z_{{\bf y}, {\bf z}-{\bf e}_1}} \qquad \text{ and } \qquad \frac{Z_{{\bf x}, {\bf z}}}{Z_{{\bf x}, {\bf z}-{\bf e}_2}} \geq \frac{Z_{{\bf y},{\bf z}}}{Z_{{\bf y}, {\bf z}-{\bf e}_2}}.
$$
\end{proposition}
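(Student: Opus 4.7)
The plan is to reduce both inequalities to a single bilinear comparison and then establish that comparison by a canonical path-swap bijection.

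Since ${\bf x}, {\bf y} < {\bf z}$ coordinatewise, the one-step recursion
$$Z_{{\bf w}, {\bf z}} = Y_{\bf z}\bigl(Z_{{\bf w}, {\bf z}-{\bf e}_1} + Z_{{\bf w}, {\bf z}-{\bf e}_2}\bigr)$$
applies at both ${\bf w} = {\bf x}$ and ${\bf w} = {\bf y}$. Substituting into each of the two displayed ratios and cancelling $Y_{\bf z}$, both inequalities in the proposition reduce algebraically to the single \emph{four-point inequality}
$$Z_{{\bf x}, {\bf z}-{\bf e}_2}\, Z_{{\bf y}, {\bf z}-{\bf e}_1} \;\leq\; Z_{{\bf x}, {\bf z}-{\bf e}_1}\, Z_{{\bf y}, {\bf z}-{\bf e}_2}.$$
Degenerate cases in which one target is unreachable from the corresponding source (so the partition function vanishes) are handled directly; otherwise, both sides are strictly positive and the displayed inequality is what we aim to prove.

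The geometric input is that ${\bf x}$ lies weakly northwest of ${\bf y}$, while ${\bf z}-{\bf e}_1$ lies weakly northwest of ${\bf z}-{\bf e}_2$. Consequently, for any pair $(\gamma_1, \gamma_2)$ of up-right paths with $\gamma_1: {\bf x} \to {\bf z}-{\bf e}_2$ and $\gamma_2: {\bf y} \to {\bf z}-{\bf e}_1$, the paths are forced to share at least one lattice vertex. Let ${\bf w} = {\bf w}(\gamma_1, \gamma_2)$ denote the \emph{last} such common vertex in the up-right partial order. Split $\gamma_1 = \alpha_1 \cdot \beta_1$ and $\gamma_2 = \alpha_2 \cdot \beta_2$ at ${\bf w}$, and swap tails to obtain the new pair $\gamma_1' = \alpha_1 \cdot \beta_2: {\bf x} \to {\bf z}-{\bf e}_1$ and $\gamma_2' = \alpha_2 \cdot \beta_1: {\bf y} \to {\bf z}-{\bf e}_2$. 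The union of vertices $\gamma_1 \cup \gamma_2$ (with multiplicities at common sites) is preserved under the swap, so the product of the $Y_v$ weights is unchanged.

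The key technical step, and the main obstacle, is to verify that the map $(\gamma_1, \gamma_2) \mapsto (\gamma_1', \gamma_2')$ is injective by showing that ${\bf w}$ remains the last common vertex of the new pair. Any common vertex of $(\gamma_1', \gamma_2')$ strictly after ${\bf w}$ would necessarily lie in $\beta_2 \cap \beta_1 \subset \gamma_2 \cap \gamma_1$, contradicting the maximality of ${\bf w}$ for the original pair; hence the same rule applied to $(\gamma_1', \gamma_2')$ recovers $(\gamma_1, \gamma_2)$, and the swap is an involution on its domain. Expanding both sides of the four-point inequality as sums over path-pairs weighted by products of $Y_v$'s, this weight-preserving injection shows that the left-hand side (every pair of which shares a common vertex, by the forced-crossing observation) is bounded above by the sum over those pairs contributing to the right-hand side that share a common vertex, which in turn is bounded by the full right-hand side. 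This completes the reduction and yields both inequalities.
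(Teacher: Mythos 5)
Your argument is correct, and it takes a genuinely different route from the paper: the paper does not prove this proposition at all but quotes it as Lemma A.2 of \cite{Bus-Sep-22}, where such monotonicity statements are typically established by induction on the endpoint through the ratio recursions of the polymer (or by coupling with the stationary model), whereas you give a direct, deterministic, combinatorial proof. Your reduction via $Z_{{\bf w},{\bf z}}=Y_{\bf z}\bigl(Z_{{\bf w},{\bf z}-{\bf e}_1}+Z_{{\bf w},{\bf z}-{\bf e}_2}\bigr)$ of both inequalities to the single log-supermodularity statement $Z_{{\bf x},{\bf z}-{\bf e}_2}Z_{{\bf y},{\bf z}-{\bf e}_1}\le Z_{{\bf x},{\bf z}-{\bf e}_1}Z_{{\bf y},{\bf z}-{\bf e}_2}$ is exact, and the tail-swap at the last common vertex is weight-preserving (the unswapped initial segments carry the omitted starting weights, so the convention of excluding the initial vertex causes no mismatch); your maximality argument correctly shows the last common vertex of the image pair is again ${\bf w}$, so the map is injective, and positivity of the weights finishes the proof. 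Two points are worth making explicit rather than asserting: the forced-intersection claim (track the ${\bf e}_1$-coordinates of the two paths along antidiagonal levels; at the lowest common level $\gamma_1$ lies weakly to the left of $\gamma_2$, at the top level strictly to the right, and the difference changes by at most one per level, so they meet at a vertex), and the degenerate bookkeeping: under the paper's convention $Z_{{\bf u},{\bf u}}=0$ the one-step recursion and the cancellation of $Y_{\bf z}$ are valid exactly when ${\bf x},{\bf y}\notin\{{\bf z},{\bf z}-{\bf e}_1,{\bf z}-{\bf e}_2\}$, which is precisely the regime where the stated ratios have nonzero denominators, and when a numerator such as $Z_{{\bf x},{\bf z}-{\bf e}_2}$ vanishes the four-point inequality is trivial; so your opening assumption that ${\bf x},{\bf y}<{\bf z}$ strictly should be replaced by this observation. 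What your approach buys is a self-contained proof for arbitrary positive weights with no induction and no probabilistic input; what the inductive/ratio approach buys in \cite{Bus-Sep-22} is machinery that is already set up there for the coupled stationary models where these ratios are the basic objects.
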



\begin{proposition} \label{monot2}
For any $k, l, m\in \mathbb{Z}_{\geq 0}$ and ${\bf z}\in \mathbb{Z}^2_{\geq 0}$,
$$Q_{0, {\bf z}} \{\tau \geq k \} \leq Q_{0, {\bf z}+l{\bf e}_1-m{\bf e}_2} \{\tau \geq k \}.$$
\end{proposition}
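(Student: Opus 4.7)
The plan is to reduce the claim to repeated applications of Proposition \ref{monot1}. First, I would rewrite the probability in terms of partition functions. Any path contributing to $\{\tau \geq k\}$ starts with $k$ consecutive ${\bf e}_1$-steps, so it must pass through $(k,0)$; using the convention that the starting weight is not counted, this gives the factorization
$$Z_{0,{\bf w}}(\tau \geq k) = \Big(\prod_{i=1}^{k} Y_{(i,0)}\Big) \cdot Z_{(k,0),{\bf w}},$$
so that
$$Q_{0,{\bf w}}\{\tau \geq k\} = \Big(\prod_{i=1}^{k} Y_{(i,0)}\Big) \cdot \frac{Z_{(k,0),{\bf w}}}{Z_{0,{\bf w}}}.$$
Setting $R({\bf w}) = Z_{(k,0),{\bf w}}/Z_{0,{\bf w}}$, the inequality I want becomes $R({\bf z}) \leq R({\bf z}+l{\bf e}_1-m{\bf e}_2)$.

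Next, I would observe that the shift ${\bf z} \mapsto {\bf z} + l{\bf e}_1 - m{\bf e}_2$ can be performed one lattice step at a time (first $l$ steps in the $+{\bf e}_1$ direction, then $m$ steps in the $-{\bf e}_2$ direction), so it suffices to establish the two one-step monotonicity statements $R({\bf w}) \leq R({\bf w}+{\bf e}_1)$ and $R({\bf w}) \leq R({\bf w}-{\bf e}_2)$. Both of these are immediate from Proposition \ref{monot1} applied with ${\bf x}=(0,0)$ and ${\bf y}=(k,0)$, which satisfy the hypotheses ${\bf x}\cdot{\bf e}_1 \leq {\bf y}\cdot{\bf e}_1$ and ${\bf x}\cdot{\bf e}_2 \geq {\bf y}\cdot{\bf e}_2$. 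For the first, taking the endpoint in Proposition \ref{monot1} to be ${\bf w}+{\bf e}_1$ gives
$$\frac{Z_{(0,0),{\bf w}+{\bf e}_1}}{Z_{(0,0),{\bf w}}} \leq \frac{Z_{(k,0),{\bf w}+{\bf e}_1}}{Z_{(k,0),{\bf w}}},$$
which on cross-multiplying is exactly $R({\bf w}) \leq R({\bf w}+{\bf e}_1)$. For the second, the ${\bf e}_2$ half of Proposition \ref{monot1} applied with endpoint ${\bf w}$ yields
$$\frac{Z_{(0,0),{\bf w}}}{Z_{(0,0),{\bf w}-{\bf e}_2}} \geq \frac{Z_{(k,0),{\bf w}}}{Z_{(k,0),{\bf w}-{\bf e}_2}},$$
which rearranges to $R({\bf w}-{\bf e}_2) \geq R({\bf w})$. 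Chaining these one-step inequalities along the path of intermediate endpoints produces the claim.

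There is no real obstacle to this argument; the only bookkeeping issue is to make sure that the ratios $R({\bf w})$ are well-defined (i.e.\ $Z_{0,{\bf w}}>0$ and $(k,0) \leq {\bf w}$ coordinatewise) at each intermediate endpoint along the chain, which is automatic under the usual interpretation that the inequality is vacuous whenever the relevant polymer measure is undefined. The heart of the proof is the observation that the conclusion is nothing but a monotonicity statement for the single ratio $R({\bf w})$, after which Proposition \ref{monot1} does all the work.
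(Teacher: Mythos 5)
Your argument is correct. Note that the paper itself contains no proof of Proposition \ref{monot2}: it is imported verbatim as Lemma A.5 of \cite{ras-sep-she-}, so there is no in-text argument to compare against. Your derivation is a clean, self-contained verification from Proposition \ref{monot1}: the factorization $Z_{0,{\bf w}}(\tau\ge k)=\bigl(\prod_{i=1}^k Y_{(i,0)}\bigr)Z_{(k,0),{\bf w}}$ is valid under the paper's convention that the starting weight is excluded, the prefactor is independent of the endpoint, and the two one-step inequalities $R({\bf w})\le R({\bf w}+{\bf e}_1)$ and $R({\bf w})\le R({\bf w}-{\bf e}_2)$ follow exactly as you say from Proposition \ref{monot1} with ${\bf x}=(0,0)$, ${\bf y}=(k,0)$, after which chaining along the staircase of intermediate endpoints (all $+{\bf e}_1$ steps first, then $-{\bf e}_2$ steps) gives the claim. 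The only loose ends are the degenerate cases — endpoints with first coordinate below $k$ (where the left side vanishes), endpoints equal to $(k,0)$ (where the paper's convention $Z_{{\bf u},{\bf u}}=0$ breaks the factorization but the right side is trivially $1$), and endpoints outside the quadrant (where the statement is vacuous) — and you have flagged these appropriately; they are resolved by trivial inspection rather than by the monotonicity argument. This is essentially the standard route to such exit-point monotonicity statements, so your proof fills the quoted gap correctly.
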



\section{Sub-exponential random variables}\label{sec_sub_exp}

First, we state a general result for the running maximum of sub-exponential random variables.
Recall that a random variable $X_1$ is sub-exponential if there exist two positive constants $K_0$ and $\lambda_0$ such that 
\be\label{sub_exp}
\log(\mathbb{E}[e^{\lambda (X_1-\mathbb{E}[X_1])}]) \leq K_0 \lambda^2 \quad \textup{ for $\lambda \in [0, \lambda_0]$}.
\ee
Let $\{X_i\}$ be a sequence of i.i.d.~sub-exponential random variables with the parameters $K_0$ and $\lambda_0$. Define $S_0 = 0$ and $S_k = X_1 + \dots + X_k - k\mathbb{E}[X_1]$ for $k\geq 1$. The following theorem captures the right tail behavior of the running maximum.
\begin{theorem}\label{max_sub_exp}
Let the random walk $S_k$ be defined as above. Then,
$$\mathbb{P} \Big(\max_{0\leq k \leq n} S_k \geq t\sqrt{n}\Big) \leq 
\begin{cases}
e^{-t^2/(4K_0)} \quad  & \textup{if $t \leq 2\lambda_0 K_0 \sqrt n$} \\
e^{-\frac{1}{2}\lambda_0 t\sqrt{n}} \quad  & \textup{if $t \geq 2\lambda_0 K_0 \sqrt n$}
\end{cases}.
$$ 
\end{theorem}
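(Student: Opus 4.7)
\medskip

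\noindent\textbf{Proof proposal for Theorem \ref{max_sub_exp}.} The plan is the classical exponential Chebyshev / Doob maximal inequality argument, followed by an optimization in the exponential parameter $\lambda$ that is truncated at $\lambda_0$ to respect the range of validity of \eqref{sub_exp}.

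First, I would observe that for each $\lambda\in[0,\lambda_0]$ the process $M_k^{(\lambda)}=\exp\bigl(\lambda S_k\bigr)$ is a nonnegative submartingale with respect to the natural filtration of the $X_i$'s, and by independence and \eqref{sub_exp} satisfies
\[
\mathbb{E}\bigl[M_k^{(\lambda)}\bigr] \;=\; \prod_{i=1}^k \mathbb{E}\bigl[e^{\lambda(X_i-\mathbb{E}[X_1])}\bigr] \;\le\; e^{k K_0 \lambda^2}.
\]
Then Doob's maximal inequality applied to this submartingale gives, for any $\lambda\in[0,\lambda_0]$,
\[
\mathbb{P}\Bigl(\max_{0\le k\le n} S_k\ge t\sqrt{n}\Bigr)
\;=\;\mathbb{P}\Bigl(\max_{0\le k\le n} M_k^{(\lambda)}\ge e^{\lambda t\sqrt{n}}\Bigr)
\;\le\; \frac{\mathbb{E}[M_n^{(\lambda)}]}{e^{\lambda t\sqrt{n}}}\;\le\; \exp\bigl(nK_0\lambda^2-\lambda t\sqrt{n}\bigr).
\]

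Next I would optimize the exponent $nK_0\lambda^2 - \lambda t\sqrt{n}$ over $\lambda\in[0,\lambda_0]$. The unconstrained minimizer is $\lambda^\ast = t/(2K_0\sqrt{n})$, which lies in $[0,\lambda_0]$ precisely when $t\le 2K_0\lambda_0\sqrt{n}$. In that regime, plugging in $\lambda^\ast$ yields the exponent $-t^2/(4K_0)$, proving the first case of the theorem.

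In the complementary regime $t\ge 2K_0\lambda_0\sqrt{n}$, the unconstrained optimum lies outside $[0,\lambda_0]$, so the exponent $nK_0\lambda^2-\lambda t\sqrt{n}$ is decreasing in $\lambda$ on $[0,\lambda_0]$ and I would take $\lambda=\lambda_0$. The resulting exponent is $nK_0\lambda_0^2-\lambda_0 t\sqrt{n}$, and the assumption $t\ge 2K_0\lambda_0\sqrt{n}$ gives $nK_0\lambda_0^2\le \tfrac{1}{2}\lambda_0 t\sqrt{n}$, so the exponent is bounded above by $-\tfrac{1}{2}\lambda_0 t\sqrt{n}$, which is the second case. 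There is no serious obstacle: the only thing to be careful about is that the truncation at $\lambda_0$ is necessary (since the sub-exponential bound \eqref{sub_exp} is only assumed on $[0,\lambda_0]$), and the regime split in the statement is exactly engineered so that $\lambda^\ast\in[0,\lambda_0]$ in the first case and $\lambda=\lambda_0$ is used in the second.
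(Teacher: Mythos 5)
Your argument is correct and is essentially identical to the paper's proof: both apply Doob's maximal inequality to the nonnegative submartingale $e^{\lambda S_k}$, bound the moment generating function via \eqref{sub_exp}, and minimize the quadratic exponent $nK_0\lambda^2-\lambda t\sqrt{n}$ over $\lambda\in[0,\lambda_0]$, splitting into the two regimes according to whether the unconstrained minimizer $t/(2K_0\sqrt{n})$ exceeds $\lambda_0$. No gaps.
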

\begin{proof}
Since $S_k$ is a mean zero random walk, then $e^{\lambda S_k}$ is a non-negative sub-martingale for $\lambda \geq 0$. By Doob's maximal inequality, 
\begin{align*}
\mathbb{P} \Big(\max_{0\leq k \leq n} S_k \geq t\sqrt{n}\Big)  &= \mathbb{P} \Big(\max_{0\leq k \leq n} e^{\lambda S_k} \geq e^{\lambda t\sqrt{n}}\Big) 
\leq \frac{\mathbb{E}[e^{\lambda S_n}]}{e^{\lambda t\sqrt{n}}}
= \frac{\mathbb{E}[e^{\lambda X_1}]^n}{e^{\lambda t\sqrt{n}}}. 
\end{align*}
Taking the logarithm of the expression above, and using our assumption \eqref{sub_exp} for $X_1$, we obtain 
$$\log\Big( \frac{\mathbb{E}[e^{\lambda X_1}]^n}{e^{\lambda t\sqrt{n}}}\Big) = n\log(\mathbb{E}[e^{\lambda X_1}]) - \lambda t\sqrt{n} \leq nK_0 \lambda^2 -  \lambda t\sqrt{n} \textup{ \quad for $\lambda \in [0, \lambda_0]$}.$$
Let us denote the quadratic quadratic function in $\lambda \in [0, \lambda_0]$ as 
$$h(\lambda) = nK_0 \lambda^2 -  \lambda t\sqrt{n} .$$
And note the minimizer of $h$ is given by 
$\lambda^\textup{min}_t = \min\{\lambda_0, \frac{t}{2K_0 \sqrt{n}}\}$, and 
$$h(\lambda^\textup{min}_t) =  
\begin{cases}
-\frac{t^2}{4K_0} \quad  & \textup{if $t \leq 2\lambda_0K_0 \sqrt{n}$}\\
nK_0\lambda_0^2 - \lambda_0 t\sqrt{n}  \leq -\frac{1}{2}\lambda_0 t\sqrt{n} \quad  & \textup{if $t \geq 2\lambda_0K_0 \sqrt{n}$}
\end{cases}.
$$ 
With this, we have finished the proof of this proposition. 
\end{proof}

Our next proposition shows that both $\log(\textup{Ga})$ and $\log(\textup{Ga}^{-1}) = - \log(\textup{Ga})$ are sub-exponential random variables. 
\begin{proposition}\label{Ga_sub_exp}
Fix $\epsilon \in (0, \mu/2)$. There exists positive constants $K_0, \lambda_0$ depending on $\epsilon$ such that for each $\alpha \in [\epsilon, \mu-\epsilon]$, let $X\sim \textup{Ga}(\alpha)$ and we have
$$
\log(\mathbb{E}[e^{\pm\lambda(\log(X) - \Psi_1(\alpha))}]) \leq K_0 \lambda^2 \qquad \textup{ for $\lambda \in [0, \lambda_0]$}.
$$
\end{proposition}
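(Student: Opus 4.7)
The plan is to compute the moment generating function of $\log X$ directly from the gamma density. For $X \sim \textup{Ga}(\alpha)$ and any $\lambda$ with $\alpha + \lambda > 0$, a change of variable in the defining integral gives
\[
\mathbb{E}[X^\lambda] \;=\; \frac{1}{\Gamma(\alpha)}\int_0^\infty x^{\alpha+\lambda-1}e^{-x}\,dx \;=\; \frac{\Gamma(\alpha+\lambda)}{\Gamma(\alpha)},
\]
and symmetrically $\mathbb{E}[X^{-\lambda}] = \Gamma(\alpha-\lambda)/\Gamma(\alpha)$ whenever $\alpha-\lambda>0$. In particular $\mathbb{E}[\log X]=\Psi_0(\alpha)$, the digamma function, which is the natural centering for $\log X$ (the $\Psi_1$ in the statement must be read as the correct centering constant $\Psi_0$).

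Next, taking logarithms, the centered cumulant generating functions are
\[
\varphi_\pm(\lambda)\;:=\;\log\mathbb{E}\bigl[e^{\pm\lambda(\log X-\Psi_0(\alpha))}\bigr] \;=\; \log\Gamma(\alpha\pm\lambda)-\log\Gamma(\alpha)\mp\lambda\Psi_0(\alpha).
\]
Since $\frac{d^2}{d\lambda^2}\log\Gamma(\alpha\pm\lambda) = \Psi_1(\alpha\pm\lambda)$ and $\varphi_\pm(0)=\varphi_\pm'(0)=0$, the Taylor expansion with integral remainder gives
\[
\varphi_\pm(\lambda) \;=\; \int_0^\lambda (\lambda-s)\,\Psi_1(\alpha\pm s)\,ds \;\leq\; \tfrac{\lambda^2}{2}\,\sup_{s\in[0,\lambda]}\Psi_1(\alpha\pm s).
\]

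Finally, choose $\lambda_0=\epsilon/2$. Then for $\alpha\in[\epsilon,\mu-\epsilon]$ and $|s|\leq\lambda_0$, the argument $\alpha\pm s$ stays in $[\epsilon/2,\mu-\epsilon/2]$, a compact subset of $(0,\infty)$. Since $\Psi_1$ is smooth and monotone decreasing on $(0,\infty)$, it is bounded by $\Psi_1(\epsilon/2)$ on this set. Setting $K_0=\Psi_1(\epsilon/2)/2$ yields the claim uniformly in $\alpha\in[\epsilon,\mu-\epsilon]$. There is no real obstacle here: the only thing to verify carefully is that the choice of $\lambda_0$ keeps the argument of $\Psi_1$ bounded away from $0$ uniformly in $\alpha$, which is exactly why the interval $[\epsilon,\mu-\epsilon]$ is bounded away from the endpoints.
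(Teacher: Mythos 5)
Your proof is correct and follows essentially the same route as the paper---writing $\mathbb{E}[X^{\pm\lambda}]=\Gamma(\alpha\pm\lambda)/\Gamma(\alpha)$ and Taylor-expanding $\log\Gamma$---but it improves on the paper in two places. First, you correctly flag that the centering constant in the statement should be $\Psi_0(\alpha)=\mathbb{E}[\log X]$ rather than $\Psi_1(\alpha)$; this is a genuine typo, since the paper's own convention $\Psi_k=(\log\Gamma)^{(k+1)}$ makes $\Psi_0=(\log\Gamma)'$ (the digamma), and the paper's proof even writes ``recall $\log\Gamma(\alpha)'=\Psi_1(\alpha)$'', contradicting its own notation. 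Second, the paper concludes via ``$=\tfrac{\Psi_1'(\alpha)}{2}\lambda^2+o(\lambda^2)$'' and an appeal to compactness, which leaves the uniformity of the $o(\lambda^2)$ over $\alpha\in[\epsilon,\mu-\epsilon]$ implicit; your integral-remainder form $\varphi_\pm(\lambda)=\int_0^\lambda(\lambda-s)\Psi_1(\alpha\pm s)\,ds$ together with the explicit choices $\lambda_0=\epsilon/2$, $K_0=\Psi_1(\epsilon/2)/2$, closes that gap cleanly using only the monotonicity of the trigamma function.
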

\begin{proof}
First, note that $\mathbb{E}[X^{\pm \lambda}] = \frac{\Gamma(\alpha\pm\lambda)}{\Gamma(\alpha)}$, provided that $\alpha \pm \lambda > 0$. 
Then, the proof essentially follows from Taylor's theorem,
\begin{align*}
\log(\mathbb{E}[e^{\pm\lambda(\log(X) - \Psi_1(\alpha))}]) & = \log(\mathbb{E} [X^{\pm\lambda}]e^{\mp\lambda \Psi_1(\alpha)})\\
 \qquad & = \log(\Gamma(\alpha \pm \lambda)) -[\log(\Gamma(\alpha)) \pm \lambda \Psi_1(\alpha)]\\
\textup{(recall $\log(\Gamma(\alpha))' = \Psi_1(\alpha)$) } \quad & =\frac{\Psi_1'(\alpha)}{2}\lambda^2 + o(\lambda^2)\\
& \leq K_0 \lambda^2
\end{align*}
provided $\lambda_0$ is fixed sufficiently small. The constant $K_0$ can be chosen uniformly for all $\alpha$ from the compact interval $[\epsilon, \mu-\epsilon]$ because $\Phi_1$ is a smooth function on $\mathbb{R}_{\geq 0}$.
\end{proof}

\begin{proof}[Proof of \eqref{rw_before_E}]

First, let us normalize the $\wt{S}_k$ by its expectation, and let us denote the new walk by $\overline{S}_k$. The expectation of the step of $\wt{S}_k$ is 
\begin{align*}
&-\Psi_0(\eta) + \Psi_0(\mu-\eta) \\
& = -\Psi_0(\mu/2 - q_0 t^{2/3}N^{-1/3}) + \Psi_0(\mu/2 + q_0 t^{2/3}N^{-1/3}) \\
& \leq c_1 t^{2/3}N^{-1/3}
\end{align*}
provided $N_0$ is sufficiently large and $c_0$ is sufficiently small. Then, 
$$\mathbb{E}\big[\wt{S}_k\big] \leq a c_1 t^{2/3}N^{-1/3} \leq c_1 t \sqrt{a} .$$
By fixing $C' = 2c_1$ in \eqref{rw_before_E}, we see that 
$$\eqref{rw_before_E} \leq \mathbb{P} \Big(\max_{0\leq k \leq a} \overline{S}_k  \geq c_1t \sqrt{a} \Big).$$

Since the sum of two independent sub-exponential random variables is still a sub-exponential random variable, this fact together with Proposition \ref{Ga_sub_exp} shows that the steps of $\overline S_k$ are sub-exponential. Now, we may apply the right tail bound on the running maximum from Theorem \ref{max_sub_exp} to the term 
$$\mathbb{P} \Big(\max_{0\leq k \leq a} \overline{S}_k  \geq c_1 t \sqrt{a}\Big),$$ 
and this finishes the proof. 
\end{proof}

\section{Random walk estimate}
First, let us recall two results from \cite{Nag-70}.
Let $\{X_i\}_{i\in \mathbb{Z}_{>0}}$ be an i.i.d.~sequence of random variables with
$$\mathbb{E}[X_i] = \mu, \quad \Var[X_i] = 1 \quad \text{and} \quad c_3 = \mathbb{E}|X-\mu|^3 < \infty.$$ Define $S_k = \sum_{i=1}^k X_i$ with $S_0 = 0$.  
\begin{lemma}[{\cite[Lemma 5]{Nag-70}}]
There exists an absolute constant $C$ such that for any $l>0$
\beq\mathbb{P}\Big(\max_{1\leq k \leq N} S_k < l\Big) - \mathbb{P}\Big(\max_{1\leq k \leq N} S_k < 0\Big) \leq C(c_3l+c_3^2)(|\mu| + 1/\sqrt N).\eeq
\end{lemma}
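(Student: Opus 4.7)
The difference to bound equals $\mathbb{P}(M_N \in [0, l))$ where $M_N = \max_{1 \le k \le N} S_k$. My plan is to combine a ladder-variable (Sparre--Andersen) decomposition of $M_N$ with an Esseen-type local concentration estimate for $S_k$.

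First, introduce the strict ascending ladder epoch $\tau^+ = \min\{k \ge 1 : S_k > 0\}$. On $\{M_N \in [0, l)\}$ we necessarily have $\tau^+ \le N$ and $S_{\tau^+} \in [0, l)$, so dropping the independent post-$\tau^+$ factor (bounded by $1$) gives
$$\mathbb{P}(M_N \in [0, l)) \le \mathbb{P}(\tau^+ \le N,\, S_{\tau^+} \in [0, l)) = \sum_{k=1}^N \mathbb{P}(\tau^+ = k,\, S_k \in [0, l)).$$
The cycle/Spitzer lemma then yields $\mathbb{P}(\tau^+ = k, S_k \in A) \le \tfrac{1}{k}\mathbb{P}(S_k \in A)$ for Borel $A \subset (0,\infty)$, so the whole problem reduces to controlling $\sum_{k=1}^N \tfrac{1}{k}\mathbb{P}(S_k \in [0, l))$.

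Second, apply a non-uniform local Berry--Esseen / concentration-function estimate: for any interval $I$ of length $l$,
$$\mathbb{P}(S_k \in I) \le \frac{C(l + c_3)}{\sqrt{k}} \cdot \Phi_k(I),$$
where $\Phi_k(I)$ is the Gaussian mass of $I$ under $\mathcal{N}(k\mu, k)$, together with a $c_3/\sqrt{k}$ correction for the non-Gaussianity. Applied to $I=[0,l)$, this gives an exponential damping $e^{-ck\mu^2}$ once $k \gg 1/\mu^2$. Inserting into the sum and splitting at $k^* = \min(1/\mu^2, N)$,
$$\sum_{k=1}^N \tfrac{1}{k}\mathbb{P}(S_k \in [0, l)) \le C(l+c_3)\sum_{k=1}^{N} \tfrac{1}{k^{3/2}} e^{-ck\mu^2},$$
and the latter sum is of order $|\mu| + 1/\sqrt{N}$ (from the $k\leq k^*$ regime one gains $\sqrt{k^*}^{\,-1}$). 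Multiplying by $c_3$ (absorbed when moving from unit variance to the third-moment normalization in the Berry--Esseen step) delivers the claimed $C(c_3 l + c_3^2)(|\mu| + 1/\sqrt{N})$.

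The main obstacle I expect is the sharp local concentration estimate with the correct drift dependence, especially tracking where the extra factor $c_3$ comes in to produce the $c_3^2$ term rather than just $c_3$. The delicate point is that the Berry--Esseen error $c_3/\sqrt k$ must be evaluated against a window of width $l$ rather than against the full real line, so one needs a \emph{concentration function} bound of Kolmogorov--Rogozin type giving $Q(S_k; l) \le C(l+\sigma\sqrt k)/(\sigma\sqrt k) \wedge 1$, refined by $c_3$-corrections. Everything else (cycle lemma, ladder decomposition, summation against $e^{-ck\mu^2}/k^{3/2}$) is routine, but obtaining the precise linear dependence on $l$ with a coefficient of the form $c_3(|\mu|+N^{-1/2})$ requires careful bookkeeping.
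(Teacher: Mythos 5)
This lemma is not proved in the paper at all: it is quoted verbatim as Lemma 5 of Nagaev \cite{Nag-70}, so the only question is whether your self-contained argument is sound. It is not, and the failure is structural rather than cosmetic. The step ``dropping the independent post-$\tau^+$ factor (bounded by $1$)'' discards exactly the mechanism that makes the left-hand side small. After the reduction you are left with $\sum_{k=1}^{N} \tfrac1k\,\mathbb{P}(S_k \in [0,l))$, and this quantity is of order $l + c_3$ (a constant), not of order $(c_3 l + c_3^2)(|\mu| + 1/\sqrt N)$: already for a driftless walk with standard Gaussian increments and $l=1$, the $k=1$ term alone is $\mathbb{P}(S_1\in[0,1))\approx 0.34$, while the bound to be proved tends to $0$ like $1/\sqrt N$. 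The factor $|\mu| + 1/\sqrt N$ must come from the requirement that during the remaining $N-\tau$ steps the walk never climbs above $l - S_\tau$ (equivalently, from a Lemma-7-type estimate $\mathbb{P}(\max_{j\le m} S'_j < l - y) \lesssim$ small for the independent post-ladder segment); bounding that factor by $1$ is fatal, and no sharpening of the concentration estimate for $\mathbb{P}(S_k\in[0,l))$ can recover it.

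There is also an arithmetic error at the end: $\sum_{k=1}^{N} k^{-3/2} e^{-ck\mu^2}$ is \emph{not} of order $|\mu| + 1/\sqrt N$; it is bounded below by its first term, hence of order $1$ uniformly in $\mu$ and $N$ (the gain $k_0^{-1/2}$ from a tail $\sum_{k\ge k_0}k^{-3/2}$ is only available if the sum starts at $k_0$, which yours does not). Two further, more minor points: the event $\{M_N = 0\}$ is not covered by strict ascending ladder epochs, so you need weak ladder epochs (the cyclic-shift count $\le 1$ still holds there); and the Berry--Esseen error $c_3/\sqrt k$ cannot be multiplied by the Gaussian factor $e^{-ck\mu^2}$ without a nonuniform Berry--Esseen theorem, which in any case only gives polynomial decay in $\mu\sqrt k$. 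The ladder decomposition and the inequality $\mathbb{P}(\tau^+=k, S_k\in A)\le \tfrac1k\mathbb{P}(S_k\in A)$ are fine, but to prove the lemma you must keep the conditional probability of the future segment and estimate it, essentially reproducing Nagaev's argument, rather than bounding it by $1$.
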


\begin{lemma}[{\cite[Lemma 7]{Nag-70}}]
There exists an absolute constant $C$ such that 
\beq\mathbb{P}\Big(\max_{1\leq k \leq N} S_k < 0\Big) \leq Cc^2_3(|\mu| + 1/\sqrt N).\eeq
\end{lemma}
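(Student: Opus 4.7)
The statement is the classical small-deviation bound of Nagaev (1970), and in practice one invokes it as a black box. To prove it directly, my plan is to combine a quantitative Sparre Andersen / ballot-type estimate for a centered walk with the immediately preceding Lemma 5 to carry in the drift correction. Concretely, I would first center by setting $Y_i = X_i - \mu$ and $T_k = \sum_{i=1}^k Y_i = S_k - k\mu$, so that $\{T_k\}$ is a mean-zero walk with variance $1$ and absolute third moment $c_3$, and rewrite the event $\{\max_{1 \le k \le N} S_k < 0\}$ as $\{T_k < -k\mu \text{ for all } k\}$.

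The core of the argument is the mean-zero bound $\mathbb{P}(\max_{1 \le k \le N} T_k < 0) \le C c_3^2 / \sqrt{N}$. For a symmetric walk this is the exact Sparre Andersen / ballot-problem formula $\binom{2N}{N} 4^{-N}$, whose asymptotics give the $1/\sqrt{N}$ rate. For a general mean-zero walk with finite third moment I would obtain it via a quantitative smoothing / local Berry--Esseen estimate to compare with the Gaussian case (equivalently, via the Kesten-Spitzer-Whitman type estimate $\mathbb{P}(T_k \text{ stays negative}) \asymp \sqrt{\text{Var}(T_1)} \cdot \mathbb{E}|T_N|^{-1}$), and the Berry--Esseen remainder is what introduces the factor $c_3^2$.

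To bring in the drift $|\mu|$ term I would apply the already-established Lemma 5 of the excerpt. That lemma, with $l$ chosen of order a constant (independent of $N$ and $\mu$), yields
\[
\mathbb{P}\Bigl(\max_{1 \le k \le N} S_k < l\Bigr) - \mathbb{P}\Bigl(\max_{1 \le k \le N} S_k < 0\Bigr) \le C(c_3 l + c_3^2)(|\mu| + 1/\sqrt{N}),
\]
and combining it with the mean-zero bound on $\max T_k < 0$ (transferred to $\max S_k$ via a comparison that is automatic when $\mu \ge 0$ and symmetric otherwise) gives $\mathbb{P}(\max_{k} S_k < 0) \le C c_3^2 (|\mu| + 1/\sqrt{N})$. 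When $|\mu|$ is of order $1$ or larger the bound becomes trivial (absorb into the constant $C$), so the interesting regime is $|\mu| \lesssim 1/\sqrt{N}$, where the two terms are comparable.

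The main obstacle I foresee is the quantitative Sparre Andersen estimate with explicit $c_3$-dependence for asymmetric walks; this typically requires a Fourier-analytic or local-limit argument carried out with enough care to track how the third-moment constant enters the remainder. Once this is in hand, the transfer via Lemma 5 is routine, and the case split between $|\mu|$ small and large is elementary.
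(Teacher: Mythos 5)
You should first note that the paper itself does not prove this statement: it is imported verbatim as Lemma 7 of Nagaev \cite{Nag-70}, and within the paper it is only ever \emph{combined} with the preceding lemma (Nagaev's Lemma 5) to yield Proposition \ref{rwest}. So you are attempting a from-scratch proof of an external input, which is legitimate, but your sketch has a genuine gap exactly where the lemma has content. Lemma 5 bounds the difference $\mathbb{P}(\max_{k\le N}S_k<l)-\mathbb{P}(\max_{k\le N}S_k<0)$ from above, so it can only transfer an upper bound at level $0$ upward to level $l$ (which is precisely how the paper deduces Proposition \ref{rwest}); it can never produce an upper bound on $\mathbb{P}(\max_{k\le N}S_k<0)$ itself, and invoking it for that purpose is circular. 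Moreover, your reduction to the mean-zero estimate $\mathbb{P}(\max_{k\le N}T_k<0)\le Cc_3^2/\sqrt{N}$ only works when $\mu\ge0$, since only then is $\{T_k<-k\mu\ \forall k\}\subseteq\{T_k<0\ \forall k\}$. For $\mu<0$ --- the only case in which the $|\mu|$ term is needed --- there is no such inclusion and no ``symmetric'' reflection that repairs it: in the regime $1/\sqrt{N}\ll|\mu|\ll1$ the probability in question is genuinely of order $|\mu|\gg1/\sqrt{N}$ (the walk can stay negative forever with probability comparable to $|\mu|$), so it cannot follow from the mean-zero bound plus any correction of size $O(c_3^2/\sqrt N)$. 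Your sketch thus contains no argument in precisely the regime the lemma addresses.

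A workable repair stays inside your Sparre Andersen/Berry--Esseen framework but applies it directly to the drifted walk rather than to $T_k$: use the identity $\sum_{N\ge0}\mathbb{P}(S_1<0,\dots,S_N<0)\,z^N=\exp\bigl(\sum_{n\ge1}\tfrac{z^n}{n}\mathbb{P}(S_n<0)\bigr)$ together with $\mathbb{P}(S_n<0)\le\tfrac12+C\bigl(|\mu|\sqrt n+c_3/\sqrt n\bigr)$, supplemented by the trivial bound for $n\le c_3^2$ (this truncation, not Berry--Esseen alone, is what yields a polynomial factor in $c_3$; using only the $Cc_3/\sqrt n$ error in the exponent gives $e^{Cc_3}$), and then exploit monotonicity of $N\mapsto\mathbb{P}(\max_{k\le N}S_k<0)$ to replace $N$ by $\min\{N,\lceil|\mu|^{-2}\rceil\}$ so that the accumulated drift contribution $|\mu|\sqrt N$ in the exponent stays of order one. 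Extracting the coefficient (e.g.\ evaluating the generating function at $z=1-1/N$ and using monotonicity again) gives $\mathbb{P}(\max_{k\le N}S_k<0)\le Cc_3(|\mu|+1/\sqrt N)$, which is even slightly stronger than the stated bound; Lemma 5 is not an ingredient of this argument, and this route is close in spirit to Nagaev's own.
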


Combining them, we obtain the following proposition.

\begin{proposition}\label{rwest} There exists an absolute constant $C$ such that for any $l \geq 0$,
\beq\mathbb{P}\Big(\max_{1\leq k \leq N} S_k < l\Big) \leq C(c_3l+c_3^2)(|\mu| + 1/\sqrt N).\eeq
\end{proposition}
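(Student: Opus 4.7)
The proof is essentially immediate once the two cited lemmas from Nag-70 are in hand, so the plan is short. The strategy is to decompose the target probability as a telescoping sum and apply each lemma to one summand. Specifically, I would write
\begin{align*}
\mathbb{P}\Big(\max_{1\leq k \leq N} S_k < l\Big) &= \Big[\mathbb{P}\Big(\max_{1\leq k \leq N} S_k < l\Big) - \mathbb{P}\Big(\max_{1\leq k \leq N} S_k < 0\Big)\Big] \\
&\quad + \mathbb{P}\Big(\max_{1\leq k \leq N} S_k < 0\Big),
\end{align*}
noting that since $l \geq 0$, the bracketed difference is non-negative, so both pieces may be bounded above without any sign considerations.

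Next, I would apply the first lemma (Nag-70, Lemma 5) to bound the bracketed difference by $C(c_3 l + c_3^2)(|\mu| + 1/\sqrt{N})$, and apply the second lemma (Nag-70, Lemma 7) to bound the remaining term by $C c_3^2(|\mu| + 1/\sqrt{N})$. Adding the two estimates and absorbing the $c_3^2(|\mu|+1/\sqrt N)$ contribution into a slightly enlarged absolute constant yields
\[
\mathbb{P}\Big(\max_{1\leq k \leq N} S_k < l\Big) \leq C'(c_3 l + c_3^2)(|\mu| + 1/\sqrt{N}),
\]
as claimed. There is no real obstacle here; the only item to verify is that the constants in the two cited lemmas are indeed absolute (independent of the distribution of $X_1$, of $\mu$, of $N$ and of $l$), which is the content of \cite{Nag-70}. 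The proof is thus a one-line addition of the two displayed bounds.
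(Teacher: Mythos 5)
Your proposal is correct and is exactly the paper's argument: the paper simply combines Lemma 5 and Lemma 7 of \cite{Nag-70} by writing $\mathbb{P}(\max_{1\le k\le N}S_k<l)$ as the difference bounded by Lemma 5 plus the term bounded by Lemma 7, then enlarges the absolute constant. Nothing further is needed.
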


\bibliographystyle{amsplain}
\bibliography{time}

\end{document}